\theoremstyle{plain}
\newtheorem{theorem}{Theorem}[section]
\newtheorem{lemma}[theorem]{Lemma}
\newtheorem{corollary}[theorem]{Corollary}
\newtheorem{proposition}[theorem]{Proposition}
\theoremstyle{definition}
\newtheorem{definition}[theorem]{Definition}
\newtheorem{example}[theorem]{Example}
\newtheorem{remark}[theorem]{Remark}
\theoremstyle{remark}
\newcounter{zahl}
\def\theenumi{(\alph{enumi})}
\def\p@enumii{\theenumi}
\newcommand{\DS}{\displaystyle}
\newcommand{\TS}{\textstyle}
\newcommand{\SC}{\scriptstyle}
\newcommand{\SSC}{\scriptscriptstyle}
\DeclareMathOperator{\Ad}{Ad}
\DeclareMathOperator{\Aut}{Aut}
\DeclareMathOperator{\End}{End}
\DeclareMathOperator{\Ext}{Ext}
\DeclareMathOperator{\Frob}{Frob}
\DeclareMathOperator{\Gal}{Gal}
\DeclareMathOperator{\GL}{GL}
\DeclareMathOperator{\Koh}{H}
\DeclareMathOperator{\Hom}{Hom}
\DeclareMathOperator{\PGL}{PGL}
\DeclareMathOperator{\QHom}{QHom}
\DeclareMathOperator{\Quot}{Frac}
\DeclareMathOperator{\Rep}{Rep}
\DeclareMathOperator{\Res}{Res}
\DeclareMathOperator{\Spm}{Sp}
\DeclareMathOperator{\Spec}{Spec}
\DeclareMathOperator{\Spf}{Spf}
\DeclareMathOperator{\Sym}{Sym}
\DeclareMathOperator{\Tor}{Tor}
\DeclareMathOperator{\Var}{V}
\newcommand{\ad}{{\rm ad}}
\newcommand{\alg}{{\rm alg}}
\DeclareMathOperator{\coker}{coker}
\newcommand{\cris}{{\rm cris}}
\newcommand{\dR}{{\rm dR}}
\newcommand{\et}{{\rm \acute{e}t}}
\newcommand{\fpqc}{{\it fpqc}}
\newcommand{\fppf}{{\it fppf}}
\DeclareMathOperator{\id}{\,id}
\DeclareMathOperator{\im}{im}
\renewcommand{\mod}{{\rm\;mod\;}}\DeclareMathOperator{\ord}{ord}
\newcommand{\perf}{{\rm perf}}
\newcommand{\rig}{{\rm rig}}
\DeclareMathOperator{\rk}{rk}
\newcommand{\sep}{{\rm sep}}
\newcommand{\tors}{{\rm tors}}
\renewcommand{\phi}{\varphi}
\renewcommand{\theta}{\vartheta}
\renewcommand{\epsilon}{\varepsilon}
\newcommand{\BOne} {{\mathchoice{\hbox{\rm1\kern-2.7pt l\kern.9pt}}
                              {\hbox{\rm1\kern-2.7pt l\kern.9pt}}
                              {\hbox{\scriptsize\rm1\kern-2.3pt l\kern.4pt}}
                              {\hbox{\scriptsize\rm1\kern-2.4pt l\kern.5pt}}}}
\newcommand{\BC}{{\mathbb{C}}}
\newcommand{\BF}{{\mathbb{F}}}
\newcommand{\BG}{{\mathbb{G}}}
\newcommand{\BH}{{\mathbb{H}}}
\newcommand{\BN}{{\mathbb{N}}}
\newcommand{\BP}{{\mathbb{P}}}
\newcommand{\BQ}{{\mathbb{Q}}}
\newcommand{\BR}{{\mathbb{R}}}
\newcommand{\BZ}{{\mathbb{Z}}}
\newcommand{\bB}{{\mathbf{B}}}
\newcommand{\bD}{{\mathbf{D}}}
\newcommand{\CC}{{\cal{C}}}
\newcommand{\CE}{{\cal{E}}}
\newcommand{\CF}{{\cal{F}}}
\newcommand{\CG}{{\cal{G}}}
\newcommand{\CH}{{\cal{H}}}
\newcommand{\CJ}{{\cal{J}}}
\newcommand{\CN}{{\cal{N}}}
\newcommand{\CO}{{\cal{O}}}
\newcommand{\CP}{{\cal{P}}}
\newcommand{\CQ}{{\cal{Q}}}
\newcommand{\FG}{{\mathfrak{G}}}
\newcommand{\FS}{{\mathfrak{S}}}
\newcommand{\Fa}{{\mathfrak{a}}}
\newcommand{\Fm}{{\mathfrak{m}}}
\newcommand{\Fp}{{\mathfrak{p}}}
\newcommand{\Fq}{{\mathfrak{q}}}
\newbox\dotCOdbox
\newbox\dotCOtbox
\newbox\dotCOsbox
\newbox\dotCOssbox
\let\setminus\smallsetminus
\newcommand{\es}{\enspace}
\newcommand{\open}{^\circ}
\newcommand{\dual}{^{\SSC\lor}}
\newcommand{\mal}{^{\SSC\times}}
\newcommand{\fdot}{{\,{\SSC\bullet}\,}}
\newcommand{\ul}[1]{{\underline{#1}}}
\newcommand{\ol}[1]{{\overline{#1}}}
\newcommand{\wh}[1]{{\widehat{#1}}}
\newcommand{\wt}[1]{{\widetilde{#1}}}
\newcommand{\invlim}[1][]{\ifthenelse{\equal{#1}{}}
{\DS \lim_{\longleftarrow}}
{\DS \lim_{\underset{#1}{\longleftarrow}}}
}
\newcommand{\dirlim}[1][]{\ifthenelse{\equal{#1}{}}
{\DS \lim_{\longrightarrow}}
{\DS \lim_{\underset{#1}{\longrightarrow}}}
}
\newcommand{\dbl}{{\mathchoice{\mbox{\rm [\hspace{-0.15em}[}}
                              {\mbox{\rm [\hspace{-0.15em}[}}
                              {\mbox{\scriptsize\rm [\hspace{-0.15em}[}}
                              {\mbox{\tiny\rm [\hspace{-0.15em}[}}}}
\newcommand{\dbr}{{\mathchoice{\mbox{\rm ]\hspace{-0.15em}]}}
                              {\mbox{\rm ]\hspace{-0.15em}]}}
                              {\mbox{\scriptsize\rm ]\hspace{-0.15em}]}}
                              {\mbox{\tiny\rm ]\hspace{-0.15em}]}}}}
\newcommand{\dpl}{{\mathchoice{\mbox{\rm (\hspace{-0.15em}(}}
                              {\mbox{\rm (\hspace{-0.15em}(}}
                              {\mbox{\scriptsize\rm (\hspace{-0.15em}(}}
                              {\mbox{\tiny\rm (\hspace{-0.15em}(}}}}
\newcommand{\dpr}{{\mathchoice{\mbox{\rm )\hspace{-0.15em})}}
                              {\mbox{\rm )\hspace{-0.15em})}}
                              {\mbox{\scriptsize\rm )\hspace{-0.15em})}}
                              {\mbox{\tiny\rm )\hspace{-0.15em})}}}}
\newcounter{commentcounter}
\def\?{\ 
{\bf\color{red}???}\ 
\immediate\write16{}
\immediate\write16{Warning: There was still a question mark . . . }
\immediate\write16{}}
\long\def\forget#1{}
\def\longto{\longrightarrow}
\def\into{\hookrightarrow}
\def\onto{\mbox{$\kern2pt\to\kern-8pt\to\kern2pt$}}
\def\isoto{\stackrel{}{\mbox{\hspace{1mm}\raisebox{+1.4mm}{$\SC\sim$}\hspace{-3.5mm}$\longrightarrow$}}}
\def\longinto{\lhook\joinrel\longrightarrow}
\newbox\mybox
\def\arrover#1{\mathrel{
       \setbox\mybox=\hbox spread 1.4em{\hfil$\scriptstyle#1$\hfil}
       \vbox{\offinterlineskip\copy\mybox
             \hbox to\wd\mybox{\rightarrowfill}}}}
\newcommand{\ancon}[1][]{{\mathchoice
           {\TS\langle\frac{z}{\zeta^{#1}},z^{-1}\}}
           {\TS\langle\frac{z}{\zeta^{#1}},z^{-1}\}}
           {\SC\langle\frac{z}{\zeta^{#1}},z^{-1}\}}
           {\SSC\langle\frac{z}{\zeta^{#1}},z^{-1}\}}}}
\newcommand{\tminus}{\ell_{\SSC -}}
\newcommand{\tplus}{\ell_{\SSC +}}
\newcommand{\ttplus}{\tilde{\ell}_{\SSC +}}
\newcommand{\tplusminus}{\ell}
\DeclareMathOperator{\Dr}{Dr}
\DeclareMathOperator{\Def}{D}
\def\olD{{\,\overline{\!D}}}
\def\olK{{\,\overline{\!K}}}
\def\olM{{\,\overline{\!M}}}
\def\ulD{{\underline{D\!}\,}}
\def\ulE{{\underline{E\!}\,}}
\def\ulM{{\underline{M\!}\,}}
\def\ulHM{{\underline{\hat M\!}\,}{}}
\def\ulCC{{\underline{\mathcal{C}\!}\,}{}}
\def\ulCE{{\underline{\mathcal{E}\!}\,}{}}
\def\ulCF{{\underline{\mathcal{F}\!}\,}{}}
\DeclareMathOperator{\Nilp}{\CN\!\it ilp}
\DeclareMathOperator{\Art}{\mathcal{A}\mathit{rt}}
\newcommand{\GlobalField}{F}
\newcommand{\HJLemmaZMinusZeta}{{\cite[Lemma~1.3]{HartlJuschka}}} 
\newcommand{\HJSectHPStruct}{{\cite[\S\,2]{HartlJuschka}}} 
\newcommand{\HJDefHPStruct}{\cite[Definition~2.3]{HartlJuschka}} 
\newcommand{\HJRemHPWts}{\cite[Remark~2.4]{HartlJuschka}} 
\newcommand{\HJDefHPWts}{\cite[Definition~2.7]{HartlJuschka}} 
\newcommand{\HJCohAMot}{{\cite[\S\,3.5]{HartlJuschka}}} 
\newcommand{\HJCohAMotCohAMod}{{\cite[\S\S\,3.5 and 5.7]{HartlJuschka}}} 
\newcommand{\HJCohAMod}{{\cite[\S\,5.7]{HartlJuschka}}} 
\newcommand{\HJSectSigmaBd}{{\cite[\S\,7]{HartlJuschka}}} 
\newcommand{\HJPropDeg}{\cite[Proposition~7.5]{HartlJuschka}} 
\begin{document}


\author{Urs Hartl and Wansu Kim}

\title{Local Shtukas, Hodge-Pink Structures and Galois Representations}
\date{\today}

\maketitle

\begin{abstract}
\noindent
We review the analog of Fontaine's theory of crystalline $p$-adic Galois representations and their classification by weakly admissible filtered isocrystals in the arithmetic of function fields over a finite field. There crystalline Galois representations are replaced by the Tate modules of so-called local shtukas. We prove that the Tate module functor is fully faithful. In addition to this \'etale realization of a local shtuka we discuss also the de Rham and the crystalline cohomology realizations and construct comparison isomorphisms between these realizations. We explain how local shtukas and these cohomology realizations arise from Drinfeld modules and Anderson's $t$-motives. As an application we construct equi-characteristic crystalline deformation rings, establish their rigid-analytic smoothness and compute their dimension.\\
\noindent
{\it Mathematics Subject Classification (2000)\/}: 
11G09,  
(11S20,  
14F30,  
14F40,  
14L05)  
\end{abstract}

\bigskip

%
%

\setcounter{tocdepth}{1}
\tableofcontents

\section{Introduction}
\setcounter{equation}{0}

Let $X$ be a smooth proper scheme over a number field $L$. With $X$ one associates various cohomology realizations, for example the \'etale $\ell$-adic realization $\Koh^i_\et(X\times_LL^\alg,\BQ_\ell)$, the de Rham realization $\Koh^i_\dR(X/L)$ and the Betti realization $\Koh^i_{\rm Betti}(X(\BC),\BZ)$. One has comparison isomorphisms between these realizations. This lead Grothendieck to postulate a category of ``motives'' as a universal cohomology theory through which all cohomology realizations factor.

There exists a rich parallel between the above number field situation and the ``Arithmetic of function fields''. In the latter, Anderson~\cite{Anderson86} introduced the concept of ``$t$-motives''. We slightly generalize his definition as follows. Let $\GlobalField$ be a finite field extension of the rational function field $\BF_q(\theta)$ in the variable $\theta$ over a finite field $\BF_q$ with $q$ elements. Equip $\GlobalField$ with a morphism $\gamma\colon\BF_q[t]\to \GlobalField,\,t\mapsto\theta$ where $\BF_q[t]$ is the polynomial ring. Let $\sigma$ be the endomorphism of $\GlobalField[t]$ with $\sigma(t)=t$ and $\sigma(b)=b^q$ for $b\in \GlobalField$. An \emph{$\BF_q[t]$-motive of rank $r$ over $\GlobalField$} is a pair $\ulM=(M,\tau_M)$ consisting of a free $\GlobalField[t]$-module $M$ of rank $r$ and an $\GlobalField[t][\tfrac{1}{t-\theta}]$-isomorphism $\tau_M\colon (\sigma^*M)[\tfrac{1}{t-\theta}]\isoto M[\tfrac{1}{t-\theta}]$, where $\sigma^*M:=M\otimes_{\GlobalField[t],\sigma}\GlobalField[t]$. A \emph{$t$-motive} in the sense of Anderson~\cite{Anderson86} is an $\BF_q[t]$-motive $\ulM$ such that in addition $\tau_M(\sigma^*M)\subset M$ and  $M$ is finitely generated over the non-commutative polynomial ring $\GlobalField\{\tau\}:=\bigl\{\,\sum_{i=0}^n b_i\tau^i\colon n\in\BN, b_i\in \GlobalField\,\bigr\}$ with $\tau b=b^q\tau$ which acts on $m\in M$ by $\tau(m):=\tau_M(\sigma_{\!M}^*m)$, where $\sigma_{\!M}^*m:=m\otimes 1\in\sigma^*M$.

Various ``cohomology'' realizations are attached to an $\BF_q[t]$-motive $\ulM$; see \cite[\S\,2]{Goss94}. Namely, for a maximal ideal $v$ of $\BF_q[t]$ one has a $v$-adic realization given by the \emph{$v$-adic (dual) Tate module}
\begin{equation*}\label{EqTateModAMotive}
\check T_v\ulM:=(M\otimes_{\GlobalField[t]}A_{v,\GlobalField^\sep})^\tau:=\bigl\{m\in M\otimes_{\GlobalField[t]}A_{v,\GlobalField^\sep}\colon \tau_M(\sigma_{\!M}^*m)=m\bigr\},
\end{equation*}
where $\GlobalField^\sep$ is a separable closure of $\GlobalField$ and $A_{v,\GlobalField^\sep}$ is the $v$-adic completion of $\GlobalField^\sep[t]$. For example if $v=(t)$ then $A_{v,\GlobalField^\sep}=\GlobalField^\sep\dbl t\dbr$. $\check T_v\ulM$ is a free module over the $v$-adic completion $A_v$ of $\BF_q[t]$ of rank equal to the rank of $\ulM$ and carries a continuous action of $\Gal(\GlobalField^\sep/\GlobalField)$. It is sometimes also denoted $\Koh^1_v(\ulM,A_v)$. The \emph{de Rham realization} is defined as $\Koh^1_\dR(\ulM,\GlobalField)=\sigma^*M/(t-\theta)\sigma^*M$. If $\ulM$ is ``uniformizable'' there is also a \emph{Betti realization} with a \emph{Hodge structure}, which is described for example in \HJCohAMot. If $\ulM$ is the $\BF_q[t]$-motive associated with a Drinfeld module, resp.\ abelian $t$-module $\ulE$, the de Rham cohomology $\Koh^1_\dR(\ulE,\GlobalField)$ as defined by Deligne, Anderson, Gekeler and Jing Yu~\cite{Gekeler89,Yu90}, resp.\ Brownawell and Papanikolas~\cite{BrownawellPapanikolas02}, is canonically isomorphic to $\Koh^1_\dR(\ulM,\GlobalField)$; see \HJCohAMod.

Assume now that $\ulM$ has good reduction $\ol\ulM$ at a maximal ideal $\Fm$ of the integral closure $\CO_\GlobalField$ of $\BF_q[\theta]$ in $\GlobalField$; see Example~\ref{ExAMotive} for the precise definition. Set $k:=\CO_\GlobalField/\Fm$ and $\epsilon:=\ker(\BF_q[t]\to \CO_\GlobalField/\Fm)$, and let $\GlobalField_\Fm$ be the completion of $\GlobalField$ at the place $\Fm$. If $v\ne\epsilon$, the assumption of good reduction implies that the Galois action on $\check T_v\ulM$ is unramified at $\Fm$ by \cite[Theorem~1.1]{Gardeyn2} and so the decomposition group $\Gal(\GlobalField_\Fm^\sep/\GlobalField_\Fm)$ acts through its quotient $\Gal(k^\sep/k)$. This representation of $\Gal(k^\sep/k)$ is canonically isomorphic to the (dual) Tate module $\check T_v\ol\ulM:=(\olM\otimes_{k[t]}A_{v,k^\sep})^\tau$ of $\ol\ulM$.

If $v=\epsilon$ however, the reduction $\ol\ulM$ does not possess a $v$-adic Tate module. In the parallel between function fields and number fields the $\Gal(\GlobalField_\Fm^\sep/\GlobalField_\Fm)$-representation $\check T_\epsilon\ulM$ corresponds to what is called a $p$-adic Galois representation (see \cite{Berger} or \cite{BrinonConrad} for an introduction), because $\GlobalField_\Fm$ has the same ``residue $\BF_q[t]$-characteristic'' $\epsilon$ as $A_\epsilon$. One aim of this survey article is to explain the function field analog of Fontaine's theory \cite{Fontaine79,Fontaine82,Fontaine90,Fontaine94} of $p$-adic Galois representations. Namely, Fontaine defined the notion of ``crystalline $p$-adic Galois representation'' and conjectured that the Galois representations arising from the $p$-adic \'etale cohomology of a smooth proper variety with good reduction over a finite field extension of $\BQ_p$ is crystalline. After contributions by Grothendieck, Tate, Fontaine, Lafaille, Messing, Kato and many others, Fontaine's conjecture was finally proved independently by Faltings~\cite{Faltings89}, Niziol~\cite{Niziol98} and Tsuji~\cite{Tsuji}. Moreover, Fontaine classified crystalline representations by ``filtered Frobenius-isocrystals'' whose function field analogs we will explain in Section~\ref{SectHPStruct} below.

It turns out that the appropriate function field analogs of crystalline $p$-adic Galois representations are given by \emph{local shtukas}; see Definition~\ref{DefLocSht}. To describe these let $\ulM$ be an $\BF_q[t]$-motive over $\GlobalField_\Fm$ with good reduction, and assume for simplicity that $\epsilon:=\ker(\BF_q[t]\to \CO_{\GlobalField_\Fm}/\Fm)=(t)$. Set $z:=t$ and $\zeta:=\theta$. Then the $z$-adic completion of the (unique) good model of $\ulM$ over $\CO_{\GlobalField_\Fm}$ is a free $\CO_{\GlobalField_\Fm}\dbl z\dbr$-module $\hat M$ equipped with an isomorphism $\tau_{\hat M}\colon\sigma^*\hat M[\tfrac{1}{z-\zeta}]\isoto\hat M[\tfrac{1}{z-\zeta}]$. The pair $\ulHM=(\hat M,\tau_{\hat M})$ is the \emph{local shtuka at $\epsilon$ associated with $\ulM$}; see Example~\ref{ExAMotive} for more details. Via their Tate modules $\check T_\epsilon\ulHM:=\bigl(\hat M\otimes_{\CO_{\GlobalField_\Fm}\dbl z\dbr}\GlobalField_\Fm^\sep\dbl z\dbr\bigr)^\tau$ and $\check V_\epsilon\ulHM:=\check T_\epsilon\ulHM\otimes_{A_\epsilon}\Quot(A_\epsilon)$ local shtukas give rise to actual representations of $\Gal(\GlobalField_\Fm^\sep/\GlobalField_\Fm)$ on finite dimensional vector spaces over the field $Q_\epsilon:=\Quot(A_\epsilon)$ which also equals the completion of $\BF_q(t)$ at the place $\epsilon$. The category of \emph{equal characteristic crystalline representations} of $\Gal(\GlobalField_\Fm^\sep/\GlobalField_\Fm)$ is defined to be the essential image of the functor $\ulHM\mapsto\check V_\epsilon\ulHM$ on local shtukas over $\CO_{\GlobalField_\Fm}$. We explain the reason for this in Remarks~\ref{RemCrystRealiz} and \ref{RemEqCharCrystRep}.

Let us outline the content of this article. For $\epsilon\ne(t)$ the definition of local shtukas and of local shtukas at $\epsilon$ associated with $\BF_q[t]$-motives is given in Section~\ref{SectLocSht}. In Section~\ref{SectDivLocAM} we discuss their relation with the function field analogs of $p$-divisible groups and in Section~\ref{SectGalRep} we explain the Tate module functor and show that it is fully faithful. Moreover, we define the de Rham realization of local shtukas and $\BF_q[t]$-motives and establish a comparison isomorphism $h_{\epsilon,\dR}$ between the $\epsilon$-adic (dual) Tate module and the de Rham realization. In Section~\ref{SectHPStruct} we define the crystalline realization of local shtukas and $\BF_q[t]$-motives and we construct the comparison isomorphisms $h_{\dR,\cris}$ between the de Rham and the crystalline realization and $h_{\epsilon,\cris}$ between the $\epsilon$-adic (dual) Tate module and the crystalline realization. The former provides on the crystalline realization a Hodge structure in the sense of Pink. These are the function field analogs of Fontaine's filtered Frobenius-isocrystals which classify crystalline $p$-adic Galois representations. In Section~\ref{SectWA} we describe which function field isocrystals with Hodge-Pink structure arise from local shtukas using the analog of Fontaine's notion of weak admissibility.

The function field analog of Fontaine's theory has a somewhat surprising application to Galois deformation theory; namely, there exists an equi-characteristic local  analog of flat deformation rings (or rather, crystalline deformation rings, see Theorems~\ref{ThmDeformation} and \ref{ThmGenSm}), which have been intensively studied in the $p$-adic setting for applications to modularity lifting theorems and congruences of modular forms; see for example \cite{KisinPST,KisinModuliFF,KisinFM,GeeKisinBreuilMezard}. The existence of equi-characteristic Galois deformation theory is somewhat surprising because the usual (unrestricted) deformation ring does not exist in the category of complete local noetherian rings since the tangent space cannot be bounded; cf.~Remark~\ref{RemTangentSp}. Nonetheless, the equi-characteristic analog of Fontaine's theory (or rather, its torsion version) provides a means to bound the tangent space of the deformation functor.  We give an expository overview of the equi-characteristic local Galois deformation theory in Section~\ref{SectDefoTh}, which is analogous to,  and was inspired by Kisin's technique to study flat deformation rings \cite{KisinModuliFF}. In order to develop Galois deformation theory, we need some techniques to handle torsion Galois representations -- some analog of Raynaud's theory of finite flat group schemes -- which is explained in Section~\ref{SectTorLocSh}.

\bigskip\noindent
{\it Acknowledgments.} These notes grew out of two lectures given by the authors in the fall of 2009 at the conference ``$t$-motives: Hodge structures, transcendence and other motivic aspects'' at the Banff International Research Station BIRS in Banff, Canada and a lecture course given by the first author at the Centre de Recerca Mathem\`atica CRM in Barcelona, Spain in the spring of 2010. The authors are grateful to BIRS and CRM for their hospitality. The first author acknowledges support of the DFG (German Research Foundation) in form of SFB 878 and Germany's Excellence Strategy EXC 2044--390685587 ``Mathematics M\"unster: Dynamics--Geometry--Structure''. The second author acknowledges support of the Herchel Smith Postdoctoral Research Fellowship, and the EPSRC (Engineering and Physical Sciences Research Council) in the form of EP/L025302/1.

\subsection{Notation}\label{Notation}
Throughout this article we denote by
\begin{tabbing}
$A:=\Gamma(C\setminus\{\infty\},\CO_C)$\quad \=\kill
$\BF_q$\> a finite field with $q$ elements,\\[1mm]
$C$\> a smooth projective geometrically irreducible curve over $\BF_q$,\\[1mm]
$Q:=\BF_q(C)$\> the function field of $C$,\\[1mm]
$\infty$\> a fixed closed point of $C$,\\[1mm]
$A:=\Gamma(C\setminus\{\infty\},\CO_C)$\> \parbox[t]{0.76\textwidth}{the ring of regular functions on $C$ outside $\infty$. This generalizes the rings $A=\BF_q[t]$ and $Q=\BF_q(t)$ from the introduction, which correspond to $C=\BP^1_{\BF_q}$.}\\[1mm]
$K$ \> \parbox[t]{0.76\textwidth}{a field which is complete with respect to a non-trivial, non-archimedean absolute value} \\[1mm]
$|\,.\,|\colon K\to\BR_{\ge0}$\\[1mm]
$|K^{^{\SSC\times}}|=\{|x|\colon x\in K^{^{\SSC\times}}\}$ \>its value group. We do \emph{not} assume that $|K^{^{\SSC\times}}|$ is discrete in $\BR_{>0}$.\\[1mm]
$\olK$ \> the completion of an algebraic closure $K^\alg$ of $K$,\\[1mm]
$R=\{x\in K\colon|x|\le1\}$ \> the valuation ring of $K$, \\[1mm]
$\Fm_R=\{x\in R\colon|x|<1\}$ \> the maximal ideal of $R$,\\[1mm]
$k=R/\Fm_R$ \> the residue field of $R$,\\[1mm]
$\gamma\colon A\into R$ \> an injective ring homomorphism,\\[1mm]
$\epsilon:=\gamma^{-1}(\Fm_R)$ \> \parbox[t]{0.76\textwidth}{the kernel of the induced homomorphism $A\to k$. It is called the \emph{$A$-characteristic of $k$}. We assume that $\epsilon\subset A$ is a maximal ideal.}\\[2mm]
$\BF_\epsilon=A/\epsilon$\> the finite residue field at $\epsilon$,\\[1mm]
$\hat q=\#\BF_\epsilon$ \> the cardinality of $\BF_\epsilon$. It satisfies $\hat q=q^{[\BF_\epsilon:\BF_q]}$.\\[1mm]
$z\in Q$ \> a fixed uniformizing parameter of $C$ at $\epsilon$,\\[1mm]
$\zeta=\gamma(z)$ \> the image of $z$ in $R$,\\[1mm]
$A_\epsilon$\> \parbox[t]{0.76\textwidth}{the completion of the stalk $\CO_{C,\epsilon}$ at  $\epsilon$, which is canonically isomorphic to $\BF_\epsilon\dbl z\dbr$,}\\[1mm]
$Q_\epsilon=\Quot(A_\epsilon)$\> its fraction field,\\[1mm]
$R\dbl z\dbr$ \> the power series ring in the variable $z$ over $R$,\\[1mm]
$\hat\sigma$ \> the endomorphism of $R\dbl z\dbr$ with $\hat\sigma(z)=z$ and $\hat\sigma(b)=b^{\hat q}$ for $b\in R$.
\end{tabbing}

%
%

\section{Local Shtukas}\label{SectLocSht}
\setcounter{equation}{0}

The function field analog of Fontaine's theory of $p$-adic Galois representations rests on the concept of local shtukas. We are interested in two kinds of base schemes $S$ in this article. The first kind is $S=\Spec R$ where $R$ is a valuation ring as in Notation~\ref{Notation}. The second kind are schemes $S$ in the category $\Nilp_{A_\epsilon}$ of $A_\epsilon$-schemes on which a uniformizer $z$ of $A_\epsilon$ is locally nilpotent. We denote by $\zeta$ the image of $z$ in $R$, respectively in the structure sheaf $\CO_S$ of $S$. The relation between the two kinds of base schemes is that $\Spec R/(\zeta^n)\in\Nilp_{A_\epsilon}$ for all positive integers $n$.

To define local shtukas over $S=\Spec R$ we consider finite free modules $\hat M$ over the power series ring $R\dbl z\dbr$ and set $\hat\sigma^*\hat M:=\hat M\otimes_{R\dbl z\dbr,\,\hat\sigma}R\dbl z\dbr$, and $\hat M[\tfrac{1}{z-\zeta}]:=\hat M\otimes_{R\dbl z\dbr}R\dbl z\dbr[\tfrac{1}{z-\zeta}]$, and $\hat M[\tfrac{1}{z}]:=\hat M\otimes_{R\dbl z\dbr}R\dbl z\dbr[\tfrac{1}{z}]$. For more general $S$ we let $\CO_S\dbl z\dbr$ be the sheaf of $\CO_S$-algebras on $S$ for the \fpqc-topology whose ring of sections on an $S$-scheme $Y$ is the ring of power series $\CO_S\dbl z\dbr(Y):=\Gamma(Y,\CO_Y)\dbl z\dbr$.  This is indeed a sheaf being the countable direct product of $\CO_S$. Let $\CO_S\dbl z\dbr[\tfrac{1}{z-\zeta}]$, respectively $\CO_S\dpl z\dpr$, be the \fpqc-sheaf of $\CO_S$-algebras on $S$ associated with the presheaf $Y\mapsto\Gamma(Y,\CO_Y)\dbl z\dbr[\tfrac{1}{z-\zeta}]$, respectively $Y\mapsto\Gamma(Y,\CO_Y)\dbl z\dbr[\tfrac{1}{z}]$. If $Y$ is quasi-compact then $\Gamma\bigl(Y,\CO_S\dbl z\dbr[\tfrac{1}{z-\zeta}]\bigr)=\Gamma(Y,\CO_Y)\dbl z\dbr[\tfrac{1}{z-\zeta}]$ and $\Gamma\bigl(Y,\CO_S\dpl z\dpr\bigr)=\Gamma(Y,\CO_Y)\dbl z\dbr[\tfrac{1}{z}]$ by \cite[Exercise II.1.11]{Hartshorne}. Also $\CO_S\dbl z\dbr[\tfrac{1}{z-\zeta}]=\CO_S\dpl z\dpr$ if $S\in\Nilp_{A_\epsilon}$. Let $\hat\sigma$ be the endomorphism of $\CO_S\dbl z\dbr$ with $\hat\sigma(z)=z$ and $\hat\sigma(b)=b^{\hat q}$ for sections $b$ of $\CO_S$. For a sheaf of $\CO_S\dbl z\dbr$-modules $\hat M$ we set $\hat\sigma^*\hat M:=\hat M\otimes_{\CO_S\dbl z\dbr,\,\hat\sigma}\CO_S\dbl z\dbr$, as well as $\hat M[\tfrac{1}{z-\zeta}]:=\hat M\otimes_{\CO_S\dbl z\dbr}\CO_S\dbl z\dbr[\tfrac{1}{z-\zeta}]$ and $\hat M[\tfrac{1}{z}]:=\hat M\otimes_{\CO_S\dbl z\dbr}\CO_S\dpl z\dpr$. There is a natural $\hat\sigma$-semilinear map $\hat M\to\hat\sigma^*\hat M,\,m\mapsto\hat\sigma_{\!\hat M}^*m:=m\otimes1$. For a morphism of $\CO_S\dbl z\dbr$-modules $f\colon \hat M\to\hat M'$ we set $\hat\sigma^*f:=f\otimes\id\colon\hat\sigma^*\hat M\to\hat\sigma^*\hat M'$. Note that by \cite[Proposition~2.3]{HV1} a sheaf of $\CO_S\dbl z\dbr$-modules which \fpqc-locally on $S$ is isomorphic to $\CO_S\dbl z\dbr^{\oplus r}$ is already Zariski-locally on $S$ isomorphic to $\CO_S\dbl z\dbr^{\oplus r}$. We therefore call such a sheaf simply a \emph{locally free $\CO_S\dbl z\dbr$-module of rank $r$}. If $S=\Spec R$ for a valuation ring $R$ as above, such a sheaf corresponds to a free $R\dbl z\dbr$-module of rank $r$. Note that in this case $R\dbl z\dbr[\tfrac{1}{z-\zeta}]\ne R\dbl z\dbr[\tfrac{1}{z}]$

\begin{definition}\label{DefLocSht}
A \emph{local $\hat\sigma$-shtuka} (or \emph{local shtuka}) \emph{of rank} $r$ over $S$ is a pair $\ulHM=(\hat M,\tau_{\hat M})$ consisting of a locally free $\CO_S\dbl z\dbr$-module $\hat M$ of rank $r$, and an isomorphism $\tau_{\hat M}\colon\hat\sigma^\ast\hat M[\frac{1}{z-\zeta}] \isoto\hat M[\frac{1}{z-\zeta}]$. If $\tau_{\hat M}(\hat\sigma^*\hat M)\subset\hat M$ then $\ulHM$ is called \emph{effective}, and if $\tau_{\hat M}(\hat\sigma^*\hat M)=\hat M$ then $\ulHM$ is called \emph{\'etale}.

A \emph{morphism} of local shtukas $f\colon(\hat M,\tau_{\hat M})\to(\hat M',\tau_{\hat M'})$ over $S$ is a morphism of $\CO_S\dbl z\dbr$-modules $f\colon\hat M\to\hat M'$ which satisfies $\tau_{\hat M'}\circ\hat\sigma^*f = f\circ \tau_{\hat M}$. We denote the set of morphisms from $\ulHM$ to $\ulHM'$ by $\Hom_S(\ulHM,\ulHM')$.

A \emph{quasi-morphism} between local shtukas $f\colon(\hat M,\tau_{\hat M})\to(\hat M',\tau_{\hat M'})$ over $S$ is a morphism of $\CO_S\dpl z\dpr$-modules $f\colon\hat M[\tfrac{1}{z}]\to\hat M'[\tfrac{1}{z}]$ with $\tau_{\hat M'}\circ\hat\sigma^*f=f\circ\tau_{\hat M}$. It is called a \emph{quasi-isogeny} if it is an isomorphism of $\CO_S\dpl z\dpr$-modules. We denote the set of quasi-morphisms from $\ulHM$ to $\ulHM'$ by $\QHom_S(\ulHM,\ulHM')$.
\end{definition}

When $S=\Spec R$ we will show in Corollary~\ref{CorHomFinGen} below that $\Hom_S(\ulHM,\ulHM{}')$ is a finite free $A_\epsilon$-module of rank at most $\rk\ulHM\cdot\rk\ulHM{}'$. Of fundamental importance is the following

\begin{example}\label{ExAMotive}
As in Notation~\ref{Notation} let $R$ be a valuation ring and set $A_R:=A\otimes_{\BF_q}R$, and let $\sigma:=\id_A\otimes\Frob_{q,R}$ be the endomorphism of $A_R$ with $\sigma(a\otimes b)=a\otimes b^{q}$ for $a\in A$ and $b\in R$. By an \emph{effective $A$-motive of rank $r$ over $R$} we mean a pair $\ulM=(M,\tau_M)$ consisting of a locally free $A_R$-module $M$ of rank $r$ and an injective $A_R$-homomorphism $\tau_M\colon\sigma^*M\into M$ whose cokernel is a finite free $R$-module and annihilated by a power of the ideal $\CJ:=(a\otimes1-1\otimes\gamma(a)\colon a\in A)=\ker(\gamma\otimes\id_R\colon A_R\onto R)\subset A_R$.

More generally, an \emph{$A$-motive of rank $r$ over $R$} is a pair $\ulM=(M,\tau_M)$ consisting of a locally free $A_R$-module $M$ of rank $r$ and an isomorphism $\tau_M\colon\sigma^*M|_{\Spec A_R\setminus\Var(\CJ)}\isoto M|_{\Spec A_R\setminus\Var(\CJ)}$ of the associated sheaves outside $\Var(\CJ)\subset\Spec A_R$. Note that if $A=\BF_q[t]$ then $\CJ=(t-\gamma(t))$ and $\Spec A_R\setminus\Var(\CJ)=\Spec R[t][\tfrac{1}{t-\gamma(t)}]$. An $A$-motive over $R$ should be viewed as a model over $R$ with good reduction of the $A$-motive $\ulM\otimes_RK$ over $K$ of generic $A$-characteristic $(0)=\ker(\gamma\colon A\to K)$. Its reduction $\ulM\otimes_Rk$ is an $A$-motive over $k$ of $A$-characteristic $\epsilon=\ker(A\to k)$. Note that we discussed the case where $A=\BF_q[t]$ in the introduction.

Let $\ulM$ be an (effective) $A$-motive. We consider the $\epsilon$-adic completions $A_{\epsilon,R}$ of $A_R$ and $\ulM\otimes_{A_R}A_{\epsilon,R}:=(M\otimes_{A_R}A_{\epsilon,R}\,,\,\tau_M\otimes\id)$ of $\ulM$. If $\BF_\epsilon=\BF_q$, and hence $\hat q=q$ and $\hat\sigma=\sigma$, we have $A_{\epsilon,R}=R\dbl z\dbr$ and $\CJ\cdot A_{\epsilon,R}=(z-\zeta)$ because $R\otimes_{A_R}A_{\epsilon,R}=R$. So $\ulM\otimes_{A_R}A_{\epsilon,R}$ is an (effective) local shtuka over $\Spec R$ which we denote by $\ulHM_\epsilon(\ulM)$ and call the \emph{local $\hat\sigma$-shtuka at $\epsilon$ associated with $\ulM$}. If $f:=[\BF_\epsilon:\BF_q]>1$ the construction is slightly more complicated; compare the discussion in \cite[after Proposition~8.4]{BH1}. Namely, by continuity the map $\gamma$ extends to a ring homomorphism $\gamma\colon A_\epsilon\into R$. We consider the canonical isomorphism $\BF_\epsilon\dbl z\dbr\isoto A_\epsilon$ and the ideals $\Fa_i=(a\otimes1-1\otimes\gamma(a)^{q^i}\colon a\in\BF_\epsilon)\subset A_{\epsilon,R}$ for $i\in\BZ/f\BZ$, which satisfy $\prod_{i\in\BZ/f\BZ}\Fa_i=(0)$, because $\prod_{i\in\BZ/f\BZ}(X-a^{q^i})\in \BF_q[X]$ is a multiple of the minimal polynomial of $a$ over $\BF_q$ and even equal to it when $\BF_\epsilon=\BF_q(a)$. By the Chinese remainder theorem $A_{\epsilon,R}$ decomposes
\begin{equation}\label{EqDecompAEpsilonR}
A_{\epsilon,R}\;=\;\prod_{i\in\BZ/f\BZ}A_{\epsilon,R}/\Fa_i\,.
\end{equation}
Each factor is canonically isomorphic to $R\dbl z\dbr$. The factors are cyclically permuted by $\sigma$ because $\sigma(\Fa_i)=\Fa_{i+1}$. In particular $\sigma^f$ stabilizes each factor. The ideal $\CJ$ decomposes as follows $\CJ\!\cdot\! A_{\epsilon,R}/\Fa_0=(z-\zeta)$ and $\CJ\!\cdot\! A_{\epsilon,R}/\Fa_i=(1)$ for $i\ne0$. We define the \emph{local $\hat\sigma$-shtuka at $\epsilon$ associated with $\ulM$} as $\ulHM_\epsilon(\ulM):=(\hat M,\tau_{\hat M}):=\bigl(M\otimes_{A_R}A_{\epsilon,R}/\Fa_0\,,\,(\tau_M\otimes1)^f\bigr)$, where $\tau_M^f:=\tau_M\circ\sigma^*\tau_M\circ\ldots\circ\sigma^{(f-1)*}\tau_M$. Of course if $f=1$ we get back the definition of $\ulHM_\epsilon(\ulM)$ given above. Also note if $\ulM$ is effective, then $M/\tau_M(\sigma^*M)=\hat M/\tau_{\hat M}(\hat\sigma^*\hat M)$.

The local shtuka $\ulHM_\epsilon(\ulM)$ allows to recover $\ulM\otimes_{A_R}A_{\epsilon,R}$ via the isomorphism
\[
\bigoplus_{i=0}^{f-1}(\tau_M\otimes1)^i\mod\Fa_i\colon\Bigl(\bigoplus_{i=0}^{f-1}\sigma^{i*}(M\otimes_{A_R}A_{\epsilon,R}/\Fa_0),\;(\tau_M\otimes1)^f\oplus\bigoplus_{i\ne0}\id\Bigr)\;\isoto\;\ulM\otimes_{A_R}A_{\epsilon,R}\,,
\]
because for $i\ne0$ the equality $\CJ\!\cdot\! A_{\epsilon,R}/\Fa_i=(1)$ implies that $\tau_M\otimes 1$ is an isomorphism modulo $\Fa_i$; see \cite[Propositions~8.8 and 8.5]{BH1} for more details.

Note that $\ulM\mapsto\ulHM_\epsilon(\ulM)$ is a functor. The philosophy is that $\ulHM_\epsilon(\ulM)$ encodes all the local information of $\ulM$ at $\epsilon$ like the (dual) Tate module $\check T_v\ulM$ encodes all the local information of $\ulM$ at $v\ne\epsilon$. The whole example also works over a base scheme $S\in\Nilp_{A_\epsilon}$ instead of $S=\Spec R$; see \cite[Example~7.2]{HartlIsog} and \cite[\S\,6]{HartlSingh}.
\end{example}

We will need the following lemma whose second part is proved more generally for $S\in\Nilp_{A_\epsilon}$ in \cite[Lemma~2.3]{HartlSingh}.

\begin{lemma}\label{LemmaLocSht}
Let $\ulHM=(\hat M,\tau_{\hat M})$ be a local shtuka of rank $r$ over a valuation ring $R$ as in Notation~\ref{Notation}.
\begin{enumerate}
\item\label{LemmaLocSht_A}
There is an integer $d\in\BZ$ such that $\det\tau_{\hat M}\in (z-\zeta)^d\!\cdot\!R\dbl z\dbr\mal$.
\item\label{LemmaLocSht_B}
If $\ulHM$ is effective, the integer $d$ from \ref{LemmaLocSht_A} satisfies $d\ge0$ and $\hat M/\tau_{\hat M}(\hat\sigma^*\hat M)$ is a free $R$-module of rank $d$ which is annihilated by $(z-\zeta)^d$.
\end{enumerate}
\end{lemma}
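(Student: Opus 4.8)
The plan is to reduce everything to the structure theory of finitely generated modules over the principal ideal domain $R\dbl z\dbr$, or rather to an explicit Smith-normal-form argument, since $R\dbl z\dbr$ need not be Noetherian when $|K^\times|$ is non-discrete. For part \ref{LemmaLocSht_A}, the key observation is that $z-\zeta$ generates a maximal ideal of $R\dbl z\dbr[\tfrac1z]$: modulo $z-\zeta$ the ring $R\dbl z\dbr$ becomes $R$, and inverting $z$ means inverting $\zeta$, so $R\dbl z\dbr[\tfrac1z]/(z-\zeta)\cong R[\tfrac1\zeta]=K$. Hence $R\dbl z\dbr[\tfrac1z]$ is local with respect to the $(z-\zeta)$-adic valuation (Weierstrass preparation / division shows every nonzero element of $R\dbl z\dbr$ is $(z-\zeta)^d$ times a unit times something invertible after inverting $z$, or more simply: $R\dbl z\dbr$ is a local ring and $R\dbl z\dbr[\tfrac1z]$ is obtained by inverting one prime). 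I would spell this out: since $\tau_{\hat M}$ is an isomorphism $\hat\sigma^*\hat M[\tfrac1{z-\zeta}]\isoto\hat M[\tfrac1{z-\zeta}]$ of free modules of the same rank, $\det\tau_{\hat M}$ is a unit in $R\dbl z\dbr[\tfrac1{z-\zeta}]$, so $\det\tau_{\hat M}\in R\dbl z\dbr[\tfrac1z]$ is a unit times a power of $z-\zeta$; as it actually lies in $R\dbl z\dbr[\tfrac1z]^\times\cdot R\dbl z\dbr$ (because $\tau_{\hat M}$ maps $\hat\sigma^*\hat M$ into $\hat M[\tfrac1z]=\hat M\otimes R\dbl z\dbr[\tfrac1z]$ — wait, one must be careful here). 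Cleaner: work over $R\dbl z\dbr[\tfrac1{z-\zeta}]$ directly; this ring is a PID-like domain in which $R\dbl z\dbr$ sits, but the only ``missing'' prime is $z-\zeta$. Writing $\tau_{\hat M}$ and $\tau_{\hat M}^{-1}$ as matrices over $R\dbl z\dbr[\tfrac1{z-\zeta}]$, both have entries that are $R\dbl z\dbr$-elements divided by powers of $z-\zeta$; so $\det\tau_{\hat M}$ lies in $R\dbl z\dbr[\tfrac1{z-\zeta}]^\times$. I claim this unit group equals $R\dbl z\dbr^\times\cdot(z-\zeta)^{\BZ}$: indeed $R\dbl z\dbr$ is a local ring with maximal ideal $(\Fm_R,z)$, $z-\zeta$ is a prime element (quotient is $R$, a domain), and it is the unique prime we invert, so by the usual localization computation the units are as claimed. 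This gives the integer $d$.

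For part \ref{LemmaLocSht_B}, assume $\ulHM$ effective, so $N:=\tau_{\hat M}(\hat\sigma^*\hat M)\subseteq\hat M$ with $N$ free of rank $r$ (as the image of the injective $\tau_{\hat M}$ on the free module $\hat\sigma^*\hat M$). Choosing bases I represent the inclusion $N\hookrightarrow\hat M$ by a matrix $T$ over $R\dbl z\dbr$ with $\det T = \det\tau_{\hat M}$ up to a unit, hence $\det T=(z-\zeta)^d u$ with $u\in R\dbl z\dbr^\times$; in particular $d\ge0$. Next I would put $T$ into Smith normal form over $R\dbl z\dbr$: the point is that although $R\dbl z\dbr$ is not a PID, it is \emph{adequate enough} — by Weierstrass division every ideal generated by finitely many elements of $R\dbl z\dbr$ containing a power of $z-\zeta$ in its ``$z$-support'' is controlled — actually the cleanest route is: $\hat M/N$ is killed by $(z-\zeta)^d$ (clear from $\det T\in(z-\zeta)^dR\dbl z\dbr^\times$ and Cramer's rule, since $(z-\zeta)^d u\cdot\hat M\subseteq (\det T)\hat M\subseteq T\cdot R\dbl z\dbr^r = N$), so $\hat M/N$ is a finitely generated module over $R\dbl z\dbr/(z-\zeta)^d$. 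That quotient ring is a free $R$-module of rank $d$ (basis $1,(z-\zeta),\dots,(z-\zeta)^{d-1}$), and it is local Artinian-like over $R$. One then shows $\hat M/N$ is $R$-free: pick an $R\dbl z\dbr$-basis of $\hat M$ adapted so that $T$ is triangular with diagonal entries $(z-\zeta)^{d_i}\cdot(\text{unit})$ summing to $d$ — this triangularization is possible by a Gaussian-elimination argument over the local ring $R\dbl z\dbr$ once we know the cokernel is $(z-\zeta)$-power torsion — whence $\hat M/N\cong\bigoplus_i R\dbl z\dbr/(z-\zeta)^{d_i}$, each summand $R$-free of rank $d_i$, total rank $\sum d_i=d$. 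Finally $(z-\zeta)^d$ annihilates the whole thing since $d_i\le d$.

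The main obstacle, and the step I would spend the most care on, is the Smith/triangular normal form over the non-Noetherian ring $R\dbl z\dbr$: one cannot blindly invoke the structure theorem for modules over a PID. The fix is to use Weierstrass preparation relative to the prime $z-\zeta$ — every nonzero $f\in R\dbl z\dbr$ that becomes a unit after inverting $z-\zeta$ up to the needed order can be manipulated by row/column operations — together with the fact that $R\dbl z\dbr$ is a local ring so that ``the gcd is one of the entries up to units'' works entrywise after clearing denominators. Alternatively one can cite \cite[Lemma~2.3]{HartlSingh} for part \ref{LemmaLocSht_B} as the excerpt already signals, and only give the self-contained argument for part \ref{LemmaLocSht_A}; I would present the full argument for \ref{LemmaLocSht_A} and sketch \ref{LemmaLocSht_B} by this triangularization, noting that $R$-freeness of $R\dbl z\dbr/(z-\zeta)^d$ and of its finitely generated submodule quotients is where the valuation-ring hypothesis on $R$ enters (finitely generated torsion-free $=$ free is false over general valuation rings, so one really needs the explicit $(z-\zeta)$-adic basis rather than an abstract flatness argument).
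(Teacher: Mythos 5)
Part \ref{LemmaLocSht_A} of your argument is correct and is essentially the paper's: both proofs come down to the fact that $(z-\zeta)R\dbl z\dbr$ is a prime ideal (quotient $\cong R$ via $z\mapsto\zeta$), so that clearing denominators from $\det\tau_{\hat M}$ and $\det\tau_{\hat M}^{-1}$ produces $f,g\in R\dbl z\dbr$ with $fg=(z-\zeta)^N$, forcing $f\in(z-\zeta)^u R\dbl z\dbr\mal$. Your packaging as ``$R\dbl z\dbr[\tfrac{1}{z-\zeta}]\mal=R\dbl z\dbr\mal\cdot(z-\zeta)^\BZ$'' is the same computation. The Cramer's-rule step in \ref{LemmaLocSht_B} and the deduction $d\ge0$ also match the paper.

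The gap is in your route to $R$-freeness and the rank in \ref{LemmaLocSht_B}. Your pivot heuristic --- ``the gcd is one of the entries up to units'' because $R\dbl z\dbr$ is local --- genuinely fails here: a nonzero $\pi\in\Fm_R$ has $(z-\zeta)$-adic valuation $0$ (since $(z-\zeta)R\dbl z\dbr\cap R=(0)$) yet is not a unit in $R\dbl z\dbr$, so it can neither be used as a pivot nor be divided by $z-\zeta$; naive Gaussian elimination over $R\dbl z\dbr$ does not terminate, and the normal form you want is exactly the nontrivial content of \cite[Proposition~2.1.3 and Lemma~2.1.2]{HartlPSp}. (A smaller slip: a merely \emph{triangular} form with diagonal $(z-\zeta)^{d_i}\cdot(\text{unit})$ does not give $\hat M/N\cong\bigoplus_i R\dbl z\dbr/(z-\zeta)^{d_i}$ as $R\dbl z\dbr$-modules --- that needs the full diagonal form --- though it does give $R$-freeness of rank $\sum d_i$ via the evident filtration.) The paper avoids all of this: it quotes \cite[Lemma~2.1.2]{HartlPSp} for the bare statement that the cokernel is finite free over $R$, and then computes the rank only \emph{after} reducing modulo $\Fm_R$, where one really is over the discrete valuation ring $k\dbl z\dbr$ and the elementary divisor theorem legitimately gives $\dim_k(\hat M/\tau_{\hat M}(\hat\sigma^*\hat M))\otimes_Rk=\ord_z(\det\tau_{\hat M}\bmod\Fm_R)=d$. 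That reduction-to-the-residue-field step is the idea your sketch is missing; with it, no normal form over $R\dbl z\dbr$ itself is needed. Your fallback of citing \cite[Lemma~2.3]{HartlSingh} for part \ref{LemmaLocSht_B} is legitimate and is in fact what the paper signals, but then the self-contained triangularization sketch should be dropped rather than left half-justified.
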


\begin{proof}
Compare \cite[Proposition~2.1.3 and Lemma~2.1.2]{HartlPSp}.

\smallskip\noindent
\ref{LemmaLocSht_A} Since $R\dbl z\dbr$ is a local ring, we may choose a basis of $\hat M$ and non-negative integers $s,t$ such that the matrices of $(z-\zeta)^s\!\cdot\!\tau_{\hat M}$ and $(z-\zeta)^t\!\cdot\!\tau_{\hat M}^{-1}$ with respect to this basis lie in $M_r(R\dbl z\dbr)$. Set $f=(z-\zeta)^{rs}\det\tau_{\hat M}$ and $g=(z-\zeta)^{rt}\det\tau_{\hat M}^{-1}$. Then $f,g\in R\dbl z\dbr$ satisfy $fg=(z-\zeta)^{r(s+t)}$.  Since $R$ is an integral domain, $(z-\zeta)R\dbl z\dbr$ is a prime ideal. So $f\in (z-\zeta)^u\!\cdot\!R\dbl z\dbr\mal$ for a non-negative integer $u$, and so $\det\tau_{\hat M}\in(z-\zeta)^{u-rs}\!\cdot\!R\dbl z\dbr\mal$.

\smallskip\noindent
\ref{LemmaLocSht_B} Let $d$ be the integer from \ref{LemmaLocSht_A}. Since $\ulM$ is effective we may take $s=0$, and hence $d\ge0$. By Cramer's rule (e.g.~\cite[III.8.6, Formulas (21) and (22)]{BourbakiAlgebra}) the matrix of $\tau_{\hat M}^{-1}$ lies in $M_r\bigl((z-\zeta)^{-d}R\dbl z\dbr\bigr)$. This implies that $\hat M/\tau_{\hat M}(\hat\sigma^*\hat M)$ is annihilated by $(z-\zeta)^d$. By \cite[Lemma~2.1.2]{HartlPSp} it is a finite free $R$-module. We can compute its rank after reducing modulo $\Fm_R$. Then we are over the principal ideal domain $k\dbl z\dbr$ and the elementary divisor theorem tells us that $\dim_k\bigl(\hat M/\tau_{\hat M}(\hat\sigma^*\hat M)\bigr)\otimes_Rk=\ord_z(\det\tau_{\hat M}\mod\Fm_R)=\ord_z\bigl((z-\zeta)^d\mod\Fm_R\bigr)=d$.
\end{proof}

More precisely \cite[Proposition~2.1.3]{HartlPSp} (and the lemma) say that all (effective) local shtukas over $\Spec R$ are \emph{bounded} (by $(d,0,\ldots,0)$ for $d=\rk_R\hat M/\tau_{\hat M}(\hat\sigma^*\hat M)$) as in the following

\begin{definition}\label{DefBounded}
Let $\mu_1\ge\ldots\ge\mu_r$ be a decreasing sequence of integers. A local shtuka $(\hat M,\tau_{\hat M})$ of rank $r$ over $S$ is \emph{bounded by $(\mu_1,\ldots,\mu_r)$} if
\[
\wedge^i\tau_{\hat M}\bigl(\wedge^i\hat\sigma^\ast\hat M\bigr)\;\subset\;(z-\zeta)^{\mu_{r-i+1}+\ldots+\mu_r}\cdot\wedge^i\hat M\qquad\text{for }1\le i\le r\text{ with equality for } i=r\,. 
\]
\end{definition}

Although over a general base boundedness is not preserved under quasi-isogenies this is true over a valuation ring.

\begin{lemma}\label{LemmaIsogBounded}
Let $\ulHM$ and $\ulHM'$ be two isogenous local shtukas over a valuation ring $R$ as in Notation~\ref{Notation}. If $\ulHM$ is bounded by $(d,0,\ldots,0)$, respectively satisfies $(z-\zeta)^a\hat M\subset \tau_{\hat M}(\hat\sigma^*\hat M)\subset(z-\zeta)^b\hat M$, then the same is true for $\ulHM'$.
\end{lemma}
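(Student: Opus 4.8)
The statement asserts that the bounds "$\ulHM$ is bounded by $(d,0,\ldots,0)$" and "$(z-\zeta)^a\hat M\subset\tau_{\hat M}(\hat\sigma^*\hat M)\subset(z-\zeta)^b\hat M$" are preserved under quasi-isogenies over a valuation ring $R$. The key tool will be the fact that these conditions can be detected after passing to suitable localizations and completions, where the quasi-isogeny becomes an isomorphism away from the prime $(z-\zeta)$, while over the generic point of $\Spec R$ the quasi-isogeny becomes an honest isogeny of shtukas over a field. Let me spell out the structure I have in mind.

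First I would reduce to the case where the quasi-isogeny is actually an isogeny (a morphism of $\CO_S\dbl z\dbr$-modules), since clearing denominators in $z$ changes the bound only in a controlled way — but in fact it is cleaner to note that both conditions in the statement are invariant under multiplying $\tau$ by a power of $(z-\zeta)$... no: the point is that the statement is about two genuinely isogenous (not merely quasi-isogenous) shtukas with the \emph{same} $d$, $a$, $b$, so I should use that $\hat M[\tfrac1z]\cong\hat M'[\tfrac1z]$ compatibly with $\tau$. The essential observation is that over the localization $R\dbl z\dbr_{(z-\zeta)}$ — a discrete valuation ring with uniformizer $z-\zeta$ — the isogeny $f\colon\hat M\to\hat M'$ becomes an isomorphism, because its cokernel is a finitely generated torsion $R\dbl z\dbr$-module killed by some power of $z$ (isogenies have cokernel supported at $z=0$), and $z$ is a unit in $R\dbl z\dbr_{(z-\zeta)}$ since $\zeta\in\Fm_R$ forces $z\not\equiv 0\pmod{z-\zeta}$ after... hmm, I need to be careful: $z-\zeta$ and $z$ generate the unit ideal in $R\dbl z\dbr$? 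No — but $z$ is a unit in the localization at the prime $(z-\zeta)$ precisely because $z\notin(z-\zeta)R\dbl z\dbr$. So after this localization, $f$ is an isomorphism identifying $(\hat M,\tau_{\hat M})$ with $(\hat M',\tau_{\hat M'})$, and hence the inclusions $\wedge^i\tau_{\hat M}(\wedge^i\hat\sigma^*\hat M)\subset(z-\zeta)^{N_i}\wedge^i\hat M$ transfer verbatim, giving the desired bounds \emph{after localization at $(z-\zeta)$}.

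Next I would argue that these bounds, being statements about containment of $R\dbl z\dbr$-submodules of a free module and about orders of vanishing at $z=\zeta$, can be checked after localization at $(z-\zeta)$ together with the normalization $\det\tau_{\hat M'}\in(z-\zeta)^{d'}R\dbl z\dbr\mal$ from Lemma~\ref{LemmaLocSht}\ref{LemmaLocSht_A}. Concretely: Lemma~\ref{LemmaLocSht}\ref{LemmaLocSht_A} gives $d'\in\BZ$ with $\det\tau_{\hat M'}\in(z-\zeta)^{d'}R\dbl z\dbr\mal$, and since $\tau_{\hat M}$, $\tau_{\hat M'}$ agree under the quasi-isogeny over $R\dbl z\dbr[\tfrac1z]$ we get $d'=d$. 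Then for the first assertion, the inclusions $\wedge^i\tau_{\hat M'}(\wedge^i\hat\sigma^*\hat M')\subset\wedge^i\hat M'$ (for $i<r$) and $\wedge^r\tau_{\hat M'}(\wedge^r\hat\sigma^*\hat M')=(z-\zeta)^d\wedge^r\hat M'$ need to be verified: the top exterior power identity is exactly the statement $\det\tau_{\hat M'}\in(z-\zeta)^dR\dbl z\dbr\mal$, which we have; and the inner inclusions $\wedge^i\tau_{\hat M'}(\wedge^i\hat\sigma^*\hat M')\subset\wedge^i\hat M'$ — i.e.\ effectivity of $\wedge^i\ulHM'$ — hold iff they hold after localizing at every height-one prime of $R\dbl z\dbr$, and the only relevant one is $(z-\zeta)$ (at all other primes $\det\tau$ is a unit, forcing $\tau$ itself to be an isomorphism there, so $\wedge^i\tau$ is too), where we already established the transfer. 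The analogous localization argument handles $(z-\zeta)^a\hat M'\subset\tau_{\hat M'}(\hat\sigma^*\hat M')\subset(z-\zeta)^b\hat M'$.

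The \textbf{main obstacle} is the reflexivity/local-detection step: one must justify that containments of sub-$R\dbl z\dbr$-modules of a finite free $R\dbl z\dbr$-module can be tested after localizing at the single prime $(z-\zeta)$ together with the generic point $R\dbl z\dbr[\tfrac1z]$. This is not automatic for $R$ a general (possibly non-discrete, non-noetherian) valuation ring, since $R\dbl z\dbr$ need not be noetherian. The way around it is to observe that $\tau_{\hat M'}$ and its exterior powers are, by Lemma~\ref{LemmaLocSht}, invertible over $R\dbl z\dbr[\tfrac1{z-\zeta}]$ and over $R\dbl z\dbr[\tfrac1z]$, so each $\wedge^i\tau_{\hat M'}$ is automatically an isomorphism over $R\dbl z\dbr[\tfrac1{z(z-\zeta)}]$; hence the only failure of an inclusion like $\wedge^i\tau_{\hat M'}(\wedge^i\hat\sigma^*\hat M')\subset(z-\zeta)^{N_i}\wedge^i\hat M'$ can occur "at $z=\zeta$", and using that $R\dbl z\dbr/(z-\zeta)^n\cong R$ via $z\mapsto\zeta$ (so these quotients are honest $R$-modules with no hidden torsion), the inclusion over $R\dbl z\dbr$ is equivalent to its image in $\hat M'/(z-\zeta)^{N}\hat M'$ vanishing appropriately for all $N$, which is detected by the localization at $(z-\zeta)$. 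Once this bookkeeping is in place, everything reduces to Lemma~\ref{LemmaLocSht} plus the elementary transfer of inclusions across the isomorphism $f$ over $R\dbl z\dbr_{(z-\zeta)}$, and the proof concludes. Alternatively, and more in the spirit of the paper, one may simply cite \cite[Proposition~2.1.3]{HartlPSp} for the discrete-valuation-ring case and reduce to it by base change $R\to$ a valuation ring of rank one dominating $(z-\zeta)$ in $R\dbl z\dbr$; but the direct argument above avoids invoking extra base changes.
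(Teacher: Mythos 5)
Your proof is at bottom the paper's own argument, repackaged in localization language. The paper writes the quasi-isogeny as a matrix $F\in\GL_r\bigl(R\dbl z\dbr[\tfrac{1}{z}]\bigr)$, reads off from $T'=F\cdot T\cdot\hat\sigma^*(F)^{-1}$ that the relevant coefficients (of $(z-\zeta)^{-b}T'$ and $(z-\zeta)^a(T')^{-1}$, resp.\ of $(z-\zeta)^{-d}\det T'$ and its inverse) lie in $R\dbl z\dbr[\tfrac{1}{z}]\cap R\dbl z\dbr[\tfrac{1}{z-\zeta}]$, and shows this intersection equals $R\dbl z\dbr$ because $(z-\zeta)$ is a prime ideal not containing $z$ --- which is exactly the mechanism your ``way around it'' paragraph lands on, so I would state that intersection identity up front and skip the detour. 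Two intermediate assertions in your write-up are false as stated and should be removed: first, $\tau_{\hat M'}$ is \emph{not} invertible over $R\dbl z\dbr[\tfrac{1}{z}]$ (the Carlitz local shtuka has $\tau_{\hat M}=z-\zeta$, and $z-\zeta$ is not a unit there since its image $\zeta^n$ in $R\dbl z\dbr/(z-\zeta)\cong R$ never vanishes); what you actually have, via the conjugation formula, is only that the coefficients of $(z-\zeta)^{-N_i}\wedge^i\tau_{\hat M'}$ lie in $R\dbl z\dbr[\tfrac{1}{z}]$, and that is all you need. Second, $R\dbl z\dbr/(z-\zeta)^n$ is free of rank $n$ over $R$, not isomorphic to $R$, for $n>1$. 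Finally, the appeal to checking containments at all height-one primes of the (generally non-noetherian) ring $R\dbl z\dbr$ is indeed invalid, as you note, and also unnecessary: once the coefficients are known to lie in both $R\dbl z\dbr[\tfrac{1}{z}]$ and $R\dbl z\dbr[\tfrac{1}{z-\zeta}]$, the cross-multiplication argument $(z-\zeta)^t f=z^s g$ with $(z-\zeta)$ prime and $z\notin(z-\zeta)$ finishes the proof directly.
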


\begin{proof}
Choosing bases of $M$ and $M'$ we write $\tau_M$ and $\tau_{M'}$ as matrices $T,T'\in\GL_r(R\dbl z\dbr[\tfrac{1}{z-\zeta}])$ and the quasi-isogeny $f\colon\ulM[\tfrac{1}{z}]\isoto\ulM'[\tfrac{1}{z}]$ as a matrix $F\in\GL_r(R\dbl z\dbr[\tfrac{1}{z}])$ satisfying $T'=F\!\cdot\!T\!\cdot\!\hat\sigma^*(F)^{-1}$. Depending on the assumption on $\ulM$, the matrix coefficients of $T'$, $(z-\zeta)^{-d}\det(T')$ and $(z-\zeta)^d\det(T')^{-1}$, respectively $(z-\zeta)^{-b}T'$ and $(z-\zeta)^a(T')^{-1}$ lie in $R\dbl z\dbr[\tfrac{1}{z}]$. Since they also lie in $R\dbl z\dbr[\tfrac{1}{z-\zeta}]$, we can write them as $f/z^s=g/(z-\zeta)^t$. The term $(z-\zeta)^tf=z^s g$ lies in the prime ideal $(z-\zeta)\subset R\dbl z\dbr$, but $z$ does not. Therefore $g$ is divisible by $(z-\zeta)^t$ and all the matrix coefficients $g/(z-\zeta)^t$ lie in $R\dbl z\dbr$ as desired.
\end{proof}

\begin{remark}\label{RemRelationBetweenTheTwoS}
If we are considering a local shtuka $\ulHM$ over $\Spec R$ for a valuation ring $R$ as in Notation~\ref{Notation}, we obtain for all $i\in\BN$ a local shtuka $\ulHM^{(i)}:=\ulHM\otimes_R R/(\zeta^i)$ over $\Spec R/(\zeta^i)$. The $\ulHM^{(i)}$ form a \emph{local shtuka over $\Spf R$} by which we mean a projective system $(\ulHM^{(i)})_{i\in\BN}$ of local shtukas $\ulHM^{(i)}$ over $\Spec R/(\zeta^i)$ together with isomorphisms $\ulHM^{(i+1)}\otimes_{R/(\zeta^{i+1})}R/(\zeta^i)\isoto\ulHM^{(i)}$. By \cite[Proposition~3.16 and \S\,4]{HV1} the functor $\ulHM\mapsto(\ulHM^{(i)})_{i\in\BN}$ is an equivalence between local shtukas \emph{bounded by $(\mu_1,\ldots,\mu_r)$} over $\Spec R$ and over $\Spf R$. In that sense the theory of local shtukas over $\Spec R$ is subsumed under the theory of \emph{bounded} local shtukas over schemes $S\in\Nilp_{A_\epsilon}$.
\end{remark}

\begin{example}\label{ExCarlitz2}
We discuss the case of the Carlitz module \cite{Carlitz35}. We keep the notation from Example~\ref{ExAMotive} and set $A=\BF_q[t]$. Let $\BF_q(\theta)$ be the rational function field in the variable $\theta$ and let $\gamma\colon A\to\BF_q(\theta)$ be given by $\gamma(t)=\theta$. The \emph{Carlitz motive over $\BF_q(\theta)$} is the $A$-motive $\ulM=\bigl(\BF_q(\theta)[t],t-\theta\bigr)$. 

Now let $\epsilon=(z)\subset A$ be a maximal ideal generated by a monic prime element $z=z(t)\in \BF_q[t]$. Then $\BF_\epsilon=A/(z)$ and $A_\epsilon$ is canonically isomorphic to $\BF_\epsilon\dbl z\dbr$. Let $R\supset\BF_\epsilon\dbl\zeta\dbr$ be a valuation ring as in Notation~\ref{Notation} and let $\theta=\gamma(t)\in R$. The Carlitz motive has a model over $R$ with good reduction given by the $A$-motive $\ulM=(R[t],t-\theta)$ over $R$.

If $\deg_t z(t)=1$, that is $z(t)=t-a$ for $a\in\BF_q$, then $\BF_\epsilon=\BF_q$, $\zeta=\theta-a$, and $z-\zeta=t-\theta$. So $\ulHM_\epsilon(\ulM)=(R\dbl z\dbr,z-\zeta)$.

If $\deg_t z(t)=f>1$, then $\ulHM_\epsilon(\ulM)=\bigl(R\dbl z\dbr,(t-\theta)(t-\theta^q)\cdots(t-\theta^{q^{f-1}})\bigr)$. Here the product $(t-\theta)(t-\theta^q)\cdots(t-\theta^{q^{f-1}})=(z-\zeta)u$ for a unit $u\in \BF_\epsilon\dbl\zeta\dbr\dbl z\dbr\mal$, because $\tau_M(\sigma^*M)=(t-\theta)M$ implies that $\ulHM_\epsilon(\ulM)$ is effective and the $d$ from Lemma~\ref{LemmaLocSht} is $1$. In order to get rid of $u$ we denote the image of $t$ in $\BF_\epsilon$ by $\lambda$. Then $\BF_\epsilon=\BF_q(\lambda)$ and $z(t)$ equals the minimal polynomial $(t-\lambda)\cdots(t-\lambda^{q^{f-1}})$ of $\lambda$ over $\BF_q$. Moreover, $t\equiv\lambda\mod zA_\epsilon$ and $\theta\equiv\lambda\mod\zeta\BF_\epsilon\dbl\zeta\dbr$. We compute in $\BF_\epsilon\dbl\zeta\dbr\dbl z\dbr/(\zeta)$
\[
z(t)\;=\;(t-\lambda)\cdots(t-\lambda^{q^{f-1}})\;\equiv\;(t-\theta)\cdots(t-\theta^{q^{f-1}})\;\equiv\;(z-\zeta)u\;\equiv\;zu\;\mod\zeta\,.
\]
Since $z$ is a non-zero-divisor in $\BF_\epsilon\dbl\zeta\dbr\dbl z\dbr/(\zeta)$ it follows that $u\equiv1\mod\zeta\,\BF_\epsilon\dbl\zeta\dbr\dbl z\dbr$. We write $u=1+\zeta u'$ and observe that the product 
\[
w\;:=\;\prod_{n=0}^\infty\hat\sigma^n(u)\;=\;\prod_{n=0}^\infty\hat\sigma^n(1+\zeta u')\;=\;\prod_{n=0}^\infty\bigl(1+\zeta^{\hat q^n}\hat\sigma^n(u')\bigr)
\]
converges in $\BF_\epsilon\dbl\zeta\dbr\dbl z\dbr\mal$ because $\BF_\epsilon\dbl\zeta\dbr\dbl z\dbr$ is $\zeta$-adically complete. It satisfies $w=u\cdot\hat\sigma(w)$ and so multiplication with $w$ defines a canonical isomorphism $(R\dbl z\dbr,z-\zeta)\isoto\ulHM_\epsilon(\ulM)$.

We conclude that $\ulHM_\epsilon(\ulM)=(R\dbl z\dbr,z-\zeta)$, regardless of $\deg_t z(t)$.
\end{example}

%
%

\section{Divisible local Anderson modules}\label{SectDivLocAM}
\setcounter{equation}{0}

Let $S\in\Nilp_{A_\epsilon}$ and let $\ulHM=(\hat M,\tau_{\hat M})$ be an effective local shtuka over $S$. Set $\ulHM_n:=(\hat M_n,\tau_{\hat M_n}):=(\hat M/z^n\hat M,\tau_{\hat M}\mod z^n)$. For $m\in\hat M_n$ set $\hat\sigma_{\!\hat M}^*m:=m\otimes 1\in \hat M_n\otimes_{\CO_S\dbl z\dbr,\,\hat\sigma}\CO_S\dbl z\dbr=:\hat\sigma^*\hat M_n$. Drinfeld~\cite[\S\,2]{Drinfeld87} associates with $\ulHM_n$ a group scheme
\begin{eqnarray}\label{EqDrinfeldGr}
\Dr_{\hat q}(\ulHM_n) & := & \Spec\Bigl(\Sym_{\CO_S}(\hat M_n)\big/ \bigl(m^{\otimes\hat q}-\tau_{\hat M}(\hat\sigma_{\!\hat M}^*m)\colon m\in\hat M_n\bigr)\Bigr)\\[2mm]
& = & \Spec \CO_S[m_1,\ldots,m_{nr}]\big/\bigr(m_i^{\hat q}-\tau_{\hat M}(\hat\sigma_{\!\hat M}^*m_i)\colon 1\le i\le nr\bigr),
\end{eqnarray}
if $\hat M_n=\bigoplus_{i=1}^{nr} \CO_S\cdot m_i$ locally on $S$. It has the following properties
\begin{itemize}
\item $\Dr_{\hat q}(\ulHM_n)\subset\Spec\Sym_{\CO_S}(\hat M_n)$ is a finite locally free subgroup scheme over $S$ of order $\hat q^{nr}$, that is, the $\CO_S$-algebra $\CO_{\Dr_{\hat q}(\ulHM_n)}$ is a finite locally free $\CO_S$-module of rank $\hat q^{nr}$. Note that locally on $S$ we have $\hat M_n=\bigoplus_{i=1}^{nr} \CO_S\cdot m_i$ and $\Spec\Sym_{\CO_S}(\hat M_n)\cong\Spec \CO_S[m_1,\ldots,m_{nr}]=\BG_{a,S}^{nr}$.
\item $\Dr_{\hat q}(\ulHM_n)$ inherits from $\ulHM_n$ an action of $A_\epsilon/(z^n)=\BF_\epsilon[z]/(z^n)$.
\item The Verschiebung map is zero on $\Dr_{\hat q}(\ulHM_n)$.
\item There is a canonical isomorphism between $\coker\tau_{\hat M_n}=\hat M/\bigl(z^n\hat M+\tau_{\hat M}(\hat\sigma^*\hat M)\bigr)$ and the co-Lie module $\omega_{\Dr_{\hat q}(\ulHM_n)}:=e^*\Omega^1_{\Dr_{\hat q}(\ulHM_n)/S}$ where $e\colon S\to\Dr_{\hat q}(\ulHM_n)$ is the zero section. See \cite[Theorem~5.2(f)]{HartlSingh} for a proof.
\item $\Dr_{\hat q}(\ulHM_n)$ is a strict $\BF_\epsilon$-scheme in the sense of Faltings~\cite{Faltings02} and Abrashkin~\cite{Abrashkin}. See \cite[Theorem~2]{Abrashkin} for a proof, or \cite[\S\,5]{HartlSingh}.
\end{itemize}
Conversely we recover $\hat M_n$ as the $A_\epsilon/(z^n)$-module of $\BF_\epsilon$-linear morphisms of $S$-group schemes\linebreak $\Hom_{S\text{-groups},\BF_\epsilon\text{-lin}}\bigl(\Dr_{\hat q}(\ulHM_n)\,,\,\BG_{a,S}\bigr)$ by \cite[Theorem~2]{Abrashkin} or \cite[Theorem~5.2]{HartlSingh}. Moreover the structure as $\CO_S$-module is given via the action of $\CO_S$ on the additive group scheme $\BG_{a,S}$ and $\tau_{\hat M_n}$ corresponds to the map $\hat\tau\colon m\mapsto F_{\hat q,\BG_{a,S}/S}\circ m$, where $F_{\hat q,\BG_{a,S}/S}$ is the relative $\hat q$-Frobenius of $\BG_{a,S}$ over $S$. More precisely, since $\hat\tau(bm)=b^{\hat q}\hat\tau(m)$ for $b\in\CO_S$, the map $\hat\tau$ is $\hat\sigma$-semilinear and satisfies $\tau_{\hat M_n}(\hat\sigma_{\hat M_n}^*m)=\hat\tau(m)=F_{\hat q,\BG_{a,S}/S}\circ m$.

The canonical epimorphisms $\ulHM_{n+1}\onto\ulHM_n$ induce closed immersions $i_n\colon\Dr_{\hat q}(\ulHM_n)\into\Dr_{\hat q}(\ulHM_{n+1})$. The inductive limit $\Dr_{\hat q}(\ulHM):=\dirlim \Dr_{\hat q}(\ulHM_n)$ in the category of sheaves on the big \fppf-site of $S$ is a sheaf of $\BF_\epsilon\dbl z\dbr$-modules that satisfies the following

\begin{definition}\label{DefDivLocAndMod}
A \emph{$z$-divisible local Anderson module over $S$} is a sheaf of $\BF_\epsilon\dbl z\dbr$-modules $G$ on the big \fppf-site of $S$ such that
\begin{enumerate}
\item \label{DefZDivGpAxiom1}
$G$ is \emph{$z$-torsion}, that is $G = \dirlim G[z^n]$, where $G[z^n]:=\ker(z^n\colon G\to G)$,
\item \label{DefZDivGpAxiom2}
$G$ is \emph{$z$-divisible}, that is $z\colon G \to G$ is an epimorphism,
\item \label{DefZDivGpAxiom3}
For every $n$ the $\BF_\epsilon$-module $G[z^n]$ is representable by a finite, locally free, strict $\BF_\epsilon$-module scheme over $S$ in the sense of Faltings~\cite{Faltings02} and Abrashkin~\cite{Abrashkin}, and
\item \label{DefZDivGpAxiom4}
locally on $S$ there exist an integer $d \in \BZ_{\geq 0}$, such that
$(z-\zeta)^d=0$ on $\omega_G$ where $\omega_G := \invlim\omega_{G[z^n]}$ and $\omega_{G[z^n]}=e^*\Omega^1_{G[z^n]/S}$ is the pullback under the zero section $e\colon S\to G[z^n]$.
\end{enumerate}
A \emph{morphism of $z$-divisible local Anderson modules over $S$} is a morphism of \fppf-sheaves of $\BF_\epsilon\dbl z\dbr$-modules. The category of divisible local Anderson modules is $\BF_\epsilon\dbl z\dbr$-linear.
It is shown in \cite[Lemma~8.2 and Theorem~10.8]{HartlSingh} that $\omega_G$ is a finite locally free $\CO_S$-module and we define the \emph{dimension of $G$} as $\rk\omega_G$\,.
\end{definition}

Note that in \cite[Definition~6.2]{HartlAbSh} and W.~Kim \cite[Definition~7.3.1]{Kim} different definitions of $z$-divisible local Anderson modules were given. Unfortunately, the latter definitions are both wrong, because the strictness assumption in \ref{DefZDivGpAxiom3} is missing.

\begin{remark}\label{RemEquiv}
By \cite[Theorem~8.3]{HartlSingh} the functor $\ulHM\mapsto\Dr_{\hat q}(\ulHM)$ is an anti-equivalence between the category of effective local $\hat\sigma$-shtukas over $S$ and the category of $z$-divisible local Anderson modules over $S$. Moreover, it is $A_\epsilon$-linear and exact. Various properties are preserved under this anti-equivalence. More precisely let $\ulHM$ be an effective local $\hat\sigma$-shtuka over $S$ and let $G=\Dr_{\hat q}(\ulHM)$.

Then the $\CO_S\dbl z\dbr$-modules $\hat M/\tau_{\hat M}(\hat\sigma^*\hat M)$ and $\omega_{G}$ are canonically isomorphic. In particular, $\ulHM$ is \'etale, if and only if $\omega_{G}=(0)$, if and only if all $G[z^n]$ are \'etale.

The map $\tau_{\hat M}$ is topologically nilpotent, in the sense that locally on $S$ there is an integer $n$ such that $\im(\tau_{\hat M}^n\colon\hat\sigma^{n*}\hat M\to\hat M)\subset z\hat M$, if and only if $G$ is a formal Lie group.

If $\ulHM$ is bounded by $(d,0,\ldots,0)$ we say that $G$ is \emph{bounded by $d$}. In this case $(z-\zeta)^d\cdot\omega_G=(0)$ globally on $S$ in axiom~\ref{DefZDivGpAxiom4} and $\dim G=d$ as can be seen from the elementary divisor theorem applied to the pullback $s^*\ulHM$ to a closed point $s\colon\Spec\kappa(s)\to S$, where $\kappa(s)$ is the residue field at $s$.
\end{remark}

\begin{remark}\label{RemTopolNilpotent}
If $\ulHM$ is a local shtuka over a valuation ring $R$ as in Notation~\ref{Notation} and we view it as a projective system of local shtukas $\ulHM^{(i)}:=(\hat M^{(i)},\tau_{\hat M^{(i)}}):=\ulHM\otimes_R R/(\zeta^i)$ over $\Spec R/(\zeta^i)$ as in Remark~\ref{RemRelationBetweenTheTwoS}, then the following are equivalent.
\begin{enumerate}
\item \label{RemTopolNilpotent_A}
The map $\tau_{\hat M}$ is \emph{topologically nilpotent} in the sense that $\tau_{\hat M}^n(\sigma^{n*}\hat M)\subset \zeta\hat M+z\hat M$ for $n\gg0$.
\item \label{RemTopolNilpotent_B}
For all $i\in\BN$ the map $\tau_{\hat M^{(i)}}$ is topologically nilpotent, that is $\tau_{\hat M^{(i)}}^n(\hat\sigma^{n*}\hat M^{(i)})\subset z\hat M^{(i)}$ for $n\gg0$.
\item \label{RemTopolNilpotent_C}
There exists an $i\in\BN_{>0}$ for which the map $\tau_{\hat M^{(i)}}$ is topologically nilpotent.
\item \label{RemTopolNilpotent_D}
$\Dr_{\hat q}(\ulHM):=\dirlim[i]\Dr_{\hat q}(\ulHM^{(i)})$ is a formal Lie group over $\Spf R$.
\end{enumerate}
Indeed, \ref{RemTopolNilpotent_A} yields $\tau_{\hat M}^{jn}(\sigma^{jn*}\hat M)\subset \zeta^{1+\hat q^n+\ldots+\hat q^{(j-1)n}}\hat M+z\hat M$ for all $j\in\BN$. This implies \ref{RemTopolNilpotent_B}, from which \ref{RemTopolNilpotent_C} follows trivially. Conversely if \ref{RemTopolNilpotent_C} holds for some $i$ then $\tau_{\hat M}^n(\sigma^{n*}\hat M)\subset \zeta^i\hat M+z\hat M$ for $n\gg0$, whence \ref{RemTopolNilpotent_A}. Finally, by Remark~\ref{RemEquiv} assertion \ref{RemTopolNilpotent_B} implies \ref{RemTopolNilpotent_D}, and \ref{RemTopolNilpotent_D} implies that every $\Dr_{\hat q}(\ulHM^{(i)})$ is a formal Lie group, whence \ref{RemTopolNilpotent_B}.
\end{remark}

\begin{example}\label{ExAndModule}
Let $S=\Spec B\in\Nilp_{A_\epsilon}$ be affine and let $d$ and $r$ be positive integers. An \emph{abelian Anderson $A$-module of rank $r$ and dimension $d$ over $S$} is a pair $\ulE=(E,\phi)$ consisting of a smooth affine group scheme $E$ over $S$ of relative dimension $d$, and a ring homomorphism $\phi\colon A\to\End_{S\text{-groups}}(E),\,a\mapsto\phi_a$ such that
\begin{enumerate}
\item \label{DefAndModule_A}
there is a faithfully flat morphism $S'\to S$ for which $E\times_S S'\cong\BG_{a,S'}^d$ as $\BF_q$-module schemes,
\item \label{DefAndModule_B}
$\bigl(\phi_a-\gamma(a)\bigr)^d=0$ on $\omega_E:=e^*\Omega^1_{E/S}$ for all $a\in A$, where $e\colon S\to E$ is the zero section,
\item \label{DefAndModule_C}
the set $M:=M(\ulE):=\Hom_{S\text{-groups},\BF_q\text{-lin}}(E,\BG_{a,S})$ of $\BF_q$-equivariant homomorphisms of $S$-group schemes is a locally free module over $A_B:=A\otimes_{\BF_q}B$ of rank $r$ under the action given on $m\in M$ by
\[
\begin{array}{rll}
A\ni a\colon & M\longto M, & m\mapsto m\circ \phi_a\\[2mm]
B\ni b\colon & M\longto M, & m\mapsto b\circ m
\end{array}
\]
\end{enumerate}
In addition we consider the map $\tau\colon m\mapsto F_{q,\BG_{a,S}/S}\!\circ\, m$ on $m\in M$, where $F_{q,\BG_{a,S}/S}$ is the relative $q$-Frobenius of $\BG_{a,S}$ over $S$. Since $\tau(bm)=b^q\tau(m)$ the map $\tau$ is $\sigma$-semilinear and induces an $A_B$-linear map $\tau_M\colon\sigma^*M\to M$, which makes $\ulM(\ulE):=\bigl(M(\ulE),\tau_M)$ into an effective $A$-motive over $S$ in the sense of Example~\ref{ExAMotive}. The functor $\ulE\mapsto\ulM(\ulE)$ is fully faithful and its essential image is described in \cite[Theorem~3.5]{HartlIsog} generalizing Anderson's description~\cite[Theorem~1]{Anderson86}.

Now let $\ulE=(E,\phi)$ be an abelian Anderson $A$-module over $S$ and let $\ulHM:=\ulHM_\epsilon(\ulM(\ulE))$ be its associated effective local $\hat\sigma$-shtuka at $\epsilon$; see Example~\ref{ExAMotive}. Let $n\in\BN$ and let $\epsilon^n=(a_1,\ldots,a_s)\subset A$. Then 
\[
\ulE[\epsilon^n]\;:=\;\ker\bigl(\phi_{a_1,\ldots,a_s}:=(\phi_{a_1},\ldots,\phi_{a_s})\colon E \longto E^s\bigr)
\]
is called the \emph{$\epsilon^n$-torsion submodule of $\ulE$}. It is an $A/\epsilon^n$-module via $A/\epsilon^n\to\End_S(\ulE[\epsilon^n]),\,\bar a\mapsto\phi_a$ and independent of the set of generators of $\epsilon^n$; see \cite[Lemma~6.2]{HartlIsog}. Moreover, by \cite[Theorem~7.6]{HartlIsog} it is a finite $S$-group scheme of finite presentation and a strict $\BF_\epsilon$-module scheme and there are canonical $A/\epsilon^n$-equivariant isomorphisms of finite locally free $S$-group schemes
\begin{eqnarray*}
\Dr_{\hat q}(\ulHM/\epsilon^n\ulHM) & \isoto & \ulE[\epsilon^n]\qquad\text{and}\\[2mm]
\ulHM/\epsilon^n\ulHM & \isoto & \Hom_{S\text{-groups},\BF_\epsilon\text{-lin}}\bigl(\ulE[\epsilon^n]\,,\,\BG_{a,S}\bigr)
\end{eqnarray*}
of torsion local shtukas in the sense of Definition~\ref{DefTorLocSht} below. In particular, $\ulE[\epsilon^\infty]:=\dirlim\ulE[\epsilon^n]=\Dr_{\hat q}(\ulHM)$ is a $z$-divisible local Anderson module over $S$.

\end{example}

\begin{example}\label{ExCarlitz3}
We continue with Example~\ref{ExCarlitz2}. Let $A=\BF_q[t]$ and let $\epsilon=(z)\subset A$ be a maximal ideal generated by a monic prime element $z=z(t)\in \BF_q[t]$. Let $R=\BF_\epsilon\dbl\zeta\dbr$ and let $\ulCC:=(\BG_{a,R},\phi)$ with $\phi_t=\theta+F_{q,\BG_{a,R}}$ be the \emph{Carlitz module} over $R$. That is, if $\BG_{a,R}=\Spec R[x]$ then $\phi_t^*(x)=\theta\,x+x^q$. Then $\ulM(\ulCC)=(R[t],t-\theta)$ is the Carlitz $\BF_q[t]$-motive over $R$ from Example~\ref{ExCarlitz2} with associated local shtuka $\ulHM:=\ulHM_\epsilon(\ulM(\ulCC))\cong(R\dbl z\dbr, z-\zeta)$ at $\epsilon$. The $\epsilon^n$-torsion submodule of $\ulCC$ is $\ulCC[\epsilon^n]:=\ker(\phi_{z^n})=\Spec R[x]/(\phi_{z^n}^*(x))$. We can compute $\ulHM/z^n\ulHM=\bigoplus_{i=1}^n R\cdot m_i$ with $m_i=z^{i-1}$ and $\tau_{\hat M}(\hat\sigma_{\!\hat M}^*m_i)=(z-\zeta)m_i=m_{i+1}-\zeta m_i$ for $1\le i<n$ and $\tau_{\hat M}(\hat\sigma_{\!\hat M}^*m_n)=-\zeta m_n$. By Example~\ref{ExAndModule} this implies
\[
\ulCC[\epsilon^n]\;\cong\;\Dr_{\hat q}(\ulHM/\epsilon^n\ulHM)\;\cong\;\Spec R[m_1,\ldots,m_n]/(m_n^{\hat q}+\zeta m_n\,,\,m_i^{\hat q}-m_{i+1}+\zeta m_i\colon 1\le i<n)\,. 
\]
On it $z$ acts via $\phi_z^*(m_i)=m_{i+1}=\zeta m_i+m_i^{\hat q}$ for $1\le i<n$ and $\phi_z^*(m_n)=0$.

Since $\tau_{\hat M}(\sigma^*\hat M)=(z-\zeta)\hat M\subset\zeta\hat M+z\hat M$, Remark~\ref{RemTopolNilpotent} implies that $G:=\ulCC[\epsilon^\infty]\cong\Dr_{\hat q}(\ulHM)$ is a formal Lie group over $\Spf R$. Its dimension is $1$ because $\omega_G\cong\hat M/\tau_{\hat M}(\hat\sigma^*\hat M)=R$. Setting $x:=m_1$ it follows that $G\cong\Spf R\dbl x\dbr$ is the formal additive group scheme with the action of $A_\epsilon=\BF_\epsilon\dbl z\dbr$ given by $\phi_z^*(x)=\zeta\,x+x^{\hat q}$ and $\phi_a^*(x)=a\,x$ for $a\in\BF_\epsilon$. So we can alternatively describe $\ulCC[\epsilon^n]$ as $\Spec R[x]/(\phi_{z^n}^*(x))$ with the latter expression for $\phi_z^*$.
\end{example}

%
%

\section{Tate modules}\label{SectGalRep}
\setcounter{equation}{0}

With a local shtuka one can associate a Galois representation. More precisely, let $\ulHM=(\hat M,\tau_{\hat M})$ be a local shtuka over $\Spec R$ for a valuation ring $R$ as in Notation~\ref{Notation}. Then $\tau_{\hat M}$ induces an isomorphism $\tau_{\hat M}\colon\hat\sigma^*\hat M\otimes_{R\dbl z\dbr}K\dbl z\dbr\isoto\hat M\otimes_{R\dbl z\dbr}K\dbl z\dbr$, because $z-\zeta\in K\dbl z\dbr\mal$. So one could say that $\ulHM\otimes_{R\dbl z\dbr}K\dbl z\dbr$ is an ``\'etale local shtuka over $K$''. 

\begin{definition}\label{DefTateMod}
With $\ulHM$ as above one associates the (\emph{dual}) \emph{Tate module} 
\[
\check T_\epsilon\ulHM\;:=\;(\hat M\otimes_{R\dbl z\dbr}K^\sep\dbl z\dbr)^{\hat\tau}\;:=\;\bigl\{m\in\hat M\otimes_{R\dbl z\dbr}K^\sep\dbl z\dbr\colon \tau_{\hat M}(\hat\sigma_{\!\hat M}^*m)=m\bigr\}
\]
and the \emph{rational} (\emph{dual}) \emph{Tate module} 
\[
\check V_\epsilon\ulHM\;:=\;\bigl\{m\in\hat M\otimes_{R\dbl z\dbr}K^\sep\dpl z\dpr\colon \tau_{\hat M}(\hat\sigma_{\!\hat M}^*m)=m\bigr\}\;=\;\check T_\epsilon\ulHM\otimes_{A_\epsilon}Q_\epsilon\,.
\]
One also sometimes writes $\Koh^1_\epsilon(\ulHM,A_\epsilon)=\check T_\epsilon\ulHM$ and $\Koh^1_\epsilon(\ulHM,Q_\epsilon)=\check V_\epsilon\ulHM$ and calls this the \emph{$\epsilon$-adic realization of $\ulHM$}. By Proposition~\ref{PropTateMod} below, this defines a covariant functor $\check T_\epsilon\colon\ulHM\mapsto \check T_\epsilon\ulHM$ from the category of local shtukas over $R$ to the category $\Rep_{A_\epsilon}\Gal(K^\sep/K)$ of continuous representations of $\Gal(K^\sep/K)$ on finite free $A_\epsilon$-modules and a covariant functor $\check V_\epsilon\colon\ulHM\mapsto \check V_\epsilon\ulHM$ from the category of local shtukas over $R$ with quasi-morphisms to the category $\Rep_{Q_\epsilon}\Gal(K^\sep/K)$ of continuous representations of $\Gal(K^\sep/K)$ on finite dimensional $Q_\epsilon$-vector spaces. For $n\in\BN$ we also define
\[
(\ulHM/{z^n \ulHM})^{\hat\tau}(K^\sep)\;:=\;\bigl\{m\in\hat M\otimes_{R\dbl z\dbr}K^\sep\dbl z\dbr/(z^n)\colon \tau_{\hat M}(\hat\sigma_{\!\hat M}^*m)=m\bigr\}\;=\;\check T_\epsilon\ulHM/z^n\check T_\epsilon\ulHM\,,
\]
which is a free $A_\epsilon/(z^n)$-module of rank equal to the rank of $\ulHM$ with $\check T_\epsilon\ulHM=\invlim(\ulHM/{z^n \ulHM})^{\hat\tau}(K^\sep)$.
\end{definition}

\begin{proposition}\label{PropTateMod}
$\check T_\epsilon\ulHM$ is a free $A_\epsilon$-module of rank equal to $\rk\ulHM$ and $\check V_\epsilon\ulHM$ is a $Q_\epsilon$-vector space of dimension equal to $\rk\ulHM$. Both carry a continuous action of $\Gal(K^\sep/K)$. Moreover the inclusion $\check T_\epsilon\ulHM\subset\hat M\otimes_{R\dbl z\dbr}K^\sep\dbl z\dbr$ defines a canonical isomorphism of $K^\sep\dbl z\dbr$-modules 
\begin{equation}\label{EqTateModIsom}
\check T_\epsilon\ulHM\otimes_{A_\epsilon}K^\sep\dbl z\dbr\isoto\ulHM\otimes_{R\dbl z\dbr}K^\sep\dbl z\dbr
\end{equation}
which is functorial in $\ulHM$ and $\Gal(K^\sep/K)$- and $\hat\tau$-equivariant, where on the left module $\Gal(K^\sep/K)$-acts on both factors and $\hat\tau$ is $\id\otimes\hat\sigma$, and on the right module $\Gal(K^\sep/K)$ acts only on $K^\sep\dbl z\dbr$ and $\hat\tau$ is $(\tau_{\hat M}\circ\hat\sigma_{\!\hat M}^*)\otimes \hat\sigma$. In particular one can recover $\ulHM\otimes_{R\dbl z\dbr}K\dbl z\dbr=\bigl(\check T_\epsilon\ulHM\otimes_{A_\epsilon}K^\sep\dbl z\dbr\bigr)^{\Gal(K^\sep/K)}$ as the Galois invariants.
\end{proposition}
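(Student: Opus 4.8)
The plan is to solve the defining equation $\hat\tau(m)=m$ directly as a power series in $z$, using Lang's theorem coefficient by coefficient, and then to read off the module structure and the comparison isomorphism; the Galois-theoretic statements are then formal. Fix an $R\dbl z\dbr$-basis of $\hat M$; since $z-\zeta\in K\dbl z\dbr\mal$, the operator $\hat\tau=\tau_{\hat M}\circ\hat\sigma_{\!\hat M}^*$ is already an isomorphism over $K\dbl z\dbr$ and is given by $m\mapsto T\hat\sigma(m)$ for some $T\in\GL_r(K\dbl z\dbr)$, so that $\check T_\epsilon\ulHM=\{m\in(K^\sep\dbl z\dbr)^r:\hat\sigma(m)=T^{-1}m\}$. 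Writing $m=\sum_{k\ge0}m_kz^k$ and $T^{-1}=\sum_{i\ge0}B_iz^i$ with $B_i\in M_r(K)$ and $B_0=(T\bmod z)^{-1}\in\GL_r(K)$, this equation is equivalent to the recursive system $m_k^{(\hat q)}=B_0m_k+c_k$ for $k\ge0$, where $(\cdot)^{(\hat q)}$ denotes the coordinatewise $\hat q$-th power (i.e.\ $\hat\sigma$ applied to coefficients) and $c_k=\sum_{j<k}B_{k-j}m_j\in(K^\sep)^r$ depends only on $m_0,\dots,m_{k-1}$, so $c_0=0$.

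First I would analyze the single equation $x^{(\hat q)}-B_0x=c$ over $K^\sep$. The additive map $x\mapsto x^{(\hat q)}-B_0x$ of $\BG_{a,K^\sep}^r$ is an isogeny which is \'etale because its linear part $-B_0$ is invertible; over the separably closed field $K^\sep$ it is therefore surjective on points, and its kernel $V$ is an $\BF_\epsilon$-vector space with $V\otimes_{\BF_\epsilon}K^\sep\isoto(K^\sep)^r$ (in particular $\dim_{\BF_\epsilon}V=r$) --- this is the classical Lang/Speiser statement for $\BG_a^r$. Consequently the recursive system always has a solution: $m_0$ is an arbitrary element of $V$ and each subsequent $m_k$ ranges over a coset of $V$. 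Hence $\check T_\epsilon\ulHM$ is nonempty; it is an $A_\epsilon=\BF_\epsilon\dbl z\dbr$-module because $\hat\sigma$ fixes $z$ and $\BF_\epsilon$, and the reduction $\check T_\epsilon\ulHM\to V,\ m\mapsto m_0$, is surjective with kernel $z\check T_\epsilon\ulHM$ (using that $\hat M\otimes_{R\dbl z\dbr}K^\sep\dbl z\dbr$ is $z$-torsion free). Lifting an $\BF_\epsilon$-basis of $V$ to $\check T_\epsilon\ulHM$ and iterating $z$-adically (completeness and $z$-torsion-freeness of $\hat M\otimes_{R\dbl z\dbr}K^\sep\dbl z\dbr$), one checks that this lift is an $A_\epsilon$-basis, so $\check T_\epsilon\ulHM$ is free of rank $r$ over $A_\epsilon$ and $\check V_\epsilon\ulHM=\check T_\epsilon\ulHM\otimes_{A_\epsilon}Q_\epsilon$ has dimension $r$ over $Q_\epsilon$. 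Finally, the matrix expressing such a basis in the chosen $K^\sep\dbl z\dbr$-basis of $\hat M\otimes_{R\dbl z\dbr}K^\sep\dbl z\dbr$ reduces modulo $z$ to an element of $\GL_r(K^\sep)$ (its columns are the $V$-basis, which spans $(K^\sep)^r$), hence lies in $\GL_r(K^\sep\dbl z\dbr)$; this says precisely that the map in~\eqref{EqTateModIsom} is an isomorphism.

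The remaining points are formal. The $\Gal(K^\sep/K)$-action on $\hat M\otimes_{R\dbl z\dbr}K^\sep\dbl z\dbr$ through the coefficients commutes with $\hat\tau$, because $\hat M$ and $\tau_{\hat M}$ are defined over $R\dbl z\dbr$; hence it preserves $\check T_\epsilon\ulHM$, and it is continuous because the recursion shows that $\check T_\epsilon\ulHM\to(\ulHM/z^n\ulHM)^{\hat\tau}(K^\sep)$ is surjective with kernel $z^n\check T_\epsilon\ulHM$, while the Galois action on the finite set $(\ulHM/z^n\ulHM)^{\hat\tau}(K^\sep)$ --- the $K^\sep$-points of a finite \'etale $K$-scheme --- factors through a finite quotient. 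By its construction \eqref{EqTateModIsom} is functorial and $\hat\tau$- and $\Gal(K^\sep/K)$-equivariant for the actions stated in the proposition, and since $(K^\sep\dbl z\dbr)^{\Gal(K^\sep/K)}=K\dbl z\dbr$, taking Galois invariants of its left-hand side recovers $\hat M\otimes_{R\dbl z\dbr}K\dbl z\dbr=\ulHM\otimes_{R\dbl z\dbr}K\dbl z\dbr$. For $\check V_\epsilon\ulHM$: any element of $\hat M\otimes_{R\dbl z\dbr}K^\sep\dpl z\dpr$ lies in $z^{-N}\bigl(\hat M\otimes_{R\dbl z\dbr}K^\sep\dbl z\dbr\bigr)$ for some $N$, and multiplication by $z^N$ preserves $\hat\tau$-invariance since $\hat\sigma(z)=z$; hence the $\hat\tau$-fixed vectors in $\hat M\otimes_{R\dbl z\dbr}K^\sep\dpl z\dpr$ are exactly $\check T_\epsilon\ulHM\otimes_{A_\epsilon}Q_\epsilon=\check V_\epsilon\ulHM$, and tensoring \eqref{EqTateModIsom} over $A_\epsilon$ with $Q_\epsilon$ yields its rational form.

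The main obstacle is the Lang/Speiser input used in the second paragraph: that over the separably closed field $K^\sep$ the \'etale isogeny $x\mapsto x^{(\hat q)}-B_0x$ of $\BG_{a,K^\sep}^r$ has exactly $\hat q^r$ rational kernel points and that these span $(K^\sep)^r$. Everything else --- the power-series recursion, passing to an $A_\epsilon$-basis, and the equivariance and continuity assertions --- is routine, and one may alternatively appeal to the analogous results in \cite{HartlPSp} or \cite{HV1}.
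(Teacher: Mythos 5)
Your proposal is correct and follows essentially the same route as the paper: both solve the fixed-point equation coefficientwise in $z$, using Lang's theorem over $K^\sep$ for the constant term and Artin--Schreier solvability for the higher terms, and then read off freeness and the isomorphism \eqref{EqTateModIsom} from the invertibility of the resulting matrix modulo $z$. The only cosmetic difference is that the paper assembles a fundamental solution matrix $U$ with $\hat\sigma(U)=T^{-1}U$ via the multiplicative Lang theorem for $\GL_r$, whereas you work vector by vector with the additive kernel of $x\mapsto x^{(\hat q)}-B_0x$ on $\BG_a^r$; these are equivalent, and your handling of continuity, equivariance and the passage to $\check V_\epsilon$ matches the paper's.
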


\begin{remark}\label{RemTateMod} 
We actually prove the stronger statement that for every $s>1/\hat q$ the isomorphism \eqref{EqTateModIsom} extends to a $\Gal(K^\sep/K)$- and $\hat\tau$-equivariant isomorphism
\begin{equation}\label{EqTateModIsomConv}
h\colon\;\check T_\epsilon\ulHM\otimes_{A_\epsilon}K^\sep\langle\tfrac{z}{\zeta^s}\rangle\;\isoto\;\ulHM\otimes_{R\dbl z\dbr}K^\sep\langle\tfrac{z}{\zeta^s}\rangle,
\end{equation}
which is functorial in $\ulHM$. Here for a field extension $K$ of $\BF_\epsilon\dpl\zeta\dpr$ and an $s\in\BR_{>0}$ we use the notation
\begin{eqnarray*}
K\langle\tfrac{z}{\zeta^s}\rangle & := & \bigl\{\,\sum_{i=0}^\infty b_iz^i\colon b_i\in K,\,|b_i|\,|\zeta|^{si}\to0\;(i\to+\infty)\,\bigr\}\,.
\end{eqnarray*}
These are subrings of $K\dbl z\dbr$ and the endomorphism $\hat\sigma\colon\sum_i b_iz^i\mapsto\sum_i b_i^{\hat q}z^i$ of $K\dbl z\dbr$ restricts to a homomorphism $\hat\sigma\colon K\langle\tfrac{z}{\zeta^s}\rangle\to K\langle\tfrac{z}{\zeta^{s\hat q}}\rangle$. Note that the $\hat\tau$-equivariance of $h$ means $h\otimes\id_{K\langle\frac{z}{\zeta^{s\hat q}}\rangle}=\tau_{\hat M}\circ\hat\sigma^*h$.
\end{remark}

\begin{proof}[Proof of Proposition~\ref{PropTateMod} and Remark~\ref{RemTateMod}]
That $\check T_\epsilon\ulHM$ is an $A_\epsilon$-module and $\check V_\epsilon\ulHM$ is a $Q_\epsilon$-vector space comes from the fact that the subring of $\hat\sigma$-invariants in $K^\sep\dbl z\dbr$ is $\BF_\epsilon\dbl z\dbr=A_\epsilon$.

We set $r=\rk\ulHM$, choose an $R\dbl z\dbr$-basis of $\hat M$, and write $\tau_{\hat M}$ with respect to this basis as a matrix $T\in\GL_r\bigl(R\dbl z\dbr[\tfrac{1}{z-\zeta}]\bigr)$. Since $z-\zeta$ is a unit in $K\langle\tfrac{z}{\zeta^{s\hat q}}\rangle$ for every $1\ge s>1/\hat q$ with $(z-\zeta)^{-1}=-\sum_{i=0}^\infty\zeta^{-i-1}z^i$, there is an inclusion $R\dbl z\dbr[\tfrac{1}{z-\zeta}]\into K\langle\tfrac{z}{\zeta^{s\hat q}}\rangle$, and we consider $T$ as a matrix in $\GL_r\bigl(K\langle\tfrac{z}{\zeta^{s\hat q}}\rangle\bigr)$. We claim that there is a matrix $U\in\GL_r\bigl(K^\sep\dbl z\dbr\bigr)$ with $\hat\sigma(U)=T^{-1}U$. We write $T^{-1}=\sum_{i=0}^\infty T_iz^i$ and $U=\sum_{n=0}^\infty U_nz^n$. 
We must solve the equations 
\begin{equation}\label{EqU0}
\hat\sigma(U_0)=T_0\cdot U_0\qquad\text{and}\qquad \hat\sigma(U_n)=\sum_{i=0}^{n} T_{n-i}\cdot U_i\,. 
\end{equation}
By Lang's theorem~\cite[Corollary on p.~557]{Lang} there exists a matrix $U_0\in\GL_r(K^\alg)$ satisfying \eqref{EqU0}. Since the morphism $\GL_r(K^\alg)\to\GL_r(K^\alg),\,U_0\mapsto U_0\cdot\hat\sigma(U_0)^{-1}$ is \'etale, we actually have $U_0\in\GL_r(K^\sep)$. Then the second equation takes the form 
\[
\hat\sigma(U_0^{-1}U_n)- U_0^{-1}U_n=\sum_{i=0}^{n-1}\hat\sigma(U_0)^{-1}T_{n-i}U_0\cdot(U_0^{-1}U_i)\,.
\]
This is a system of Artin-Schreier equations for the coefficients of $U_0^{-1}U_n$ which can be solved in $K^\sep$ and this establishes our claim. 

We show that the matrix $U\in\GL_r\bigl(K^\sep\dbl z\dbr\bigr)$ obtained in this way lies in $\GL_r\bigl(K^\sep\langle\tfrac{z}{\zeta^s}\rangle\bigr)$ for every $1\ge s>1/\hat q$. Since $T^{-1}\in\GL_r\bigl(K\langle\tfrac{z}{\zeta^{s\hat q}}\rangle\bigr)\subset\GL_r\bigl(K\langle\tfrac{z}{\zeta^{\hat q}}\rangle\bigr)$ there is a constant $c\ge 1$ with $|T_i \zeta^{\hat qi}|\leq c$ for all $i$, where $|T_i \zeta^{\hat qi}|$ denotes the maximal absolute value of the entries of the matrix $T_i \zeta^{\hat qi}$. We write \eqref{EqU0} as
\[
\hat\sigma\bigl(U_n \zeta^{n}\bigr)\es = \es \sum_{i=0}^n \bigl(T_{n-i} \zeta^{\hat q(n-i)}\bigr) \bigl(U_i \zeta^{i}\bigr) \,\zeta^{i(\hat q-1)}\,.
\]
In view of $|\zeta|<1$ this implies the estimate
\[
|U_n\zeta^{n}|^{\hat q}\es \leq \es c\cdot \max\{\,|U_i\zeta^{i}|:\,0\leq i\leq n\,\}
\]
from which induction yields $|U_n\zeta^{n}|\leq c^{1/(\hat q-1)}$ for all $n\ge0$. 
In particular, if $s>1/\hat q$, then $s\hat q>1$ and $U\in M_r\bigl(K^\sep\langle\frac{z}{\zeta^{s\hat q}}\rangle\bigr)$. But now the equation $\hat\sigma(U)=T^{-1}U$ shows that
$\hat\sigma(U)\in M_r\bigl(K^\sep\langle\frac{z}{\zeta^{s\hat q}}\rangle\bigr)$, hence $U\in M_r\bigl(K^\sep\langle\frac{z}{\zeta^s}\rangle\bigr)$. A similar reasoning with the equation $\hat\sigma(U^{-1})=U^{-1}T$ shows that also $U^{-1}\in M_r\bigl(K^\sep\langle\frac{z}{\zeta^s}\rangle\bigr)$ and $U\in\GL_r\bigl(K^\sep\langle\tfrac{z}{\zeta^s}\rangle\bigr)$ as desired. 

Multiplication with $U$ provides an isomorphism $(K^\sep\langle\tfrac{z}{\zeta^s}\rangle^{\oplus r},\hat\tau=\id)\isoto\ulHM\otimes_{R\dbl z\dbr}K^\sep\langle\tfrac{z}{\zeta^s}\rangle$. Since $(K^\sep\langle\tfrac{z}{\zeta^s}\rangle)^{\hat\sigma}=A_\epsilon$, it follows that $\check T_\epsilon\ulHM=U\!\cdot\! A_\epsilon^{\oplus r}$ is free of rank $r$, and the inclusion $\check T_\epsilon\ulHM\subset U\!\cdot\!K^\sep\langle\tfrac{z}{\zeta^s}\rangle^{\oplus r}$ induces the equivariant isomorphism \eqref{EqTateModIsomConv}. Since $K^\sep\langle\tfrac{z}{\zeta^s}\rangle\subset K^\sep\dbl z\dbr$ this induces the isomorphism \eqref{EqTateModIsom}.

The continuity of the Galois representation $\rho_\ulHM\colon\Gal(K^\sep/K)\to\Aut_{A_\epsilon}(\check T_\epsilon\ulHM)$ means that for all $n\in\BN$ the subgroup $\{\,g\in\Gal(K^\sep/K)\colon \rho_\ulHM(g)\equiv\id\mod z^n\,\}$ is open. This is true because $\rho_\ulHM$ is given by the Galois action on the coefficients of the matrix $U$, and so this subgroup contains the open subgroup $\Gal\bigl(K^\sep/K(U_0,\ldots,U_{n-1})\bigr)$. Finally, the functoriality of $h$ is clear.
\end{proof}

\begin{remark}\label{RemTateModGeneral}
There is a statement similar to Proposition~\ref{PropTateMod} for an \'etale local shtuka $\ulHM$ over a connected scheme $S\in\Nilp_{A_\epsilon}$. For a geometric base point $\bar s\in S$ the (dual) Tate module and the rational (dual) Tate module
\[
\check{T}_\epsilon\ulHM :=(\ulHM\otimes_{\CO_S\dbl z\dbr} \kappa(\bar{s})\dbl z\dbr)^{\hat\tau}\qquad\text{and}\qquad\check{V}_\epsilon\ulHM :=\check{T}_\epsilon\ulHM\otimes_{A_\epsilon}Q_\epsilon
\]
are free of rank $\rk\ulHM$ and carry a continuous action of the \'etale fundamental group $\pi_1^\et(S,\bar s)$. Moreover, the functor $\ulHM\mapsto\check{T}_\epsilon\ulHM$ is an equivalence between the category of \'etale local shtukas over $S$ and the category of representations of $\pi_1^\et(S,\bar s)$ on finite free $A_\epsilon$-modules. Similarly, the functor $\ulHM\mapsto\check{V}_\epsilon\ulHM$ is an equivalence between the category of \'etale local shtukas over $S$ with quasi-morphisms and the category of representations of $\pi_1^\et(S,\bar s)$ on finite $Q_\epsilon$-vector spaces; see \cite[Proposition~3.4]{AH_Local}.
\end{remark}

Back in the situation over a valuation ring $R$ as in Notation~\ref{Notation} there is the following

\begin{corollary}\label{CorHomFinGen}
For local shtukas $\ulHM$ and $\ulHM{}'$ over $R$ the $A_\epsilon$-module $\Hom_R(\ulHM,\ulHM{}')$ is finite free of rank at most $\rk\ulHM\cdot\rk\ulHM{}'$.
\end{corollary}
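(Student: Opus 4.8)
The plan is to exhibit $\Hom_R(\ulHM,\ulHM{}')$ as an $A_\epsilon$-submodule of the Tate module of an ``internal Hom'' local shtuka, and then conclude by Proposition~\ref{PropTateMod} together with elementary module theory over $A_\epsilon$. Write $\ulHM=(\hat M,\tau_{\hat M})$ and $\ulHM{}'=(\hat M',\tau_{\hat M'})$, and put $r=\rk\ulHM$, $r'=\rk\ulHM{}'$. First I would form the pair $\underline N=(\hat N,\tau_{\hat N})$ with $\hat N:=\Hom_{R\dbl z\dbr}(\hat M,\hat M')$ and $\tau_{\hat N}(g):=\tau_{\hat M'}\circ g\circ\tau_{\hat M}^{-1}$. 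Since $\hat M$ is finite free over $R\dbl z\dbr$ one has $\hat\sigma^\ast\hat N=\Hom_{R\dbl z\dbr}(\hat\sigma^\ast\hat M,\hat\sigma^\ast\hat M')$, and because $\tau_{\hat M}$ and $\tau_{\hat M'}$ are isomorphisms after inverting $z-\zeta$, the formula above defines an isomorphism $\hat\sigma^\ast\hat N[\tfrac{1}{z-\zeta}]\isoto\hat N[\tfrac{1}{z-\zeta}]$. Thus $\underline N$ is a local shtuka over $R$ of rank $rr'$, and Proposition~\ref{PropTateMod} shows that $\check T_\epsilon\underline N$ is a free $A_\epsilon$-module of rank $rr'$.

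Next I would identify $\Hom_R(\ulHM,\ulHM{}')$ with $\hat N\cap\check T_\epsilon\underline N$ inside $\hat N\otimes_{R\dbl z\dbr}K^\sep\dbl z\dbr$. The inclusion $R\dbl z\dbr\into K^\sep\dbl z\dbr$ together with freeness of $\hat M$ and $\hat M'$ yields $\hat N\into\hat N\otimes_{R\dbl z\dbr}K^\sep\dbl z\dbr=\Hom_{K^\sep\dbl z\dbr}(\hat M\otimes K^\sep\dbl z\dbr,\,\hat M'\otimes K^\sep\dbl z\dbr)$, in which $z-\zeta$ is a unit. By Definition~\ref{DefLocSht}, an element $f$ of $\hat N$ lies in $\Hom_R(\ulHM,\ulHM{}')$ exactly when $\tau_{\hat M'}\circ\hat\sigma^\ast f=f\circ\tau_{\hat M}$, which (multiplying by $\tau_{\hat M}^{-1}$ on the right) is exactly the condition $\tau_{\hat N}(\hat\sigma_{\!\hat N}^\ast f)=f$ from Definition~\ref{DefTateMod}; hence $\Hom_R(\ulHM,\ulHM{}')=\hat N\cap\check T_\epsilon\underline N$. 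Both $\hat N$, being a module over $R\dbl z\dbr\supset A_\epsilon$, and $\check T_\epsilon\underline N$ are $A_\epsilon$-submodules of $\hat N\otimes_{R\dbl z\dbr}K^\sep\dbl z\dbr$, so this intersection is an $A_\epsilon$-submodule of $\check T_\epsilon\underline N$.

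Finally, since $A_\epsilon\cong\BF_\epsilon\dbl z\dbr$ is a Noetherian principal ideal domain, every submodule of the finite free module $\check T_\epsilon\underline N$ is itself finite free, of rank at most $\rk_{A_\epsilon}\check T_\epsilon\underline N=rr'=\rk\ulHM\cdot\rk\ulHM{}'$, which gives the corollary. The only steps requiring a little care are checking that $\underline N$ really is a local shtuka and that the various $A_\epsilon$-structures match up, but neither is a serious obstacle: invertibility of $\tau_{\hat N}$ away from $z=\zeta$ is immediate from the corresponding property of $\tau_{\hat M}$ and $\tau_{\hat M'}$, and the remaining bookkeeping is reduced to Proposition~\ref{PropTateMod} and standard commutative algebra over the complete discrete valuation ring $A_\epsilon$.
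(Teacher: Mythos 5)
Your argument is correct. It is essentially the paper's proof in a slightly more constructive packaging: the paper observes that $\Hom_R(\ulHM,\ulHM{}')$ is $A_\epsilon$-torsion free and, by faithfulness of $\check T_\epsilon$ (which follows from the isomorphism \eqref{EqTateModIsom}), embeds into $\Hom_{A_\epsilon}(\check T_\epsilon\ulHM,\check T_\epsilon\ulHM{}')$, a free $A_\epsilon$-module of rank $\rk\ulHM\cdot\rk\ulHM{}'$; you instead build the internal Hom local shtuka $\underline N$ and identify $\Hom_R(\ulHM,\ulHM{}')$ with the $\hat\tau$-invariants $\hat N\cap\check T_\epsilon\underline N$ inside $\check T_\epsilon\underline N$. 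Since $\check T_\epsilon\underline N\cong\Hom_{A_\epsilon}(\check T_\epsilon\ulHM,\check T_\epsilon\ulHM{}')$, the two embeddings are the same map in disguise, and both proofs then finish with Proposition~\ref{PropTateMod} and the fact that a submodule of a finite free module over the discrete valuation ring $A_\epsilon$ is finite free of no larger rank. Your version has the small advantage of making the target module visibly a Tate module, at the cost of having to check that $\underline N$ is indeed a local shtuka (which, as you note, is immediate since $\tau_{\hat M}$ and $\tau_{\hat M'}$ are isomorphisms after inverting $z-\zeta$ and $\hat\sigma^*$ commutes with $\Hom$ of finite free modules).
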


\begin{proof}
First of all $\Hom_R(\ulHM,\ulHM{}')$ is $A_\epsilon$-torsion free, because $\ulHM{}'$ is. The functor $\ulHM\mapsto\check T_\epsilon\ulHM$ is faithful by \eqref{EqTateModIsom}, because $\hat M\subset\hat M\otimes_{R\dbl z\dbr}K^\sep\dbl z\dbr$. Since $\check T_\epsilon\ulHM$ is a free $A_\epsilon$-module of rank $\rk\ulHM$, it follows that $\Hom_R(\ulHM,\ulHM{}')$ is a finitely generated $A_\epsilon$-module, and hence free of rank at most $\rk\ulHM\cdot\rk\ulHM{}'$.
\end{proof}

\begin{proposition}\label{PropTateModAMotive}
If $\ulHM=\ulHM_\epsilon(\ulM)$ for an $A$-motive $\ulM=(M,\tau_M)$ over $R$ as in Example~\ref{ExAMotive}, the \emph{$\epsilon$-adic (dual) Tate module of $\ulM$} defined by $\check T_\epsilon\ulM:=\{m\in M\otimes_{A_R}A_{\epsilon,K^\sep}\colon \tau_M(\sigma_{\!M}^*m)=m\}$ is canonically isomorphic to $\check T_\epsilon\ulHM$ as representations of $\Gal(K^\sep/K)$. This isomorphism is functorial in $\ulM$.
\end{proposition}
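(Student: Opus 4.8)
The plan is to pull the Chinese remainder decomposition of $A_{\epsilon,R}$ from Example~\ref{ExAMotive} up along $R\to K^\sep$ and to read off both Tate modules from it. Write $f:=[\BF_\epsilon:\BF_q]$. Since $\gamma$ embeds $\BF_\epsilon$ into $R$, and hence into $K^\sep$, the decomposition $A_{\epsilon,R}=\prod_{i\in\BZ/f\BZ}A_{\epsilon,R}/\Fa_i$ base changes to $A_{\epsilon,K^\sep}=\prod_{i\in\BZ/f\BZ}A_{\epsilon,K^\sep}/\Fa_i$, with each factor canonically isomorphic to $K^\sep\dbl z\dbr$ and, after tensoring with $M$ over $A_R$, the factor for $i=0$ equal to $\hat M\otimes_{R\dbl z\dbr}K^\sep\dbl z\dbr$, where $\hat M=M\otimes_{A_R}A_{\epsilon,R}/\Fa_0$. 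Moreover $\CJ\cdot A_{\epsilon,K^\sep}=A_{\epsilon,K^\sep}$: for $i\ne0$ one already has $\CJ\cdot A_{\epsilon,R}/\Fa_i=(1)$, and on the factor for $i=0$ one has $\CJ\cdot A_{\epsilon,K^\sep}/\Fa_0=(z-\zeta)=(1)$ because $-\zeta$ is a unit in $K^\sep$. Hence $\Spec A_{\epsilon,K^\sep}$ maps into $\Spec A_R\setminus\Var(\CJ)$, so $\tau_M$ induces an automorphism of $M\otimes_{A_R}A_{\epsilon,K^\sep}$ and $\check T_\epsilon\ulM$ is a well-defined $A_\epsilon$-submodule of it.

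Next I would base change the isomorphism $\bigoplus_i(\tau_M\otimes1)^i\mod\Fa_i$ of Example~\ref{ExAMotive} to $K^\sep$ and pass to $\tau$-invariant sections. On the source $\bigoplus_{i=0}^{f-1}\sigma^{i*}\bigl(\hat M\otimes_{R\dbl z\dbr}K^\sep\dbl z\dbr\bigr)$, equipped with $(\tau_M\otimes1)^f\oplus\bigoplus_{i\ne0}\id$, the $\tau$-invariants are exactly the tuples $(\sigma^{i*}n)_i$ with $n\in(\hat M\otimes_{R\dbl z\dbr}K^\sep\dbl z\dbr)^{\hat\tau}=\check T_\epsilon\ulHM$: the $\id$ summands for $i\ne0$ force the higher components to be the $\sigma$-twists of the $i=0$ component $n$, and the summand in degree $0$ then reads $n=(\tau_M\otimes1)^f(\hat\sigma_{\!\hat M}^*n)=\tau_{\hat M}(\hat\sigma_{\!\hat M}^*n)$, since $\hat\sigma$ restricts to $\sigma^f$ on the factor for $i=0$ and $\tau_{\hat M}=(\tau_M\otimes1)^f$ there by the definition of $\ulHM_\epsilon(\ulM)$; on the target the $\tau$-invariants are $\check T_\epsilon\ulM$. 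Thus the base changed isomorphism restricts to a canonical isomorphism $\check T_\epsilon\ulHM\isoto\check T_\epsilon\ulM$, whose inverse sends $m=(m_i)_i\in\check T_\epsilon\ulM$ to its component $m_0$ in $\hat M\otimes_{R\dbl z\dbr}K^\sep\dbl z\dbr$. When $f=1$ there is nothing to do: $A_{\epsilon,K^\sep}=K^\sep\dbl z\dbr$, $\hat\sigma=\sigma$, $\hat M=M\otimes_{A_R}A_{\epsilon,R}$, and the two definitions of the Tate module literally coincide.

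Finally, the decomposition $A_{\epsilon,K^\sep}=\prod_iA_{\epsilon,K^\sep}/\Fa_i$ is the base change to $K^\sep$ of the decomposition of $A_{\epsilon,R}$, which is already defined over $R\subseteq K$; hence it is stable under $\Gal(K^\sep/K)$, and the Galois action on each factor $\cong K^\sep\dbl z\dbr$ is the one on the coefficients. So the projection $m\mapsto m_0$ is $\Gal(K^\sep/K)$-equivariant, and the resulting isomorphism $\check T_\epsilon\ulM\isoto\check T_\epsilon\ulHM$ is an isomorphism of $\Gal(K^\sep/K)$-representations; since it is assembled only from base change along $R\to K^\sep$ and the functorial constructions $\ulM\mapsto\ulHM_\epsilon(\ulM)$ and $\check T_\epsilon$, it is functorial in $\ulM$. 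The step that needs care is the cyclic bookkeeping when $f>1$ — matching $\hat\sigma$ with $\sigma^f$ on the distinguished factor and checking that the summands for $i\ne0$ contribute nothing new to the $\tau$-invariants, which is exactly where the triviality of $\CJ$ on those factors enters; for $f=1$ this obstacle is absent and the proposition is immediate from the definitions.
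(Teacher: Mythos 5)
Your proof is correct and follows essentially the same route as the paper's: both use the Chinese remainder decomposition of $A_{\epsilon,K^\sep}$ from Example~\ref{ExAMotive}, observe that a $\tau$-invariant tuple $(m_i)_i$ is determined by its component $m_0$ subject to $m_0=\tau_M^f(\sigma_{\!M}^{f*}m_0)=\tau_{\hat M}(\hat\sigma_{\!\hat M}^*m_0)$, and take $(m_i)_i\mapsto m_0$ as the canonical isomorphism. You merely spell out a few points the paper leaves implicit (that $\tau_M$ becomes an isomorphism over $A_{\epsilon,K^\sep}$, and the Galois equivariance of the projection), which is fine.
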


\begin{proof}
Consider the decomposition $M\otimes_{A_R}A_{\epsilon,K^\sep}=\prod\limits_{i\in\BZ/f\BZ}M\otimes_{A_R}K^\sep\dbl z\dbr$ discussed in Example~\ref{ExAMotive}. An element $m=(m_i)_i\in\prod\limits_{i\in\BZ/f\BZ}M\otimes_{A_R}K^\sep\dbl z\dbr$ satisfies $\tau_M(\sigma_{\!M}^*m)=m$ if and only if $m_{i+1}=\tau_M(\sigma_{\!M}^*m_i)$ for all $i$ and $m_0=\tau_M^f(\sigma_{\!M}^{f*}m_0)=\tau_{\hat M}(\hat\sigma_{\!\hat M}^*m_0)$. So the isomorphism $\check T_\epsilon\ulM\isoto\check T_\epsilon\ulHM$ is given by $(m_i)_i\mapsto m_0$. It clearly is functorial.
\end{proof}

\begin{remark}\label{RemTateModAMotive}
The arguments proving Remark~\ref{RemTateMod} and Proposition~\ref{PropTateModAMotive} can also be applied to an $A$-motive $\ulM$ over $K$ as in Example~\ref{ExAMotive} which not necessarily has good reduction. Namely, for every $s>1/\hat q$ the decomposition \eqref{EqDecompAEpsilonR} yields 
\[
A_{\epsilon,R}\otimes_{R\dbl z\dbr}K^\sep\langle\tfrac{z}{\zeta^s}\rangle\;=\;\prod\limits_{i\in\BZ/f\BZ}A_{\epsilon,R}/\Fa_i\otimes_{R\dbl z\dbr}K^\sep\langle\tfrac{z}{\zeta^s}\rangle\;=\;\prod\limits_{i\in\BZ/f\BZ}K^\sep\langle\tfrac{z}{\zeta^s}\rangle\,.
\]
Therefore the decomposition $M\otimes_{A_K}A_{\epsilon,K^\sep}=\prod\limits_{i\in\BZ/f\BZ}M\otimes_{A_K}K^\sep\dbl z\dbr$ extends to a decomposition $M\otimes_{A_K}(A_{\epsilon,R}\otimes_{R\dbl z\dbr}K^\sep\langle\tfrac{z}{\zeta^s}\rangle)=\prod\limits_{i\in\BZ/f\BZ}M\otimes_{A_K}K^\sep\langle\tfrac{z}{\zeta^s}\rangle$. We denote the factor for $i=0$ by $M^{^{\SSC\land}}_0$ and equip it with the Frobenius $\tau_0^{\SSC\land}:=\tau_M^f\mod\Fa_0\colon\hat\sigma^*M^{^{\SSC\land}}_0[\tfrac{1}{z-\zeta}]\isoto M^{^{\SSC\land}}_0[\tfrac{1}{z-\zeta}]$. Since $K^\sep\langle\tfrac{z}{\zeta^s}\rangle$ is a principal ideal domain by \cite[Proposition~2]{Lazard}, we may choose a $K^\sep\langle\tfrac{z}{\zeta^s}\rangle$-basis of $M^{^{\SSC\land}}_0$ and represent $\tau_0^{\SSC\land}$ by a matrix $T\in\GL_r\bigl(K^\sep\langle\tfrac{z}{\zeta^{s\hat q}}\rangle\bigr)$. Now the argument proving Proposition~\ref{PropTateModAMotive} and Remark~\ref{RemTateMod} shows that $\check T_\epsilon\ulM=\{\,m\in M^{^{\SSC\land}}_0\colon\tau_0^{\SSC\land}(\sigma^{f*}_M m)=m\,\}$ and that there is a $\Gal(K^\sep/K)$- and $\hat\tau$-equivariant isomorphism
\begin{equation}\label{EqTateModIsomConvAMotive}
h\colon\;\check T_\epsilon\ulM\otimes_{A_\epsilon}K^\sep\langle\tfrac{z}{\zeta^s}\rangle\;\isoto\;(M^{^{\SSC\land}}_0, \tau_0^{\SSC\land})\;=\;\ulM\otimes_{A_K}(A_{\epsilon,R}/\Fa_0\otimes_{R\dbl z\dbr}K^\sep\langle\tfrac{z}{\zeta^s}\rangle)
\end{equation}
which is functorial in $\ulM$.
\end{remark}

\begin{definition}\label{DefTateModAModule}
Let $R$ be a valuation ring as in Notation~\ref{Notation}. For a $z$-divisible local Anderson module $G$ over $R$ the \emph{Tate module} $T_\epsilon G$ and the \emph{rational Tate module} $V_\epsilon G$ are defined as
\begin{eqnarray*}
T_\epsilon G & := & \Hom_{A_\epsilon}\bigl(Q_\epsilon/A_\epsilon,\,G(K^\sep)\bigr)\qquad\text{and}\\[2mm]
 V_\epsilon G & := & \Hom_{A_\epsilon}\bigl(Q_\epsilon,\,G(K^\sep)\bigr) \es \cong \es T_\epsilon G\otimes_{A_\epsilon}Q_\epsilon\,,
\end{eqnarray*}
where the last isomorphism sends $f\otimes z^{-n}\in T_\epsilon G\otimes_{A_\epsilon}Q_\epsilon$ to the homomorphism $\tilde f\colon Q_\epsilon\to G(K^\sep)$ with $\tilde f(a):=f(az^{-n})$ for $a\in Q_\epsilon$. To see that it is indeed an isomorphism, note that it is clearly injective, because $f$ can be recovered from $\tilde f$ and $n$. Conversely, since every $\tilde f\in V_\epsilon G$ satisfies $\tilde f(1)\in G(K^\sep)=\dirlim G[z^n](K^\sep)$ by \cite[Lemma~5.4]{HV1}, there is an $n$ with $\tilde f(1)\in G[z^n](K^\sep)$, and so $\tilde f(z^nA_\epsilon)=0$. This shows that $\tilde f$ is the image of $f\otimes z^{-n}$ for $f\in T_\epsilon G$ with $f(a):=\tilde f(az^n)$. 

$T_\epsilon G$ is an $A_\epsilon$-module and $V_\epsilon G$ is a $Q_\epsilon$-vector space. Both carry a continuous $\Gal(K^\sep/K)$-action and $G\mapsto T_\epsilon G$ and $G\mapsto V_\epsilon G$ are covariant functors. 

For an abelian Anderson $A$-module $\ulE$ over $R$ as in Example~\ref{ExAndModule} the \emph{$\epsilon$-adic Tate module} $T_\epsilon\ulE$ and the \emph{rational $\epsilon$-adic Tate module} $V_\epsilon\ulE$ are defined as
\[
T_\epsilon\ulE\;:=\;\Hom_A\bigl(Q_\epsilon/A_\epsilon,\,\ulE(K^\sep)\bigr)\qquad\text{and}\qquad V_\epsilon \ulE\::=\;T_\epsilon\ulE\otimes_{A_\epsilon}Q_\epsilon\,.
\]
Since every element of $Q_\epsilon/A_\epsilon$ is annihilated by a power of $\epsilon$, we have $T_\epsilon\ulE=T_\epsilon\,\ulE[\epsilon^\infty]$.
\end{definition}

After choosing a uniformizing parameter $z$ of $A_\epsilon$ there are isomorphisms
\begin{eqnarray*}
T_\epsilon G & \isoto & \bigl\{(P_n)_n\in\prod_{n\in\BN_{0}}G[z^n](K^\sep)\colon z(P_{n+1})=P_n\bigr\} \es =: \es \invlim[n]\bigl(G[z^n](K^\sep),z\bigr)\qquad\text{and}\\[2mm]
V_\epsilon G & \isoto & \bigl\{(P_n)_n\in\prod_{n\in\BZ}G(K^\sep)\colon z(P_{n+1})=P_n\bigr\}\,,
\end{eqnarray*}
which send $f\colon Q_\epsilon\to G(K^\sep)$ to the tuple $P_n:=f(z^{-n})$. These are indeed isomorphisms, because from $(P_n)_n$ we can reconstruct $f$ as follows. Every $a\in Q_\epsilon\mal$ is of the form $a=uz^{-n}$ for an integer $n$ and a unit $u\in A_\epsilon\mal$. Then $f(a)=u(P_n)$. 

\medskip

To describe the relation between $T_\epsilon G$ and $\check T_\epsilon\ulHM$ consider the $A_\epsilon$-module $\Hom_{\BF_\epsilon}(Q_\epsilon/A_\epsilon,\BF_\epsilon)$ which carries the trivial Galois action and is canonically isomorphic to the module of continuous differential forms $\wh\Omega^1_{A_\epsilon/\BF_\epsilon}$ under the map
\begin{equation}\label{EqIsomDifferentials}
\wh\Omega^1_{A_\epsilon/\BF_\epsilon} \; \isoto \; \Hom_{\BF_\epsilon}(Q_\epsilon/A_\epsilon,\BF_\epsilon)\,,\quad\omega\longmapsto\bigl(a\mapsto \Res_\epsilon(a\omega)\bigr)\,.
\end{equation}
After choosing a uniformizing parameter $z$ of $A_\epsilon$ we can identify $\wh\Omega^1_{A_\epsilon/\BF_\epsilon}\cong\BF_\epsilon\dbl z\dbr dz$ and the inverse map is given by $\Hom_{\BF_\epsilon}(Q_\epsilon/A_\epsilon,\BF_\epsilon)\to\BF_\epsilon\dbl z\dbr dz,\;\lambda\mapsto\sum_{i=0}^\infty \lambda(z^{-1-i})z^idz$. In particular $\Hom_{\BF_\epsilon}(Q_\epsilon/A_\epsilon,\BF_\epsilon)$ is non-canonically isomorphic to $A_\epsilon$.

The following proposition generalizes Anderson's result \cite[\S\,4.2]{Anderson93} who treated the case where $G$ is a formal Lie group.

\begin{proposition}\label{PropCompTateMod}
Let $\ulHM$ be an effective local shtuka over $R$, let $G=\Dr_{\hat q}(\ulHM)$ be the associated $z$-divisible local Anderson module from Section~\ref{SectDivLocAM} and view $\ulHM$ as $\invlim\Hom_{R\text{\rm-groups},\BF_\epsilon\text{\rm-lin}}(G[z^n]\,,\,\BG_{a,R})$. Then there is a $\Gal(K^\sep/K)$-equivariant perfect pairing of $A_\epsilon$-modules
\begin{equation}\label{EqPairingTateMod}
\langle\,.\,,\,.\,\rangle\colon \;T_\epsilon G\times \check T_\epsilon\ulHM\;\longto\;\Hom_{\BF_\epsilon}(Q_\epsilon/A_\epsilon,\BF_\epsilon)\;\cong\;\wh\Omega^1_{A_\epsilon/\BF_\epsilon}\,,\quad\langle f, m\rangle:= m\circ f\,.
\end{equation}
which identifies $T_\epsilon G$ with the contragredient $\Gal(K^\sep/K)$-representation $\Hom_{A_\epsilon}(\check T_\epsilon\ulHM,\,\wh\Omega^1_{A_\epsilon/\BF_\epsilon})$ of $\check T_\epsilon\ulHM$. In particular $T_\epsilon G$ is a free $A_\epsilon$-module of rank equal to $\rk\ulHM$ that carries a continuous action of $\Gal(K^\sep/K)$. 
\end{proposition}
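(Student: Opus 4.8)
The plan is to (i) make sense of the pairing, (ii) check $A_\epsilon$-bilinearity and $\Gal(K^\sep/K)$-equivariance, and (iii) establish perfectness by reducing modulo $z^n$ and invoking Proposition~\ref{PropTateMod}. Throughout I would write $\hat\tau:=\tau_{\hat M}\circ\hat\sigma^*_{\hat M}$ for the $\hat\sigma$-semilinear composite, and use that by \S\ref{SectDivLocAM} an element $m$ of $\check T_\epsilon\ulHM=\invlim[n](\ulHM/z^n\ulHM)^{\hat\tau}(K^\sep)$ is a compatible system $(m_n)_n$ of $\BF_\epsilon$-linear homomorphisms $m_n\colon G[z^n]_{K^\sep}\to\BG_{a,K^\sep}$ with $F_{\hat q,\BG_{a}/K^\sep}\circ m_n=m_n$, the latter being the translation of $\tau_{\hat M}(\hat\sigma^*_{\hat M}m_n)=m_n$.

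First I would observe that the pairing is well defined: from $F_{\hat q}\circ m_n=m_n$ one gets $m_n(P)^{\hat q}=m_n(P)$, i.e.\ $m_n(P)\in\BF_\epsilon$, for every $P\in G[z^n](K^\sep)$; and for $f\in T_\epsilon G$ and $x\in Q_\epsilon/A_\epsilon$ annihilated by $z^n$ one has $f(x)\in G[z^n](K^\sep)$, so $(m\circ f)(x):=m_n(f(x))\in\BF_\epsilon$ is independent of $n$ by compatibility of the $m_n$. Thus $\langle f,m\rangle:=m\circ f$ lies in $\Hom_{\BF_\epsilon}(Q_\epsilon/A_\epsilon,\BF_\epsilon)$. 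The $A_\epsilon$-bilinearity then follows from $A_\epsilon$-linearity of $f$ together with the identity $(am_n)(P)=m_n(aP)$, which expresses that the two $A_\epsilon$-module structures match under the $A_\epsilon$-linear anti-equivalence of Remark~\ref{RemEquiv}; and Galois-equivariance follows from $g(m_n(P))=(gm_n)(gP)$ and the fact that $m_n(P)\in\BF_\epsilon$ is $g$-fixed, so that the target $\Hom_{\BF_\epsilon}(Q_\epsilon/A_\epsilon,\BF_\epsilon)\cong\wh\Omega^1_{A_\epsilon/\BF_\epsilon}$ (via \eqref{EqIsomDifferentials}) is the trivial Galois module.

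The real work is perfectness, and here I would follow Anderson's strategy \cite[\S\,4.2]{Anderson93} but replace the use of the logarithm by the trivialization from Proposition~\ref{PropTateMod}. The goal is to show that $f\mapsto\langle f,\fdot\rangle$ induces an isomorphism $T_\epsilon G\isoto\Hom_{A_\epsilon}(\check T_\epsilon\ulHM,\wh\Omega^1_{A_\epsilon/\BF_\epsilon})$; since the target is free of rank $\rk\ulHM$ by Proposition~\ref{PropTateMod}, this will simultaneously give the freeness and rank claims for $T_\epsilon G$. Fix $n$. From the presentation $\Dr_{\hat q}(\ulHM/z^n\ulHM)=\Spec\bigl(\Sym_R(\hat M_n)/(m^{\otimes\hat q}-\hat\tau(m)\colon m\in\hat M_n)\bigr)$ a $K^\sep$-point of $G[z^n]$ is the same as an $R$-linear map $\hat M_n\to K^\sep$, equivalently a $K^\sep$-linear functional $\ell\colon\hat M_n\otimes_R K^\sep\to K^\sep$ satisfying $\ell\circ\hat\tau=\ell^{\hat q}$ (both sides being $\hat\sigma$-semilinear additive maps, so it suffices to test on a $K^\sep$-basis). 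Reducing the isomorphism \eqref{EqTateModIsom} modulo $z^n$ and choosing an $A_\epsilon$-basis of $\check T_\epsilon\ulHM$ yields a $K^\sep\dbl z\dbr/(z^n)$-basis $u_1,\dots,u_r$ of $\hat M_n\otimes_R K^\sep$ lying in $(\ulHM/z^n\ulHM)^{\hat\tau}(K^\sep)=\bigoplus_j(A_\epsilon/z^n)u_j$; the elements $z^iu_j$ ($0\le i<n$) are $\hat\tau$-fixed and form an $\BF_\epsilon$-basis of $(\ulHM/z^n\ulHM)^{\hat\tau}(K^\sep)$ that is also a $K^\sep$-basis of $\hat M_n\otimes_R K^\sep$, so $\ell\circ\hat\tau=\ell^{\hat q}$ holds if and only if $\ell(z^iu_j)\in\BF_\epsilon$ for all $i,j$. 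Consequently restriction to $(\ulHM/z^n\ulHM)^{\hat\tau}(K^\sep)$ identifies
\[
G[z^n](K^\sep)\;\isoto\;\Hom_{\BF_\epsilon}\bigl((\ulHM/z^n\ulHM)^{\hat\tau}(K^\sep),\BF_\epsilon\bigr),\qquad P\longmapsto\bigl(m_n\mapsto m_n(P)\bigr),
\]
which is $A_\epsilon/z^n$-linear (by $(am_n)(P)=m_n(aP)$) and compatible with the transition maps in $n$: multiplication by $z$ on the left corresponds to precomposition with the multiplication-by-$z$ maps $(\ulHM/z^n\ulHM)^{\hat\tau}(K^\sep)\into(\ulHM/z^{n+1}\ulHM)^{\hat\tau}(K^\sep)$ on the right. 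Passing to $\invlim[n]$, and using $\check T_\epsilon\ulHM=\invlim[n]\check T_\epsilon\ulHM/z^n$ and $\dirlim[n](\check T_\epsilon\ulHM/z^n,\,z)=\check T_\epsilon\ulHM\otimes_{A_\epsilon}(Q_\epsilon/A_\epsilon)$, gives
\[
T_\epsilon G\;=\;\Hom_{\BF_\epsilon}\bigl(\check T_\epsilon\ulHM\otimes_{A_\epsilon}(Q_\epsilon/A_\epsilon),\BF_\epsilon\bigr)\;\cong\;\Hom_{A_\epsilon}\bigl(\check T_\epsilon\ulHM,\Hom_{\BF_\epsilon}(Q_\epsilon/A_\epsilon,\BF_\epsilon)\bigr)
\]
by Hom–tensor adjunction, and unwinding the identifications shows this composite map is exactly $f\mapsto\langle f,\fdot\rangle$. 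This yields the perfect pairing, hence also the remaining assertions.

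I expect the main obstacle to be the compatibility bookkeeping in the last step: correctly describing the $K^\sep$-points of $\Dr_{\hat q}(\ulHM/z^n\ulHM)$ as $\hat\tau$-compatible functionals, keeping the two $A_\epsilon$-actions (on $\hat M$ and on $G$) straight — this already enters the $A_\epsilon$-bilinearity — and above all verifying that the finite-level identifications are compatible with the multiplication-by-$z$ transition maps, so that taking $\invlim[n]$ and applying the adjunction genuinely reproduces the pairing $\langle\fdot,\fdot\rangle$ rather than a twist of it. Once this is in place the perfectness, freeness and rank statements are immediate, and the identification of $T_\epsilon G$ with the contragredient of $\check T_\epsilon\ulHM$ is just a restatement.
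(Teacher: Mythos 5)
Your proposal is correct, and its skeleton — a perfect duality between $G[z^n](K^\sep)$ and $(\ulHM/z^n\ulHM)^{\hat\tau}(K^\sep)$ at each finite level, followed by a passage to the limit — is the same as the paper's. The difference lies in how the finite-level duality is obtained and how the limit is organized. The paper simply quotes the finite-level perfect pairing from B\"ockle--Hartl \cite[Lemma~2.4 and Theorem~8.6]{BoeckleHartl}, whereas you re-derive it from scratch: you unwind the definition of $\Dr_{\hat q}(\ulHM_n)$ to describe $K^\sep$-points as semilinear functionals $\ell$ with $\ell\circ\hat\tau=\ell^{\hat q}$, and then use the trivialization \eqref{EqTateModIsom} to produce a $K^\sep$-basis of $\hat\tau$-fixed vectors $z^iu_j$ on which the condition becomes $\ell(z^iu_j)\in\BF_\epsilon$; this is exactly the content of the cited result, so your argument is self-contained where the paper's is not. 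For the limit, the paper keeps the two sides asymmetric (it works with $\Hom_{A_\epsilon}(z^{-n}A_\epsilon/A_\epsilon,G(K^\sep))$ versus $(\ulHM/z^n\ulHM)^{\hat\tau}(K^\sep)$ with the \emph{projection} transition maps) and must therefore separately prove surjectivity of the left vertical maps in diagram \eqref{EqSurjDiag} by evaluating on a fixed basis of $\check T_\epsilon\ulHM$ before the projective limit of pairings can be seen to be perfect; you instead dualize at each level onto the colimit system $(\check T_\epsilon\ulHM/z^n,\cdot z)$ whose direct limit is $\check T_\epsilon\ulHM\otimes_{A_\epsilon}Q_\epsilon/A_\epsilon$, so that a projective limit of isomorphisms plus Hom--tensor adjunction gives the result with no separate surjectivity step. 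The price, as you correctly identify, is the transition-map bookkeeping: one must check that $z\colon G[z^{n+1}]\to G[z^n]$ is dual to the multiplication-by-$z$ inclusion $(\ulHM/z^n\ulHM)^{\hat\tau}\into(\ulHM/z^{n+1}\ulHM)^{\hat\tau}$ (it is, since $i_n$ is induced by the projection $\ulHM_{n+1}\onto\ulHM_n$ and the $A_\epsilon$-actions correspond contravariantly), and that the resulting composite really is $f\mapsto m\circ f$ and not a twist of it. Both versions land on the same statement; yours buys independence from the external reference at the cost of this verification.
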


\begin{remark}
(a) Note that indeed $m\circ f$ lies in $\Hom_{\BF_\epsilon}(Q_\epsilon/A_\epsilon,\BF_\epsilon)$, because $m=\tau_{\hat M}(\sigma_{\!\hat M}^*m) = F_{\hat q,\BG_{a,S}/S}\circ m$ implies that $m\circ f(a)=F_{\hat q,\BG_{a,S}/S}\bigl(m\circ f(a)\bigr)=\bigl(m\circ f(a)\bigr)^{\hat q}$ in $\BG_a(K^\sep)=K^\sep$, and hence $m\circ f(a)\in\BF_\epsilon$ for all $a\in Q_\epsilon$.

\medskip\noindent
(b) This pairing is functorial in $\ulHM$ in the following sense. If $\alpha\colon\ulHM\to\ulHM'$ is a morphism of local shtukas over $R$ and $\Dr_{\hat q}(\alpha)\colon G':=\Dr_{\hat q}(\ulHM')\to\Dr_{\hat q}(\ulHM)=G$ is the induced morphism of the associated $z$-divisible local Anderson modules then $\check T_\epsilon\alpha\colon\check T_\epsilon\ulHM\to\check T_\epsilon\ulHM'$ and $T_\epsilon\Dr_{\hat q}(\alpha)\colon T_\epsilon G'\to T_\epsilon G$ satisfy $\langle f',\check T_\epsilon\alpha(m)\rangle=m\circ\Dr_{\hat q}(\alpha)\circ f'=\langle T_\epsilon\Dr_{\hat q}(\alpha)(f'),m\rangle$ for $f'\in T_\epsilon G'$ and $m\in\check T_\epsilon\ulHM$.
\end{remark}

\begin{proof}[{Proof of Proposition~\ref{PropCompTateMod}}]
From \cite[Lemma~2.4 and Theorem~8.6]{BoeckleHartl} we have a  perfect pairing of $A_\epsilon/(z^n)$-modules
\begin{eqnarray*}
{G[z^n](K^\sep)}  \times  {(\ulHM/{z^n \ulHM})^{\hat\tau}(K^\sep)} & \longrightarrow & \Hom_{A_\epsilon}(A_\epsilon/(z^n) , \BF_\epsilon)\,,\\[2mm]
(P_n, m) \qquad\qquad\qquad\es & \longmapsto & \bigl(h_n\colon a \mapsto m(a\,P_n)\bigr)\,.
\end{eqnarray*}
Under the $A_\epsilon$-isomorphism $\Hom_{A_\epsilon}\bigl(z^{-n}A_\epsilon/A_\epsilon,\,G(K^\sep)\bigr)\isoto G[z^n](K^\sep)$, $f\mapsto P_n:=f(z^{-n})$ it transforms into a perfect pairing of $A_\epsilon/(z^n)$-modules
\begin{eqnarray*}
\Hom_{A_\epsilon}\bigl(z^{-n}A_\epsilon/A_\epsilon,\,G(K^\sep)\bigr) \times  {(\ulHM/{z^n \ulHM})^{\hat\tau}(K^\sep)} & \longrightarrow & \Hom_{\BF_\epsilon}(z^{-n}A_\epsilon/A_\epsilon, \BF_\epsilon)\,,\\[2mm]
(f, m) \qquad\qquad\qquad\quad & \longmapsto & \bigl(h_n\colon a \mapsto m\circ f(a)\bigr)\,.
\end{eqnarray*}
Again the identification $\Hom_{\BF_\epsilon}(z^{-n}A_\epsilon/A_\epsilon, \BF_\epsilon)\,=\,A_\epsilon/(z^n)\cdot dz$ induced from \eqref{EqIsomDifferentials}, shows that this is a free $A_\epsilon/(z^n)$-module of rank one. It follows that 
\[
\Hom_{A_\epsilon}\bigl(z^{-n}A_\epsilon/A_\epsilon,\,G(K^\sep)\bigr)\;\cong\; \Hom_{A_\epsilon/(z^n)}\bigl((\ulHM/{z^n \ulHM})^{\hat\tau}(K^\sep),\,A_\epsilon/(z^n)\cdot dz\bigr)
\]
is a free $A_\epsilon/(z^n)$-module of rank $\rk\ulHM$, because this holds for $(\ulHM/{z^n \ulHM})^{\hat\tau}(K^\sep)$. This implies that for varying $n$ the inclusions $z^{-n}A_\epsilon\subset z^{-(n+1)}A_\epsilon$ induce vertical maps in the following diagram
\begin{equation}\label{EqSurjDiag}
\xymatrix @R-0.5pc {
**{!L(0) =<8.5pc,1.5pc>} \objectbox{\Hom_{A_\epsilon}\bigl(z^{-n-1}A_\epsilon/A_\epsilon,\,G(K^\sep)\bigr) } \ar[d] & { \times \es (\ulHM/{z^{n+1} \ulHM})^{\hat\tau}(K^\sep)} \ar[r] \ar@{->>}[d] & A_\epsilon/(z^{n+1})\cdot dz \ar@{->>}[d] \\
**{!L(0) =<7pc,1.5pc>} \objectbox{\Hom_{A_\epsilon}\bigl(z^{-n}A_\epsilon/A_\epsilon,\,G(K^\sep)\bigr) } & {\times \quad (\ulHM/{z^n \ulHM})^{\hat\tau}(K^\sep)\es} \ar[r] & A_\epsilon/(z^n)\cdot dz\,.
}
\end{equation}
By evaluating these pairings on a fixed choosen $A_\epsilon$-basis of $\check T_\epsilon\ulHM=\invlim(\ulHM/{z^n \ulHM})^{\hat\tau}(K^\sep)$, we obtain isomorphisms 
\[
\Hom_{A_\epsilon}\bigl(z^{-n}A_\epsilon/A_\epsilon,\,G(K^\sep)\bigr)\;\isoto\;\bigl(A_\epsilon/(z^n)\cdot dz\bigr)^{\rk\ulHM},
\]
which are compatible for all $n$. Since the middle and the right vertical maps in diagram \eqref{EqSurjDiag} are surjective, this shows that also the left vertical map is surjective. So the projective limit of this diagram is the pairing \eqref{EqPairingTateMod}, and this yields an isomorphism
\[
T_\epsilon G\;=\;\Hom_{A_\epsilon}\bigl(Q_\epsilon/A_\epsilon,\,G(K^\sep)\bigr)\;\isoto\;\bigl(A_\epsilon dz\bigr)^{\rk\ulHM}\;\cong\;\Hom_{A_\epsilon}\bigl(\check T_\epsilon\ulHM,\,\wh\Omega^1_{A_\epsilon/\BF_\epsilon}\bigr)\,.
\]
This shows that \eqref{EqPairingTateMod} is a perfect pairing of free $A_\epsilon$-modules. If $g \in \Gal(K^\sep/K)$ then $\langle g(f),g(m)\rangle\,=\,g(m)\circ g(f)\,=\,g(m\circ f)\,=\,m\circ f\,=\,\langle f,m\rangle$, because $g$ acts trivially on $m\circ f$.
Therefore, the pairing \eqref{EqPairingTateMod} is $\Gal(K^\sep/K)$-equivariant.
\end{proof}

\begin{example}\label{ExCarlitz4A}
We describe the $\epsilon$-adic (dual) Tate module $\check T_\epsilon\ulM=\check T_\epsilon\ulHM_\epsilon(\ulM)$ of the Carlitz motive $\ulM=(R[t],t-\theta)$ from Example~\ref{ExCarlitz2} by using the local shtuka $\ulHM:=\ulHM_\epsilon(\ulM)=(R\dbl z\dbr,z-\zeta)$ computed there. For all $i\in\BN_0$ let $\tplusminus_i\in K^\sep$ be solutions of the equations $\tplusminus_0^{\hat q-1}=-\zeta$ and $\tplusminus_i^{\hat q}+\zeta \tplusminus_i=\tplusminus_{i-1}$. This implies $|\tplusminus_i|=|\zeta|^{\hat q^{-i}/(\hat q-1)}<1$. Define the power series $\tplus=\sum_{i=0}^\infty \tplusminus_iz^i\in\CO_{K^\sep}\dbl z\dbr$. It satisfies $\hat\sigma(\tplus)=(z-\zeta)\!\cdot\!\tplus$, but depends on the choice of the $\tplusminus_i$. A different choice yields a different power series $\ttplus$ which satisfies $\ttplus=u\tplus$ for a unit $u\in(K^\sep\dbl z\dbr\mal)^{\hat\sigma=\id}=A_\epsilon^{^{\SSC\times}}$, because $\hat\sigma(u)=\frac{\hat\sigma(\ttplus)}{\hat\sigma(\tplus)}=\frac{\ttplus}{\tplus}=u$. The field extension $\BF_\epsilon\dpl\zeta\dpr(\tplusminus_i\colon i\in\BN_0)$ of $\BF_\epsilon\dpl\zeta\dpr$ is the function field analog of the cyclotomic tower $\BQ_p(\sqrt[p^i]{1}\colon i\in\BN_0)$; see \cite[\S\,1.3 and \S\,3.4]{HartlDict}. There is an isomorphism of topological groups called the \emph{$\epsilon$-adic cyclotomic character}
\[
\chi_\epsilon\colon\Gal\bigl(\BF_\epsilon\dpl\zeta\dpr(\tplusminus_i\colon i\in\BN_0)\big/\BF_\epsilon\dpl\zeta\dpr\bigr)\;\isoto\; A_\epsilon^{^{\SSC\times}},
\]
which satisfies $g(\tplus):=\sum_{i=0}^\infty g(\tplusminus_i)z^i=\chi_\epsilon(g)\cdot \tplus$ in $K^\sep\dbl z\dbr$ for $g$ in the Galois group. It is independent of the choice of the $\tplusminus_i$. The $\epsilon$-adic (dual) Tate module $\check T_\epsilon\ulHM$ of $\ulHM$ and $\ulM$ is generated by $\tplus^{-1}$ on which the Galois group acts by the inverse of the cyclotomic character. According to Proposition~\ref{PropCompTateMod}, $\Gal(K^\sep/K)$ acts on the $\epsilon$-adic Tate module $T_\epsilon\,\ulCC$ of the Carlitz module $\ulCC$ from Example~\ref{ExCarlitz3} by $\chi_\epsilon$. So using the notation of Tate twists we may write $\check T_\epsilon\ulM=A_\epsilon(-1)$ and $\check V_\epsilon\ulM=Q_\epsilon(-1)$, as well as $T_\epsilon\,\ulCC=A_\epsilon(1)$ and $V_\epsilon\,\ulCC=Q_\epsilon(1)$. The isomorphisms \eqref{EqTateModIsom} and \eqref{EqTateModIsomConv} for $s>1/\hat q$ are given by sending the generator $\tplus^{-1}$ of $\check T_\epsilon\ulHM$ to $\tplus^{-1}\in K^\sep\langle\tfrac{z}{\zeta^s}\rangle\mal$.

We compute $T_\epsilon\,\ulCC$ explicitly. By Example~\ref{ExCarlitz3} the group scheme $\ulCC[\epsilon^n]$ equals the kernel of the endomorphism $\phi_z^n$ of $\BG_{a,R}=\Spec R[x]$ where $\phi_z^*(x)=\zeta x+x^{\hat q}$. By definition $\phi_z(\tplusminus_0)=0$ and $\phi_z(\tplusminus_i)=\tplusminus_{i-1}$ for all $i$. This means that $\ulCC[\epsilon^n](K^\sep)=A/\epsilon^n\cdot\tplusminus_{n-1}$ and $T_\epsilon\,\ulCC=A_\epsilon\cdot(\tplusminus_{n-1})_n$ with the action of $g\in\Gal(K^\sep/K)$ given by $g(\tplusminus_{n-1})_n=\chi_\epsilon(g)\cdot(\tplusminus_{n-1})_n$. In this respect the power series $\tplus=\sum_{n=0}^\infty\tplusminus_n z^n$ is the \emph{Anderson generating function} from \cite[\S\,4.2]{Anderson93} of the $z$-division tower $(\tplusminus_{n-1})_n$. The pairing \eqref{EqPairingTateMod} sends $(\tplusminus_{n-1})_n\times\tplus^{-1}$ to $1\!\cdot\! dz$. Indeed, if we write $\tplus^{-1}=\sum_{k=0}^\infty\tplusminus'_k z^k$, then $\tplusminus'_0\tplusminus_0=1$ and $\sum_{k=0}^n\tplusminus'_k\tplusminus_{n-k}=0$ for $n\ge1$. The element $(\tplusminus_{n-1})_n\in T_\epsilon\,\ulCC$ corresponds to the element $f\in\Hom_{A_\epsilon}\bigl(Q_\epsilon/A_\epsilon,\,\ulCC(K^\sep)\bigr)$ with $f(z^{-n-1})=\tplusminus_n$. We compute $\tplus^{-1}\circ f(z^{-n-1})=\sum_{k=0}^\infty\tplusminus'_k \phi_z^k(\tplusminus_n)=\sum_{k=0}^n\tplusminus'_k\tplusminus_{n-k}=\delta_{n,0}=\Res_\epsilon(z^{-n-1}dz)$ for all $n$. This proves the claim.
\end{example}

\begin{definition}\label{DefdR}
Let $\ulHM$ be a local shtuka over a valuation ring $R$ as in Notation~\ref{Notation}. We denote by $K\dbl z-\zeta\dbr$ the power series ring over $K$ in the ``variable'' $z-\zeta$ and by $K\dpl z-\zeta\dpr$ its fraction field. We consider the ring homomorphism $R\dbl z\dbr\into K\dbl z-\zeta\dbr,\,z\mapsto z=\zeta+(z-\zeta)$ and define the \emph{de Rham realization of $\ulHM$} as 
\begin{eqnarray*}\label{EqdR}
\Koh^1_\dR\bigl(\ulHM,K\dbl z-\zeta\dbr\bigr) & := & \hat\sigma^*\hat M\otimes_{R\dbl z\dbr}K\dbl z-\zeta\dbr\,,\\[2mm]
\Koh^1_\dR\bigl(\ulHM,K\dpl z-\zeta\dpr\bigr) & := & \hat\sigma^*\hat M\otimes_{R\dbl z\dbr}K\dpl z-\zeta\dpr\qquad\text{and}\\[2mm]
\Koh^1_\dR(\ulHM,K) & := & \hat\sigma^*\hat M\otimes_{R\dbl z\dbr,\,z\mapsto \zeta}K \\[1mm]
& = & \Koh^1_\dR\bigl(\ulHM,K\dbl z-\zeta\dbr\bigr)\otimes_{K\dbl z-\zeta\dbr}K\dbl z-\zeta\dbr/(z-\zeta)\,.\nonumber
\end{eqnarray*}
The de Rham realization $\Koh^1_\dR\bigl(\ulHM,K\dpl z-\zeta\dpr\bigr)$ contains a full $K\dbl z-\zeta\dbr$-lattice 
\[
\Fq\;:=\;\tau_{\hat M}^{-1}(\hat M\otimes_{R\dbl z\dbr}K\dbl z-\zeta\dbr), 
\]
which is called the \emph{Hodge-Pink lattice of $\ulHM$}. The de Rham realization $\Koh^1_\dR(\ulHM,K)$ carries a descending separated and exhausting filtration $F^\bullet$ by $K$-subspaces called the \emph{Hodge-Pink filtration of $\ulHM$}. It is defined via $\Fp:=\Koh^1_\dR(\ulHM,K\dbl z-\zeta\dbr)$ and (for $i\in\BZ$)
\[
F^i\Koh^1_\dR(\ulHM,K)\;:=\;\bigl(\Fp\cap(z-\zeta)^i\Fq\bigr)\big/\bigl((z-\zeta)\Fp\cap(z-\zeta)^i\Fq\bigr)\;\subset\;\Koh^1_\dR(\ulHM,K)\,.
\]
If we equip $\Koh^1_\dR\bigl(\ulHM,K\dpl z-\zeta\dpr\bigr)$ with the descending filtration $F^i\Koh^1_\dR\bigl(\ulHM,K\dpl z-\zeta\dpr\bigr):=(z-\zeta)^i\Fq$ by $K\dbl z-\zeta\dbr$-submodules, then $F^i\Koh^1_\dR(\ulHM,K)$ is the image of $\Koh^1_\dR\bigl(\ulHM,K\dbl z-\zeta\dbr\bigr)\,\cap\,F^i\Koh^1_\dR\bigl(\ulHM,K\dpl z-\zeta\dpr\bigr)$ in $\Koh^1_\dR(\ulHM,K)$. Since $z=\zeta+(z-\zeta)$ is invertible in $K\dbl z-\zeta\dbr$ the de Rham realization with Hodge-Pink lattice and filtration is a functor on the category of local shtukas over $R$ with quasi-morphisms.
\end{definition}

\begin{definition}\label{DefdRAMotive}
If $\ulM=(M,\tau_M)$ is an $A$-motive over $R$ as in Example~\ref{ExAMotive} we use $A_K:=A\otimes_{\BF_q}K$ and $A_K/\CJ=K$, as well as the identification $\invlim A_K/\CJ^n=K\dbl z-\zeta\dbr$ from \HJLemmaZMinusZeta. Then the \emph{de Rham realization of $\ulM$} is defined as
\begin{eqnarray*}\label{EqdRAMotive}
\Koh^1_\dR\bigl(\ulM,K\dbl z-\zeta\dbr\bigr) & := & \sigma^*M\otimes_{A_R}\invlim A_K/\CJ^n\,,\\[2mm]
\Koh^1_\dR\bigl(\ulM,K\dpl z-\zeta\dpr\bigr) & := & \Koh^1_\dR\bigl(\ulM,K\dbl z-\zeta\dbr\bigr)\otimes_{K\dbl z-\zeta\dbr}K\dpl z-\zeta\dpr\qquad\text{and}\\[2mm]
\Koh^1_\dR(\ulM,K) & := & \sigma^*M\otimes_{A_R}A_K/\CJ.
\end{eqnarray*}
(See \HJCohAMotCohAMod{} for a justification of this definition and the relation with the de Rham cohomology of a Drinfeld module, resp.\ abelian $t$-module, studied by Deligne, Anderson, Gekeler and Jing Yu~\cite{Gekeler89,Yu90}, resp.\ Brownawell and Papanikolas~\cite{BrownawellPapanikolas02}.) The \emph{Hodge-Pink lattice of $\ulM$} is defined as $\Fq:=\tau_M^{-1}(M\otimes_{A_R}\invlim A_K/\CJ^n)\subset\Koh^1_\dR\bigl(\ulM,K\dpl z-\zeta\dpr\bigr)$, and the \emph{Hodge-Pink filtration of $\ulM$} is defined via $\Fp:=\Koh^1_\dR(\ulM,K\dbl z-\zeta\dbr)$ and
\begin{eqnarray*}
F^i\Koh^1_\dR(\ulM,K) \es := & \bigl(\Fp\cap(z-\zeta)^i\Fq\bigr)\big/\bigl((z-\zeta)\Fp\cap(z-\zeta)^i\Fq\bigr)\\[2mm]
= & \text{image of }\bigl(\sigma^*M\cap\tau_M^{-1}(\CJ^i M)\bigr)\otimes_RK & \subset \es \Koh^1_\dR(\ulM,K)\,;
\end{eqnarray*}
see \cite[\S\,2.6]{Goss94}. Note that the de Rham realization with Hodge-Pink lattice and filtration only depends on the generic fiber $\ulM\otimes_RK$ of $\ulM$. It is a functor on the category of $A$-motives over $K$ with quasi-morphisms.
\end{definition}

\begin{remark}\label{RemdRAMotive}
If $\ulHM:=\ulHM_\epsilon(\ulM)$ is the associated local shtuka of $\ulM$ and $f=[\BF_\epsilon:\BF_q]$ as in Example~\ref{ExAMotive}, the map 
\[
\sigma^*\tau_M^{f-1}=\sigma^*\tau_M\circ\sigma^{2*}\tau_M\circ\cdots\circ\sigma^{(f-1)*}\tau_M\colon(\sigma^{f*}M)\otimes_{A_R}A_{\epsilon,R}/\Fa_0\isoto(\sigma^*M)\otimes_{A_R}A_{\epsilon,R}/\Fa_0
\]
is an isomorphism, because $\tau_M$ is an isomorphism over $A_{\epsilon,R}/\Fa_i$ for all $i\ne0$. Therefore it defines canonical functorial isomorphisms $\sigma^*\tau_M^{f-1}\colon\Koh^1_\dR\bigl(\ulHM,K\dbl z-\zeta\dbr\bigr)\isoto\Koh^1_\dR\bigl(\ulM,K\dbl z-\zeta\dbr\bigr)$ and $\sigma^*\tau_M^{f-1}\colon\Koh^1_\dR(\ulHM,K)\isoto\Koh^1_\dR(\ulM,K)$, which are compatible with Hodge-Pink lattice and filtration because $\sigma^*\tau_M^{f-1}\circ\tau_{\hat M}^{-1}=\tau_M^{-1}$.
\end{remark}

For the next theorem note that there is a ring homomorphism $Q_\epsilon=\BF_\epsilon\dpl z\dpr\into K\dbl z-\zeta\dbr$ sending $z$ to $z=\zeta+(z-\zeta)$, because $\zeta+(z-\zeta)$ is invertible in $K\dbl z-\zeta\dbr$.

\begin{theorem}\label{ThmdR}
Let $\olK$ be the completion of an algebraic closure $K^\alg$ of $K$. There is a canonical functorial comparison isomorphism 
\[
h_{\epsilon,\dR}\colon\;\Koh^1_\epsilon(\ulHM,Q_\epsilon)\otimes_{Q_\epsilon}\olK\dpl z-\zeta\dpr\;\isoto\;\Koh^1_\dR\bigl(\ulHM,K\dpl z-\zeta\dpr\bigr)\otimes_{K\dpl z-\zeta\dpr}\olK\dpl z-\zeta\dpr\,,
\]
which satisfies $h_{\epsilon,\dR}\bigl(\Koh^1_\epsilon(\ulHM,Q_\epsilon)\otimes_{Q_\epsilon}\olK\dbl z-\zeta\dbr\bigr)=\Fq\otimes_{K\dbl z-\zeta\dbr}\olK\dbl z-\zeta\dbr$ and which is equivariant for the action of $\Gal(K^\sep/K)$, where on the source of $h_{\epsilon,\dR}$ this group acts on both factors of the tensor product and on the target of $h_{\epsilon,\dR}$ it acts only on $\olK$. However, if $K$ is not perfect, $h_{\epsilon,\dR}$ does not allow to recover $\Koh^1_\dR\bigl(\ulHM,K\dpl z-\zeta\dpr\bigr)$ or $\Koh^1_\dR(\ulHM,K)$ from $\Koh^1_\epsilon(\ulHM,Q_\epsilon)$ because the field of Galois invariants $\olK{}^{\Gal(K^\sep/K)}$ equals the completion $\wh{K^\perf}$ of the perfect closure of $K$ by the Ax-Sen-Tate Theorem~\cite[p.\ 417]{Ax70} and $\olK\dpl z-\zeta\dpr^{\Gal(K^\sep/K)}$ $=\wh{K^\perf}\dpl z-\zeta\dpr$.
\end{theorem}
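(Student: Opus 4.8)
The plan is to obtain $h_{\epsilon,\dR}$ by base-changing the comparison isomorphism of Remark~\ref{RemTateMod} along an ``expansion at $z=\zeta$'' homomorphism and then composing with $\tau_{\hat M}^{-1}$. Fix a real number $s$ with $1/\hat q<s<1$, which is possible since $\hat q\ge q\ge 2$; by Remark~\ref{RemTateMod} there is a functorial isomorphism $h\colon\check T_\epsilon\ulHM\otimes_{A_\epsilon}K^\sep\langle\tfrac{z}{\zeta^s}\rangle\isoto\hat M\otimes_{R\dbl z\dbr}K^\sep\langle\tfrac{z}{\zeta^s}\rangle$ which is $\Gal(K^\sep/K)$- and $\hat\tau$-equivariant. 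Since $|\zeta|<|\zeta|^s<1$, re-expanding a series $\sum_i b_iz^i\in K^\sep\langle\tfrac{z}{\zeta^s}\rangle$ around $z=\zeta$ produces for the coefficient of $(z-\zeta)^k$ the sum $\sum_{i\ge k}\binom{i}{k}\zeta^{i-k}b_i$, which converges because $|b_i||\zeta|^i\le|b_i||\zeta|^{si}\to 0$; its value lies in the completion $\olK$ but in general not in $K^\sep$. Thus one gets a ring homomorphism $\iota\colon K^\sep\langle\tfrac{z}{\zeta^s}\rangle\to\olK\dbl z-\zeta\dbr$, $z\mapsto\zeta+(z-\zeta)$, compatible with the homomorphisms $R\dbl z\dbr\to\olK\dbl z-\zeta\dbr$ and $A_\epsilon\to\olK\dbl z-\zeta\dbr$ from Definition~\ref{DefdR}, and $\Gal(K^\sep/K)$-equivariant for the coefficientwise actions (recall $\zeta\in K$). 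This is the step where passage to the completion $\olK$ is forced into the statement.

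Base-changing $h$ along $\iota$ gives an isomorphism $\check T_\epsilon\ulHM\otimes_{A_\epsilon}\olK\dbl z-\zeta\dbr\isoto\hat M\otimes_{R\dbl z\dbr}\olK\dbl z-\zeta\dbr$; inverting $z-\zeta$ and using $\check V_\epsilon\ulHM=\check T_\epsilon\ulHM\otimes_{A_\epsilon}Q_\epsilon$ (note that $z$ and $z-\zeta$ become units) turns this into $\check V_\epsilon\ulHM\otimes_{Q_\epsilon}\olK\dpl z-\zeta\dpr\isoto\hat M\otimes_{R\dbl z\dbr}\olK\dpl z-\zeta\dpr$. I then define $h_{\epsilon,\dR}$ (independently of $s$) by composing with the isomorphism $\tau_{\hat M}^{-1}\colon\hat M\otimes_{R\dbl z\dbr}\olK\dpl z-\zeta\dpr\isoto\hat\sigma^*\hat M\otimes_{R\dbl z\dbr}\olK\dpl z-\zeta\dpr=\Koh^1_\dR(\ulHM,K\dpl z-\zeta\dpr)\otimes_{K\dpl z-\zeta\dpr}\olK\dpl z-\zeta\dpr$, which is an isomorphism because $z-\zeta$ is inverted. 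The equality $h_{\epsilon,\dR}\bigl(\Koh^1_\epsilon(\ulHM,Q_\epsilon)\otimes_{Q_\epsilon}\olK\dbl z-\zeta\dbr\bigr)=\Fq\otimes_{K\dbl z-\zeta\dbr}\olK\dbl z-\zeta\dbr$ is then immediate: the left side maps under the base-changed $h$ onto $\hat M\otimes_{R\dbl z\dbr}\olK\dbl z-\zeta\dbr$, whose image under $\tau_{\hat M}^{-1}$ is $\Fq\otimes_{K\dbl z-\zeta\dbr}\olK\dbl z-\zeta\dbr$ by the definition $\Fq=\tau_{\hat M}^{-1}(\hat M\otimes_{R\dbl z\dbr}K\dbl z-\zeta\dbr)$ and $K\dpl z-\zeta\dpr$-linearity of $\tau_{\hat M}^{-1}$. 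Galois equivariance in the asserted form — the group acting on both tensor factors on the source, only on $\olK$ on the target — follows from the equivariance of $h$, of $\iota$, and of $\tau_{\hat M}^{-1}$ (which is defined over $R\dbl z\dbr$); functoriality in $\ulHM$ follows from that of $h$ and $\tau_{\hat M}$.

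For the last assertion, $\Gal(K^\sep/K)$ acts coefficientwise on $\olK\dbl z-\zeta\dbr$ and on $\olK\dpl z-\zeta\dpr$, so a power (resp.\ Laurent) series is invariant if and only if each of its coefficients is; hence $\olK\dpl z-\zeta\dpr^{\Gal(K^\sep/K)}=\bigl(\olK^{\Gal(K^\sep/K)}\bigr)\dpl z-\zeta\dpr$, which by the Ax--Sen--Tate theorem \cite[p.~417]{Ax70} equals $\wh{K^\perf}\dpl z-\zeta\dpr$. Since $\Koh^1_\dR(\ulHM,K\dpl z-\zeta\dpr)$ is finite free over $K\dpl z-\zeta\dpr$, the $\Gal(K^\sep/K)$-invariants of the target of $h_{\epsilon,\dR}$ are $\Koh^1_\dR(\ulHM,K\dpl z-\zeta\dpr)\otimes_{K\dpl z-\zeta\dpr}\wh{K^\perf}\dpl z-\zeta\dpr$, and after reduction modulo $z-\zeta$ one only recovers $\Koh^1_\dR(\ulHM,K)\otimes_K\wh{K^\perf}$; these strictly exceed $\Koh^1_\dR(\ulHM,K\dpl z-\zeta\dpr)$ and $\Koh^1_\dR(\ulHM,K)$ whenever $K$ is imperfect.

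The analytic heart of the proof is already contained in Proposition~\ref{PropTateMod} and Remark~\ref{RemTateMod}, so no new estimate is required. The one genuinely delicate point is the construction of $\iota$ in the first paragraph: verifying convergence of the re-expanded coefficients (this is where $1/\hat q<s<1$ is used, together with the inclusion $\check T_\epsilon\ulHM\subset\hat M\otimes_{R\dbl z\dbr}K^\sep\langle\tfrac{z}{\zeta^s}\rangle$ provided by Remark~\ref{RemTateMod}) and observing that these coefficients lie in $\olK$ rather than in $K^\sep$ — this is exactly what makes passage to the completion unavoidable and underlies the Ax--Sen--Tate phenomenon in the final statement. Everything else is routine bookkeeping with the various coefficient rings and with base change.
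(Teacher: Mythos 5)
Your proposal is correct and follows essentially the same route as the paper's proof: re-expand the isomorphism $h$ of Remark~\ref{RemTateMod} around $z=\zeta$ via the convergent binomial expansion into $\olK\dbl z-\zeta\dbr$, invert $z-\zeta$, and compose with $\tau_{\hat M}^{-1}$, after which the lattice identity $h_{\epsilon,\dR}\bigl(\check T_\epsilon\ulHM\otimes_{A_\epsilon}\olK\dbl z-\zeta\dbr\bigr)=\Fq\otimes_{K\dbl z-\zeta\dbr}\olK\dbl z-\zeta\dbr$ and the equivariance statements follow exactly as you say. The only (immaterial) deviation is your choice of radius $1/\hat q<s<1$ where the paper uses $s=1$; both are covered by Remark~\ref{RemTateMod} and both make the re-expanded coefficients converge in $\olK$.
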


\noindent
{\itshape Remark.} Regardless of the field $K$ the comparison isomorphism does not allow to recover $\Koh^1_\epsilon(\ulHM,Q_\epsilon)$ from $\Koh^1_\dR\bigl(\ulHM,K\dbl z-\zeta\dbr\bigr)$ and $\Fq$.

\begin{proof}[Proof of Theorem~\ref{ThmdR}]
Note that the map $z\mapsto\zeta+(z-\zeta)$ induces ring homomorphisms $R\dbl z\dbr[\tfrac{1}{z-\zeta}]\into\olK\dpl z-\zeta\dpr$ and $K^\sep\langle\tfrac{z}{\zeta}\rangle\into \olK\dbl z-\zeta\dbr$, $\sum\limits_{n=0}^\infty b_nz^n\mapsto\sum\limits_{n=0}^\infty b_n\bigl(\sum\limits_{i=0}^n{n\choose i}\zeta^{n-i}(z-\zeta)^i\bigr)=\sum\limits_{i=0}^\infty\zeta^{-i}\bigl(\sum\limits_{n=i}^\infty{n\choose i}b_n\zeta^n\bigr)(z-\zeta)^i$. The series $\sum\limits_{n=i}^\infty{n\choose i}b_n\zeta^n$ converges in $\olK$ because $|{n\choose i}b_n\zeta^n|\le|b_n\zeta^n|\to0$ for $n\to\infty$. Thus we can take the functorial isomorphism $h$ from \eqref{EqTateModIsomConv} in Remark~\ref{RemTateMod} and define
\[
h_{\epsilon,\dR}:=(\tau_{\hat M}^{-1}\circ h)\otimes\id_{\olK\dpl z-\zeta\dpr}\colon\;\check T_\epsilon\ulHM\otimes_{A_\epsilon}\olK\dpl z-\zeta\dpr\;\isoto\;\hat\sigma^*\hat M\otimes_{R\dbl z\dbr}\olK\dpl z-\zeta\dpr\,.
\]
Clearly $\Fq\otimes_{K\dbl z-\zeta\dbr}\olK\dbl z-\zeta\dbr=\tau_{\hat M}^{-1}\bigl(\hat M\otimes_{R\dbl z\dbr}\olK\dbl z-\zeta\dbr\bigr)=\tau_{\hat M}^{-1}\circ h\bigl(\check T_\epsilon\ulHM\otimes_{A_\epsilon}\olK\dbl z-\zeta\dbr\bigr)$.
\end{proof}

\begin{remark}\label{RemdRIsomAMotive}
If $\ulM=(M,\tau_M)$ is an $A$-motive over $K$ as in Example~\ref{ExAMotive} which not necessarily has good reduction, the functorial isomorphism $h$ from \eqref{EqTateModIsomConvAMotive} in Remark~\ref{RemTateModAMotive} defines a canonical functorial comparison isomorphism 
\[
h_{\epsilon,\dR}:=(\tau_M^{-1}\circ h)\otimes\id_{\olK\dpl z-\zeta\dpr}\colon\;\check T_\epsilon\ulM\otimes_{A_\epsilon}\olK\dpl z-\zeta\dpr\;\isoto\;\sigma^*M\otimes_{A_K}\olK\dpl z-\zeta\dpr\,,
\]
that is, a canonical functorial comparison isomorphism 
\[
h_{\epsilon,\dR}\colon\;\Koh^1_\epsilon(\ulM,Q_\epsilon)\otimes_{Q_\epsilon}\olK\dpl z-\zeta\dpr\;\isoto\;\Koh^1_\dR\bigl(\ulM,K\dpl z-\zeta\dpr\bigr)\otimes_{K\dpl z-\zeta\dpr}\olK\dpl z-\zeta\dpr\,,
\]
which satisfies $h_{\epsilon,\dR}\bigl(\Koh^1_\epsilon(\ulM,Q_\epsilon)\otimes_{Q_\epsilon}\olK\dbl z-\zeta\dbr\bigr)=\Fq\otimes_{K\dbl z-\zeta\dbr}\olK\dbl z-\zeta\dbr$ and which is equivariant for the action of $\Gal(K^\sep/K)$, where on the source of $h_{\epsilon,\dR}$ this group acts on both factors of the tensor product and on the target of $h_{\epsilon,\dR}$ it acts only on $\olK$. When $\ulM$ has good reduction, this comparison isomorphism is compatible with the comparison isomorphism from Theorem~\ref{ThmdR} and the isomorphisms $\sigma^*\tau_M^{f-1}\colon\Koh^1_\dR\bigl(\ulHM(\ulM),K\dbl z-\zeta\dbr\bigr)\isoto\Koh^1_\dR\bigl(\ulM,K\dbl z-\zeta\dbr\bigr)$ from Remark~\ref{RemdRAMotive} and $\Koh^1_\epsilon(\ulM,Q_\epsilon)\isoto\Koh^1_\epsilon(\ulHM(\ulM),Q_\epsilon)$ from Proposition~\ref{PropTateModAMotive}.
\end{remark}

\begin{remark}\label{RemdR}
The comparison isomorphisms $h_{\epsilon,\dR}$ from Theorem~\ref{ThmdR} and Remark~\ref{RemdRIsomAMotive} are the function field analog for the comparison isomorphism $\Koh_\et^i(X\times_LL^\alg,\BQ_p)\otimes_{\BQ_p}\bB_\dR\isoto\Koh_\dR^i(X/L)\otimes_L\bB_\dR$ for a smooth proper scheme $X$ over a complete discretely valued field $L$ of characteristic $0$ with perfect residue field of characteristic $p$. The existence of the latter comparison isomorphism was conjectured by Fontaine~\cite[A.6]{Fontaine82} and proved by Faltings~\cite{Faltings89}. It is equivariant for the action of $\Gal(L^\sep/L)$ and allows to compute $\Koh_\dR^i(X/L)=\bigl(\Koh_\et^i(X\times_LL^\alg,\BQ_p)\otimes_{\BQ_p}\bB_\dR\bigr)^{\Gal(L^\sep/L)}$, because $\bB_\dR^{\Gal(L^\sep/L)}=L$. Note that it does not allow to reconstruct $\Koh_\et^i(X\times_LL^\alg,\BQ_p)$ from $\Koh_\dR^i(X/L)$. The existence of this comparison isomorphism is also phrased by saying that the $p$-adic Galois representation $\Koh_\et^i(X\times_LL^\alg,\BQ_p)$ is \emph{de Rham}. 

In our comparison isomorphism, $\olK\dpl z-\zeta\dpr$ is the analog of $\bB_\dR$; see \cite[\S\,2.9]{HartlDict}, and we may thus say that for every $A$-motive $\ulM$ over $K$, which not necessarily has good reduction, the $Q_\epsilon$-representation $\Koh^1_\epsilon(\ulM,Q_\epsilon)$ of $\Gal(K^\sep/K)$ is \emph{de Rham}.
\end{remark}

\begin{remark}
The entries of a matrix representing the comparison isomorphism with respect to some bases are called the \emph{periods of $\ulHM$}, respectively of $X$. The transcendence degree of the periods of $\ulHM$ was related by Mishiba~\cite{Mishiba12} to the dimension of the Tannakian Galois group of $\ulHM$ following the approach of Papanikolas~\cite{Papanikolas}.
\end{remark}

\begin{example}\label{ExCarlitz4B}
For the Carlitz motive from Example~\ref{ExCarlitz4A} we have $\Koh^1_\epsilon(\ulM,Q_\epsilon)=Q_\epsilon\cdot\tplus^{-1}\cong Q_\epsilon$ and \mbox{$\Koh^1_\dR(\ulM,K\dbl z-\zeta\dbr)=K\dbl z-\zeta\dbr=:\Fp$}. The Hodge-Pink lattice is $\Fq=(z-\zeta)^{-1}\Fp$ and the Hodge filtration satisfies $F^1=\Koh^1_\dR(\ulM,K)\supset F^2=(0)$. With respect to the bases $\tplus^{-1}$ of $\Koh^1_\epsilon(\ulM,Q_\epsilon)$ and $1$ of \mbox{$\Koh^1_\dR(\ulM,K\dbl z-\zeta\dbr)$} the comparison isomorphism $h_{\epsilon,\dR}$ from Theorem~\ref{ThmdR} is given by the \emph{$\epsilon$-adic Carlitz period} \mbox{$(z-\zeta)^{-1}\tplus^{-1}=\hat\sigma(\tplus)^{-1}$}. It has a pole of order one at $z=\zeta$ because $\tplus\in K^\sep\langle\tfrac{z}{\zeta}\rangle\mal\subset \olK\dbl z-\zeta\dbr\mal$. So $h_{\epsilon,\dR}\bigl(\Koh^1_\epsilon(\ulM,Q_\epsilon)\otimes_{Q_\epsilon}\olK\dbl z-\zeta\dbr\bigr)=(z-\zeta)^{-1}\olK\dbl z-\zeta\dbr=\Fq\otimes_{K\dbl z-\zeta\dbr}\olK\dbl z-\zeta\dbr$. Mishiba~\cite[Example~6.6]{Mishiba12} shows that $\hat\sigma(\tplus)^{-1}$ is transcendental over $\BF_q(\zeta)\dpl z\dpr$.
\end{example}

We next study properties of the (dual) Tate module functors.

\begin{theorem}\label{ThmAndersonKim}
Assume that $R$ is discretely valued. Then the functor $\check T_\epsilon\colon\ulHM\mapsto \check T_\epsilon\ulHM$ from the category of local shtukas over $R$ to the category $\Rep_{A_\epsilon}\Gal(K^\sep/K)$ of representations of $\Gal(K^\sep/K)$ on finite free $A_\epsilon$-modules and the functor $\check V_\epsilon\colon\ulHM\mapsto \check V_\epsilon\ulHM$ from the category of local shtukas over $R$ with quasi-morphisms to the category $\Rep_{Q_\epsilon}\Gal(K^\sep/K)$ of representations of $\Gal(K^\sep/K)$ on finite dimensional $Q_\epsilon$-vector spaces are fully faithful.
\end{theorem}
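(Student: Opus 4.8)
The plan is as follows. Faithfulness of $\check T_\epsilon$ is already contained in Corollary~\ref{CorHomFinGen} together with the isomorphism~\eqref{EqTateModIsom} (and then $\check V_\epsilon$ is faithful too), so the real issue is \emph{fullness}. First we would pass to an internal statement: given $\ulHM,\ulHM{}'$ over $R$, set $\hat N:=\Hom_{R\dbl z\dbr}(\hat M,\hat M{}')$ and $\tau_{\hat N}(g):=\tau_{\hat M{}'}\circ\hat\sigma^*g\circ\tau_{\hat M}^{-1}$, so that $(\hat N,\tau_{\hat N})$ is again a local shtuka over $R$ (of rank $\rk\ulHM\cdot\rk\ulHM{}'$), with $\Hom_R(\ulHM,\ulHM{}')=\hat N^{\hat\tau}:=\{g\in\hat N\colon\tau_{\hat N}(\hat\sigma^*_{\hat N}g)=g\}$ and $\QHom_R(\ulHM,\ulHM{}')=\bigl(\hat N[\tfrac1z]\bigr)^{\hat\tau}$. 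Since~\eqref{EqTateModIsom} is functorial it is compatible with tensor products and internal homomorphisms, and thus yields a $\Gal(K^\sep/K)$-equivariant isomorphism $\Hom_{A_\epsilon}(\check T_\epsilon\ulHM,\check T_\epsilon\ulHM{}')\isoto\check T_\epsilon(\hat N,\tau_{\hat N})$, and similarly for $\check V_\epsilon$. Taking $\Gal(K^\sep/K)$-invariants, full faithfulness of $\check T_\epsilon$ (resp.\ $\check V_\epsilon$) reduces to proving
\[
\hat N^{\hat\tau}=\bigl(\check T_\epsilon(\hat N,\tau_{\hat N})\bigr)^{\Gal(K^\sep/K)}\qquad\text{resp.}\qquad\bigl(\hat N[\tfrac1z]\bigr)^{\hat\tau}=\bigl(\check V_\epsilon(\hat N,\tau_{\hat N})\bigr)^{\Gal(K^\sep/K)}
\]
for every local shtuka $(\hat N,\tau_{\hat N})$ over $R$.

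Next we would carry out Galois descent. As $\check T_\epsilon(\hat N,\tau_{\hat N})\subseteq\hat N\otimes_{R\dbl z\dbr}K^\sep\dbl z\dbr$ and $(K^\sep\dbl z\dbr)^{\Gal(K^\sep/K)}=K\dbl z\dbr$, the left side above equals $\bigl(\hat N\otimes_{R\dbl z\dbr}K\dbl z\dbr\bigr)^{\hat\tau}$, the $\hat\tau$-invariants of the generic fibre $(\hat N,\tau_{\hat N})\otimes_{R\dbl z\dbr}K\dbl z\dbr$ (an \'etale $\tau$-module over $K\dbl z\dbr$, since $z-\zeta\in K\dbl z\dbr\mal$). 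Because $\hat N\hookrightarrow\hat N\otimes_{R\dbl z\dbr}K\dbl z\dbr$ respects $\hat\tau$, the inclusion ``$\subseteq$'' is automatic, and the whole statement comes down to the reverse inclusion: \emph{every $\hat\tau$-fixed element of the generic fibre already lies in $\hat N$}. (For $\check V_\epsilon$ we would invert $z$ throughout; since every element of $K\dpl z\dpr^{\oplus r}$ is $z^{-M}$ times one of $K\dbl z\dbr^{\oplus r}$ and $\hat\sigma$ fixes $z$, this case follows from the $\check T_\epsilon$-case.) This reverse inclusion is the only point where the hypothesis that $R$ be discretely valued is used.

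To prove it we would argue $\pi$-adically. Fix a uniformiser $\pi$ of $R$; since $z\in\epsilon=\gamma^{-1}(\Fm_R)$ we have $\zeta\in\Fm_R=\pi R$, hence $z-\zeta\equiv z\pmod\pi$. Choose a basis of $\hat N$, write $\tau_{\hat N}$ as $T\in\GL_r\bigl(R\dbl z\dbr[\tfrac1{z-\zeta}]\bigr)$ and $t\ge0$ with $(z-\zeta)^tT^{-1}\in M_r(R\dbl z\dbr)$. If $w\in K\dbl z\dbr^{\oplus r}$ solves $T\hat\sigma(w)=w$, i.e.\ $(z-\zeta)^t\hat\sigma(w)=\bigl((z-\zeta)^tT^{-1}\bigr)w$, then comparing coefficients of $z^n$ and using $\mathrm{val}(\zeta)>0$ gives, for all $n$, the estimate $\hat q\cdot\mathrm{val}(w_{n-t})\ge\min\bigl(c_n,\,\mathrm{val}(\zeta)+\hat q\,c_n\bigr)$ with $c_n:=\min_{m\le n}\mathrm{val}(w_m)$; playing this off against a hypothetical strictly decreasing cofinal sequence of ``record lows'' of $\mathrm{val}(w_n)$ and using $\hat q>1$, $\mathrm{val}(\zeta)>0$, one concludes that $\inf_n\mathrm{val}(w_n)$ is finite and attained. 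Hence the corresponding $\hat\tau$-fixed element $v$ can be written $v=\pi^{-N}m$ with $m\in\hat N\setminus\pi\hat N$ and $N\ge0$; assume $N\ge1$ for a contradiction. From $\tau_{\hat N}(\hat\sigma^*_{\hat N}v)=v$ and $\hat\sigma(\pi^{-N})=\pi^{-N\hat q}$ we get $\tau_{\hat N}(\hat\sigma^*_{\hat N}m)=\pi^{N(\hat q-1)}m\in\pi\hat N$ (note $N(\hat q-1)\ge1$). Now reduce modulo $\pi$: $\tau_{\hat N}$ becomes an isomorphism of the generic fibre of the local shtuka $(\hat N/\pi\hat N,\tau_{\hat N}\bmod\pi)$ over $k=R/\pi$, in particular an injective $k\dbl z\dbr$-linear map $\hat\sigma^*(\hat N/\pi\hat N)[\tfrac1z]\to(\hat N/\pi\hat N)[\tfrac1z]$ (recall $z-\zeta\equiv z$, a unit of $k\dpl z\dpr$); since it sends $\hat\sigma^*_{\hat N}m\bmod\pi$ to $\tau_{\hat N}(\hat\sigma^*_{\hat N}m)\bmod\pi=0$, we obtain $\hat\sigma^*_{\hat N}m\equiv0\pmod\pi$, i.e.\ $m\otimes1=0$ in $(\hat N/\pi\hat N)\otimes_{k\dbl z\dbr,\hat\sigma}k\dbl z\dbr$, and therefore $m\in\pi\hat N$ because $b\mapsto b^{\hat q}$ is injective on the field $k$ — contradicting the choice of $m$. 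Thus $N=0$ and $v\in\hat N$, as required.

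The hard part is the last paragraph: the passage to the internal-Hom shtuka and the Galois descent are formal, but two things need care. First, one must rule out ``unbounded'' $\hat\tau$-fixed elements of the generic fibre — this is exactly the coefficient estimate on $\mathrm{val}(w_n)$, the one genuinely delicate computation, and it is where one sees that a $\Gal(K^\sep/K)$-invariant $\epsilon$-adic Tate class of a local shtuka over $R$ is forced to be integral. Second, one needs that $\tau_{\hat N}$ becomes an honest automorphism after reduction modulo $\pi$, so that it has trivial kernel; this is where $\zeta\equiv0$, hence $z-\zeta\equiv z\in k\dpl z\dpr\mal$, is used. The discreteness of $R$ enters through the normalisation $v=\pi^{-N}m$ (over a more general valuation ring one would divide instead by an element of the appropriate valuation).
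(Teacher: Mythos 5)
Your overall architecture is sound: the passage to the internal-Hom shtuka $\hat N=\Hom_{R\dbl z\dbr}(\hat M,\hat M{}')$, the Galois descent via Proposition~\ref{PropTateMod} reducing everything to the statement $\bigl(\hat N\otimes_{R\dbl z\dbr}K\dbl z\dbr\bigr)^{\hat\tau}=\hat N^{\hat\tau}$, and your final step (if $v=\pi^{-N}m$ with $m\in\hat N\setminus\pi\hat N$ and $N\ge1$, then $\tau_{\hat N}(\hat\sigma_{\hat N}^*m)=\pi^{N(\hat q-1)}m$ dies mod $\pi$, forcing $m\in\pi\hat N$ because $z-\zeta\equiv z\in k\dpl z\dpr\mal$) are all correct. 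The gap is in the step you yourself flag as delicate: the claim that the estimate $\hat q\cdot\mathrm{val}(w_{n-t})\ge\min\bigl(c_n,\,\mathrm{val}(\zeta)+\hat q\,c_n\bigr)$ forces $\inf_n\mathrm{val}(w_n)$ to be finite. It does not. Take the valuation profile $\mathrm{val}(w_n)=-Cn+O(1)$ with $C=\mathrm{val}(\zeta)/(\hat q-1)$: then $c_n\sim -Cn$, the minimum on the right is $\mathrm{val}(\zeta)+\hat q c_n$ for large $n$, and the inequality $-\hat qC(n-t)\ge\mathrm{val}(\zeta)-\hat qCn$ reduces to $\hat qCt\ge\mathrm{val}(\zeta)$, which holds whenever $t\ge1$. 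So your inequality is satisfied by coefficient sequences with linearly unbounded denominators. This is not an artifact of a weak estimate: such $\hat\tau$-fixed elements genuinely exist over non-discretely valued $K$ — e.g.\ for $\hat N=(R\dbl z\dbr,(z-\zeta)^{-1})$ (the internal Hom from the Carlitz shtuka to the unit object) the element $\tplus^{-1}$ lies in $\bigl(\hat N\otimes\CO_{\olK}\dbl z\dbr\bigr)^{\hat\tau}$ and has $\mathrm{val}\bigl((\tplus^{-1})_n\bigr)$ unbounded below of order $-Cn$. Hence no inequality derived purely from $\hat q>1$ and $\mathrm{val}(\zeta)>0$ can yield boundedness; the discreteness of $|K\mal|$ must enter the boundedness argument itself (what actually rules out $\tplus^{-1}$ over discretely valued $K$ is that its coefficients have valuations in $\tfrac{1}{\hat q^n}\BZ\,\mathrm{val}(\zeta)$, incompatible with a discrete value group — compare conditions \ref{ThmWA=>A_c} and \ref{ThmWA=>A_d} of Theorem~\ref{ThmWA=>A}), not merely the normalisation $v=\pi^{-N}m$ afterwards as you assert.

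The paper circumvents this difficulty entirely and never proves integrality coefficient by coefficient. Given $g$ on Tate modules, it descends to $f_K\colon\hat M\otimes_{R\dbl z\dbr}K\dbl z\dbr\to\hat M{}'\otimes_{R\dbl z\dbr}K\dbl z\dbr$ exactly as you do, but then works with the whole module $f_K(\hat M)$ rather than with a single $\hat\tau$-fixed element: it forms the saturations $\hat M''_1$ of $f_K(\hat M)$ and $\hat M''_2$ of $f_K(\hat M)\cap\hat M{}'$ using Lemma~\ref{LemmaSaturation} (a finitely generated torsion-free $R\dbl z\dbr$-module sandwiched between its $z$-localization and its $K\dbl z\dbr$-fibre is reflexive, hence free over the $2$-dimensional regular local ring $R\dbl z\dbr$), observes that $\ulHM''_2\hookrightarrow\ulHM''_1$ induces an isomorphism on Tate modules, and concludes it is an isomorphism by Lemma~\ref{LemmaTateIsog} — whose proof reduces to rank one via determinants and uses only the faithfulness of reduction modulo $\Fm_R$ (Lemma~\ref{LemmaFaithfulBM}). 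Discreteness of $R$ enters there through Lemma~\ref{LemmaSaturation} and Lemma~\ref{LemmaFaithfulBM} (via $\bigcap_n\hat\sigma^n(\Fm_R)=(0)$). If you want to keep your internal-Hom formulation, you should replace your coefficient estimate by this saturation-plus-isogeny argument applied to the rank-one sub-shtuka generated by $v$, or find a genuinely different boundedness argument that exploits the discreteness of the value group; as written, the first half of your key step does not close.
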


We will give a proof at the end of the section.\\

If we additionally require that $\tau_{\hat M}$ is topologically nilpotent in the sense of Remark~\ref{RemTopolNilpotent}, then the above theorem was previously obtained by Anderson \cite[\S4.5, Theorem~1]{Anderson93} by a different method from ours. Note that if $\tau_{\hat M}$ is ``topologically nilpotent'', then the action of the (not necessarily commutative) polynomial ring $R\dbl z\dbr\{\hat\tau\}$ on $\ulHM$ extends to the action of the formal power series ring \mbox{$R\dbl z\dbr \{\!\{\hat\tau\}\!\}$}, which is also a (not necessarily commutative) local ring.\\

The theorem allows to make the following

\begin{definition}\label{DefCrystRep}
Let $R$ be discretely valued. The full subcategory of $\Rep_{Q_\epsilon}\Gal(K^\sep/K)$ which is the essential image of the functor $\check V_\epsilon$ is called the \emph{category of equal characteristic crystalline representations}.
\end{definition}

We explain the motivation for this definition in Remarks~\ref{RemCrystRealiz} and \ref{RemEqCharCrystRep} below.

\begin{proposition}\label{PropLattices}
Let $R$ be discretely valued, let $\ulHM$ be a local shtuka over $R$, and set $\check V:=\check V_\epsilon\ulHM$. Then the map
\[
\left\{\,\begin{array}{l}
\ulHM'\subset\ulHM[\tfrac{1}{z}]\colon\text{ local shtukas}\\\text{over }R\text{ of full rank}
\end{array}
\,\right\}  
\xrightarrow{\es\check T_\epsilon\;}
\left\{\,\begin{array}{l}\check T'\subset\check V\colon\Gal(K^\sep/K)\text{-stable}\\\text{full }A_\epsilon\text{-lattices}
\end{array}
\,\right\}
\]
is a bijection.
\end{proposition}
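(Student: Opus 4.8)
The map is well defined: if $\ulHM'$ is a full‑rank sub‑local shtuka of $\ulHM[\tfrac1z]$ then $\hat M'[\tfrac1z]=\hat M[\tfrac1z]$, so $\ulHM'$ and $\ulHM$ are quasi‑isogenous and $\check V_\epsilon\ulHM'=\check V$, while $\check T_\epsilon\ulHM'\subset\hat M'\otimes_{R\dbl z\dbr}K^\sep\dbl z\dbr\subset\hat M\otimes_{R\dbl z\dbr}K^\sep\dpl z\dpr$ is by Proposition~\ref{PropTateMod} a $\Gal(K^\sep/K)$‑stable full $A_\epsilon$‑lattice in $\check V$. I will prove injectivity by reconstructing $\ulHM'$ from its Tate lattice, and surjectivity by an explicit intersection construction.

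\textbf{Injectivity.} For a local shtuka $\ulHM'\subset\ulHM[\tfrac1z]$ of full rank, Proposition~\ref{PropTateMod} gives $\hat M'\otimes_{R\dbl z\dbr}K^\sep\dbl z\dbr=K^\sep\dbl z\dbr\cdot\check T_\epsilon\ulHM'$ inside $\hat M\otimes_{R\dbl z\dbr}K^\sep\dpl z\dpr$. I claim $\hat M'=\hat M[\tfrac1z]\cap\bigl(K^\sep\dbl z\dbr\cdot\check T_\epsilon\ulHM'\bigr)$: choosing an $R\dbl z\dbr$‑basis $e'_1,\dots,e'_r$ of $\hat M'$, which is simultaneously an $R\dbl z\dbr[\tfrac1z]$‑basis of $\hat M[\tfrac1z]$ and a $K^\sep\dbl z\dbr$‑basis of $\hat M'\otimes K^\sep\dbl z\dbr$, an element of the intersection has coordinates in $R\dbl z\dbr[\tfrac1z]\cap K^\sep\dbl z\dbr=R\dbl z\dbr$ (the last equality is an elementary computation with power series). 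Since moreover $\tau_{\hat M'}$ is the restriction of $\tau_{\hat M}[\tfrac1z]$ to $\hat\sigma^\ast\hat M'[\tfrac1{z-\zeta}]$, both $\hat M'$ and $\tau_{\hat M'}$ are recovered from $\check T_\epsilon\ulHM'$ viewed inside $\check V$. (Alternatively one may invoke the full faithfulness of $\check T_\epsilon$ from Theorem~\ref{ThmAndersonKim}.)

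\textbf{Surjectivity.} Given a $\Gal(K^\sep/K)$‑stable full $A_\epsilon$‑lattice $\check T'\subset\check V$, set $L':=K^\sep\dbl z\dbr\cdot\check T'$ and $\hat M':=\hat M[\tfrac1z]\cap L'$, both inside $\hat M\otimes_{R\dbl z\dbr}K^\sep\dpl z\dpr$. Because $\check T'$ and $\check T_\epsilon\ulHM$ are commensurable and $K^\sep\dbl z\dbr\cdot\check T_\epsilon\ulHM=\hat M\otimes K^\sep\dbl z\dbr$ by Proposition~\ref{PropTateMod}, one gets $z^N\hat M\subset\hat M'\subset z^{-N}\hat M$ for some $N$ (again using $R\dbl z\dbr[\tfrac1z]\cap K^\sep\dbl z\dbr=R\dbl z\dbr$), so $\hat M'$ is a finitely generated torsion‑free $R\dbl z\dbr$‑module with $\hat M'[\tfrac1z]=\hat M[\tfrac1z]$. \emph{Freeness:} writing $\Fm_R=(\pi)$, the module $\hat M'/\pi\hat M'$ is $z$‑torsion‑free — if $m\in\hat M'$ with $zm\in\pi\hat M'$, then $\pi^{-1}m=z^{-1}(\pi^{-1}zm)$ lies in $\hat M[\tfrac1z]$ and in $L'$ (as $\pi^{-1}\in K\subset K^\sep\dbl z\dbr$), hence in $\hat M'$ — so $\hat M'/\pi\hat M'$ is free over the discrete valuation ring $k\dbl z\dbr$, and by $\pi$‑adic completeness and Noetherianity of $R\dbl z\dbr$ together with $\pi$‑torsion‑freeness, $\hat M'$ is free over $R\dbl z\dbr$ of rank $r$. \emph{Shtuka structure:} since every $m\in\check T'$ satisfies $\tau_{\hat M}(\hat\sigma^\ast_{\hat M}m)=m$, $K^\sep\dbl z\dbr$‑linearity yields $\tau_{\hat M}(\hat\sigma^\ast L')=L'$ in $\hat M\otimes K^\sep\dpl z\dpr$; intersecting this with $\tau_{\hat M}(\hat\sigma^\ast\hat M[\tfrac1z,\tfrac1{z-\zeta}])=\hat M[\tfrac1z,\tfrac1{z-\zeta}]$ and using $\hat M'[\tfrac1{z-\zeta}]=\hat M[\tfrac1z,\tfrac1{z-\zeta}]\cap L'$ (and flatness of $\hat\sigma$) gives $\tau_{\hat M}(\hat\sigma^\ast\hat M'[\tfrac1{z-\zeta}])=\hat M'[\tfrac1{z-\zeta}]$, so $\ulHM':=(\hat M',\tau_{\hat M}|)$ is a local shtuka of rank $r$ contained in $\ulHM[\tfrac1z]$ with full rank.

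\textbf{Identifying the Tate module, and the main obstacle.} It remains to show $\check T_\epsilon\ulHM'=\check T'$. One has $\check T_\epsilon\ulHM'=(\hat M'\otimes K^\sep\dbl z\dbr)\cap\check V$, and since $\hat M'$ is free with $\hat M'\subset L'$ this gives $\hat M'\otimes K^\sep\dbl z\dbr=K^\sep\dbl z\dbr\cdot\hat M'\subset L'$, whence $\check T_\epsilon\ulHM'\subset L'\cap\check V=\check T'$ (the last equality because $K^\sep\dbl z\dbr\cap Q_\epsilon=A_\epsilon$). For the reverse inclusion it is equivalent to prove $\hat M'\otimes_{R\dbl z\dbr}K^\sep\dbl z\dbr=L'$, i.e. that passing from $L'$ to $\hat M[\tfrac1z]\cap L'$ does not shrink the $K^\sep\dbl z\dbr$‑span. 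This is where both $\Gal(K^\sep/K)$‑stability of $\check T'$ and the hypothesis that $R$ be discretely valued are essential: $L'$ being $\Gal$‑stable descends (Speiser/Hilbert~90) to a free $K\dbl z\dbr$‑lattice $L'_K=(L')^{\Gal}$ in $\hat M\otimes_{R\dbl z\dbr}K\dpl z\dpr$ with $\hat M'=\hat M[\tfrac1z]\cap L'_K$, and one must check that $L'_K$ admits a $K\dbl z\dbr$‑basis lying in $\hat M[\tfrac1z]$ (so that it lies in $\hat M'$); I expect to do this by an elementary‑divisor argument over the discrete valuation ring $R$, controlling denominators via the convergence estimate of Remark~\ref{RemTateMod} (which forces the entries of elements of $\check V$ to lie in $K^\sep\langle\tfrac{z}{\zeta^s}\rangle[\tfrac1z]$). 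Granting $\hat M'\otimes K^\sep\dbl z\dbr=L'$, we conclude $\check T_\epsilon\ulHM'=L'\cap\check V=\check T'$, and combined with injectivity this proves the map is a bijection. The hard part is precisely this last descent/denominator bookkeeping; everything else is formal manipulation of lattices together with Propositions~\ref{PropTateMod} and the $\pi$‑adic completeness of $R\dbl z\dbr$.
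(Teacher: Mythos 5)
Your construction agrees with the paper's: setting $L':=K^\sep\dbl z\dbr\cdot\check T'$ and $\hat M':=\hat M[\tfrac{1}{z}]\cap L'$ is the same as the paper's $\hat M':=\hat M[\tfrac{1}{z}]\cap N$ with $N:=(\check T'\otimes_{A_\epsilon}K^\sep\dbl z\dbr)^{\Gal(K^\sep/K)}$, since $\hat M[\tfrac{1}{z}]$ is Galois-invariant. Your injectivity argument is fine (the paper simply quotes Theorem~\ref{ThmAndersonKim}), and your direct proof that $\hat M'$ is free (via $z$-torsion-freeness of $\hat M'/\pi\hat M'$ and a Nakayama/completeness argument) is a legitimate alternative to the paper's appeal to Lemma~\ref{LemmaSaturation}, which instead observes that $\hat M'$ is reflexive over the two-dimensional regular local ring $R\dbl z\dbr$ and hence free.

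However, the proof is genuinely incomplete at the point you yourself flag. The decisive assertion $\hat M'\otimes_{R\dbl z\dbr}K^\sep\dbl z\dbr=L'$ (equivalently $\check T'\subset\check T_\epsilon\ulHM'$; equivalently $\hat M'\otimes_{R\dbl z\dbr}K\dbl z\dbr=N$) is only ``granted'', and the route you sketch --- Galois descent to $N$ followed by an elementary-divisor and denominator-bookkeeping argument via Remark~\ref{RemTateMod} --- is neither carried out nor obviously workable: there is no formal reason why a $K\dbl z\dbr$-lattice $N\subset\hat M[\tfrac{1}{z}]\otimes_{R\dbl z\dbr}K\dbl z\dbr$ should be generated over $K\dbl z\dbr$ by $N\cap\hat M[\tfrac{1}{z}]$, and this is exactly the content of the missing step. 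The paper closes it with a short flat-base-change argument that you should supply: the sequence $0\to\hat M'\to\hat M[\tfrac{1}{z}]\to N[\tfrac{1}{z}]/N$ is left exact by the very definition of $\hat M'$; tensoring over $R\dbl z\dbr$ with the flat algebra $K\dbl z\dbr$ preserves left exactness and turns $\hat M[\tfrac{1}{z}]$ into $N[\tfrac{1}{z}]$, while it leaves $N[\tfrac{1}{z}]/N\cong\varinjlim_r z^{-r}N/N$ unchanged, because each $z^{-r}N/N$ is a module over $K\dbl z\dbr/(z^r)$, hence already a $K$-vector space, and base change from $R\dbl z\dbr/(z^r)$ to $K\dbl z\dbr/(z^r)$ amounts to inverting $\zeta$. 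Therefore $\hat M'\otimes_{R\dbl z\dbr}K\dbl z\dbr=\ker\bigl(N[\tfrac{1}{z}]\to N[\tfrac{1}{z}]/N\bigr)=N$, which yields $\check T_\epsilon\ulHM'=\check T'$ via Proposition~\ref{PropTateMod}. Without this (or an equivalent) argument the surjectivity, and hence the bijection, is not established.
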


Let us now prove Theorem~\ref{ThmAndersonKim} and Proposition~\ref{PropLattices}. We begin with a few lemmas.

\begin{lemma}\label{LemmaSaturation}
Assume that $R$ is discretely valued.
Let $\hat M$ be  a finitely generated torsion-free $R\dbl z\dbr $-module (not necessarily free). We set $\hat M':= \hat M[\frac{1}{z}]\cap (\hat M \otimes_{R\dbl z\dbr }K\dbl z\dbr ) $, where the intersection is taken inside $\hat M\otimes_{R\dbl z\dbr }K\dpl z\dpr$. Then $\hat M'$ is free over $R\dbl z\dbr $ and we have $\hat M[\frac{1}{z-\zeta}] = \hat M'[\frac{1}{z-\zeta}]$. In particular, if $\hat M$ is equipped with an isomorphism $\tau_{\hat M}:\hat\sigma^*\hat M[\frac{1}{z-\zeta}] \isoto \hat M[\frac{1}{z-\zeta}]$, then $\ulHM':=(\hat M', \tau_{\hat M'} ) $ is a local shtuka, where $\tau_{\hat M'}=\tau_{\hat M} $. Furthermore, we have $\hat M' = \hat M$ if $\hat M$ is already free.
\end{lemma}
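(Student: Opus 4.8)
The plan is to work over the ring $R\dbl z\dbr$, which, since $R$ is discretely valued, is a regular local ring of Krull dimension $2$ with maximal ideal $\Fm=(\pi,z)$, where $\pi$ denotes a uniformizer of $R$. I would regard all modules in sight as submodules of the $K\dpl z\dpr$-vector space $V:=\hat M\otimes_{R\dbl z\dbr}K\dpl z\dpr$, which has dimension $n:=\rk_{R\dbl z\dbr}\hat M$ (as $\hat M$ is torsion-free); thus $P:=\hat M[\tfrac1z]$ is a finitely generated torsion-free module over the one-dimensional regular domain $R\dbl z\dbr[\tfrac1z]$ spanning $V$, and $N:=\mathrm{im}\bigl(\hat M\otimes_{R\dbl z\dbr}K\dbl z\dbr\to V\bigr)$ is a free $K\dbl z\dbr$-lattice of rank $n$ in $V$, with $\hat M\subseteq\hat M'=P\cap N$. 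First I would record the elementary identities, obtained inside $K\dpl z\dpr$ by comparing coefficients, $R\dbl z\dbr[\tfrac1z]\cap K\dbl z\dbr=R\dbl z\dbr$ and, more generally, $\Quot(R\dbl z\dbr)\cap K\dbl z\dbr=R\dbl z\dbr_{(z)}$. The first, applied coordinatewise, gives $\hat M'=\hat M$ when $\hat M$ is free, which is the last assertion of the lemma. For general $\hat M$ I would choose an embedding $\hat M\hookrightarrow R\dbl z\dbr^{\oplus n}$ with torsion cokernel; since $\hat M\rightsquigarrow\hat M'$ is functorial for injections, applying it and invoking the free case yields $\hat M\subseteq\hat M'\subseteq(R\dbl z\dbr^{\oplus n})'=R\dbl z\dbr^{\oplus n}$, so $\hat M'$ is a submodule of a noetherian module, hence finitely generated.

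The heart of the matter is the freeness of $\hat M'$. Since $R\dbl z\dbr$ is regular local of dimension $2$, by the Auslander--Buchsbaum formula it suffices to show $\mathrm{depth}_{R\dbl z\dbr}\hat M'\ge 2$, i.e.\ that $z,\pi$ is an $\hat M'$-regular sequence. The element $z$ is a non-zero-divisor on $\hat M'$ because $\hat M'$ is torsion-free over the domain $R\dbl z\dbr$; the point is that $\pi$ then acts injectively on $\hat M'/z\hat M'$. Suppose $x\in\hat M'$ with $\pi x=zy$ for some $y\in\hat M'$. Viewed in $N$, where $\pi$ is a unit (it lies in $K\mal\subseteq K\dbl z\dbr\mal$), this reads $x=z\cdot w$ with $w:=\pi^{-1}y\in N$; viewed in $P=\hat M[\tfrac1z]$, where $z$ is a unit, it gives $w=z^{-1}x\in P$. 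Hence $w\in P\cap N=\hat M'$ and $x\in z\hat M'$, as required. So $z,\pi$ is $\hat M'$-regular and $\hat M'$ is free of rank $n$. I expect this step — showing that the "patching'' of the $R\dbl z\dbr[\tfrac1z]$-module $P$ with the $K\dbl z\dbr$-lattice $N$ repairs the possible non-freeness of $\hat M$ at the closed point $\Fm$ — to be the main obstacle; the clean way to see it is precisely the above regular-sequence argument, using that $\pi$ is invertible on $N$ and $z$ on $P$.

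Next I would compare $\hat M[\tfrac1{z-\zeta}]$ with $\hat M'[\tfrac1{z-\zeta}]$. As $\hat M\subseteq\hat M'$ have the same generic rank, $C:=\hat M'/\hat M$ is a finitely generated torsion module, and I claim $\Supp C\subseteq\{\Fm\}$. For a height-one prime $\Fp$ of $R\dbl z\dbr$ one has $\hat M'_\Fp=P_\Fp\cap N_\Fp$ inside $V$ (localization commutes with finite intersections); if $z\notin\Fp$ then $P_\Fp=\hat M_\Fp$, whence $\hat M'_\Fp=\hat M_\Fp$, while for $\Fp=(z)$ one has $N_{(z)}=N$ (because $R\dbl z\dbr_{(z)}\subseteq K\dbl z\dbr$, so $N$ is already an $R\dbl z\dbr_{(z)}$-module), and choosing an $R\dbl z\dbr_{(z)}$-basis $m_1,\dots,m_n$ of the free module $\hat M_{(z)}$ the inclusions $N\subseteq\bigoplus_i K\dbl z\dbr\,m_i$ and $P_{(z)}=\bigoplus_i\Quot(R\dbl z\dbr)\,m_i$ give $\hat M'_{(z)}\subseteq\bigoplus_i\bigl(\Quot(R\dbl z\dbr)\cap K\dbl z\dbr\bigr)m_i=\bigoplus_i R\dbl z\dbr_{(z)}\,m_i=\hat M_{(z)}$; and $\hat M'\otimes\Quot=\hat M\otimes\Quot$ at the generic point. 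Hence $C_\Fp=0$ for all $\Fp\ne\Fm$, so $C$ is annihilated by a power of $\Fm$, and in particular by a power of $z-\zeta$ (recall $\zeta\in\Fm_R\subseteq\Fm$); therefore $\hat M[\tfrac1{z-\zeta}]=\hat M'[\tfrac1{z-\zeta}]$. (Equivalently, $\hat M'$ is the reflexive hull of $\hat M$.)

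Finally, for the statement about $\tau$: applying $\hat\sigma^\ast(-)=(-)\otimes_{R\dbl z\dbr,\hat\sigma}R\dbl z\dbr$ to $0\to\hat M\to\hat M'\to C\to0$ and using that $\hat M'$ is free, the kernel of $\hat\sigma^\ast\hat M\to\hat\sigma^\ast\hat M'$ is $\mathrm{Tor}_1^{R\dbl z\dbr}\bigl(C,R\dbl z\dbr\bigr)$ (with the second argument an $R\dbl z\dbr$-algebra via $\hat\sigma$) and its cokernel is $\hat\sigma^\ast C$; the former is supported inside $\Supp C=\{\Fm\}$ since $\mathrm{Tor}$ localizes, and the latter has support $(\Spec\hat\sigma)^{-1}(\Supp C)=(\Spec\hat\sigma)^{-1}(\{\Fm\})=\{\Fm\}$ (the fibre of $\Spec\hat\sigma$ over $\Fm$ being $\Spec(R\dbl z\dbr/(\pi^{\hat q},z))=\Spec(R/\pi^{\hat q})$, a one-point space). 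Hence both kernel and cokernel are killed by a power of $z-\zeta$, so $\hat\sigma^\ast\hat M[\tfrac1{z-\zeta}]=\hat\sigma^\ast\hat M'[\tfrac1{z-\zeta}]$; together with $\hat M[\tfrac1{z-\zeta}]=\hat M'[\tfrac1{z-\zeta}]$ this exhibits $\tau_{\hat M}$ as an isomorphism $\hat\sigma^\ast\hat M'[\tfrac1{z-\zeta}]\isoto\hat M'[\tfrac1{z-\zeta}]$, and setting $\tau_{\hat M'}:=\tau_{\hat M}$ makes $\ulHM':=(\hat M',\tau_{\hat M'})$ a local shtuka. (Only the discrete valuedness of $R$, via the regularity of $R\dbl z\dbr$, is used in any essential way.)
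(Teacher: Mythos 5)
Your proof is correct, and while it follows the same overall skeleton as the paper's argument (sandwich $\hat M'$ inside a free module to get finite generation; show $\hat M'/\hat M$ is supported at the closed point of $\Spec R\dbl z\dbr$; deduce freeness from a depth-two property of the intersection $P\cap N$), the execution of the key freeness step is genuinely different. The paper observes that $\hat M'$ is reflexive — using freeness of $\hat M'[\tfrac1z]$ and $\hat M'\otimes_{R\dbl z\dbr}K\dbl z\dbr$ together with $R\dbl z\dbr=R\dbl z\dbr[\tfrac1z]\cap K\dbl z\dbr$ — and then cites Serre's lemma that a reflexive module over a two-dimensional regular local ring is free. You instead prove directly that $z,\pi$ is an $\hat M'$-regular sequence, exploiting that $\pi$ is invertible on $N$ and $z$ on $P$, and conclude via Auslander--Buchsbaum; this is in effect a self-contained proof of the instance of Serre's lemma being used, and it isolates nicely why the intersection "repairs" the failure of freeness at $\Fm$. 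Your finite-generation argument (embed $\hat M\into R\dbl z\dbr^{\oplus n}$ with torsion cokernel and use monotonicity of $\hat M\mapsto\hat M'$ plus the free case) is a cleaner packaging of the paper's construction of $\hat M''=\alpha^{-1}(R\dbl z\dbr^{\oplus r})$, and your support computation at height-one primes repackages the paper's observation that $\hat M'/\hat M$ is killed by a power of $z$ and a power of $\zeta$. You also make explicit a point the paper leaves implicit, namely that the kernel and cokernel of $\hat\sigma^*\hat M\to\hat\sigma^*\hat M'$ are again killed by a power of $z-\zeta$, so that $\tau_{\hat M}$ really does induce $\tau_{\hat M'}$. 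One very minor quibble: the identity $\Quot(R\dbl z\dbr)\cap K\dbl z\dbr=R\dbl z\dbr_{(z)}$ is not literally a matter of comparing coefficients; it follows from the fact that the DVR $K\dbl z\dbr$ dominates the DVR $R\dbl z\dbr_{(z)}$ with the same uniformizer $z$ and with units mapping to units, so both induce the same $z$-adic valuation on $\Quot(R\dbl z\dbr)$.
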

\begin{proof}
Note that $R\dbl z\dbr[\frac{1}{z}]$ is a principal ideal domain, being a $1$-dimensional factorial ring, so the torsion-free module $\hat M'[\frac{1}{z}]$ is free over $R\dbl z\dbr[\frac{1}{z}]$. Likewise,  $\hat M\otimes_{R\dbl z\dbr}K\dbl z\dbr$ is free over $K\dbl z\dbr$.

Clearly $\hat M\subset\hat M'$ and $\hat M[\frac{1}{z}]=\hat M'[\frac{1}{z}]$. Choose isomorphisms $\alpha\colon\hat M[\frac{1}{z}]\isoto R\dbl z\dbr[\frac{1}{z}]^{\oplus r}$ and $\beta\colon\hat M\otimes_{R\dbl z\dbr}K\dbl z\dbr\isoto K\dbl z\dbr^{\oplus r}$. After multiplying $\alpha$ with a high enough power of $z$ we may assume that all entries of the matrix $A:=\alpha\beta^{-1}\in\GL_r\bigl(K\dpl z\dpr\bigr)$ lie in $K\dbl z\dbr$. Then every $m'\in\hat M'$ satisfies $\alpha(m')=A\cdot\beta(m')\in(R\dbl z\dbr[\frac{1}{z}]\cap K\dbl z\dbr)^{\oplus r}=R\dbl z\dbr^{\oplus r}$. Let $\hat M'':=\alpha^{-1}(R\dbl z\dbr^{\oplus r})$ so that $\hat M'\subset\hat M''$. Since $R\dbl z\dbr$ is noetherian, $\hat M'$ is finitely generated over $R\dbl z\dbr$. Together with $\hat M[\frac{1}{z}]=\hat M'[\frac{1}{z}]$ this implies that there is a power of $z$ which annihilates $\hat M'/\hat M$. From $\hat M'\otimes_{R\dbl z\dbr}K\dbl z\dbr\subset\hat M''\otimes_{R\dbl z\dbr}K\dbl z\dbr\cong K\dbl z\dbr^{\oplus r}$ it 
follows that $\hat M'\otimes_{R\dbl z\dbr}K\dbl z\dbr$ has no $z$-torsion, whence $\hat M'\otimes_{R\dbl z\dbr}K\dbl z\dbr\subset\hat M'\otimes_{R\dbl z\dbr}K\dpl z\dpr=\hat M\otimes_{R\dbl z\dbr}K\dpl z\dpr$. This implies $\hat M'\otimes_{R\dbl z\dbr}K\dbl z\dbr=\hat M\otimes_{R\dbl z\dbr}K\dbl z\dbr$ and $(\hat M'/\hat M)\otimes_R K=(\hat M'/\hat M)\otimes_{R\dbl z\dbr}K\dbl z\dbr=(0)$. So $\hat M'/\hat M$ is annihilated by a power of $\zeta$ and thus by some power of $z-\zeta $, which shows that $\hat M[\frac{1}{z-\zeta}] = \hat M'[\frac{1}{z-\zeta}]$.

So it remains to show that $\hat M'$ is free over $R\dbl z\dbr $. Note that by construction $\hat M'$ is a \emph{reflexive} $R\dbl z\dbr$-module; i.e., $\hat M'$ is naturally isomorphic to the $R$-linear double dual $(\hat M')\dual{}\dual$; indeed, this follows from the fact that $\hat M'[\tfrac{1}{z}]$ and $\hat M'\otimes_{R\dbl z\dbr}K\dbl z\dbr$ are free over  $R\dbl z\dbr[\tfrac{1}{z}]$ and $K\dbl z\dbr$, respectively, and the equality $R\dbl z\dbr=R\dbl z\dbr[\tfrac{1}{z}]\cap K\dbl z\dbr$ with the intersection taking place in $K\dpl z\dpr$. Now, it is well known that a reflexive module over a regular $2$-dimensional local ring is necessarily free; \emph{cf.} \cite[\S6, Lemme~6]{Serre58}. The last assertion follows from the equation $R\dbl z\dbr=R\dbl z\dbr[\tfrac{1}{z}]\cap K\dbl z\dbr$ in $K\dpl z\dpr$.
\end{proof}

\begin{lemma}\label{LemmaFaithfulBM}
Assume that  $\bigcap_n\hat\sigma^n(\Fm_R) = (0)$. Then the base change functor $\ulHM\mapsto \ulHM\otimes_R k$ from the category of local shtukas over $R$ to the categories of local shtukas over $k$ is faithful.
\end{lemma}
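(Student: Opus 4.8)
The plan is to reduce the statement to a single iteration argument. Both functors are additive, so it suffices to prove that a morphism $h\colon\ulHM\to\ulHM'$ of local shtukas over $R$ with $h\otimes_R k=0$ is already zero. Choosing $R\dbl z\dbr$-bases of $\hat M$ and $\hat M'$ and writing $h$ as a matrix $H$ over $R\dbl z\dbr$ and $\tau_{\hat M},\tau_{\hat M'}$ as matrices $T,T'\in\GL_r\bigl(R\dbl z\dbr[\tfrac1{z-\zeta}]\bigr)$, the condition that $h$ be a morphism reads $H\cdot T=T'\cdot\hat\sigma(H)$, and the vanishing $h\otimes_R k=0$ says precisely that every coefficient of $H$ lies in $\Fm_R$. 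Since $-\otimes(R\dbl z\dbr,(z-\zeta)^e)$ is an auto-equivalence of the category of local shtukas over $R$ which commutes with $-\otimes_R k$, and since by Lemma~\ref{LemmaLocSht} and the discussion following it every local shtuka over $R$ becomes effective and bounded by $(d,0,\ldots,0)$ for some $d\ge0$ after such a twist, I may moreover assume that $\ulHM$ and $\ulHM'$ are effective and bounded by a common $(d,0,\ldots,0)$; then $T,T'\in M_r(R\dbl z\dbr)$ and $(z-\zeta)^dT^{-1},\,(z-\zeta)^d(T')^{-1}\in M_r(R\dbl z\dbr)$.

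The heart of the argument is to iterate the rewritten relation $H=T'\,\hat\sigma(H)\,T^{-1}$. The decisive point is that $\hat\sigma$ raises the coefficients of a matrix over $R\dbl z\dbr$ to their $\hat q$-th powers, so that $\hat\sigma^n(H)$ has \emph{all} of its coefficients in the set $\hat\sigma^n(\Fm_R)$. Writing $\hat\sigma^j(T^{-1})=(z-\zeta^{\hat q^j})^{-d}W_j$ with $W_j\in M_r(R\dbl z\dbr)$, and using $\hat\sigma^j(T')\in M_r(R\dbl z\dbr)$, iteration of the relation produces for every $n$ an identity
\[
\Bigl(\prod_{j=0}^{n-1}(z-\zeta^{\hat q^j})\Bigr)^{d}\cdot H\;=\;\Bigl(\prod_{j=0}^{n-1}\hat\sigma^j(T')\Bigr)\cdot\hat\sigma^n(H)\cdot\Bigl(\prod_{j=0}^{n-1}W_j\Bigr),
\]
in which the three factors on the right are matrices over $R\dbl z\dbr$ and the middle one has all coefficients in $\hat\sigma^n(\Fm_R)$. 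Feeding in the hypothesis $\bigcap_n\hat\sigma^n(\Fm_R)=(0)$ — which forces the coefficients of $\hat\sigma^n(H)$, and hence of the whole right hand side, to become arbitrarily divisible as $n\to\infty$ — one reads off, coefficient by coefficient of $H$, that every coefficient of $H$ lies in $\bigcap_n\hat\sigma^n(\Fm_R)=(0)$, so $H=0$ and $h=0$.

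The step I expect to demand the most care is exactly this final extraction: the operator $\hat\sigma$ moves the pole of $\tau_{\hat M}^{-1}$ from $z=\zeta$ to $z=\zeta^{\hat q^j}$, so the left hand factor $\prod_{j<n}(z-\zeta^{\hat q^j})^d$ genuinely grows with $n$ (its constant term acquires valuation $d\,\tfrac{\hat q^n-1}{\hat q-1}\,v(\zeta)$), and one must check that this growth is outpaced by the divisibility gained from passing to $\hat q^n$-th powers — and, when the value group of $R$ is not discrete, that one really uses that $\hat\sigma^n(\Fm_R)$ consists of honest $\hat q^n$-th powers and not merely of elements of the ideal it generates. When $R$ is discretely valued, which is the case relevant for Theorem~\ref{ThmAndersonKim}, this is transparent: there $\hat\sigma^n(\Fm_R)=(\pi^{\hat q^n})$, and comparing the $\pi$-adic and $z$-adic valuations of the lowest nonzero coefficient of $H$ forces $H=0$ immediately.
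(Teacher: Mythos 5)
Your overall strategy --- iterate the intertwining relation $H=T'\,\hat\sigma(H)\,T^{-1}$ and exploit that the entries of $\hat\sigma^n(H)$ are $\hat q^n$-th powers of elements of $\Fm_R$ --- is the same as the paper's, and your preliminary reduction to the effective bounded case by twisting is correct (though, as it turns out, unnecessary). The gap is in the final extraction, exactly where you flag your own unease, and the worry is justified: the growth of the cleared denominator is \emph{not} outpaced by the gain from $\hat q^n$-th powers. Concretely, comparing $z^0$-coefficients in your displayed identity gives
\[
d\,\tfrac{\hat q^n-1}{\hat q-1}\,v(\zeta)+v(H_0)\;\ge\;\hat q^n\,v(H_0)\,,
\]
where $v(H_0)$ denotes the minimal valuation of an entry of the constant term $H_0$ of $H$; letting $n\to\infty$ this yields only the \emph{upper} bound $v(H_0)\le d\,v(\zeta)/(\hat q-1)$, which is no contradiction at all and is genuinely attainable (e.g.\ when $\zeta=\pi^{q+1}$ as in Example~\ref{ExOrdDriMod}). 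The same happens for every fixed low-order $z$-coefficient, so reading the identity ``coefficient by coefficient'' from the bottom does not place the coefficients of $H$ in $\bigcap_n\hat\sigma^n(\Fm_R)$. What saves your version of the argument is that $\prod_{j<n}(z-\zeta^{\hat q^j})^d$ is \emph{monic}, hence has Gauss norm $1$: when $R$ is discretely valued the Gauss norm $\|\cdot\|$ on $R\dbl z\dbr$ is multiplicative and $\|H\|<1$, so your identity gives $\|H\|=\|P_nH\|\le\|\hat\sigma^n(H)\|=\|H\|^{\hat q^n}$, forcing $H=0$. That is a comparison at the coefficient of largest absolute value, not at the lowest-order coefficient, and it needs to be said explicitly.

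The paper sidesteps the difficulty entirely by never clearing denominators: it reduces modulo the ideal $I_n$ generated by $\hat\sigma^n(\Fm_R)$ and inverts $z$ rather than $z-\zeta$. Since $\zeta^{\hat q^n}=\hat\sigma^n(\zeta)\in I_n$, the element $\zeta$ is nilpotent in $R/I_n$, so $z-\zeta=z(1-\zeta/z)$ is already a unit in $(R/I_n)\dbl z\dbr[\tfrac{1}{z}]$; hence $T^{-1}$ and $T'$ are matrices over that ring, and the relation $H=T'\hat\sigma(H)T^{-1}$ gives $H\equiv0$ in $M_r\bigl((R/I_n)\dbl z\dbr[\tfrac{1}{z}]\bigr)$ by induction on $n$, whence the coefficients of $H$ lie in $\bigcap_nI_n=(0)$. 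I recommend replacing your final step either by this reduction or by the Gauss-norm argument sketched above.
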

\noindent
{\it Remark.} The assumption  $\bigcap_n\hat\sigma^n(\Fm_R) = (0)$ is satisfied if $R$ is discretely valued.
\begin{proof}[Proof of Lemma~\ref{LemmaFaithfulBM}]
For local shtukas $\ulHM$ and $\ulHM'$ over $R$, let $f\colon\ulHM\rightarrow\ulHM'$ be a morphism which becomes zero over $k$; i.e., $f(\ulHM)=(0)$ in $\ulHM'\otimes_R R/\Fm_R$. Since we have $\tau_{\hat M'}\circ \hat\sigma^*f = f\circ \tau_{\hat M}$, it follows inductively that $f\bigl(\ulHM[\tfrac{1}{z}]\bigr)=(0)$ in $\ulHM'[\tfrac{1}{z-\zeta}]\otimes_R R/\hat\sigma^n(\Fm_R)=\ulHM'[\tfrac{1}{z}]\otimes_R R/\hat\sigma^n(\Fm_R)$ for any $n\geqslant1$. Now the assumption $\bigcap_n\hat\sigma^n(\Fm_R) = (0)$ forces $f=0$.
\end{proof}

\begin{lemma}\label{LemmaTateIsog}
Assume that $R$ is discretely valued, and let $\ulHM$ and $\ulHM'$ be local shtukas over $R$. Let $f\colon\ulHM\rightarrow\ulHM'$ be a morphism such that $\check T_\epsilon f \colon \check T_\epsilon\ulHM \rightarrow \check T_\epsilon\ulHM'$ is an isomorphism. Then $f$ is an isomorphism.
\end{lemma}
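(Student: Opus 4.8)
The plan is to show that $\delta:=\det f\in R\dbl z\dbr$ -- computed with respect to any $R\dbl z\dbr$-bases of $\hat M$ and $\hat M'$ -- is a unit of $R\dbl z\dbr$; then $f$ is invertible by Cramer's rule, and its inverse automatically intertwines $\tau_{\hat M'}$ and $\tau_{\hat M}$, so $f$ is an isomorphism of local shtukas. To start, by the functoriality of the comparison isomorphism \eqref{EqTateModIsom}, the assumption that $\check T_\epsilon f$ be an isomorphism implies that $f\otimes_{R\dbl z\dbr}K^\sep\dbl z\dbr$ is an isomorphism. Hence $\rk\ulHM=\rk\ulHM'=:r$ (so $\delta$ really is a square determinant), the map $f$ is injective because $\hat M\into\hat M\otimes_{R\dbl z\dbr}K^\sep\dbl z\dbr$, and $\delta$ becomes a unit of $K^\sep\dbl z\dbr$; in particular its constant term $\delta(0)$ is nonzero in $R$.

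Next I would extract an identity from the $\tau$-structures. Writing $\tau_{\hat M},\tau_{\hat M'},f$ as matrices, the relation $\tau_{\hat M'}\circ\hat\sigma^*f=f\circ\tau_{\hat M}$ together with Lemma~\ref{LemmaLocSht}\ref{LemmaLocSht_A} -- which gives $\det\tau_{\hat M}=(z-\zeta)^{d}u$ and $\det\tau_{\hat M'}=(z-\zeta)^{d'}u'$ with $u,u'\in R\dbl z\dbr\mal$ and $d,d'\in\BZ$ -- yields, upon taking determinants,
\[
\hat\sigma(\delta)\;=\;(z-\zeta)^{\,d-d'}\,\tfrac{u}{u'}\,\delta\qquad\text{in }R\dbl z\dbr[\tfrac1{z-\zeta}]\,.
\]
Here the discreteness of $R$ enters. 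The Gauss norm $\|\,.\,\|$ (the supremum of the absolute values of the coefficients) is multiplicative on $R\dbl z\dbr$ and satisfies $\|\hat\sigma(g)\|=\|g\|^{\hat q}$, $\|z-\zeta\|=1$ (as $|\zeta|<1$) and $\|u/u'\|=1$; the displayed equation therefore forces $\|\delta\|^{\hat q}=\|\delta\|$, hence $\|\delta\|=1$ since $\delta\ne0$. As $R$ is discretely valued, $\|\delta\|=1$ means exactly that $\overline\delta\ne0$ in $k\dbl z\dbr$, i.e.\ that $\det\bar f\ne0$ where $\bar f:=f\otimes_Rk$.

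Finally I would reduce modulo $\Fm_R$ to pin down the exponents. Since $\zeta\in\Fm_R$, reduction modulo $\Fm_R$ turns $z-\zeta$ into $z$, so over $k\dbl z\dbr$ one has $\det\bar\tau_{\hat M}=z^{d}\bar u$ and $\det\bar\tau_{\hat M'}=z^{d'}\bar u'$ with $\bar u,\bar u'\in k\dbl z\dbr\mal$, and the reduced morphism relation reads $\det\bar\tau_{\hat M'}\cdot\hat\sigma(\det\bar f)=\det\bar f\cdot\det\bar\tau_{\hat M}$. Because $\hat\sigma$ fixes $z$ it preserves $\ord_z$, and $\det\bar f\ne0$ by the previous step, so comparing $z$-adic orders forces $d=d'$. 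Substituting $d=d'$ into the displayed identity -- now an equality in $R\dbl z\dbr$ -- and evaluating at $z=0$ gives $\delta(0)^{\hat q}=\tfrac{u(0)}{u'(0)}\,\delta(0)$ in $R$; cancelling the nonzero $\delta(0)$ yields $\delta(0)^{\hat q-1}=u(0)/u'(0)\in R\mal$, so $\delta(0)\in R\mal$. Thus $\delta=\det f$ has a unit constant term and hence is a unit of the local ring $R\dbl z\dbr$, which completes the argument. The one genuinely delicate point is obtaining $d=d'$: it rests on the discreteness of $R$ through the implication $\|\delta\|=1\Rightarrow\overline\delta\ne0$, without which the order comparison over $k\dbl z\dbr$ would carry no information.
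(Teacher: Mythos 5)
Your proof is correct, but it follows a genuinely different route from the paper's. The paper also reduces to the determinant (by passing to top exterior powers), but then argues in two stages: first that $f$ is an isogeny — i.e.\ $f[\tfrac{1}{z}]$ is an isomorphism — by combining Nakayama over $k\dpl z\dpr$ with the faithfulness of reduction modulo $\Fm_R$ (Lemma~\ref{LemmaFaithfulBM}); and second that $f$ is an isomorphism over $K\dbl z\dbr$ (from Proposition~\ref{PropTateMod}), whence $f$ itself is an isomorphism by the saturation/reflexivity statement $R\dbl z\dbr=R\dbl z\dbr[\tfrac{1}{z}]\cap K\dbl z\dbr$ of Lemma~\ref{LemmaSaturation}. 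You instead extract the functional equation $(z-\zeta)^{d'}u'\hat\sigma(\delta)=(z-\zeta)^{d}u\,\delta$ for $\delta=\det f$ from the $\tau$-equivariance and Lemma~\ref{LemmaLocSht}\ref{LemmaLocSht_A}, force $\|\delta\|=1$ by the Frobenius fixed-point property of the Gauss norm, deduce $d=d'$ by comparing $z$-adic orders in $k\dpl z\dpr$, and conclude $\delta(0)\in R\mal$ by evaluating at $z=0$. Your Gauss-norm step is the exact counterpart of Lemma~\ref{LemmaFaithfulBM} (both iterate the Frobenius relation to see that $\delta$ cannot vanish modulo $\Fm_R$), but the endgame is different: you avoid Lemma~\ref{LemmaSaturation} entirely, which makes the argument for this lemma more self-contained and explicit, whereas the paper's saturation machinery is what it reuses to prove the stronger full-faithfulness statement of Theorem~\ref{ThmAndersonKim}, where no morphism $f$ of shtukas is given in advance. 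Two small presentational points: to apply the Gauss norm you should first clear the factor $(z-\zeta)^{d-d'}$ (which need not lie in $R\dbl z\dbr$ when $d<d'$) so that both sides of the identity live in $R\dbl z\dbr$; and multiplicativity of the Gauss norm, as well as the implication $\|\delta\|=1\Rightarrow\bar\delta\ne0$, uses that the supremum of the coefficients is attained — exactly where, as you note, discreteness of $R$ enters.
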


\begin{proof}
To show that $f$ is an isomorphism, it suffices to show that the determinant of $f$ is an isomorphism. Therefore we may assume that $\ulHM$ and $\ulHM'$ are of rank~$1$ by replacing $\ulHM$ and $\ulHM'$ with their respective top exterior powers.

Let us first show that $f$ is an isogeny; i.e., $f[\frac{1}{z}]$ is an isomorphism. By Nakayama's lemma, it suffices to show the surjectivity of the following map induced by $f[\frac{1}{z}]$:
\[
\hat M \otimes_{R\dbl z\dbr } k\dpl z \dpr \rightarrow \hat M' \otimes_{R\dbl z\dbr } k\dpl z \dpr .
\]
Since both the source and the target  are $1$-dimensional vector spaces over $k\dpl z\dpr$, the above map is surjective as long as it is non-zero. The latter follows from Lemma~\ref{LemmaFaithfulBM} and the assumption that $\check{T}_\epsilon f$ is an isomorphism, which implies that $f\ne0$.

Let us now show that $f$ is an isomorphism. By Proposition~\ref{PropTateMod} it follows that $f$ induces an isomorphism
\[
f\otimes1\colon \hat M\otimes_{R\dbl z\dbr }K\dbl z\dbr  \isoto \hat M'\otimes_{R\dbl z\dbr }K\dbl z\dbr .
\]
Since $f[\frac{1}{z}]$ is also an isomorphism, it follows from Lemma~\ref{LemmaSaturation} that $f$ is an isomorphism.
\end{proof}

\begin{proof}[Proof of Theorem~\ref{ThmAndersonKim}]
Assume that $R$ is discretely valued, and let $\ulHM$ and $\ulHM'$ be local shtukas over $R$. To prove Theorem~\ref{ThmAndersonKim}, 
it suffices to show that for any $\Gal(K^\sep/K)$-equivariant morphism $g\colon\check T_\epsilon\ulHM \rightarrow \check T_\epsilon\ulHM'$ there exists a unique morphism $f\colon\ulHM\rightarrow\ulHM'$ with $\check T_\epsilon f = g$. 

Since we have $\hat M\otimes_{R\dbl z\dbr }K\dbl z\dbr  = (\check T_\epsilon\ulHM\otimes_{A_\epsilon}K^\sep\dbl z\dbr )^{\Gal(K^\sep/K)}$ which matches $\tau_{\hat M}\otimes1$ and $1\otimes\hat\sigma $ by Proposition~\ref{PropTateMod}, it follows that $g$ induces a uniquely determined morphism
\[
f_K\colon \hat M\otimes_{R\dbl z\dbr }K\dbl z\dbr  \rightarrow  \hat M'\otimes_{R\dbl z\dbr }K\dbl z\dbr 
\]
satisfying $(\tau_{\hat M'}\otimes1)\circ \hat\sigma^*f_K = f_K\circ (\tau_{\hat M}\otimes1)$.
Furthermore,  any morphism $f\colon\ulHM\rightarrow\ulHM'$ with $\check T_\epsilon f = g$ has to satisfy $f = f_K|_{\hat M}$. Therefore, it suffices to show that $f_K(\hat M)\subset \hat M'$ for any $g$. 

We define  $\hat M''_1, \hat M''_2 \subset \hat M'\otimes_{R\dbl z\dbr }K\dbl z\dbr $ as follows:
\begin{align*}
\hat M''_1 & := f_K(\hat M)[\tfrac{1}{z}]\cap \left( f_K(\hat M)\otimes_{R\dbl z\dbr }K\dbl z\dbr   \right) \\
\hat M''_2 & := ( f_K(\hat M)\cap \hat M')[\tfrac{1}{z}]\cap \left( ( f_K(\hat M)\cap \hat M')\otimes_{R\dbl z\dbr }K\dbl z\dbr \right)   . 
\end{align*}
By Lemma~\ref{LemmaSaturation} applied to the torsion free modules $ f_K(\hat M)$ and $( f_K(\hat M)\cap \hat M')$, we obtain local shtukas $\ulHM_1''$ and $\ulHM''_2$ with underlying $R\dbl z\dbr $-modules $\hat M''_1$ and $\hat M_2''$, respectively, and the natural maps $\ulHM \rightarrow \ulHM''_1$, $\ulHM''_2\hookrightarrow \ulHM''_1$, and $\ulHM_2''\hookrightarrow \ulHM'$ are morphisms of local shtukas. Clearly, we have $\hat M''_1\otimes_{R\dbl z\dbr }K\dbl z\dbr  =  \hat M''_2\otimes_{R\dbl z\dbr }K\dbl z\dbr $ since both are equal to the image of $f_K$, so the natural inclusion $\ulHM''_2\hookrightarrow \ulHM_1''$ induces an isomorphism $\check T_\epsilon\ulHM_2''\isoto\check T_\epsilon\ulHM''_1$. We can now apply Lemma~\ref{LemmaTateIsog} to show that $\ulHM''_1 = \ulHM_2''$. Therefore, we obtain a map $f\colon\ulHM \rightarrow\ulHM'$ by the following 
composition
\[
\ulHM \rightarrow \ulHM''_1 = \ulHM''_2 \hookrightarrow \ulHM',
\]
which clearly extends to $f_K$.
\end{proof}

\begin{proof}[Proof of Proposition~\ref{PropLattices}]
Let $\ulHM$ be a local shtuka over $R$. We want to show that $\check T_\epsilon$ induces a bijection from the set of local shtukas $\ulHM'\subset \ulHM[\frac{1}{z}]$ to the set of Galois-stable $A_\epsilon$-lattices $\check T'\subset \check V_\epsilon\ulHM$. The injectivity of $\check T_\epsilon$ is clear from Theorem~\ref{ThmAndersonKim}, so it suffices to show the surjectivity.

Let $\check T'\subset \check V_\epsilon\ulHM$ be a Galois-stable $A_\epsilon$-lattice. We want to show that there exists a local shtuka $\ulHM'\subset\ulHM[\frac{1}{z}]$ with $\check T_\epsilon\ulHM'=\check T'$. We set $N:= (\check T' \otimes_{A_\epsilon}K^\sep\dbl z\dbr )^{\Gal(K^\sep/K)}$, which can be viewed as a $K\dbl z\dbr$-lattice in $\hat M[\frac{1}{z}]\otimes_{R\dbl z\dbr } K\dbl z\dbr $ by Proposition~\ref{PropTateMod}, and set $\hat M':=  \hat M[\tfrac{1}{z}] \cap N$. By construction, $\hat M'$ is finitely generated over $R\dbl z\dbr $, and we have $\hat M' = \hat M'[\frac{1}{z}]\cap (\hat M'\otimes_{R\dbl z\dbr}K\dbl z\dbr)$. So by Lemma~\ref{LemmaSaturation}, $\ulHM':=(\hat M',\tau_{\hat M'})$ is a local shtuka where $\tau_{\hat M'}$ is the restriction of $\tau_{\hat M}$.

It remains to show that $\check T_\epsilon\ulHM' = \check T'$. By Proposition~\ref{PropTateMod}, it suffices to show that $\hat M'\otimes_{R\dbl z\dbr}K\dbl z\dbr = N$, which follows from the left exactness of the following sequence:
\begin{equation}\label{EqLatticesProof}
\xymatrix @C+1pc {
0 \ar[r] & \hat M'\otimes_{R\dbl z\dbr}K\dbl z\dbr \ar[r] &  N[\tfrac{1}{z}]  \ar[r] &  N[\tfrac{1}{z}]/N.} 
\end{equation}
Indeed, this sequence is the flat scalar extension of the left exact sequence $0\rightarrow \hat M'\rightarrow \hat M[\frac{1}{z}] \rightarrow N[\frac{1}{z}]/N$; note that we have $N[\frac{1}{z}] = \hat M[\frac{1}{z}]\otimes_{R\dbl z\dbr } K\dbl z\dbr $ and 
\[
(N[\tfrac{1}{z}]/N) \otimes_{R\dbl z\dbr } K\dbl z\dbr \cong \varinjlim_r \left(z^{-r}N/N\otimes_{R\dbl z\dbr/(z^r) } K\dbl z\dbr/(z^r)\right) \cong \varinjlim_r (z^{-r}N/N)[\tfrac{1}{\zeta}] \cong N[\tfrac{1}{z}]/N.
\]
This concludes the proof.
\end{proof}

%
%

\section{Hodge-Pink structures}\label{SectHPStruct}
\setcounter{equation}{0}

From now on we work over the base scheme $S=\Spec R$ where $R$ is a valuation ring as in Notation~\ref{Notation} with fraction field $K$ and residue field $k$. In particular $\zeta=0$ in $k$. We assume that there is a section $k\into R$ and we fix one. As mentioned in the introduction we want to describe the analog of Fontaine's classification of crystalline $p$-adic Galois representations by filtered isocrystals. In this function field analog we make the following

\begin{definition}\label{DefZIsoc}
A \emph{$z$-isocrystal} over $k$ is a pair $(D,\tau_D)$ consisting of a finite dimensional $k\dpl z\dpr$-vector space together with a $k\dpl z\dpr$-isomorphism $\tau_D\colon\hat\sigma^*D\isoto D$. A \emph{morphism} $(D,\tau_D)\to(D',\tau_{D'})$ is a $k\dpl z\dpr$-homomorphism $f\colon D\to D'$ satisfying $\tau_{D'}\circ\hat\sigma^*f=f\circ\tau_D$.
\end{definition}

Here again $\hat\sigma^*D:=D\otimes_{k\dpl z\dpr,\,\hat\sigma}k\dpl z\dpr$ and we set $\hat\sigma_{\!D}^*x:=x\otimes 1\in\hat\sigma^*D$ for $x\in D$. By the assumed existence of the fixed section $k\into R$ there is a ring homomorphism 
\begin{equation}\label{EqSection}
\TS k\dpl z\dpr\longinto K\dbl z-\zeta\dbr\,,\quad z\longmapsto \zeta+(z-\zeta)\,,\quad\sum\limits_i b_i z^i\longmapsto\sum\limits_{j=0}^\infty(z-\zeta)^j\cdot\sum\limits_i{i\choose j}b_i\zeta^{i-j}\,.
\end{equation}
We always consider $K\dbl z-\zeta\dbr$ and its fraction field $K\dpl z-\zeta\dpr$ as $k\dpl z\dpr$-vector spaces via \eqref{EqSection}.

\begin{definition}\label{DefHPStruct}
A \emph{$z$-isocrystal with Hodge-Pink structure over $R$} is a triple $\ulD=(D,\tau_D,\Fq_D)$ consisting of a $z$-isocrystal $(D,\tau_D)$ over $k$ and a $K\dbl z-\zeta\dbr$-lattice $\Fq_D$ in $D\otimes_{k\dpl z\dpr}K\dpl z-\zeta\dpr$ of full rank, which is called the \emph{Hodge-Pink lattice of $\ulD$}. The dimension of $D$ is called the \emph{rank of $\ulD$} and is denoted $\rk\ulD$. 

A \emph{morphism} $(D,\tau_D,\Fq_D)\to(D',\tau_{D'},\Fq_{D'})$ is a $k\dpl z\dpr$-homomorphism $f\colon D\to D'$ satisfying $\tau_{D'}\circ\hat\sigma^*f=f\circ\tau_D$ and $(f\otimes\id)(\Fq_D)\subset\Fq_{D'}$.

A \emph{strict subobject $\ulD'\subset\ulD$} is a $z$-isocrystal with Hodge-Pink structure of the form \linebreak\mbox{$\ulD'=\bigl(D',\,\tau_D|_{\hat\sigma^*\!D'},\,\Fq_D\cap D'\otimes_{k\dpl z\dpr}K\dpl z-\zeta\dpr\bigr)$} where $D'\subset D$ is a $k\dpl z\dpr$-subspace with $\tau_D(\hat\sigma^*D')=D'$.
\end{definition}

\begin{remark}\label{RemDeviation}
This definition slightly deviates from \cite[\S\,3 and \S\,7]{GL11} and \cite[\S\,2.3]{HartlPSp} where the Hodge-Pink lattice $\Fq_D$ was defined to be a $K\dbl z-\zeta\dbr$-lattice in $\hat\sigma^*D\otimes_{k\dpl z\dpr}K\dpl z-\zeta\dpr$. The reason for our definition here is explained in Example~\ref{ExHPOfLocSht} below.
\end{remark}

\begin{definition}\label{DefHPWts}
On a $z$-isocrystal with Hodge-Pink structure $\ulD$ there always is the tautological lattice $\Fp_D:=D\otimes_{k\dpl z\dpr}K\dbl z-\zeta\dbr$. Since $K\dbl z-\zeta\dbr$ is a principal ideal domain the elementary divisor theorem provides basis vectors $v_i\in\Fp_D$ such that $\Fp_D=\bigoplus_{i=1}^rK\dbl z-\zeta\dbr\cdot v_i$ and $\Fq_D=\bigoplus_{i=1}^rK\dbl z-\zeta\dbr\cdot(z-\zeta)^{\mu_i}\cdot v_i$ for integers $\mu_1\ge\ldots\ge\mu_r$. We call $(\mu_1,\ldots,\mu_r)$ the \emph{Hodge-Pink weights of $\ulD$}. Alternatively if $e$ is large enough such that $\Fq_D\subset(z-\zeta)^{-e}\Fp_D$ or $(z-\zeta)^e\Fp_D\subset \Fq_D$ then the Hodge-Pink weights are characterized by
\begin{eqnarray*}
(z-\zeta)^{-e}\Fp_D/\Fq_D&\cong&\bigoplus_{i=1}^n K\dbl z-\zeta\dbr/(z-\zeta)^{e+\mu_i}\,,\\
\text{or}\quad\Fq_D/(z-\zeta)^e\Fp_D&\cong&\bigoplus_{i=1}^n K\dbl z-\zeta\dbr/(z-\zeta)^{e-\mu_i}
\end{eqnarray*}
The category of $z$-isocrystals with Hodge-Pink structure possesses \emph{tensor products} and \emph{duals}
\begin{eqnarray*}
\ul D\otimes\ul D' & = & \bigl(D\otimes_{k\dpl z\dpr} D', \tau_D\otimes \tau_{D'}, \Fq_D\otimes_{K\dbl z-\zeta\dbr}\Fq_{D'}\bigr)\,,\\[2mm]
\ul D\dual & = & \bigl(D\dual:=\Hom_{k\dpl z\dpr}(D,k\dpl z\dpr),(\tau_D^{-1})\dual, \Hom_{K\dbl z-\zeta\dbr}(\Fq_D,K\dbl z-\zeta\dbr)\bigr)\,,
\end{eqnarray*}
\emph{internal Hom's}, and the \emph{unit object} $\bigl(k\dpl z\dpr,\tau_D=1,\Fq_D=\Fp_D\bigr)$. The endomorphism ring of the unit object is $\BF_\epsilon\dpl z\dpr$. For every $n\in\BZ$ we consider the \emph{Tate object} 
\[
\ul\BOne(n)\es:=\es\Bigl(D=k\dpl z\dpr\,,\,\tau_D=z^{-n}\,,\,\Fq_D=(z-\zeta)^n\Fp_D\Bigr)\,. 
\]
The Hodge-Pink lattice $\Fq_D$ induces a descending filtration of $D_K:=D\otimes_{k\dpl z\dpr,\,z\mapsto\zeta}K$ by $K$-subspaces as follows. Consider the natural projection
\[
\Fp_D\;\onto\;\Fp_D/(z -\zeta)\Fp_D\;=\;D_K\,.
\]
The \emph{Hodge-Pink filtration} $F^\bullet D_K=(F^i D_K)_{i\in\BZ}$ is defined by letting $F^i D_K$ be the image in $D_K$ of $\Fp_D\cap(z-\zeta)^i\Fq_D$ for all $i\in\BZ$. This means, $F^i D_K=\bigl(\Fp_D\cap(z-\zeta)^i\Fq_D\bigr)\big/\bigl((z-\zeta)\Fp_D\cap(z-\zeta)^i\Fq_D\bigr)$.
\end{definition}

\begin{example}\label{ExHPFilt}
The Hodge-Pink lattice contains finer information than the Hodge-Pink filtration. For example let $D=k\dpl z\dpr^{\oplus r}$ and $\Fq_D=(z-\zeta)^2\Fp_D+K\dbl z-\zeta\dbr\cdot(v_0+(z-\zeta)v_1)\subset\Fp_D$ for vectors $v_0,v_1\in K^{\oplus r}$. Then $F^{-2}D_K=D_K\supset F^{-1}D_K=K\cdot v_0=F^0D_K\supset F^1D_K=(0)$. So the information about $v_1$ is not contained in the Hodge-Pink filtration.
\end{example}

\begin{remark}\label{RemCompWithHJ_A}
In comparison with the Hodge-Pink structures at $\infty$ from \HJSectHPStruct{} our Frobenius $\tau_D$ replaces the weight filtration $W_\bullet H$ from \HJDefHPStruct{} and the Hodge-Pink weights from \HJRemHPWts{} are the negatives of the Hodge-Pink weights we defined in Definition~\ref{DefHPWts}. Observe that the Frobenius $\tau_D$ induces on $D$ an increasing filtration given by Newton slopes similarly to \cite{Zink84}.
\end{remark}

At the heart of the theory is the following

\begin{example}\label{ExHPOfLocSht}
Let $\ulHM=(\hat M,\tau_{\hat M})$ be a local shtuka over $\Spec R$. We set $(\wt D,\tau_{\wt D}):=(\hat M,\tau_{\hat M})\otimes_{R\dbl z\dbr}k\dpl z\dpr$ and take $(D,\tau_D):=\hat\sigma^*(\wt D,\tau_{\wt D})$ as the underlying $z$-isocrystal. This deviates from \cite[\S\,3 and \S\,7]{GL11} and \cite[\S\,2.3]{HartlPSp} where instead $(\wt D,\tau_{\wt D})$ was taken as the underlying $z$-isocrystal. Note however, that there is a canonical isomorphism $\tau_{\wt D}\colon(D,\tau_D)\isoto(\wt D,\tau_{\wt D}$). The reason for our definition here is that we also define an integral structure $\Koh^1_\cris(\ulHM,k\dbl z\dbr)$ in Definition~\ref{DefCrystRealiz} below. In order to construct $\Fq_D$ we will need a ``comparison isomorphism''. This does not exist in general. However, if $R$ is discretely valued it is constructed as follows. Consider the  $R$-algebra
\begin{eqnarray}\label{EqBrig}
R\dbl z,z^{-1}\}&:=&\bigl\{\,\sum_{i=-\infty}^\infty b_iz^i\colon\; b_i\in R\,,\,|b_i|\,|\zeta|^{ri}\to0\;(i\to-\infty) \text{ for all }r>0\,\bigr\}\,.
\end{eqnarray}
It is a subring of $K\dbl z-\zeta\dbr$ via the expansion $\sum\limits_{i=-\infty}^\infty b_iz^i=\sum\limits_{j=0}^\infty\zeta^{-j}\Bigl(\sum\limits_{i=-\infty}^\infty{i\choose j}b_i\zeta^i\Bigr)(z-\zeta)^j$. The homomorphism \eqref{EqSection} factors through $R\dbl z,z^{-1}\}$. We view the elements of $R\dbl z,z^{-1}\}$ as functions that converge on the punctured open unit disc $\{0<|z|<1\}$. An example of such a function is
\begin{equation}\label{EqTminus}
\tminus:=\prod_{i\in\BN_0}(1-\tfrac{\zeta^{\hat q^i}}{z})\;\in\;\BF_\epsilon\dbl\zeta\dbr\dbl z,z^{-1}\}\;\subset\;R\dbl z,z^{-1}\}\,,
\end{equation}
because the coefficient of $z^{-n}$ is $\sum\limits_{0\le i_1<\ldots<i_n}(-\zeta^{\hat q^{i_1}})\cdots(-\zeta^{\hat q^{i_n}})$ which has absolute value $|\zeta|^{1+\hat q+\ldots+\hat q^{n-1}}=|\zeta|^{(\hat q^n-1)/(\hat q-1)}$. Note that $\tminus=(1-\tfrac{\zeta}{z})\!\cdot\!\hat\sigma(\tminus)$. If we evaluate $\hat\sigma(\tminus)$ at $z=\zeta$ we obtain $\hat\sigma(\tminus)|_{z=\zeta}=\prod_{i\in\BN_{>0}}(1-\zeta^{\hat q^i-1})$. If we were working at the place $\infty$ on the projective line $\BP^1$ instead of at $\epsilon\subset A$, and hence took $\zeta=\theta^{-1}$ and $\hat q=q$, the number $\sqrt[q-1]{-\zeta^q}\cdot\hat\sigma(\tminus)|_{z=\zeta}$ would be the inverse of the period of the Carlitz module which is considered to be the function field analog of $2\pi i$. Moreover, $\sqrt[q-1]{-\zeta^q}\cdot\hat\sigma(\tminus)$ would equal Papanikolas's function $\Omega\in\BC_\infty\dbl\tfrac{1}{z}\dbr$; see \cite[3.3.4]{Papanikolas}.

\smallskip
Further note that in the reduction we have $R\dbl z,z^{-1}\}\otimes_R k=k\dpl z\dpr$ and $\tminus\equiv 1\mod\Fm_R$.
\end{example}

The following lemma of A.~Genestier and V.~Lafforgue \cite[Lemma~7.4]{GL11} is used for the construction of the Hodge-Pink lattice $\Fq_D$ attached to the local shtuka $\ulHM$.

\begin{lemma}\label{LemmaGL}
If $R$ is discretely valued then there is a unique functorial isomorphism
\[
\delta_{\hat M}\colon\es\hat M\otimes_{R\dbl z\dbr}R\dbl z,z^{-1}\}[\tminus^{-1}]\es\isoto\es\wt D\otimes_{k\dpl z\dpr}R\dbl z,z^{-1}\}[\tminus^{-1}]
\]
which satisfies $\delta_{\hat M}\circ \tau_{\hat M}=\tau_{\wt D}\circ\hat\sigma^\ast\delta_{\hat M}$ and $\delta_{\hat M}\equiv\id_{\wt D}\mod\Fm_R$. 
\end{lemma}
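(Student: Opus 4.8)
The plan is to build $\delta_{\hat M}$ by successive approximation, exactly as one constructs the comparison isomorphism between a local shtuka and its isocrystal in $p$-adic Hodge theory. The key point is that $R\dbl z,z^{-1}\}[\tminus^{-1}]$ plays the role of the ring $\tilde B^\dagger_{\rm rig}$, and the defining equation $\delta_{\hat M}\circ\tau_{\hat M}=\tau_{\wt D}\circ\hat\sigma^*\delta_{\hat M}$ together with the normalization $\delta_{\hat M}\equiv\id\pmod{\Fm_R}$ pins down $\delta_{\hat M}$ uniquely. So I would proceed in three steps: first prove uniqueness, then reduce to a matrix equation over the ring $B:=R\dbl z,z^{-1}\}[\tminus^{-1}]$, then solve that equation by a convergent iteration using that $R$ is discretely valued.

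\textbf{Uniqueness.} Suppose $\delta_{\hat M}$ and $\delta'_{\hat M}$ both satisfy the conditions. Then $g:=\delta'_{\hat M}\circ\delta_{\hat M}^{-1}$ is an automorphism of $\wt D\otimes_{k\dpl z\dpr}B$ with $g\circ\tau_{\wt D}=\tau_{\wt D}\circ\hat\sigma^*g$ and $g\equiv\id\pmod{\Fm_R}$. Writing $g=\id+h$ with $h$ having entries in $\Fm_R\cdot B$ (after choosing a $k\dpl z\dpr$-basis of $\wt D$ and lifting $\tau_{\wt D}$ to a matrix $N\in\GL_r(k\dpl z\dpr)\subset\GL_r(B)$, noting $\tminus\equiv1\pmod{\Fm_R}$), the intertwining relation reads $N\cdot\hat\sigma(h)\cdot N^{-1}=h$, i.e.\ $h=N\hat\sigma(h)N^{-1}$. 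Iterating, $h=N\hat\sigma(N)\cdots\hat\sigma^{n-1}(N)\cdot\hat\sigma^n(h)\cdot(\cdots)^{-1}$; since $\hat\sigma$ raises the coefficients of $R$ to the $\hat q$-th power, the entries of $\hat\sigma^n(h)$ lie in $\hat\sigma^n(\Fm_R)\cdot B$, and because $R$ is discretely valued $\bigcap_n\hat\sigma^n(\Fm_R)=(0)$ (this is exactly the hypothesis used in Lemma~\ref{LemmaFaithfulBM}), forcing $h=0$, hence $g=\id$.

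\textbf{Existence.} Choose an $R\dbl z\dbr$-basis of $\hat M$, giving a basis of $\wt D$ by reduction, and write $\tau_{\hat M}$ as a matrix $T\in\GL_r(R\dbl z\dbr[\tfrac1{z-\zeta}])$ and $\tau_{\wt D}$ as $\ol T:=T\bmod\Fm_R\in\GL_r(k\dpl z\dpr)$; both are units in $B$ since $z-\zeta$ becomes invertible in $R\dbl z,z^{-1}\}[\tminus^{-1}]$ (indeed $(z-\zeta)$ divides $\tminus$ up to a unit after applying $\hat\sigma$, so $z-\zeta$ is invertible once $\tminus$ is). We must produce $\Delta\in\GL_r(B)$ with $\Delta\cdot T=\ol T\cdot\hat\sigma(\Delta)$ and $\Delta\equiv\id\pmod{\Fm_R}$. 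Set $\Delta_0=\id$ and iteratively $\Delta_{n+1}=\ol T\cdot\hat\sigma(\Delta_n)\cdot T^{-1}$. One checks by induction that $\Delta_{n+1}-\Delta_n$ has entries in $\hat\sigma^n(\Fm_R)\cdot B$: indeed $\Delta_{n+1}-\Delta_n=\ol T\cdot\hat\sigma(\Delta_n-\Delta_{n-1})\cdot T^{-1}$, and $\ol T, T^{-1}$ have coefficients in $R$ while $\hat\sigma$ applied to an entry in $\hat\sigma^{n-1}(\Fm_R)$ lands in $\hat\sigma^n(\Fm_R)$, together with $\Delta_1-\Delta_0=\ol T T^{-1}-\id$ having entries in $\Fm_R\cdot B$ because $\ol T\equiv T\pmod{\Fm_R}$. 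Since $R$ is discretely valued and complete, $\hat\sigma^n(\Fm_R)\to(0)$ and the ring $B$ is complete for the induced topology, so the sequence $\Delta_n$ converges to a limit $\Delta\in M_r(B)$ with $\Delta\equiv\id\pmod{\Fm_R}$; then $\Delta\bmod\Fm_R=\id$ is invertible, and by the usual Nakayama-type argument over the $\Fm_R$-adically complete situation (or directly, since $\Delta=\id+(\text{nilpotent-ish in the }\hat\sigma\text{-filtration})$) $\Delta\in\GL_r(B)$. Passing back to the modules gives $\delta_{\hat M}$.

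\textbf{Main obstacle.} The routine parts are the matrix bookkeeping and the convergence estimates. The one genuine point requiring care is checking that $R\dbl z,z^{-1}\}[\tminus^{-1}]$ is complete with respect to the filtration by the ideals $\hat\sigma^n(\Fm_R)\cdot R\dbl z,z^{-1}\}[\tminus^{-1}]$ so that the iteration converges there — this uses the explicit description \eqref{EqBrig} of $R\dbl z,z^{-1}\}$, the fact that inverting $\tminus$ only introduces denominators that are units, and that $R$ is discretely valued (so the $\hat\sigma^n(\Fm_R)$ shrink in a controlled way and $R$ itself is complete). One must also verify functoriality, which is immediate from uniqueness: a morphism $f\colon\ulHM\to\ulHM'$ produces two solutions $\delta_{\hat M'}\circ(f\otimes\id)$ and $(f\otimes\id)\circ\delta_{\hat M}$ of the corresponding intertwining equation for the pair $(\ulHM,\ulHM')$ — or more precisely both restrict correctly — and they agree modulo $\Fm_R$, hence coincide; I would phrase this by applying the uniqueness statement to the endomorphism case or to $\hat M\oplus\hat M'$.
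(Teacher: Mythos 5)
Your proposal is correct and is essentially the paper's own argument: your recursion $\Delta_{n+1}=\ol T\cdot\hat\sigma(\Delta_n)\cdot T^{-1}$ starting from $\Delta_0=\id$ produces exactly the products $C_m=S\hat\sigma(S)\cdots\hat\sigma^m(S)\hat\sigma^m(T^{-1})\cdots T^{-1}$ used in the paper, with the same estimate $\Delta_{n+1}-\Delta_n\equiv0\mod\Fm_R^{\hat q^n}$ driving convergence in $\GL_r\bigl(R\dbl z,z^{-1}\}[\tminus^{-1}]\bigr)$, and the same use of $\bigcap_n\hat\sigma^n(\Fm_R)=(0)$ for uniqueness and hence functoriality. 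The one point you flag as the main obstacle (completeness of $R\dbl z,z^{-1}\}[\tminus^{-1}]$ for the relevant filtration, after clearing the denominators coming from $\hat\sigma^j(T^{-1})$ by a fixed power of $\tminus$) is precisely what the paper also leaves to the references \cite[Lemma~7.4]{GL11} and \cite[Lemma~2.3.1]{HartlPSp}.
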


\begin{proof}
We refer to \cite[Lemma~7.4]{GL11} and \cite[Lemma~2.3.1]{HartlPSp} for a proof and just present the idea.

We write $\tau_{\hat M}$ as a matrix $T\in\GL_r\bigl(R\dbl z\dbr[\tfrac{1}{z-\zeta}]\bigr)$ and set $S:=T\mod\Fm_R\in\GL_r\bigl(k\dpl z\dpr\bigr)$. Under the fixed section $k\into R$ we view $S$ as an element of $\GL_r\bigl(R\dbl z\dbr[z^{-1}]\bigr)$. We set
\begin{equation}\label{EqCm}
C_m\;:=\;S\cdot\hat\sigma(S)\cdots\hat\sigma^m(S)\cdot\hat\sigma^m(T^{-1})\cdots T^{-1}\,.
\end{equation}
Then $C_m\cdot T=S\cdot \hat\sigma^*C_{m-1}$ and $T^{-1}-S^{-1}\equiv 0\mod\Fm_R$ implies that 
\[
C_m-C_{m-1}\;=\; S\cdots \hat\sigma^m(S)\cdot\hat\sigma^m(T^{-1}-S^{-1})\cdot \hat\sigma^{m-1}(T^{-1})\cdots T^{-1} \;\equiv\;0\mod\Fm_R^{\hat q^m}\,.
\]
From this one derives that $\delta_{\hat M}:=\lim\limits_{m\to\infty}C_m$ converges in $\GL_r\bigl(R\dbl z,z^{-1}\}[\tminus^{-1}]\bigr)$ and satisfies $\delta_{\hat M}\cdot T=S\cdot \hat\sigma^*\delta_{\hat M}$. For example if $T=(z-\zeta)^d$ and hence $S=z^d$,  one obtains $\delta_{\hat M}=\lim\limits_{m\to\infty}\;\prod\limits_{i=0}^m(1-\tfrac{\zeta^{\hat q^i}}{z})^{-d}=\tminus^{-d}$.
\end{proof}

If $R$ is not discretely valued the isomorphism $\delta_{\hat M}$ might not exist, or if it exists it might not be unique; see the discussion on \cite[pp.~1293\,f after Lemma~2.3.1]{HartlPSp}. Therefore we include it into the data.

\begin{definition}\label{DefRigLocSht}
Let $R$ be an arbitrary valuation ring as in Notation~\ref{Notation}. A \emph{rigidified local $\hat\sigma$-shtuka over $R$} is a triple $(\hat M,\tau_{\hat M},\delta_{\hat M})$ where $(\hat M,\tau_{\hat M})$ is a local $\hat\sigma$-shtuka and $\delta_{\hat M}$ is an isomorphism as in Lemma~\ref{LemmaGL}. A morphism of rigidified local shtukas over $R$ is a morphism $f\colon(\hat M,\tau_{\hat M})\to(\hat M',\tau_{\hat M'})$ of local shtukas with $(f\otimes 1_{k\dpl z\dpr})\circ\delta_{\hat M}=\delta_{\hat M'}\circ f$.
\end{definition}

\begin{remark}\label{RemGL}
The proof of Lemma~\ref{LemmaGL} works not only if $R$ is discretely valued, but more generally if $T-S\equiv0\mod (\pi)$ and $T^{-1}-S^{-1}\equiv0\mod(\pi)$ for an element $\pi\in\Fm_R$. In particular if $\ulHM=\ulHM_\epsilon(\ulM)$ for an $A$-motive $\ulM=(M,\tau_M)$ over $R$, then this is satisfied. Namely, after choosing bases of the locally free $A_R$-modules $M$ and $\sigma^*M$ on a neighborhood $\Spec A_R[f^{-1}]$ with $f\in A_R$ of the point $(\Fm_R,\CJ)\in\Spec A_R$, the matrix $T$ of $\tau_M$ with respect to these bases contains only finitely many elements of $R$ and therefore the differences $T-S$ and $T^{-1}-S^{-1}$ likewise contain only finitely many elements of $\Fm_R$. All these elements lie in $(\pi)$ for a suitable $\pi\in\Fm_R$. This implies that there exists a canonical rigidification on $\ulHM=\ulHM_\epsilon(\ulM)$ even if $R$ is an arbitrary valuation ring as in Notation~\ref{Notation}.
\end{remark}

After these preparations we can complete Example~\ref{ExHPOfLocSht} by defining the Hodge-Pink lattice $\Fq_D$ associated with a rigidified local shtuka $(\hat M,\tau_{\hat M},\delta_{\hat M})$ as the image 
\begin{equation}\label{EqHPLattice}
\Fq_D:=\hat\sigma^*\delta_{\hat M}\circ\tau_{\hat M}^{-1}\bigl(\hat M\otimes_{R\dbl z\dbr}K\dbl z-\zeta\dbr\bigr)\,,
\end{equation}
under $\hat\sigma^*\delta_{\hat M}\circ\tau_{\hat M}^{-1}=\tau_{\wt D}^{-1}\circ\delta_{\hat M}\colon\hat M\otimes_{R\dbl z\dbr}K\dpl z-\zeta\dpr\isoto D\otimes_{k\dpl z\dpr}K\dpl z-\zeta\dpr=\Fp_D[\tfrac{1}{z-\zeta}]$. 

\begin{definition}\label{DefMystFunct}
The functor
\begin{eqnarray}\label{EqMystFunct}
\ulHM=(\hat M,\tau_{\hat M},\delta_{\hat M}) & \longmapsto & \BH(\ulHM):=\ulD:=(D,\tau_D,\Fq_D)\quad\text{with}\\
& & (D,\tau_D)=\hat\sigma^*(\hat M,\tau_{\hat M})\otimes_{R\dbl z\dbr}k\dpl z\dpr\text{ and $\Fq_D$ from \eqref{EqHPLattice}} \nonumber
\end{eqnarray}
from the category of rigidified local shtukas with quasi-morphisms over $R$ to the category of $z$-isocrystals with Hodge-Pink structure over $R$ is called the \emph{mysterious functor}. Clearly $\rk\BH(\ulHM)=\rk\ulHM$.
\end{definition}

We quote the following result from \cite[Proposition~2.4.10]{HartlPSp}.

\begin{proposition}\label{PropBHfullyFaithful}
Let $R$ be an arbitrary valuation ring as in Notation~\ref{Notation}. The functor $\BH$ from \eqref{EqMystFunct} is fully faithful.
\end{proposition}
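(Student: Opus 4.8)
The plan is to prove faithfulness and fullness of $\BH$ separately, working throughout over the ring $\CB:=R\dbl z,z^{-1}\}[\tminus^{-1}]$ from Example~\ref{ExHPOfLocSht}. Two features of this ring will be used repeatedly: first, $z-\zeta=z\cdot\tminus\cdot\hat\sigma(\tminus)^{-1}$ is a unit in $\CB$, so $\tau_{\hat M}$ becomes an isomorphism after tensoring with $\CB$ over $R\dbl z\dbr$, and $\hat M[\tfrac1z]$ sits inside $\hat M\otimes_{R\dbl z\dbr}\CB$; second, by Lemma~\ref{LemmaGL} and Remark~\ref{RemGL} a rigidified local shtuka is equipped with an isomorphism $\delta_{\hat M}\colon\hat M\otimes_{R\dbl z\dbr}\CB\isoto\wt D\otimes_{k\dpl z\dpr}\CB$ satisfying $\delta_{\hat M}\circ\tau_{\hat M}=\tau_{\wt D}\circ\hat\sigma^*\delta_{\hat M}$ and $\delta_{\hat M}\equiv\id\mod\Fm_R$. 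Unwinding Definition~\ref{DefRigLocSht}, the rigidification-compatibility of a (quasi-)morphism $f$ is precisely the identity $f\otimes\id_\CB=\delta_{\hat M'}^{-1}\circ(\BH(f)\otimes\id_\CB)\circ\delta_{\hat M}$, once $D$ is identified with $\wt D$ via $\tau_{\wt D}$. \emph{Faithfulness} follows at once: $\hat M$ is free over $R\dbl z\dbr$ and $R\dbl z\dbr[\tfrac1z]\into\CB$ is injective, so $\hat M[\tfrac1z]\into\hat M\otimes_{R\dbl z\dbr}\CB$ is injective, and $f$ is recovered from $f\otimes\id_\CB$, which the last identity shows is determined by $\BH(f)$.

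For \emph{fullness} I fix rigidified local shtukas $\ulHM,\ulHM'$ together with a morphism $g\colon\BH(\ulHM)\to\BH(\ulHM')$, and I must produce a quasi-morphism $f\colon\ulHM\to\ulHM'$ with $\BH(f)=g$. Since $(D,\tau_D)=\hat\sigma^*(\wt D,\tau_{\wt D})$ (and likewise for $\ulHM'$), the $\tau$-equivariant map $g$ is the same datum as a $\tau_{\wt D}$-equivariant map $\tilde g\colon\wt D\to\wt D'$, and I put
\[
f_\CB\;:=\;\delta_{\hat M'}^{-1}\circ(\tilde g\otimes\id_\CB)\circ\delta_{\hat M}\colon\quad\hat M\otimes_{R\dbl z\dbr}\CB\;\longto\;\hat M'\otimes_{R\dbl z\dbr}\CB\,.
\]
A short diagram chase with $\delta_{\hat M}\circ\tau_{\hat M}=\tau_{\wt D}\circ\hat\sigma^*\delta_{\hat M}$ (used for both $\ulHM$ and $\ulHM'$) yields $f_\CB\circ\tau_{\hat M}=\tau_{\hat M'}\circ\hat\sigma^*f_\CB$, and $f_\CB$ is rigidification-compatible by construction; so once I know that $f_\CB$ maps $\hat M[\tfrac1z]$ into $\hat M'[\tfrac1z]$, the restriction $f:=f_\CB|_{\hat M[1/z]}$ is the desired quasi-morphism with $\BH(f)=g$. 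The hypothesis $g(\Fq_D)\subset\Fq_{D'}$ enters only here: by \eqref{EqHPLattice} one has $\Fq_D=(\tau_{\wt D}^{-1}\circ\delta_{\hat M})(\hat M\otimes_{R\dbl z\dbr}K\dbl z-\zeta\dbr)$ and similarly for $\ulHM'$, so since $\tau_{\wt D'}^{-1}\circ\delta_{\hat M'}$ is injective on $K\dpl z-\zeta\dpr$-coefficients, the inclusion $g(\Fq_D)\subset\Fq_{D'}$ is equivalent to $f_\CB\bigl(\hat M\otimes_{R\dbl z\dbr}K\dbl z-\zeta\dbr\bigr)\subset\hat M'\otimes_{R\dbl z\dbr}K\dbl z-\zeta\dbr$; in particular $f_\CB(\hat M)$ has no pole at $z=\zeta$.

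The main obstacle, and the step I expect to be genuinely technical, is to upgrade this to $f_\CB(\hat M)\subset\hat M'[\tfrac1z]$. I would use two descriptions of $\CB$: its elements are functions on the punctured open unit disc whose only possible poles lie at the zeros $z=\zeta^{\hat q^i}$, $i\in\BN_0$, of $\tminus$ --- writing $B_m:=R\dbl z,z^{-1}\}[\hat\sigma^m(\tminus)^{-1}]$ for the subring of those with poles only at $z=\zeta^{\hat q^j}$, $j\ge m$, one has $\hat\sigma(B_m)\subset B_{m+1}$ and $\bigcap_m B_m=R\dbl z,z^{-1}\}$ --- and the identification of $R\dbl z\dbr[\tfrac1z]$ inside $\CB$ as the elements that are regular away from $z=0$ with a pole of \emph{bounded} order there. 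First I would clear the poles of $f_\CB(\hat M)$ at the points $\zeta^{\hat q^i}$, $i\ge1$: feeding the no-pole-at-$\zeta$ statement into the recursion $\tau_{\hat M'}\circ\hat\sigma^*f_\CB=f_\CB\circ\tau_{\hat M}$ and using that $\tau_{\hat M},\tau_{\hat M'}$ contribute only poles of bounded order at $z=\zeta$ (they lie in $\GL_r(R\dbl z\dbr[\tfrac1{z-\zeta}])$, cf.\ Lemma~\ref{LemmaLocSht}), an induction on $i$ shows $f_\CB(\hat M)\subset\hat M'\otimes_{R\dbl z\dbr}B_{i+1}$ for all $i$, hence $f_\CB(\hat M)\subset\hat M'\otimes_{R\dbl z\dbr}R\dbl z,z^{-1}\}$. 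Then I would bound the order of the pole of $f_\CB(\hat M)$ at $z=0$: modulo $\Fm_R$ the maps $\delta_{\hat M},\delta_{\hat M'}$ become the identity and $f_\CB$ becomes $\tilde g$, a matrix over $k\dpl z\dpr$ with pole of bounded order at $z=0$, and propagating this bound by $\Fm_R$-adic successive approximation through the same $\tau$-recursion gives $z^N f_\CB(\hat M)\subset\hat M'\otimes_{R\dbl z\dbr}R\dbl z\dbr$ for some $N$. Combining the two steps yields $f_\CB(\hat M)\subset\hat M'[\tfrac1z]$, which finishes the proof; the careful bookkeeping of pole orders in these last two steps is exactly what is carried out in \cite[Proposition~2.4.10]{HartlPSp}.
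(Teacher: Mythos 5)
The paper offers no proof of this proposition: it simply quotes \cite[Proposition~2.4.10]{HartlPSp}, so there is no in-text argument to compare yours against. Your faithfulness argument is complete and correct: compatibility with the rigidifications forces $f\otimes\id_{\CB}=\delta_{\hat M'}^{-1}\circ(\BH(f)\otimes\id_{\CB})\circ\delta_{\hat M}$ over $\CB=R\dbl z,z^{-1}\}[\tminus^{-1}]$, and $\hat M[\tfrac1z]$ injects into $\hat M\otimes_{R\dbl z\dbr}\CB$, so $f$ is determined by $\BH(f)$. The set-up of the fullness half is also right: $f_\CB$ is the only possible candidate, it is automatically $\tau$- and rigidification-compatible, and the hypothesis $g(\Fq_D)\subset\Fq_{D'}$ translates precisely into regularity of $f_\CB(\hat M)$ at $z=\zeta$.

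The gap is the final integrality statement $f_\CB(\hat M)\subset\hat M'[\tfrac1z]$, which is the entire content of the proposition and which you sketch rather than prove. Your step~1 (clearing the poles at $z=\zeta^{\hat q^i}$, $i\ge1$, via the recursion $f_\CB=\tau_{\hat M'}\circ\hat\sigma^*f_\CB\circ\tau_{\hat M}^{-1}$) requires at each stage a Lazard-type divisibility statement --- an element of $B_{m+1}[\tfrac1{z-\zeta}]$ that is regular in $K\dbl z-\zeta\dbr$ lies in $B_{m+1}$ --- which is in the spirit of Lemmas~\ref{LemmaIsogBounded} and \ref{LemmaFundSequence} and could be supplied, so that part is merely incomplete. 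Step~2 is the real problem. Knowing that $f_\CB\bmod\Fm_R=\tilde g$ has a pole of bounded order at $z=0$ does not bound the pole order of $f_\CB$ itself: $R\dbl z,z^{-1}\}$ contains elements $\sum_{i<0}b_iz^i$ with infinitely many nonzero $b_i\in\Fm_R$, all of which reduce to $0$ in $k\dpl z\dpr$. Moreover ``$\Fm_R$-adic successive approximation'' is not a legitimate tool over an arbitrary valuation ring as in Notation~\ref{Notation}, where $\Fm_R$ need not be finitely generated and typically satisfies $\Fm_R^2=\Fm_R$; the convergence arguments in the paper (e.g.\ Lemma~\ref{LemmaGL}) are correspondingly restricted to the discretely valued case or replaced by extra data. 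So the one step you defer entirely to \cite[Proposition~2.4.10]{HartlPSp} is exactly the step where a genuinely different mechanism is needed, and as written the proof is not self-contained at its crucial point.
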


\begin{remark}\label{RemCrystRealiz}
Let $R$ be discretely valued. In view of Theorem~\ref{ThmAndersonKim} we like to view the essential image of $\check V_\epsilon$ as the (analog of the) category of \emph{crystalline Galois representations} and the mysterious functor $\BH$ as the analog of Fontaine's functor from crystalline Galois representations to filtered isocrystals:
\begin{equation}\label{DiagMystFunct}
\xymatrix { \bigl\{\text{crystalline $\Gal(K^\sep/K)$-representations}\bigr\} \ar@{^{ (}->}[r] & \Rep_{Q_\epsilon}\Gal(K^\sep/K)\\
\bigl\{\text{local shtukas over $R$ with quasi-morphisms}\bigr\} \ar[u]_{\TS\check V_\epsilon}^{\TS\cong\,} \ar[r]^{\;\TS\BH} & \bigl\{\text{$z$-isocrystals with Hodge-Pink structure}\bigr\}.
}
\end{equation}
The name ``mysterious functor'' derives from the fact that for an abelian variety $X$ over a $p$-adic field $L$ with good reduction $X_0$, Tate~\cite{Tate66} and Grothendieck~\cite{GrothendieckCristaux} had constructed a functor relating the $p$-adic \'etale cohomology $\Koh_\et^i(X\times_LL^\alg,\BQ_p)$ to the crystalline cohomology $\Koh^i_\cris(X_0/L_0)$ with its Hodge filtration. Here $L_0$ is the fraction field of the ring of $p$-typical Witt vectors with coefficients in the (perfect) residue field of $L$ and $\Koh^i_\cris(X_0/L_0)$ is a ``filtered isocrystal''; see Remark~\ref{RemCompIsom} below. Grothendieck then posed the problem to extend this functor (the ``mysterious functor'') to general proper smooth schemes $X$ over $L$ with good reduction. For those $X$ the problem was solved by Fontaine~\cite{Fontaine79,Fontaine82,Fontaine90,Fontaine94}, who defined the notion of ``crystalline $p$-adic Galois representations'' and constructed a functor from crystalline $p$-adic Galois representations to filtered isocrystals. Fontaine conjectured that $\Koh_\et^i(X\times_LL^\alg,\BQ_p)$ is crystalline. After contributions by Grothendieck, Tate, Fontaine, Lafaille, Messing, Hyodo, Kato and many others, Fontaine's conjecture was proved independently by Faltings~\cite{Faltings89}, Niziol~\cite{Niziol98} and Tsuji~\cite{Tsuji}. Tate's original construction \cite{Tate66} proceeds similarly to our diagram~\eqref{DiagMystFunct}, as it starts with the $p$-divisible group $X[p^\infty]$ of the abelian variety $X$ and associates with it on the one hand the $p$-adic Tate module of $X$, which is the dual of $\Koh_\et^1(X\times_LL^\alg,\BQ_p)$, and on the other hand the filtered isocrystal which is the dual of $\Koh^1_\cris(X_0/L_0)$. In our diagram the category of local shtukas over $R$ with quasi-morphisms takes the place of the $p$-divisible group in Tate's construction. This is a perfect analog, because its full subcategory of effective local shtukas is anti-equivalent to the category of $z$-divisible local Anderson modules (see Remark~\ref{RemEquiv}). The latter arise from Drinfeld modules and abelian Anderson $A$-modules in the same way as $p$-divisible groups arise from abelian varieties (see Example~\ref{ExAndModule}). The reader should also note that in Fontaine's theory the information given by the Hodge filtration is fine enough to determine the Galois representation. In the function field analog this is not the case and one needs the finer information contained in the Hodge-Pink lattice; compare Example~\ref{ExHPFilt}. These observations are the reason for Definition~\ref{DefCrystRep}; see also Remark~\ref{RemEqCharCrystRep}. They suggest the following
\end{remark}

\begin{definition}\label{DefCrystRealiz}
Let $R$ be an arbitrary valuation ring as in Notation~\ref{Notation} and let $\ulHM$ be a rigidified local shtuka over $R$. Then $\Koh^1_\cris\bigl(\ulHM,k\dpl z\dpr\bigr):=\hat\sigma^*(\hat M,\tau_{\hat M})\otimes_{R\dbl z\dbr}k\dpl z\dpr$ is called the \emph{crystalline realization} of $\ulHM$. It is a functor from the category of rigidified local shtukas over $R$ with quasi-morphism to the category of $z$-isocrystals. This functor is faithful by Lemma~\ref{LemmaFaithfulBM} if $\bigcap_n\hat\sigma^n(\Fm_R) = (0)$.

If $\ulM$ is an $A$-motive over $R$ as in Example~\ref{ExAMotive} and $\ulHM_\epsilon(\ulM)$ is its associated rigidified local shtuka (see Remark~\ref{RemGL}), then $\Koh^1_\cris\bigl(\ulM,k\dpl z\dpr\bigr):=\Koh^1_\cris\bigl(\ulHM_\epsilon(\ulM),k\dpl z\dpr\bigr)$ is called the \emph{crystalline realization} of $\ulM$. 

We also define the integral structure $\Koh^1_\cris\bigl(\ulHM,k\dbl z\dbr\bigr):=\hat\sigma^*(\hat M,\tau_{\hat M})\otimes_{R\dbl z\dbr}k\dbl z\dbr\subset\Koh^1_\cris\bigl(\ulHM,k\dpl z\dpr\bigr)$ and $\Koh^1_\cris\bigl(\ulM,k\dbl z\dbr\bigr):=\Koh^1_\cris\bigl(\ulHM_\epsilon(\ulM),k\dbl z\dbr\bigr)$. It is a functor from the category of rigidified local shtukas over $R$ with the usual morphism to the category of $z$-isocrystals. The crystalline realization only depends on the special fiber $\ulHM\otimes_Rk$ of $\ulHM$. This functor is faithful by Lemma~\ref{LemmaFaithfulBM} if $\bigcap_n\hat\sigma^n(\Fm_R) = (0)$.
\end{definition}

\begin{example}\label{ExCarlitz5A}
We continue with the discussion of the Carlitz motive $\ulM=(R[t],t-\theta)$ from Example~\ref{ExCarlitz2} and its local shtuka $\ulHM:=\ulHM_\epsilon(\ulM)=(R\dbl z\dbr,z-\zeta)$. The associated $z$-isocrystal is $(k\dpl z\dpr,z)$. The isomorphism $\delta_{\hat M}$ from Lemma~\ref{LemmaGL} is given by multiplication with the element $\tminus^{-1}$ from \eqref{EqTminus} because $\tminus^{-1}\cdot(z-\zeta)=z\cdot\hat\sigma(\tminus^{-1})$. Since $\hat\sigma(\tminus)$ converges at $z=\zeta$ to the non-zero element $\hat\sigma(\tminus)|_{z=\zeta}=\prod_{i\in\BN_{>0}}(1-\zeta^{\hat q^i-1})\in\BF_\epsilon\dbl\zeta\dbr\mal$, we have $\hat\sigma(\tminus)\in\BF_\epsilon\dpl\zeta\dpr\dbl z-\zeta\dbr\mal$ and $\Fq_D=\hat\sigma(\tminus^{-1})(z-\zeta)^{-1}\cdot K\dbl z-\zeta\dbr=(z-\zeta)^{-1}\Fp_D$. So the Hodge-Pink weight is $-1$ and 
\[
\BH(\ulHM)\;=\;\bigl(k\dpl z\dpr,\,z,\,(z-\zeta)^{-1}\Fp_D\bigr)\;=\;\ul\BOne(-1)\,.
\]
\end{example}

\medskip

In order to describe comparison isomorphisms which relate the crystalline realization $\Koh^1_\cris\bigl(\ulHM,k\dpl z\dpr\bigr)$ of a rigidified local shtuka with the other realizations we let $\olK$ be the completion of an algebraic closure of $K$, and we recall the definition of $\CO_\olK\dbl z,z^{-1}\}$ from \eqref{EqBrig} and the elements $\tminus\in\BF_\epsilon\dbl\zeta\dbr\dbl z,z^{-1}\}$ from \eqref{EqTminus} and $\tplus\in\CO_\olK\dbl z\dbr$ from Example~\ref{ExCarlitz4A}, which also satisfies $\tplus\in\olK\langle\tfrac{z}{\zeta}\rangle\mal$ and $\hat\sigma(\tplus)=(z-\zeta)\!\cdot\!\tplus$. We set
\begin{equation}\label{EqT}
\tplusminus\;:=\;\tplus\tminus\;\in\;\CO_\olK\dbl z,z^{-1}\}\,.
\end{equation}
Then $\hat\sigma(\tplusminus)=z\!\cdot\!\tplusminus$ because $\tminus=(1-\tfrac{\zeta}{z})\!\cdot\!\hat\sigma(\tminus)$, and $g(\tplusminus)=\chi_\epsilon(g)\!\cdot\!\tplusminus$ for $g\in\Gal(K^\sep/K)$ where $\chi_\epsilon$ is the cyclotomic character from Example~\ref{ExCarlitz4A}. With these properties $\tplusminus$ is the function field analog of Fontaine's period $\log[\ul e]\in\wt\bB_\rig$ of the multiplicative group, where $\ul e=(e_i\colon i\in\BN_0)$ is a compatible system of primitive $p^i$-th roots of unity $e_i$ and $[\ul e]$ is its Teichm\"uller lift; see \cite[\S\,2.7]{HartlDict}.

\medskip

The following lemma is the function field analog of the \emph{fundamental exact sequence} \cite[Proposition~1.3(v)]{CF} of $\BQ_p$-vector spaces
\begin{equation}\label{EqFundSeqFontaine}
\xymatrix @C+1pc {
0 \ar[r] & \BQ_p \ar[r] & \wt\bB_\rig^{\phi=\id} \oplus \bB_\dR^+ \ar[r] & \bB_\dR \ar[r] & 0
}
\end{equation}
in which $\wt\bB_\rig^{\phi=\id}=\bB_\cris^{\phi=\id}$; see for example \cite[\S\,II.3.4]{Berger}.

\begin{lemma}\label{LemmaFundSequence}
The following sequence of $Q_\epsilon$-vector spaces is exact
\[
\xymatrix @R=0.2pc {
0 \ar[r] & Q_\epsilon \ar[r] & \CO_\olK\dbl z,z^{-1}\}[\tplusminus^{-1}]^{^{\SC\hat\sigma=\id}} \oplus \olK\dbl z-\zeta\dbr \ar[r] & \olK\dpl z-\zeta\dpr \ar[r] & 0\\
& \quad a \quad \ar@{|->}[r] & \quad\qquad (a,a)\qquad,\qquad (f,g) \qquad\quad \ar@{|->}[r] & \quad\; f-g\,. \quad
}
\]
\end{lemma}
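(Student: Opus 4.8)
The plan is to follow the proof of the $p$-adic fundamental exact sequence \cite[Proposition~1.3(v)]{CF} (see also \cite[\S\,II.3.4]{Berger}): the ring $\CO_\olK\dbl z,z^{-1}\}$ plays the role of $\wt\bB_\rig$, the element $\tplusminus=\tplus\tminus$ that of Fontaine's period $t$, and $\olK\dbl z-\zeta\dbr\subset\olK\dpl z-\zeta\dpr$ that of $\bB_\dR^+\subset\bB_\dR$. First I would collect the formal facts. Via the embedding $\CO_\olK\dbl z,z^{-1}\}\into\olK\dbl z-\zeta\dbr$ from \eqref{EqBrig} and the fact that $\tplusminus$ has a simple zero at $z=\zeta$ (because $\tminus$ has, by its factor $1-\tfrac\zeta z$, while $\tplus$ is a unit on $\{|z|\le|\zeta|\}$ by Example~\ref{ExCarlitz4A}), one has $\tplusminus=(z-\zeta)\!\cdot\!u$ with $u$ a unit of $\olK\dbl z-\zeta\dbr$, hence $\CO_\olK\dbl z,z^{-1}\}[\tplusminus^{-1}]\subset\olK\dbl z-\zeta\dbr[(z-\zeta)^{-1}]=\olK\dpl z-\zeta\dpr$; so both $\CO_\olK\dbl z,z^{-1}\}[\tplusminus^{-1}]^{\hat\sigma=\id}$ and $\olK\dbl z-\zeta\dbr$ are subrings of $\olK\dpl z-\zeta\dpr$. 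Moreover $Q_\epsilon=\BF_\epsilon\dpl z\dpr$ lies in $\CO_\olK\dbl z,z^{-1}\}$, is fixed by $\hat\sigma$, and embeds into $\olK\dbl z-\zeta\dbr$ via $z\mapsto\zeta+(z-\zeta)$; hence $a\mapsto(a,a)$ is injective and the composite of the two maps is zero on $Q_\epsilon$. Thus only exactness in the middle and surjectivity of $(f,g)\mapsto f-g$ remain.

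For exactness in the middle I would show $\CO_\olK\dbl z,z^{-1}\}[\tplusminus^{-1}]^{\hat\sigma=\id}\cap\olK\dbl z-\zeta\dbr=Q_\epsilon$ inside $\olK\dpl z-\zeta\dpr$. Given $f$ in the intersection, write $f=\tplusminus^{-n}g$ with $g\in\CO_\olK\dbl z,z^{-1}\}$; then $\hat\sigma(f)=f$ and $\hat\sigma(\tplusminus)=z\tplusminus$ give $\hat\sigma(g)=z^ng$. The key point is that everything is of characteristic $p$, so the $\hat q$-power map is a ring homomorphism and the $\hat q$-power roots of $\zeta$ lie in $\olK$; consequently $\hat\sigma(g)(v^{\hat q})=g(v)^{\hat q}$ and $v^{\hat q}-\zeta^{\hat q^{j+1}}=(v-\zeta^{\hat q^j})^{\hat q}$, and comparing orders of vanishing in $\hat\sigma(g)=z^ng$ at $z=\zeta^{\hat q^j}$ yields $\ord_{\zeta^{\hat q^j}}(g)=\ord_{\zeta^{\hat q^{j+1}}}(g)$ for all $j\in\BZ$. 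Since $f$ has no pole at $z=\zeta$ while $\tplusminus^{-n}$ has a pole of order $n$ there, $\ord_{\zeta}(g)\ge n$, so $g$ vanishes to order $\ge n$ at every $\zeta^{\hat q^j}$; as the zero divisor of $\tplusminus$ on the punctured disc $\{0<|z|<1\}$ is exactly $\sum_{j\in\BZ}[\zeta^{\hat q^j}]$ (a short computation from the product formula for $\tminus$, the functional equations $\hat\sigma(\tminus)=\tfrac z{z-\zeta}\tminus$, $\hat\sigma(\tplus)=(z-\zeta)\tplus$, and the unit-ness of $\tplus$ on $\{|z|\le|\zeta|\}$), the quotient $h:=g\tplusminus^{-n}=f$ is holomorphic on all of $\{0<|z|<1\}$ and hence has a convergent Laurent expansion $h=\sum_{i\in\BZ}c_iz^i$ there. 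From $\hat\sigma(h)=h$ we get $c_i^{\hat q}=c_i$, i.e.\ $c_i\in\BF_\epsilon$, and convergence of the expansion at some $0<|z_0|<1$ forces $c_i=0$ for $i\ll0$; therefore $f=h\in\BF_\epsilon\dpl z\dpr=Q_\epsilon$.

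For surjectivity I would reformulate it as: the composition $\CO_\olK\dbl z,z^{-1}\}[\tplusminus^{-1}]^{\hat\sigma=\id}\into\olK\dpl z-\zeta\dpr\onto\olK\dpl z-\zeta\dpr/\olK\dbl z-\zeta\dbr$ is onto. Using $\tplusminus=(z-\zeta)u$ with $u$ a unit, the target is $\bigcup_{n\ge0}\tplusminus^{-n}\olK\dbl z-\zeta\dbr/\olK\dbl z-\zeta\dbr$ with graded pieces $\tplusminus^{-n}\olK\dbl z-\zeta\dbr/\tplusminus^{-(n-1)}\olK\dbl z-\zeta\dbr\cong\olK$ (sending $\tplusminus^{-n}a\mapsto a(\zeta)$), so by dévissage on $n$ it suffices to prove: for every $n\ge1$ and every $a\in\olK$ there is $g\in\CO_\olK\dbl z,z^{-1}\}$ with $\hat\sigma(g)=z^ng$ and $g(\zeta)=a$; then $f:=\tplusminus^{-n}g$ lifts $a\tplusminus^{-n}$ modulo $\tplusminus^{-(n-1)}\olK\dbl z-\zeta\dbr$, and one subtracts and recurses on the pole order. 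Since $\hat\sigma(g_1g_2)=z^2g_1g_2$ when $\hat\sigma(g_i)=zg_i$ and $(g_1g_2)(\zeta)=g_1(\zeta)g_2(\zeta)$, this reduces to the case $n=1$, i.e.\ to showing that $g\mapsto g(\zeta)$ carries the set of solutions of $\hat\sigma(g)=zg$ in $\CO_\olK\dbl z,z^{-1}\}$ onto $\olK$. Here the nonzero solutions are precisely the series $g_\beta=\sum_{i\in\BZ}\beta^{\hat q^{-i}}z^i$ with $\beta\in\CO_\olK$, $|\beta|<1$ (where $\beta^{\hat q^{-i}}$ denotes the $\hat q^{\,i}$-th root of $\beta$ for $i>0$, which exists since $\olK$ is perfect; these converge because $|\beta|^{\hat q^{|i|}}\to0$ superexponentially as $i\to-\infty$), and given $a\in\olK\mal$ one would solve $g_\beta(\zeta)=a$ by choosing $|\beta|$ so that the series $\sum_i\beta^{\hat q^{-i}}\zeta^i$ has a unique dominant term of absolute value $|a|$, fitting that term to $a$, and correcting the strictly smaller remainder by successive approximation. \textbf{This final surjectivity statement is the main obstacle:} it is the function-field analog of the surjectivity of Fontaine's map $\theta\colon\bB_\cris^{\phi=p}\onto\BC_p$, and, unlike the rest of the argument, it is not purely formal but requires the approximation estimate just indicated.
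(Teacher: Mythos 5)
Your overall route is the same as the paper's: left exactness is formal; middle exactness amounts to showing that a $\hat\sigma$-invariant element of $\CO_\olK\dbl z,z^{-1}\}[\tplusminus^{-1}]$ which is regular at $z=\zeta$ already lies in $\BF_\epsilon\dpl z\dpr$; and right exactness is reduced by d\'evissage on the pole order at $z=\zeta$ to the statement that evaluation at $z=\zeta$ maps $\{g\in\CO_\olK\dbl z,z^{-1}\}\colon\hat\sigma(g)=zg\}$ onto $\olK$, which is exactly the paper's Corollary~\ref{CorF11Surj}. Your middle-exactness argument is correct; it differs from the paper's only in packaging, in that you propagate the vanishing to all points $\zeta^{\hat q^j}$, $j\in\BZ$, and divide by $\tplusminus^n$ on the whole punctured disc, whereas the paper stays on the single annulus $\{|\zeta|^{r\hat q}\le|z|\le|\zeta|^r\}$ and invokes Lazard's divisibility lemma there. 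Both work, and your identification of the divisor of $\tplusminus$ is right.

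The genuine gap is exactly where you flag it. Writing the solutions of $\hat\sigma(g)=zg$ as $g_\beta=\sum_i\beta^{\hat q^{-i}}z^i$ with $|\beta|<1$ and then ``fitting the dominant term and correcting the strictly smaller remainder by successive approximation'' is not yet a proof, because $\olK$ is not discretely valued: at each step the error $a_{k+1}=a_k-g_{\beta_k}(\zeta)$ satisfies $v(a_{k+1})>v(a_k)$, but the gain $v(a_{k+1})-v(a_k)$ equals $\min\bigl(1-(1-\hat q^{-1})\bar v_k,\ (\hat q-1)\bar v_k-1\bigr)$ where $\bar v_k$ is the valuation renormalized into the interval $(\tfrac{1}{\hat q-1},\tfrac{\hat q}{\hat q-1})$, and this degenerates to $0$ at the endpoints of that interval; since a strictly increasing sequence of valuations in a densely valued field need not tend to $+\infty$, convergence of $\sum_k g_{\beta_k}$ is not automatic and the iteration could stall. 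This is precisely what the paper's proof of Corollary~\ref{CorF11Surj} supplies: it replaces the naive iteration by solving the truncated equations $\sum_{i=-n}^n u_n^{\hat q^{-i}}\zeta^i=c$, uses the Newton polygon of each to produce roots with the explicit valuations $v(x_n)=\hat q^{-1}v(c)+\hat q^{n-1}$ for $n\ge3$, and deduces $\hat q$-exponentially fast convergence of the $u_n$ together with the vanishing of the tail sums. To complete your argument you must either carry out such a quantitative estimate or cite Corollary~\ref{CorF11Surj} as the key input rather than treat it as an exercise in successive approximation.
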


\noindent
{\it Remark.} Note that in \cite[\S\,2.9, last line on p.~1745]{HartlDict} the exactness of a similar sequence was stated. That sequence contains an error, as in the middle term ``$[\tplusminus^{-1}]$'' is missing. 

\bigskip

We prove the lemma simultaneously with the following

\begin{corollary}\label{CorF11Surj}
For every $c\in\olK$ there is an $f=\sum_{i\in\BZ}b_i z^i\in\CO_\olK\dbl z,z^{-1}\}$ with $f=z^{-1}\hat\sigma(f)$ and $c=f(\zeta):=\sum_{i\in\BZ}b_i \zeta^i$.
\end{corollary}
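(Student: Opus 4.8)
The plan is to make the equation explicit. Writing $f=\sum_{i\in\BZ}b_iz^i$, the relation $f=z^{-1}\hat\sigma(f)$ is equivalent to $b_{i-1}=b_i^{\hat q}$ for all $i$; since $\olK$ is algebraically closed, hence perfect, every $b_i$ has a unique $\hat q$-power root, so such an $f$ is determined by its constant coefficient $a:=b_0$, and the growth condition defining $\CO_\olK\dbl z,z^{-1}\}$ forces $|a|<1$ (then $b_{-n}=a^{\hat q^n}$ decays doubly-exponentially and the condition is automatic). Thus the solutions of $f=z^{-1}\hat\sigma(f)$ in $\CO_\olK\dbl z,z^{-1}\}$ are in bijection with $\{a\in\olK\colon|a|<1\}$, and under this bijection $f(\zeta)=\sum_i b_i\zeta^i$ becomes a convergent, additive (in fact $\BF_\epsilon$-linear) expression $\Phi(a)=\sum_{n\ge0}a^{\hat q^n}\zeta^{-n}+\sum_{n\ge1}a^{1/\hat q^n}\zeta^n$; so the Corollary asserts exactly that $\Phi$ is surjective onto $\olK$. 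A first reduction: since $f\mapsto\hat\sigma(f)$ multiplies the value at $\zeta$ by $\zeta$ while being the identity on solutions, one has $\Phi(a^{\hat q})=\zeta\,\Phi(a)$, so the image of $\Phi$ is stable under multiplication by $\zeta^{\pm1}$; normalising the valuation $v$ on $\olK$ by $v(\zeta)=1$, it therefore suffices to hit those $c$ whose valuation lies in one fixed system of representatives for $v(\olK)/\BZ$.

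The analytic input is the estimate that for $v(a)$ in the open interval $W:=\bigl(\tfrac1{\hat q-1},\tfrac{\hat q}{\hat q-1}\bigr)$ one has $v(\Phi(a)-a)>v(a)$: the hypothesis $v(a)>\tfrac1{\hat q-1}$ sends every $a^{\hat q^n}\zeta^{-n}$ with $n\ge1$ to valuation $>v(a)$, and $v(a)<\tfrac{\hat q}{\hat q-1}$ does the same for every $a^{1/\hat q^n}\zeta^n$. More generally, for any value of $v(a)$ the leading behaviour of $\Phi$ near that valuation is an explicit invertible additive operator: away from the (in any bounded range only finitely many) transition valuations where two terms of $\Phi$ collide, it is a composition of a power of the Frobenius $a\mapsto a^{\hat q}$ (or its inverse) with a power of multiplication by $\zeta$; at a transition valuation it is a short $\hat q$-linearised polynomial with nonzero linear coefficient, which is surjective on the residue field because that field is algebraically closed. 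Consequently, given a target and a prescribed leading valuation, one can choose $a$ --- as a finite sum of ``homogeneous'' pieces, a \emph{packet} --- so that $\Phi(a)$ agrees with the target up to strictly larger valuation, and, crucially, larger by an amount bounded below on each bounded range of valuations.

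With these tools I would solve $\Phi(a)=c$ by successive approximation: write $f=\sum_{k\ge0}f^{(k)}$ with each $f^{(k)}$ a solution of $f^{(k)}=z^{-1}\hat\sigma(f^{(k)})$, choosing its constant coefficient $a_k$ (a packet, at the scaling level dictated by $v$ of the current residual and by the reduction of the first paragraph) so that the residual $c-\sum_{j\le k}f^{(j)}(\zeta)$ has strictly increasing valuation $\beta_k$; by the last sentence of the previous paragraph $\beta_k\to\infty$. A direct estimate of the seminorms $\nu_r(g):=\sup_i|b_i|\,|\zeta|^{ri}$ governing the Fr\'echet topology of $\CO_\olK\dbl z,z^{-1}\}$ shows that $v(a_k)\to\infty$ --- which is forced by $\beta_k\to\infty$ via the scaling --- implies $f^{(k)}\to0$. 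Since $\CO_\olK\dbl z,z^{-1}\}$ is a non-archimedean Fr\'echet space, $f:=\sum_k f^{(k)}$ converges there; it lies in the closed subspace defined by $f=z^{-1}\hat\sigma(f)$, and $|f(\zeta)|\le\nu_1(f)$ shows evaluation at $\zeta$ is continuous, so $f(\zeta)=\sum_k f^{(k)}(\zeta)=c$. (One may first treat $c$ algebraic over $K$ and pass to the general case by density and continuity.)

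The step requiring real care is this convergence, i.e.\ arranging $\beta_k\to\infty$: the gain $v(\Phi(a)-a)-v(a)$ furnished by the basic estimate on $W$ tends to $0$ at the endpoints of $W$, and correcting a single coefficient at a time can leave the residual's valuation pinned at a transition value (concretely, the series $\sum_{n\ge0}a^{\hat q^n}\zeta^{-n}$ alone is invertible on $\{v>\tfrac1{\hat q-1}\}$, but the full $\Phi$ admits no naive fixed point), which is exactly why the correction must be carried out on a whole packet using the explicit invertible leading operators above. This is also the link with Lemma~\ref{LemmaFundSequence}, proved simultaneously: realising in the middle term of the fundamental exact sequence an element with a prescribed simple pole at $z=\zeta$ is, via $f\mapsto\tplusminus^{-1}f$ together with $\hat\sigma(\tplusminus)=z\tplusminus$ and the fact that $\tplusminus$ has a simple zero at $\zeta$ (coming from $\tminus(\zeta)=0$), equivalent to the present Corollary, and the two statements are then bootstrapped from simple to higher-order poles.
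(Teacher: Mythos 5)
Your reduction is sound and matches the paper's: solutions of $f=z^{-1}\hat\sigma(f)$ in $\CO_\olK\dbl z,z^{-1}\}$ are parametrized by $a=b_0$ with $|a|<1$, one must show $\Phi(a)=\sum_{n\ge0}a^{\hat q^n}\zeta^{-n}+\sum_{n\ge1}a^{\hat q^{-n}}\zeta^{n}$ is onto, and the relation $\Phi(a^{\hat q})=\zeta\,\Phi(a)$ lets one normalize $v(c)$ into a fixed fundamental domain (the paper takes $\tfrac1{\hat q-1}<v(c)\le1+\tfrac1{\hat q-1}$). The gap is in the successive-approximation step, and it is exactly the one you flag without resolving. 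Matching the leading term of the residual $c_k$ via surjectivity of the leading additive operator on the residue field only yields $v(\Phi(a_k)-c_k)>\beta_k$ \emph{strictly}; since $v(\olK^{\times})=\BQ$ is dense, a strict gain can be arbitrarily small, so the $\beta_k$ may well converge to a finite limit. Your stronger assertion --- that the gain is bounded below on each bounded range of valuations --- is the entire difficulty, and the justification offered (residue-field surjectivity plus an undefined ``packet'') does not establish it: to quantify the gain you must solve the leading equation \emph{exactly} in $\olK$ and, via a Newton polygon, certify that the root you select has the prescribed valuation, so that the error is governed by the next-smallest of the valuations $\hat q^{\,n}v(a)-n$, $\hat q^{-n}v(a)+n$; and you must further confront the fact that this next-smallest value collapses onto the smallest as $v(a)$ approaches one of your transition valuations, so that even the exact method gives no uniform gain there.

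This is precisely what the paper's argument supplies, by a different organization of the induction: after the normalization it does not correct the residual scale by scale, but for each $n\ge2$ solves the truncated equation $\sum_{i=-n}^{n}u_n^{\hat q^{-i}}\zeta^i=c$ \emph{exactly}, writing $u_n=x_2+\cdots+x_n$ with each $x_n$ a root of an explicit additive polynomial of degree $\hat q^{2n}$ whose Newton polygon forces $v(x_2)=v(c)$ and $v(x_n)=\hat q^{-1}v(c)+\hat q^{\,n-1}$ for $n\ge3$. The doubly-exponential growth of $v(x_n)$ makes $b_0=\lim_n u_n$ converge trivially, and a direct tail estimate (using Bernoulli's inequality) shows $b_0$ solves the full equation. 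To repair your proof you would have to replace the residue-field step by this kind of exact root-finding with Newton-polygon control --- at which point your ``packets'' are the paper's partial sums $u_n$, and the two arguments coincide.
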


\begin{proof}[Proof of Lemma~\ref{LemmaFundSequence} and Corollary~\ref{CorF11Surj}]
In Lemma~\ref{LemmaFundSequence} the exactness on the left is clear because the maps into each of the summands are ring homomorphisms.

To prove exactness in the middle let $g=\tfrac{b}{\tplusminus^n}\in\CO_\olK\dbl z,z^{-1}\}[\tplusminus^{-1}]^{^{\SC\hat\sigma=\id}}\cap\olK\dbl z-\zeta\dbr$ with $b\in\CO_\olK\dbl z,z^{-1}\}$. Since $\hat\sigma(\tplusminus^n)=z^n\cdot\tplusminus^n$ we must have $b=z^{-n}\hat\sigma(b)$. It follows from \cite[Proposition~1.4.4]{HartlPSp} that $b\in Q_\epsilon\!\cdot\!\tplusminus^n$. This can also be seen directly as follows. For $s\ge r>0$ consider the ring
\[
\olK\langle\tfrac{z}{\zeta^r},\tfrac{\zeta^s}{z}\rangle\;:=\;\bigl\{\;{\TS\sum\limits_{i=-\infty}^\infty} b_i z^i:\es |b_i|\,|\zeta|^{ri} \to 0 \es(i\to\infty),\es |b_i|\,|\zeta|^{si}\to0\es(i\to-\infty)\;\bigr\}\,,
\]
which is a principal ideal domain by \cite[Proposition~4]{Lazard}. Fix a real number $r$ with $1/\hat q<r<1$. Then $1<r\hat q<\hat q$. Therefore the elements $\tplus$ and $\hat\sigma(\tminus)$ are units in $\olK\langle\tfrac{z}{\zeta^r},\tfrac{\zeta^{r\hat q}}{z}\rangle$ and the element $(1-\tfrac{\zeta}{z})^ng=\tplus^{-n}\hat\sigma(\tminus)^{-n}b\in\olK\langle\tfrac{z}{\zeta^r},\tfrac{\zeta^{r\hat q}}{z}\rangle$ has a zero of order $\ge n$ at $z=\zeta$ because $g\in\olK\dbl z-\zeta\dbr$. By \cite[Lemme~2]{Lazard} this shows that it is divisible by $(1-\tfrac{\zeta}{z})^n$ in $\olK\langle\tfrac{z}{\zeta^r},\tfrac{\zeta^{r\hat q}}{z}\rangle$. So $g=\sum\limits_{i=-\infty}^\infty b_iz^i\in\olK\langle\tfrac{z}{\zeta^r},\tfrac{\zeta^{r\hat q}}{z}\rangle$ satisfies $g=\hat\sigma(g)=\sum\limits_{i=-\infty}^\infty b_i^{\hat q}z^i$ in $\olK\langle\tfrac{z}{\zeta^{r\hat q}},\tfrac{\zeta^{r\hat q}}{z}\rangle$. It follows that $b_i=b_i^{\hat q}$, whence $b_i\in\BF_\epsilon$. Now the convergence condition $|b_i|\,|\zeta|^{r\hat qi}\to0$ for $i\to-\infty$ implies that $g\in\BF_\epsilon\dpl z\dpr=Q_\epsilon$ as desired. Thus the sequence is exact in the middle.

\medskip

We next prove Corollary~\ref{CorF11Surj}. The condition $f=z^{-1}\hat\sigma(f)=\sum_{i\in\BZ}b_i^{\hat q} z^{i-1}$ implies $b_i=b_{i-1}^{\hat q^{-1}}=b_0^{{\hat q}^{-i}}$, whence $f=\sum_{i\in\BZ}b_0^{{\hat q}^{-i}}z^i$. The convergence condition on $f$ for $i\to-\infty$ further implies that $|b_0|<1$. Conversely this guarantees that $f\in\CO_\olK\dbl z,z^{-1}\}$. Thus we must find an element $b_0\in\olK$ with $|b_0|<1$ and
\begin{equation}\label{EqLemmaF11Surj}
\sum_{i=-\infty}^\infty b_0^{{\hat q}^{-i}}\zeta^i\;=\;c\,.
\end{equation}
We consider the valuation $v$ on $\olK$ with $v(x):=\log|x|/\log|\zeta|$, which satisfies $|x|=|\zeta|^{v(x)}$ and $v(\zeta)=1$. If $f\in\CO_\olK\dbl z,z^{-1}\}$ satisfies $f=z^{-1}\hat\sigma(f)$ and $f(\zeta)=c$ then $z^nf\in\CO_\olK\dbl z,z^{-1}\}$ satisfies $z^nf=z^{-1}\hat\sigma(z^nf)$ and $(z^nf)(\zeta)=\zeta^nc$ for all $n\in\BZ$. Therefore we may multiply $c$ with an integral power of $\zeta$ and assume that $\tfrac{1}{{\hat q}-1}<v(c)\le 1+\tfrac{1}{{\hat q}-1}$, that is $|\zeta|>|c|^{{\hat q}-1}\ge|\zeta|^{\hat q}$. To solve equation~\eqref{EqLemmaF11Surj} in this case we iteratively try to find $u_n\in\olK$ such that
\begin{equation}\label{EqLemmaF11Surj1}
\sum_{i=-n}^nu_n^{{\hat q}^{-i}}\zeta^i\;=\;c
\end{equation}
for each $n\in\BN$ with $n\ge2$. Writing $u_n=\sum_{i=2}^nx_n$ and $i=n-j$, multiplying \eqref{EqLemmaF11Surj1} with $\zeta^n$ and raising it to the ${\hat q}^n$-th power we have to solve the equations
\begin{equation}\label{EqLemmaF11Surj2}
\sum_{j=0}^{2n}\zeta^{(2n-j){\hat q}^{n}}(x_n)^{{\hat q}^j}\;=\;(x_n)^{{\hat q}^{2n}}+\ldots+\zeta^{(2n-1){\hat q}^n}(x_n)^{{\hat q}}+\zeta^{2n{\hat q}^n}x_n\;=\; c_n
\end{equation}
for $x_n\in\olK$ where $c_2=\zeta^{2{\hat q}^2}c^{{\hat q}^2}$ and $c_n=-(u_{n-1})^{{\hat q}^{2n}}-\zeta^{2n{\hat q}^n}u_{n-1}$ for $n\ge 3$. We claim that there are solutions $x_n\in\olK$ with $|x_2|=|c|$ and $|x_n|^{\hat q}=|c|\,|\zeta|^{{\hat q}^n}<|c|\,|\zeta|^{\hat q}\le|c|^{\hat q}$ for $n\ge3$. From this claim it follows that $|u_n|=|c|$ and that the limit $b_0:=\lim_{n\to\infty}u_n$ exists in $\olK$ with $|b_0|=|c|<1$. In the expression
\[
\sum_{|i|> n} b_0^{{\hat q}^{-i}}\zeta^i\,+\,\sum_{i=-n}^{n} (b_0-u_{n})^{{\hat q}^{-i}}\zeta^i\;=\;\sum_{i=-\infty}^\infty b_0^{{\hat q}^{-i}}\zeta^i\,-\,\sum_{i=-n}^{n} u_{n}^{{\hat q}^{-i}}\zeta^i\;=\;\sum_{i=-\infty}^\infty b_0^{{\hat q}^{-i}}\zeta^i\,-\,c
\]
the first sum goes to zero for $n\to\infty$ because $f\in\CO_\olK\dbl z,z^{-1}\}$. Since $v(b_0-u_n)=v(x_{n+1})={\hat q}^{-1}v(c)+{\hat q}^n$ the $i$-th summand in the second sum has valuation $v\bigl((b_0-u_n)^{{\hat q}^{-i}}\zeta^i\bigr)={\hat q}^{-i-1}v(c)+{\hat q}^{n-i}+i>({\hat q}-1)(n-i)+1+i\ge n+1$ by Bernoulli's inequality. So the second sum likewise goes to zero and $b_0$ solves equation~\eqref{EqLemmaF11Surj}.

To prove the claim we use the Newton polygon of \eqref{EqLemmaF11Surj2}, which is defined as the lower convex hull in the plane $\BR^2$ of the points
\[
\bigl(0, v(c_n)\bigr)\,,\es(1, 2n{\hat q}^n)\,,\es({\hat q},(2n-1){\hat q}^n)\,,\es\ldots\,,\es({\hat q}^j,(2n-j){\hat q}^n)\,,\es\ldots\,,\es({\hat q}^{2n},0)\,.
\]
The piecewise linear function passing through these points has slope $-\tfrac{{\hat q}^{n-j}}{{\hat q}-1}$ on the interval $[{\hat q}^j,{\hat q}^{j+1}]$ for all $j=0,\ldots,2n-1$. In particular, these slopes are strictly increasing. If $n=2$ and $v(c_2)={\hat q}^2v(c)+2{\hat q}^2$ the Newton polygon starts with the line segment of slope $-v(c)$ on the interval $[0,{\hat q}^2]$ because $v(c)\le1+\tfrac{1}{{\hat q}-1}\le2$ implies $v(c_2)-1\cdot v(c)=({\hat q}^2-1)v(c)+2{\hat q}^2<4{\hat q}^2$ and $v(c_2)-{\hat q}\cdot v(c)=({\hat q}^2-{\hat q})v(c)+2{\hat q}^2\le3{\hat q}^2$, and because on the next interval $[{\hat q}^2,{\hat q}^3]$ the slope is $-\tfrac{1}{{\hat q}-1}>-v(c)$. So by the theory of the Newton polygon, \eqref{EqLemmaF11Surj2} has a solution $x_2\in\olK$ with $v(x_2)=v(c)$.

If $n\ge 3$ then $n<3(n-2)+1\le({\hat q}^2-1)(n-2)+({\hat q}-1)$ implies $\sum_{j=n+1}^{2n}({\hat q}^n-{\hat q}^{2n-j})\le n{\hat q}^n<({\hat q}^{n+2}-{\hat q}^n)(n-2)+({\hat q}^{n+1}-{\hat q}^n)\le\sum_{j=1}^n({\hat q}^{2n-j}-{\hat q}^n)$, whence $2n{\hat q}^n<\sum_{j=1}^{2n}{\hat q}^{2n-j}=\tfrac{{\hat q}^{2n}-1}{{\hat q}-1}$. Therefore the induction hypothesis $v(u_{n-1})=v(c)>\tfrac{1}{{\hat q}-1}$ yields 
\[
v(u_{n-1}^{{\hat q}^{2n}})\;=\;{\hat q}^{2n}v(c)\;>\;v(c)+\tfrac{{\hat q}^{2n}-1}{{\hat q}-1}\;>\;v(c)+2n{\hat q}^n\;=\;v(\zeta^{2n{\hat q}^n}u_{n-1})
\]
and hence $v(c_n)=v(c)+2n{\hat q}^n$. It follows that the Newton polygon starts with the line segment of slope $-\tfrac{v(c)}{{\hat q}}-{\hat q}^{n-1}$ on the interval $[0,{\hat q}]$ because $v(c)+2n{\hat q}^n+1\cdot(-\tfrac{v(c)}{{\hat q}}-{\hat q}^{n-1})=\tfrac{({\hat q}-1)v(c)}{{\hat q}}-{\hat q}^{n-1}+2n{\hat q}^n<2n{\hat q}^n$, and because on the next interval $[{\hat q},{\hat q}^2]$ the slope is $-\tfrac{{\hat q}^{n-1}}{{\hat q}-1}\ge-{\hat q}^{n-1}>-\tfrac{v(c)}{{\hat q}}-{\hat q}^{n-1}$. Again by the theory of the Newton polygon, \eqref{EqLemmaF11Surj2} has a solution $x_n\in\olK$ with $v(x_n)=\tfrac{v(c)}{{\hat q}}+{\hat q}^{n-1}$ for $n\ge3$. This proves our claim and hence also the corollary.

\medskip

Finally, to prove exactness on the right in Lemma~\ref{LemmaFundSequence} let $c\,(z-\zeta)^{-n}\in\olK\dpl z-\zeta\dpr$ for $c\in\olK$ and fix an $n$-th root $\sqrt[n]{c}\in\olK$. By the corollary there is an $f\in\CO_\olK\dbl z,z^{-1}\}$ with $f=z^{-1}\hat\sigma(f)$ and $f(\zeta)=\sqrt[n]{c}\cdot(\tfrac{\ell}{z-\zeta})\big|_{z=\zeta}=\sqrt[n]{c}\cdot\bigl(z^{-1}\tplus\hat\sigma(\tminus)\bigr)\big|_{z=\zeta}$. Therefore $f^n/\ell^n\in\CO_\olK\dbl z,z^{-1}\}[\tplusminus^{-1}]^{^{\SC\hat\sigma=\id}}$ and $f/\ell\equiv \sqrt[n]{c}\,(z-\zeta)^{-1}\mod\olK\dbl z-\zeta\dbr$ implies $c\,(z-\zeta)^{-n}-f^n/\ell^n\in (z-\zeta)^{1-n}\olK\dbl z-\zeta\dbr$. In this way we can successively get rid of all denominators in $\olK\dpl z-\zeta\dpr$ and this proves exactness on the right.
\end{proof}

\begin{theorem}\label{ThmCompdRCris}
Let $\ulHM$ be a rigidified local shtuka over $R$. There are canonical functorial comparison isomorphisms between the de Rham and crystalline realizations
\[
\begin{array}{rccl}
h_{\dR,\cris}\colon & \Koh^1_\dR(\ulHM,K\dbl z-\zeta\dbr) & \isoto & \Koh^1_\cris\bigl(\ulHM,k\dpl z\dpr\bigr)\otimes_{k\dpl z\dpr}K\dbl z-\zeta\dbr\quad\text{and}\\[2mm]
h_{\dR,\cris}\colon & \Koh^1_\dR(\ulHM,K) & \isoto & \Koh^1_\cris\bigl(\ulHM,k\dpl z\dpr\bigr)\otimes_{k\dpl z\dpr,\,z\mapsto\zeta}K\,,
\end{array}
\]
which are compatible with the Hodge-Pink lattices and Hodge-Pink filtrations. They usually are \emph{not} compatible with the integral structures $\Koh^1_\dR(\ulHM,R):=\hat\sigma^*\hat M/(z-\zeta)\hat\sigma^*\hat M\subset\Koh^1_\dR(\ulHM,K)$ and $\Koh^1_\cris(\ulHM,k\dbl z\dbr)$.
\end{theorem}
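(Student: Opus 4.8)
The plan is to obtain $h_{\dR,\cris}$ as a base change of the $\hat\sigma$-pullback $\hat\sigma^\ast\delta_{\hat M}$ of the rigidification. The crucial observation is that, although $\tminus$ has a zero at $z=\zeta$ — so that $\delta_{\hat M}$ itself only spreads out to an isomorphism over $K\dpl z-\zeta\dpr$, which is exactly what is used for the Hodge--Pink lattice in \eqref{EqHPLattice} — its $\hat\sigma$-translate $\hat\sigma(\tminus)=\tfrac{z}{z-\zeta}\cdot\tminus=\prod_{i\in\BN_{>0}}(1-\tfrac{\zeta^{\hat q^i}}{z})$ is a \emph{unit} of $K\dbl z-\zeta\dbr$: indeed $\hat\sigma(\tminus)|_{z=\zeta}=\prod_{i\in\BN_{>0}}(1-\zeta^{\hat q^i-1})\in R\mal$. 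First I would record that $\hat\sigma$ maps $R\dbl z,z^{-1}\}$ into itself and hence carries $R\dbl z,z^{-1}\}[\tminus^{-1}]$ into the subring $R\dbl z,z^{-1}\}[\hat\sigma(\tminus)^{-1}]$, and that the expansion homomorphism $R\dbl z,z^{-1}\}\into K\dbl z-\zeta\dbr$ of Example~\ref{ExHPOfLocSht} extends to $R\dbl z,z^{-1}\}[\hat\sigma(\tminus)^{-1}]\to K\dbl z-\zeta\dbr$ because $\hat\sigma(\tminus)$ is invertible there. Applying $\hat\sigma^\ast$ to $\delta_{\hat M}\in\GL_r\bigl(R\dbl z,z^{-1}\}[\tminus^{-1}]\bigr)$ then yields an isomorphism
\[
\hat\sigma^\ast\delta_{\hat M}\colon\;\hat\sigma^\ast\hat M\otimes_{R\dbl z\dbr}R\dbl z,z^{-1}\}[\hat\sigma(\tminus)^{-1}]\;\isoto\;D\otimes_{k\dpl z\dpr}R\dbl z,z^{-1}\}[\hat\sigma(\tminus)^{-1}]
\]
where $D=\Koh^1_\cris(\ulHM,k\dpl z\dpr)=\hat\sigma^\ast\wt D$, and I would define $h_{\dR,\cris}$ at the level of $K\dbl z-\zeta\dbr$ to be its base change along the above homomorphism, and the second comparison isomorphism its reduction modulo $(z-\zeta)$. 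One checks the source is indeed $\Koh^1_\dR(\ulHM,K\dbl z-\zeta\dbr)$, because the composite $R\dbl z\dbr\into R\dbl z,z^{-1}\}[\hat\sigma(\tminus)^{-1}]\to K\dbl z-\zeta\dbr$ is the expansion $z\mapsto\zeta+(z-\zeta)$ of Definition~\ref{DefdR}, and the target is $\Koh^1_\cris(\ulHM,k\dpl z\dpr)\otimes_{k\dpl z\dpr}K\dbl z-\zeta\dbr$, because the composite $k\dpl z\dpr\into R\dbl z,z^{-1}\}[\hat\sigma(\tminus)^{-1}]\to K\dbl z-\zeta\dbr$ is the homomorphism~\eqref{EqSection}.

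The remaining assertions are then formal. Functoriality: a morphism of rigidified local shtukas $f$ satisfies $(f\otimes 1)\circ\delta_{\hat M}=\delta_{\hat M'}\circ f$ by Definition~\ref{DefRigLocSht}, so $\hat\sigma^\ast f$ commutes with $\hat\sigma^\ast\delta_{\hat M}$ and therefore with $h_{\dR,\cris}$; and $\delta_{\hat M}$ is canonical, being part of the data (and, when $R$ is discretely valued, the unique such isomorphism by Lemma~\ref{LemmaGL}). Compatibility with the Hodge--Pink lattices is built in: a further base change along $K\dbl z-\zeta\dbr\into K\dpl z-\zeta\dpr$ identifies $h_{\dR,\cris}$ with $\hat\sigma^\ast\delta_{\hat M}$, whence $h_{\dR,\cris}(\Fq)=\hat\sigma^\ast\delta_{\hat M}\bigl(\tau_{\hat M}^{-1}(\hat M\otimes_{R\dbl z\dbr}K\dbl z-\zeta\dbr)\bigr)=\Fq_D$ by the defining formula \eqref{EqHPLattice}; and since $h_{\dR,\cris}$ is by construction an isomorphism of $K\dbl z-\zeta\dbr$-modules it carries the tautological lattice $\Fp=\Koh^1_\dR(\ulHM,K\dbl z-\zeta\dbr)$ onto $\Fp_D$. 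As the Hodge--Pink filtrations on both sides are defined by the same recipe from the pairs $(\Fp,\Fq)$, respectively $(\Fp_D,\Fq_D)$ (Definitions~\ref{DefdR} and~\ref{DefHPWts}), compatibility with filtrations follows, and passing to the quotient modulo $(z-\zeta)$ gives the statement at the level of $K$. For the final (negative) assertion it suffices to exhibit an example: although $\delta_{\hat M}\equiv\id\mod\Fm_R$, re-expanding an entry $y/\hat\sigma(\tminus)^n$ of $\hat\sigma^\ast\delta_{\hat M}$ (with $y\in R\dbl z,z^{-1}\}$) around $z=\zeta$ generally produces coefficients outside $R$, so $h_{\dR,\cris}$ does not preserve $\Koh^1_\dR(\ulHM,R)$ and $\Koh^1_\cris(\ulHM,k\dbl z\dbr)$.

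I expect the one step requiring genuine care to be the first, namely verifying that $\hat\sigma^\ast\delta_{\hat M}$ really lands in $\GL_r$ of $R\dbl z,z^{-1}\}[\hat\sigma(\tminus)^{-1}]$ — equivalently, that its expansion at $z=\zeta$ is holomorphic \emph{and invertible}, not merely meromorphic. This is precisely the manifestation of the fact that the crystalline realization (over $k\dpl z\dpr$, ``holomorphic at $z=\zeta$'') and the de Rham realization (formal at $z=\zeta$) become comparable without inverting $z-\zeta$, in contrast to the Hodge--Pink lattices which live in $K\dpl z-\zeta\dpr$. Everything else is bookkeeping with the definitions already established in Example~\ref{ExHPOfLocSht}, Definition~\ref{DefdR} and Definition~\ref{DefHPWts}.
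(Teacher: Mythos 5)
Your construction is exactly the paper's: one sets $h_{\dR,\cris}:=\hat\sigma^*\delta_{\hat M}\otimes\id_{K\dbl z-\zeta\dbr}$, justified precisely by the observation that $\hat\sigma(\tminus)$ is a unit of $K\dbl z-\zeta\dbr$ so that $R\dbl z,z^{-1}\}[\hat\sigma(\tminus)^{-1}]\subset K\dbl z-\zeta\dbr$, and the compatibility with lattices and filtrations is read off from \eqref{EqHPLattice} as you do. The only point you leave implicit is a concrete instance of the failure of integral compatibility; the paper supplies one in Example~\ref{ExOrdDriMod} (an ordinary Drinfeld module over a base with ramification index $q+1>\hat q$), and your heuristic --- that re-expanding elements of $R\dbl z,z^{-1}\}$ around $z=\zeta$ introduces negative powers of $\zeta$ --- is indeed the mechanism at work there.
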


\begin{proof}
Taking the functorial isomorphism $\hat\sigma^*\delta_{\hat M}$ from Lemma~\ref{LemmaGL}, which is an isomorphism over $\hat\sigma^*R\dbl z,z^{-1}\}[\tminus^{-1}]\subset R\dbl z,z^{-1}\}[\hat\sigma(\tminus)^{-1}]$, and using the inclusion $R\dbl z,z^{-1}\}[\hat\sigma(\tminus)^{-1}]\subset K\dbl z-\zeta\dbr$, see Example~\ref{ExCarlitz5A}, we define 
\[
h_{\dR,\cris}:=\hat\sigma^*\delta_{\hat M}\otimes\id_{K\dbl z-\zeta\dbr}\colon\es\hat\sigma^*\hat M\otimes_{R\dbl z\dbr}K\dbl z-\zeta\dbr\es\isoto\es D\otimes_{k\dpl z\dpr}K\dbl z-\zeta\dbr\,.
\]
We will see in Example~\ref{ExOrdDriMod} below that $h_{\dR,\cris}$ does not need to be compatible with the integral structures.
\end{proof}

\begin{example}\label{ExOrdDriMod}
To give an example in which $h_{\dR,\cris}$ is not compatible with the integral structures, we let $A_\epsilon=\BF_q\dbl z\dbr$ and assume that $\hat q=q\ge3$. As our base ring we take $R=k\dbl\pi\dbr$ with $\zeta=\pi^{q+1}$. So the ramification index of $\gamma\colon A_\epsilon\into R$ is $q+1>q$. Over $R$ we consider the local shtuka $\ulHM=\bigl(R\dbl z\dbr^{\oplus2},\tau_{\hat M}=\left(\begin{smallmatrix} 0 & z-\zeta \\[1mm] 1 & \pi-1 \end{smallmatrix}\right)\bigr)$. It is the local shtuka at $\epsilon=(t)\subset\BF_q[t]$ associated with the Drinfeld $\BF_q[t]$-module $\ulE=(\BG_{a,R},\phi)$ with $\phi\colon\BF_q[t]\to R\{\tau\},\,\phi_t=\zeta+(1-\pi)\tau+\tau^2$ which has good ordinary reduction. We compute its rigidification $\delta_M$, or rather $\delta_M^{-1}$ by following the proof of Lemma~\ref{LemmaGL}. We have $T=\left(\begin{smallmatrix} 0 & z-\zeta \\[1mm] 1 & \pi-1 \end{smallmatrix}\right)$ and therefore $S=\left(\begin{smallmatrix} 0 & z \\[1mm] 1 & -1 \end{smallmatrix}\right)$ and $S^{-1}=z^{-1}\left(\begin{smallmatrix} 1 & z \\[1mm] 1 & 0\end{smallmatrix}\right)$. We obtain $C_0^{-1}=T\,S^{-1}=\left(\begin{smallmatrix} 1-\zeta/z & \;\;0 \\[1mm] \pi/z & \;\;1 \end{smallmatrix}\right)$ and $T-S=\left(\begin{smallmatrix} 0 & -\zeta \\[1mm] 0 & \pi \end{smallmatrix}\right)$. The $C_m$ from \eqref{EqCm} satisfy the recursion formula 
\[
C_m^{-1}-C_{m-1}^{-1}\;=\;T\cdots \hat\sigma^{m-1}(T)\cdot\hat\sigma^m(T-S)\cdot \hat\sigma^m(S^{-1})\cdots S^{-1} \;\in\;M_2\bigl(\tfrac{\pi^{q^m}}{z^{m+1}}R\dbl z\dbr\bigr)\,.
\]
We have $\delta_M^{-1}=\lim_{m\to\infty}C_m^{-1}$ and we want to evaluate $\hat\sigma^*\delta_M^{-1}$ at $z=\zeta$. We observe $\hat\sigma(C_0^{-1})|_{z=\zeta}=\left(\begin{smallmatrix} 1-\zeta^q/\zeta & \;\;0 \\[1mm] \pi^q/\zeta & \;\;1\end{smallmatrix}\right)=\left(\begin{smallmatrix} 1-\zeta^{q-1} & \;\;0 \\[1mm] \pi^{-1} & \;\;1\end{smallmatrix}\right)\notin M_2(R)$ and $\hat\sigma(C_m^{-1}-C_{m-1}^{-1})|_{z=\zeta}\in M_2\bigl(\tfrac{\pi^{q^{m+1}}}{z^{m+1}}R\dbl z\dbr\bigr)|_{z=\zeta}\subset M_2(R)$ for $m\ge 1$, because $\tfrac{\pi^{q^{m+1}}}{\zeta^{m+1}}=\pi^{q^{m+1}-(q+1)(m+1)}\in R$ as $q^{m+1}-(q+1)(m+1)\ge0$ for $q\ge3$ and $m\ge1$. This shows that $\hat\sigma(\delta_M^{-1})|_{z=\zeta}\notin\GL_2(R)$, that is $h_{\dR,\cris}$ is not compatible with the integral structures.

Note that this example is the function field analog of the example \cite[Remark~2.10]{BerthelotOgus83} which shows that also for a proper smooth scheme $X$ over $\CO_L$ the comparison isomorphism $\Koh^i_\dR(X/\CO_L)\otimes_{\CO_L}L\isoto\Koh^i_\cris(X_0/W)\otimes_WL$ does not need to be compatible with the integral structures; see Remark~\ref{RemCompIsom}.
\end{example}

\begin{theorem}\label{ThmCompEpsCris}
Let $\ulHM$ be a rigidified local shtuka over $R$. There is a canonical functorial comparison isomorphism between the $\epsilon$-adic and crystalline realizations
\[
h_{\epsilon,\cris}\colon\;\Koh^1_\epsilon(\ulHM,Q_\epsilon)\otimes_{Q_\epsilon}\CO_\olK\dbl z,z^{-1}\}[\tplusminus^{-1}]\;\isoto\;\Koh^1_\cris\bigl(\ulHM,k\dpl z\dpr\bigr)\otimes_{k\dpl z\dpr}\CO_\olK\dbl z,z^{-1}\}[\tplusminus^{-1}]\,.
\]
The isomorphism $h_{\epsilon,\cris}$ is $\Gal(K^\sep/K)$- and $\hat\tau$-equivariant, where on the left module $\Gal(K^\sep/K)$ acts on both factors and $\hat\tau$ is $\id\otimes\hat\sigma$, and on the right module $\Gal(K^\sep/K)$ acts only on $\CO_\olK\dbl z,z^{-1}\}[\tplusminus^{-1}]$ and $\hat\tau$ is $(\tau_D\circ\hat\sigma_{\!D}^*)\otimes \hat\sigma$. In other words $h_{\epsilon,\cris}=\tau_D\circ\hat\sigma^*h_{\epsilon,\cris}$. Moreover, $h_{\epsilon,\cris}$ satisfies $h_{\epsilon,\dR}=(h_{\dR,\cris}^{-1}\otimes\id_{\olK\dpl z-\zeta\dpr})\circ(h_{\epsilon,\cris}\otimes\id_{\olK\dpl z-\zeta\dpr})$. It allows to recover $\Koh^1_\epsilon(\ulHM,Q_\epsilon)$ from $\Koh^1_\cris\bigl(\ulHM,k\dpl z\dpr\bigr)$ as the intersection inside $\Koh^1_\cris\bigl(\ulHM,k\dpl z\dpr\bigr)\otimes_{k\dpl z\dpr}\olK\dpl z-\zeta\dpr$
\[
h_{\epsilon,\cris}\bigl(\Koh^1_\epsilon(\ulHM,Q_\epsilon)\bigr)\;=\;\bigl(\Koh^1_\cris\bigl(\ulHM,k\dpl z\dpr\bigr)\otimes_{k\dpl z\dpr}\CO_\olK\dbl z,z^{-1}\}[\tplusminus^{-1}]\bigr)^{\hat\tau=\id}\,\cap\,\Fq_D\otimes_{K\dbl z-\zeta\dbr}\olK\dbl z-\zeta\dbr\,,
\]
where $\Fq_D\subset\Koh^1_\cris\bigl(\ulHM,k\dpl z\dpr\bigr)\otimes_{k\dpl z\dpr}K\dpl z-\zeta\dpr$ is the Hodge-Pink lattice of $\ulHM$. Since $k\dpl z\dpr\subsetneq R\dbl z,z^{-1}\}\subset\bigl(\CO_\olK\dbl z,z^{-1}\}[\tplusminus^{-1}]\bigr)^{\Gal(K^\sep/K)}$, it does not allow to recover $\Koh^1_\cris\bigl(\ulHM,k\dpl z\dpr\bigr)$ from $\Koh^1_\epsilon(\ulHM,Q_\epsilon)$.
\end{theorem}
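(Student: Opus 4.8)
The plan is to construct $h_{\epsilon,\cris}$ directly from the two comparison data we already have, namely the isomorphism $h$ of Remark~\ref{RemTateMod} (which compares the $\epsilon$-adic realization with $\hat M$ over a convergent annulus) and the rigidification $\delta_{\hat M}$ of Lemma~\ref{LemmaGL} (which compares $\hat M$ with the crystalline realization $D$ over $R\dbl z,z^{-1}\}[\tminus^{-1}]$). First I would recall that $\check T_\epsilon\ulHM\otimes_{A_\epsilon}K^\sep\langle\tfrac{z}{\zeta^s}\rangle\isoto\ulHM\otimes_{R\dbl z\dbr}K^\sep\langle\tfrac{z}{\zeta^s}\rangle$ for $s>1/\hat q$, and that this extends over $\CO_\olK\dbl z,z^{-1}\}$: indeed the matrix $U$ produced in the proof of Proposition~\ref{PropTateMod} lies in $\GL_r\bigl(\CO_{K^\sep}\langle\tfrac{z}{\zeta^s}\rangle\bigr)$, and since $\hat\sigma(U)=T^{-1}U$ with $T$ a matrix over $R\dbl z\dbr[\tfrac{1}{z-\zeta}]$, we can invert $T$ using $(z-\zeta)^{-1}=-\tfrac1z\sum_{i\ge0}(\tfrac\zeta z)^i\in\CO_\olK\dbl z,z^{-1}\}$ after inverting $\tminus$ (whose zeros at $z=\zeta^{\hat q^i}$ account for the poles of $(z-\zeta)^{-1}\hat\sigma^{-1}$ under iteration); one checks $U$ and $U^{-1}$ lie in $\GL_r\bigl(\CO_\olK\dbl z,z^{-1}\}[\tminus^{-1}]\bigr)$, hence over $\CO_\olK\dbl z,z^{-1}\}[\tplusminus^{-1}]$ as well. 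Then I would \emph{define}
\[
h_{\epsilon,\cris}\;:=\;\bigl((\tau_{\wt D}^{-1}\circ\delta_{\hat M})\otimes\id\bigr)\circ h\otimes_{\phantom{.}}\id_{\CO_\olK\dbl z,z^{-1}\}[\tplusminus^{-1}]}\,,
\]
where $\tau_{\wt D}\colon(D,\tau_D)\isoto(\wt D,\tau_{\wt D})$ is the canonical isomorphism of Example~\ref{ExHPOfLocSht}; equivalently $h_{\epsilon,\cris}=(\hat\sigma^*\delta_{\hat M}\circ\tau_{\hat M}^{-1})\circ h\otimes\id$, matching the recipe \eqref{EqHPLattice} for $\Fq_D$.

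Next I would verify the equivariances. The $\hat\tau$-equivariance of $h$ reads $h\otimes\id=\tau_{\hat M}\circ\hat\sigma^*h$ by Remark~\ref{RemTateMod}, and $\delta_{\hat M}\circ\tau_{\hat M}=\tau_{\wt D}\circ\hat\sigma^*\delta_{\hat M}$ by Lemma~\ref{LemmaGL}; combining these with $\hat\sigma(\tplusminus)=z\cdot\tplusminus$ (so that $\CO_\olK\dbl z,z^{-1}\}[\tplusminus^{-1}]$ is $\hat\sigma$-stable) gives $h_{\epsilon,\cris}=\tau_D\circ\hat\sigma^*h_{\epsilon,\cris}$ after unwinding $(D,\tau_D)=\hat\sigma^*(\wt D,\tau_{\wt D})$. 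Galois equivariance is immediate since $\delta_{\hat M}$ and $\tau_{\hat M}$ are defined over $R\dbl z\dbr$ (trivial Galois action) and $h$ is $\Gal(K^\sep/K)$-equivariant by Remark~\ref{RemTateMod}. The identity $h_{\epsilon,\dR}=(h_{\dR,\cris}^{-1}\otimes\id)\circ(h_{\epsilon,\cris}\otimes\id)$ over $\olK\dpl z-\zeta\dpr$ is then a formal consequence of the definitions: $h_{\epsilon,\dR}=(\tau_{\hat M}^{-1}\circ h)\otimes\id$ by the proof of Theorem~\ref{ThmdR}, $h_{\dR,\cris}=\hat\sigma^*\delta_{\hat M}\otimes\id$ by the proof of Theorem~\ref{ThmCompdRCris}, and $\hat\sigma^*\delta_{\hat M}\circ\tau_{\hat M}^{-1}=\tau_{\wt D}^{-1}\circ\delta_{\hat M}$; one must only check that all three are defined on the overlap $\CO_\olK\dbl z,z^{-1}\}[\tplusminus^{-1}]\subset\olK\dpl z-\zeta\dpr$, which holds because $\hat\sigma(\tminus)\in\olK\dpl z-\zeta\dpr\mal$ (it does not vanish at $z=\zeta$) as noted in Example~\ref{ExCarlitz5A}, and $\tplus\in\olK\langle\tfrac z\zeta\rangle\mal\subset\olK\dbl z-\zeta\dbr\mal$.

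The substantive part is the reconstruction formula for $\Koh^1_\epsilon(\ulHM,Q_\epsilon)$, and here I would use Lemma~\ref{LemmaFundSequence} as the essential input — this is the step I expect to be the main obstacle, since it is exactly the function-field analog of the fundamental exact sequence. Transporting $\check V_\epsilon\ulHM$ via $h_{\epsilon,\cris}$, an element of the crystalline realization tensored up to $\CO_\olK\dbl z,z^{-1}\}[\tplusminus^{-1}]$ lies in $h_{\epsilon,\cris}(\check V_\epsilon\ulHM)$ iff it is $\hat\tau$-invariant; one inclusion ($\subset$) in the claimed intersection is clear because $h_{\epsilon,\cris}$ is $\hat\tau$-equivariant and $h_{\epsilon,\dR}$ maps $\check T_\epsilon\ulHM\otimes\olK\dbl z-\zeta\dbr$ into $\Fq_D\otimes\olK\dbl z-\zeta\dbr$ by Theorem~\ref{ThmdR}. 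For the reverse inclusion, after trivializing $\check V_\epsilon\ulHM$ we reduce to the rank-one Tate-twist case $\ul\BOne(n)$ (using tensor/dual constructions from Definition~\ref{DefHPWts}); there an element of $\bigl(\CO_\olK\dbl z,z^{-1}\}[\tplusminus^{-1}]\bigr)^{\hat\tau=z^{-n}}$ lying in $(z-\zeta)^n\olK\dbl z-\zeta\dbr$ is precisely, after multiplying by $\tplusminus^n$, an element of $\bigl(\CO_\olK\dbl z,z^{-1}\}[\tplusminus^{-1}]\bigr)^{\hat\sigma=\id}\cap\olK\dbl z-\zeta\dbr$, which by the exactness in the middle of Lemma~\ref{LemmaFundSequence} equals $Q_\epsilon$; hence the original element lies in $Q_\epsilon\cdot\tplusminus^{-n}=h_{\epsilon,\cris}(\Koh^1_\epsilon(\ul\BOne(n),Q_\epsilon))$. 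The general case follows by picking a $Q_\epsilon$-basis of $\check V_\epsilon\ulHM$ adapted to the elementary divisors of $\Fq_D$ relative to $\Fp_D$ (the Hodge-Pink weights), whereupon the $\hat\tau$-fixed condition becomes a system decoupling into rank-one pieces of the above type. Finally, the non-reconstructability of $\Koh^1_\cris$ from $\Koh^1_\epsilon$ follows at once from $R\dbl z,z^{-1}\}\subset\bigl(\CO_\olK\dbl z,z^{-1}\}[\tplusminus^{-1}]\bigr)^{\Gal(K^\sep/K)}$ strictly containing $k\dpl z\dpr$ (because, e.g., $\tminus\in R\dbl z,z^{-1}\}$ reduces to $1$ but is not a scalar), so the Galois invariants of the right-hand side are strictly larger than the crystalline realization itself.
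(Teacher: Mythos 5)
Your architecture coincides with the paper's: the same definition of $h_{\epsilon,\cris}$ as $(\hat\sigma^*\delta_{\hat M}\circ\tau_{\hat M}^{-1}\circ h)\otimes\id$, the same formal equivariance checks, and Lemma~\ref{LemmaFundSequence} as the engine for the reconstruction formula. However, two steps have genuine gaps.

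The first is the descent from $\olK\ancon{}[\tplusminus^{-1}]$ to $\CO_\olK\dbl z,z^{-1}\}[\tplusminus^{-1}]$. A priori the composite is only defined over the annulus $\{0<|z|\le|\zeta|\}$, and the theorem requires convergence on the whole punctured open unit disc with \emph{integral} coefficients. Your justification --- ``one checks $U,U^{-1}\in\GL_r(\CO_\olK\dbl z,z^{-1}\}[\tminus^{-1}])$'' via the geometric-series inversion of $z-\zeta$ --- establishes neither point, and the intermediate claim is false: for the Carlitz shtuka $(R\dbl z\dbr,z-\zeta)$ one has $U=\tplus^{-1}$, and from $\tplus=(z-\zeta^{1/\hat q})\cdot\hat\sigma^{-1}(\tplus)$ this has poles at $z=\zeta^{\hat q^{-m}}$, $m\ge1$, which are not zeros of $\tminus$ (they are zeros of $\tplus$, which is why $[\tplusminus^{-1}]$ is the right localization). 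Even the correct statement is not formal: the paper writes $h_{\epsilon,\cris}$ as a matrix $H$ with $T\hat\sigma(H)=H$ for $T\in\GL_r(k\dpl z\dpr)$ the matrix of the \emph{crystalline} Frobenius $\tau_D$ --- which has no pole at $z=\zeta$, unlike the $\tau_{\hat M}$ appearing in your equation for $U$ --- twists this to $\hat\sigma(\tplusminus^nH)=z^nT^{-1}\tplusminus^nH$, and invokes Lemma~\ref{LemmaBerger}, whose inductive norm estimate is what forces the coefficients into $\CO_\olK$ and gives convergence for all radii. This analytic input is the core of the proof and is missing from your plan.

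The second gap is the reduction to rank one for the reverse inclusion of the reconstruction formula. A $K\dbl z-\zeta\dbr$-basis splitting $\Fq_D$ inside $\Fp_D[\tfrac{1}{z-\zeta}]$ according to the Hodge-Pink weights cannot in general be taken inside $h_{\epsilon,\dR}(\check V_\epsilon\ulHM)$, so the $\hat\tau$-fixed condition and the lattice condition do not decouple coordinatewise; moreover your preliminary assertion that membership in $h_{\epsilon,\cris}(\check V_\epsilon\ulHM)$ is equivalent to $\hat\tau$-invariance is wrong, since the $\hat\tau$-invariants form a module over $\bigl(\CO_\olK\dbl z,z^{-1}\}[\tplusminus^{-1}]\bigr)^{\hat\sigma=\id}\supsetneq Q_\epsilon$ --- which is precisely why the Hodge-Pink lattice condition must be imposed. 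No reduction is needed: pulling both conditions back through $h_{\epsilon,\cris}^{-1}$ turns them into $\Koh^1_\epsilon(\ulHM,Q_\epsilon)\otimes_{Q_\epsilon}\bigl(\CO_\olK\dbl z,z^{-1}\}[\tplusminus^{-1}]\bigr)^{\hat\sigma=\id}$ and, by Theorem~\ref{ThmdR}, $\Koh^1_\epsilon(\ulHM,Q_\epsilon)\otimes_{Q_\epsilon}\olK\dbl z-\zeta\dbr$; their intersection is $\Koh^1_\epsilon(\ulHM,Q_\epsilon)$ by the middle exactness of Lemma~\ref{LemmaFundSequence} tensored with the finite-dimensional space $\Koh^1_\epsilon(\ulHM,Q_\epsilon)$, which is exactly the paper's diagram chase. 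Your rank-one computation is the scalar instance of this, so the correct move is to delete the decoupling step rather than to repair it.
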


\begin{remark}\label{RemCompIsom}
The comparison isomorphisms $h_{\dR,\cris}$ are the function field analogs for the comparison isomorphism $\Koh^i_\dR(X/\CO_L)\otimes_{\CO_L}L\isoto\Koh^i_\cris(X_0/W)\otimes_WL$ between de Rham and crystalline cohomology of a proper smooth scheme $X$ over a complete discrete valuation ring $\CO_L$ with perfect residue field $\kappa$ of characteristic $p$ and fraction field $L$ of characteristic $0$, where $W$ is the ring of $p$-typical Witt vectors with coefficients in $\kappa$ and $X_0=X\otimes_{\CO_L}\kappa$; see \cite[Corollary~2.5]{BerthelotOgus83}. If $L_0$ is the fraction field of $W$ then $\Koh^i_\cris(X_0/L_0)=\Koh^i_\cris(X_0/W)\otimes_WL_0$ is an \emph{$F$-isocrystal}, that is a finite dimensional $L_0$-vector space equipped with a Frobenius linear automorphism, and the Hodge filtration on $\Koh^i_\dR(X/\CO_L)\otimes_{\CO_L}L$ makes it into a \emph{filtered isocrystal} via the comparison isomorphism.

Less straightforward is the comparison isomorphism $h_{\epsilon,\cris}$ which is the function field analog of Fontaine's comparison isomorphism $\Koh^i_\et(X\times_LL^\alg,\BQ_p)\otimes_{\BQ_p}\wt\bB_\rig\isoto\Koh^i_\cris(X_0/L_0)\otimes_{L_0}\wt\bB_\rig$. Namely, with a $\Gal(L^\alg/L)$-representation $V$ such as $\Koh^i_\et(X\times_LL^\alg,\BQ_p)$ Fontaine associates a filtered isocrystal $\bD_\cris(V):=(V\otimes_{\BQ_p}\bB_\cris)^{\Gal(L^\alg/L)}$ and he calls $V$ ``crystalline'' if $\dim_{L_0}\bD_\cris(V)=\dim_{\BQ_p}V$. In this case there is a comparison isomorphism $V\otimes_{\BQ_p}\bB_\cris\isoto\bD_\cris(V)\otimes_{L_0}\bB_\cris$, which is already defined over the subring $\wt\bB_\rig\subset\bB_\cris$; see \cite[II.3.5]{Berger} and \cite[p.~228]{Berger02}. It was then conjectured by Fontaine~\cite[A.11]{Fontaine82} and proved by Faltings~\cite{Faltings89} that $\Koh^i_\et(X\times_LL^\alg,\BQ_p)$ is indeed crystalline and $\bD_\cris\bigl(\Koh^i_\et(X\times_LL^\alg,\BQ_p)\bigr)=\Koh^i_\cris(X_0/L_0)$. The fundamental exact sequence \eqref{EqFundSeqFontaine} allows to also recover
\[
\Koh^i_\et(X\times_LL^\alg,\BQ_p) \; \cong \;  F^0\bigl(\Koh^i_\cris(X_0/L_0)\otimes_{L_0}\wt\bB_\rig\bigr)^{\Frob\,=\id}
\]
from $\Koh^i_\cris(X_0/L_0)$.

In our comparison isomorphism, $\tplusminus$ is the analog of $\log[\ul e]\in\wt\bB_\rig$ where $\ul e=(e_i\colon i\in\BN_0)$ is a compatible system of primitive $p^i$-th roots of unity $e_i$ and $[\ul e]$ is its Teichm\"uller lift; see Example~\ref{ExCarlitz5A}. Our ring $\CO_\olK\dbl z,z^{-1}\}[\tplusminus^{-1}]$ is the analog of $\wt\bB_\rig$; see \cite[\S\,2.5 and \S\,2.7]{HartlDict}. In that sense we could write $F^0\Koh^1_\dR\bigl(\ulHM,K\dpl z-\zeta\dpr\bigr):=\Fq$ and 
\begin{eqnarray*}
& & F^0\Koh^1_\cris\bigl(\ulHM,k\dpl z\dpr\bigr)\otimes_{k\dpl z\dpr}\CO_\olK\dbl z,z^{-1}\}[\tplusminus^{-1}] \es :=  \\[1mm]
& & \qquad\qquad\qquad\qquad \Koh^1_\cris\bigl(\ulHM,k\dpl z\dpr\bigr)\otimes_{k\dpl z\dpr}\CO_\olK\dbl z,z^{-1}\}[\tplusminus^{-1}]\,\cap\,\Fq_D\otimes_{K\dbl z-\zeta\dbr}\olK\dbl z-\zeta\dbr\,.
\end{eqnarray*}
\end{remark}

\begin{proof}[Proof of Theorem~\ref{ThmCompEpsCris}]
To construct $h_{\epsilon,\cris}$ we use for $s\in\BR_{>0}$ the notation
\begin{eqnarray}\label{Kancon}
K\ancon[s] & := & \bigl\{\,{\TS\sum\limits_{i=-\infty}^\infty} b_iz^i\colon b_i\in K\,,\,|b_i|\,|\zeta|^{ri}\to0\;(i\to\pm\infty) \text{ for all }r\ge s\,\bigr\}
\end{eqnarray}
for the ring of rigid analytic functions with coefficients in $K$ on the punctured closed disc $\{0<|z|\le|\zeta|^s\}$ of radius $|\zeta|^s$. The ring $K\ancon$ contains both the rings $K\langle\tfrac{z}{\zeta}\rangle$ and $R\dbl z,z^{-1}\}$, and so $\tplus\in\olK\ancon\mal$. In addition to $\delta_{\hat M}$ we take the isomorphism $h$ from Remark~\ref{RemTateMod} and define $h_{\epsilon,\cris}:=(\hat\sigma^*\delta_{\hat M}\circ\tau_{\hat M}^{-1}\circ h)\otimes\id_{\olK\ancon{}[\tplusminus^{-1}]}$
\[
h_{\epsilon,\cris}\colon\;\check T_\epsilon\ulHM\otimes_{A_\epsilon}\olK\ancon{}[\tplusminus^{-1}]\;\isoto\;\Koh^1_\cris\bigl(\ulHM,k\dpl z\dpr\bigr)\otimes_{k\dpl z\dpr}\olK\ancon{}[\tplusminus^{-1}]\,.
\]
It satisfies $h_{\epsilon,\cris}=\tau_D\circ\hat\sigma^*h_{\epsilon,\cris}$ and is functorial in $\ulHM$. If we choose an $A_\epsilon$-basis of $\check T_\epsilon\ulHM$ and a $k\dpl z\dpr$-basis of $\Koh^1_\cris\bigl(\ulHM,k\dpl z\dpr\bigr)=D:=\hat\sigma^*\hat M\otimes_{R\dbl z\dbr}k\dpl z\dpr$, we may write $h_{\epsilon,\cris}$ and its inverse as matrices $H,H^{-1}\in\GL_r\bigl(\olK\ancon{}[\tplusminus^{-1}]\bigr)$. Let also $T\in\GL_r\bigl(k\dpl z\dpr\bigr)\subset\GL_r\bigl(\CO_\olK\dbl z,z^{-1}\}\bigr)$ be the matrix of $\tau_D$ with respect to the second basis. Then the equivariance of $h_{\epsilon,\cris}$ with respect to $\hat\tau$ implies that $T\cdot\hat\sigma(H)=H$. After multiplying $H$ and $H^{-1}$ each with a high enough power of $\tplusminus$ we have $\tplusminus^nH,\tplusminus^m H^{-1}\in M_r\bigl(\olK\ancon\bigr)$ with $\hat\sigma(\tplusminus^nH)=z^nT^{-1}\tplusminus^nH$ and $\hat\sigma(\tplusminus^mH^{-1})=z^m\tplusminus^mH^{-1}T$. From Lemma~\ref{LemmaBerger} below it follows that $\tplusminus^nH,\tplusminus^m H^{-1}\in M_r\bigl(\CO_\olK\dbl z,z^{-1}\}\bigr)$ and $H\in\GL_r\bigl(\CO_\olK\dbl z,z^{-1}\}[\tplusminus^{-1}]\bigr)$. So $h_{\epsilon,\cris}$ comes from an isomorphism
\[
h_{\epsilon,\cris}\colon\;\Koh^1_\epsilon(\ulHM,Q_\epsilon)\otimes_{Q_\epsilon}\CO_\olK\dbl z,z^{-1}\}[\tplusminus^{-1}]\;\isoto\;\Koh^1_\cris\bigl(\ulHM,k\dpl z\dpr\bigr)\otimes_{k\dpl z\dpr}\CO_\olK\dbl z,z^{-1}\}[\tplusminus^{-1}]\,.
\]
By definition of $h_{\epsilon,\dR}:=(\tau_{\hat M}^{-1}\circ h)\otimes\id_{\olK\dpl z-\zeta\dpr}$ in Theorem~\ref{ThmdR} and $h_{\dR,\cris}:=\hat\sigma^*\delta_{\hat M}\otimes\id_{K\dbl z-\zeta\dbr}$ in Theorem~\ref{ThmCompdRCris} we obtain $h_{\epsilon,\dR}=(h_{\dR,\cris}^{-1}\otimes\id_{\olK\dpl z-\zeta\dpr})\circ(h_{\epsilon,\cris}\otimes\id_{\olK\dpl z-\zeta\dpr})$.

To prove the remaining statement we use the exact sequence of $Q_\epsilon$-vector spaces from Lemma~\ref{LemmaFundSequence} and tensor it with $\Koh^1_\epsilon(\ulHM,Q_\epsilon)$ to obtain the left column in the diagram
\[
\xymatrix @C+1pc {
0 \ar[d] & 0 \ar[d] \\
\Koh^1_\epsilon(\ulHM,Q_\epsilon) \ar[d] \ar[r]_{\TS h_{\epsilon,\cris}\qquad}^{\TS\cong\qquad} & h_{\epsilon,\cris}\bigl(\Koh^1_\epsilon(\ulHM,Q_\epsilon)\bigr) \ar[d] \\
**{!U(0.2) =<15pc,1.2pc>} \objectbox{\parbox{6cm}{\center $\Koh^1_\epsilon(\ulHM,Q_\epsilon)\otimes_{Q_\epsilon}\CO_\olK\dbl z,z^{-1}\}[\tplusminus^{-1}]^{^{\SC\hat\sigma=\id}}$\\[1.5ex]$\bigoplus$}} \ar[r]_{\TS h_{\epsilon,\cris}\qquad}^{\TS\cong\qquad} & 
**{!U(0.2) =<18pc,1.2pc>} \objectbox{\parbox{8cm}{\center $\bigl(\Koh^1_\cris\bigl(\ulHM,k\dpl z\dpr\bigr)\otimes_{k\dpl z\dpr}\CO_\olK\dbl z,z^{-1}\}[\tplusminus^{-1}]\bigr)^{\hat\tau=\id}$\\[1.5ex]$\bigoplus$}}\\
\Koh^1_\epsilon(\ulHM,Q_\epsilon)\otimes_{Q_\epsilon}\olK\dbl z-\zeta\dbr \ar[d] \ar[r]_{\TS h_{\epsilon,\cris}\qquad}^{\TS\cong\qquad} & 
\Fq_D\otimes_{K\dbl z-\zeta\dbr}\olK\dbl z-\zeta\dbr \ar[d] \\
\Koh^1_\epsilon(\ulHM,Q_\epsilon)\otimes_{Q_\epsilon}\olK\dpl z-\zeta\dpr \ar[r]_{\TS h_{\epsilon,\cris}\qquad}^{\TS\cong\qquad} &
\Koh^1_\cris\bigl(\ulHM,k\dpl z\dpr\bigr)\otimes_{k\dpl z\dpr}\olK\dpl z-\zeta\dpr\,.\!
}
\]
The second horizontal map is an isomorphism because $h_{\epsilon,\cris}$ is $\hat\tau$-equivariant. The third horizontal map is an isomorphism because $h_{\epsilon,\dR}\bigl(\Koh^1_\epsilon(\ulHM,Q_\epsilon)\otimes_{Q_\epsilon}\olK\dbl z-\zeta\dbr\bigr)=\Fq\otimes_{K\dbl z-\zeta\dbr}\olK\dbl z-\zeta\dbr$ by Theorem~\ref{ThmdR} and because $h_{\dR,\cris}$ is compatible with the Hodge-Pink lattices by Theorem~\ref{ThmCompdRCris}. From the right column it follows that
\[
h_{\epsilon,\cris}\bigl(\Koh^1_\epsilon(\ulHM,Q_\epsilon)\bigr)\;=\;\bigl(\Koh^1_\cris\bigl(\ulHM,k\dpl z\dpr\bigr)\otimes_{k\dpl z\dpr}\CO_\olK\dbl z,z^{-1}\}[\tplusminus^{-1}]\bigr)^{\hat\tau=\id}\,\cap\,\Fq_D\otimes_{K\dbl z-\zeta\dbr}\olK\dbl z-\zeta\dbr\,.
\]
\end{proof}

The following lemma is the function field analog of \cite[Proposition~I.4.1 and Corollary~I.4.2]{Berger04a}.

\begin{lemma}\label{LemmaBerger}
Let $U\in M_r(K\ancon[s])$ for some $s>0$, and let $V,W\in M_r(R\dbl z,z^{-1}\})$ such that $\hat\sigma(U)=VUW$. Then $U\in M_r(R\dbl z,z^{-1}\})$.
\end{lemma}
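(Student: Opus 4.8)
The approach transcribes Berger's argument (\cite[Prop.~I.4.1 and Cor.~I.4.2]{Berger04a}) and rests on the fact that $\hat\sigma$ improves radii of convergence. For a matrix $X=\sum_n X_nz^n$ over $K\dpl z\dpr$ and $\rho\in(0,1)$, write $|X|_\rho$ for the Gauss norm on the circle $|z|=\rho$, i.e.\ the supremum over $n$ of $\rho^n$ times the largest absolute value of an entry of $X_n$. It is sub-multiplicative, finite exactly when $X$ converges on $|z|=\rho$, and satisfies $|\hat\sigma(X)|_\rho=|X|_{\rho^{1/\hat q}}^{\hat q}$; since $\rho<1$ and $\hat q\ge2$, the radius $\rho^{1/\hat q}$ is strictly larger than $\rho$. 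Set $\rho_0:=|\zeta|^s$, so that the hypothesis $U\in M_r(K\ancon[s])$ gives $|U|_{\rho_0}<\infty$, and $\rho_m:=\rho_0^{1/\hat q^m}$, increasing to $1$.

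The first step is to show that $U$ converges on the whole punctured open unit disc. Since $V,W\in M_r(R\dbl z,z^{-1}\})$ converge on $\{0<|z|<1\}$, their Gauss norms are finite there, so the functional equation and sub-multiplicativity give $|U|_{\rho^{1/\hat q}}^{\hat q}=|\hat\sigma(U)|_\rho\le|V|_\rho\,|U|_\rho\,|W|_\rho$; hence $|U|_\rho<\infty$ forces $|U|_{\rho^{1/\hat q}}<\infty$. Iterating from $\rho_0$ yields $|U|_{\rho_m}<\infty$ for all $m$, and finiteness of the Gauss norm on the circles $|z|=\rho_0$ and $|z|=\rho_m$ (with $\rho_m\to1$) gives finiteness on the whole annulus in between, i.e.\ $U$ converges on $\{0<|z|<1\}$. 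In particular $|U_n||\zeta|^{rn}\to0$ as $n\to-\infty$ for every $r>0$, which is exactly the overconvergence demanded of an element of $R\dbl z,z^{-1}\}$; so only the integrality $|U_n|\le1$ for all $n$ remains to be proved.

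For this, take logarithms in $|U|_{\rho^{1/\hat q}}\le(|V|_\rho|U|_\rho|W|_\rho)^{1/\hat q}$ and unroll the recursion from $\rho_0$ to $\rho_m$:
\[
\log|U|_{\rho_m}\;\le\;\frac{1}{\hat q^m}\log|U|_{\rho_0}\;+\;\sum_{j=1}^m\frac{1}{\hat q^j}\bigl(\log|V|_{\rho_{m-j}}+\log|W|_{\rho_{m-j}}\bigr)\,.
\]
The first term tends to $0$. For the sum I would first record two properties of an element $V\in R\dbl z,z^{-1}\}$ (and likewise $W$): (i) on the annulus $\rho_0\le|z|<1$ the Gauss norm $|V|_\rho$ is bounded above (the coefficients of nonnegative index contribute $\le1$, those of negative index $\le|V|_{\rho_0}$) and, if $V\ne0$, bounded away from $0$; and (ii) $\limsup_{\rho\to1^-}|V|_\rho\le1$, which follows from the overconvergence of $V$ at $0$: for $j\ll0$ one has $|V_j|\le|\zeta|^{|j|}$, so $|V_j|\rho^j\le(|\zeta|/\rho)^{|j|}<1$ as soon as $\rho>|\zeta|$, while the finitely many remaining negative-index terms contribute at most $\rho^{-N}\to1$. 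Property (i) provides a summable majorant $G_1\hat q^{-j}$ for the $j$-th term of the sum, and (ii) provides its pointwise limit as $m\to\infty$; a dominated-convergence (reverse Fatou) argument for series then gives $\limsup_m\log|U|_{\rho_m}\le0$. Letting $m\to\infty$ in $|U_n|\rho_m^n\le|U|_{\rho_m}$, and using $\rho_m^n\to1$ for fixed $n$, yields $|U_n|\le1$; together with the previous step this shows $U\in M_r(R\dbl z,z^{-1}\})$.

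The sub-multiplicativity, the behaviour of $\hat\sigma$ on Gauss norms, and the unrolling of the recursion are routine. The one place that needs care is the interchange of limit and infinite sum in the last step; I expect the cleanest way around it is to prove the two auxiliary facts (i) and (ii) about elements of $R\dbl z,z^{-1}\}$ beforehand, after which the interchange is a standard dominated-convergence statement and the integrality of $U$ follows.
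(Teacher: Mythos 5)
Your proof is correct, but it follows a genuinely different route from the one in the paper. You iterate over the increasing radii $\rho_m=|\zeta|^{s/\hat q^m}\to1$, use the identity $|\hat\sigma(U)|_\rho=|U|_{\rho^{1/\hat q}}^{\hat q}$, unroll the resulting logarithmic recursion, and conclude $\limsup_m|U|_{\rho_m}\le1$ by a dominated-convergence argument resting on your auxiliary facts (i) and (ii) about elements of $R\dbl z,z^{-1}\}$; this is essentially Berger's original $p$-adic argument transposed. The paper instead stays at the single radius $|\zeta|^s$ and exploits two features of the norm $\bigl\|\sum_ib_iz^i\bigr\|_s=\max_i|b_i|\,|\zeta|^{si}$: multiplication by $z$ scales it by $|\zeta|^s<1$, so after multiplying by suitable powers of $z$ one may assume $\|z^cU\|_s\le1$ and $\|z^kV\|_s,\|z^kW\|_s\le1$; and $\hat\sigma$ does not increase $\|\cdot\|_s$ on series with coefficients in $R$. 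Iterating the functional equation then gives $\|\hat\sigma^{n}(z^{c+2kn}U)\|_s\le1$, i.e.\ $|b_i|^{\hat q^n}|\zeta|^{s(i+c+2kn)}\le1$, and letting $n\to\infty$ yields $|b_i|\le1$ directly, with no limsup or interchange of limit and sum. The paper also disposes of the convergence condition in one line: for $r<s$ and $i<0$ one has $|b_i|\,|\zeta|^{ri}<|b_i|\,|\zeta|^{si}$, so the decay as $i\to-\infty$ at larger radii follows from the hypothesis at radius $|\zeta|^s$; your Step 1 therefore proves more than membership in $R\dbl z,z^{-1}\}$ requires, although the finiteness $|U|_{\rho_m}<\infty$ it establishes is genuinely needed as input to your Step 2. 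Both arguments are sound; the paper's avoids the analytic limit interchange you flag as the delicate point, while yours makes the parallel with Berger's proof transparent.
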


\begin{remark}\label{RemAdmAlg}
The lemma and its proof are valid more generally if $R$ is replaced by an admissible formal $R$-algebra $B\open$ in the sense of Raynaud, if $K$ is replaced by $B:=B\open\otimes_RK$ and $|\,.\,|$ is a $K$-Banach norm on $B$ with $B\open=\{b\in B\colon|b|\le1\}$; see \cite{FRG1}.
\end{remark}

\begin{proof}[Proof of Lemma~\ref{LemmaBerger}]
For the elements $\sum\limits_{i=-\infty}^\infty b_iz^i\in K\ancon[s]$ we consider the norm $\bigl\|\sum_ib_iz^i\bigr\|_s:=\max_{i\in\BZ}|b_i|\,|\zeta|^{si}$ and we extend this to a norm on the matrix $U$ by taking the maximum of the norms of the entries of $U$. It satisfies $\|VU\|_s\le\|V\|_s\cdot\|U\|_s$. Since $\|z\|_s=|\zeta|^s<1$ there is an integer $c$ such that $\|z^cU\|_s\le1$. Likewise there is an integer $k$ such that $\|z^kV\|_s,\|z^kW\|_s\le1$. The entries of $V$ and $W$ are of the form $\sum_ib_iz^i$ with $|b_i|\le1$. For those we have $|b_i|^{\hat q}\le|b_i|$ and this implies $\|\hat\sigma^n(\sum_ib_iz^i)\|_s=\|\sum_ib_i^{\hat q^n}z^i\|_s\le\|\sum_ib_iz^i\|_s$, and hence $\|\hat\sigma^n(z^kV)\|_s,\|\hat\sigma^n(z^kW)\|_s\le1$ for all $n\in\BN_0$. We obtain
\[
\bigl\|\hat\sigma^{n+1}(z^{c+2k(n+1)}U)\bigr\|_s\;=\;\bigl\|\hat\sigma^n(z^kV)\hat\sigma^n(z^{c+2kn}U)\hat\sigma^n(z^kW)\bigr\|_s\;\le\;\bigl\|\hat\sigma^n(z^{c+2kn}U)\bigr\|_s\;\le\;1
\]
for all $n\in\BN_0$ by induction. Let $\sum_ib_iz^i$ be an entry of $U$. Then $\sum_ib_i^{\hat q^n}z^{i+c+2kn}$ is the corresponding entry of $\hat\sigma^n(z^{c+2kn}U)$. The estimate $1\ge\bigl\|\sum_ib_i^{\hat q^n}z^{i+c+2kn}\bigr\|_s=\max_i|b_i|^{\hat q^n}|\zeta|^{s(i+c+2kn)}$ implies $|b_i|\le|\zeta|^{-s(i+c+2kn)\hat q^{-n}}$ for all $i$. For fixed $i$ and $n\to+\infty$ the exponent goes to $0$ and so we find $|b_i|\le1$, that is $b_i\in R$ for all $i\in\BZ$. In order that $\sum_ib_iz^i\in R\dbl z,z^{-1}\}$ we have to verify that for all $r>0$ the condition $|b_i|\,|\zeta|^{ri}\to0$ for $i\to-\infty$ holds. From $\sum_ib_iz^i\in K\ancon[s]$ we already know that this condition holds for all $r\ge s$. If $r<s$ and $i<0$ then $|b_i|\,|\zeta|^{ri}<|b_i|\,|\zeta|^{si}$ and so the condition also holds for the remaining $r<s$. This proves that $U\in M_r(R\dbl z,z^{-1}\})$ as desired.
\end{proof}

\begin{example}\label{ExCarlitz5B}
For the Carlitz motive from Examples~\ref{ExCarlitz4B} and \ref{ExCarlitz5A} the comparison isomorphisms from Theorems~\ref{ThmCompdRCris} and \ref{ThmCompEpsCris} between $\Koh^1_\dR(\ulM,K\dbl z-\zeta\dbr)=K\dbl z-\zeta\dbr$ with basis $1$ and $\Koh^1_\epsilon(\ulM,Q_\epsilon)=Q_\epsilon\cdot\tplus^{-1}$ with basis $\tplus^{-1}$ on the one hand and $\Koh^1_\cris\bigr(\ulM,k\dpl z\dpr\bigr)=(k\dpl z\dpr,z,(z-\zeta)^{-1}\Fp_D)$ with basis $1$ on the other hand are given explicitly as follows. With respect to these bases $h_{\dR,\cris}$ is given by $\hat\sigma(\tminus^{-1})\in K\dbl z-\zeta\dbr\mal$ and $h_{\epsilon,\cris}$ is given by $\hat\sigma(\tplusminus^{-1})=\hat\sigma(\tminus)^{-1}\hat\sigma(\tplus)^{-1}=z^{-1}\tplusminus^{-1}$. In particular, $h_{\dR,\cris}$ modulo $z-\zeta$ is given by $\prod_{i\in\BN_{>0}}(1-\zeta^{\hat q^i-1})^{-1}\in\CO_K\mal$. So in this example $h_{\dR,\cris}$ is compatible with the integral structures.
\end{example}

%
%

\section{Admissibility and weak admissibility}\label{SectWA}
\setcounter{equation}{0}

It turns out that not all $z$-isocrystals with Hodge-Pink structure can arise via the functor $\BH$ from \eqref{EqMystFunct}. Namely a necessary condition is weak admissibility as in the following

\begin{definition}\label{DefWA}
Let $R$ be an arbitrary valuation ring as in Notation~\ref{Notation}, let $\ulD=(D,\tau_D,\Fq_D)$ be a $z$-isocrystal with Hodge-Pink structure over $R$ and set $r=\dim_{k\dpl z\dpr}D$.
\begin{enumerate}
\item Choose a $k\dpl z\dpr$-basis of $D$ and let $\det\tau_D$ be the determinant of the matrix representing $\tau_D$ with respect to this basis. The number $t_N(\ulD):=\ord_z(\det\tau_D)$ is independent of this basis and is called the \emph{Newton slope of $\ulD$}.
\item 
The integer $t_H(\ulD):=-\mu_1-\ldots-\mu_r$, where $\mu_1,\ldots,\mu_r$ are the Hodge-Pink weights of $\ulD$ from Definition~\ref{DefHPWts}, satisfies $\wedge^r\Fq_D=(z-\zeta)^{-t_H(\ulD)}\wedge^r\Fp_D$ and is called the \emph{Hodge slope of $\ulD$}.
\item 
$\ulD$ is called \emph{weakly admissible} (or \emph{semi-stable of slope $0$}) if 
\[
t_H(\ulD)=t_N(\ulD)\qquad \text{and}\qquad t_H(\ulD')\le t_N(\ulD')\quad\text{for every strict subobject $\ulD'\subset\ulD$.}
\]
\item 
$\ulD$ is called \emph{admissible} if there exists a rigidified local $\hat\sigma$-shtuka $\ulHM$ over $R$ with $\ulD=\BH(\ulHM)$.
\end{enumerate}
\end{definition}

\begin{remark}\label{RemCompWithHJ_B}
This definition parallels Fontaine's definition \cite{Fontaine79} of (weak) admissibility of filtered isocrystals. Namely, Fontaine calls a filtered isocrystal admissible if it comes from a crystalline Galois representation; compare Remark~\ref{RemCrystRealiz}.

In comparison with the Hodge-Pink structures at $\infty$ from \HJSectHPStruct{} the Hodge slope $t_H(\ulD)$ corresponds to $\deg_\Fq$, the Newton slope $t_N(\ulD)$ corresponds to $\deg^W$, and our notion of weak admissibility corresponds to Pink's notion of ``local semi-stability''; see \HJDefHPWts{}.
\end{remark}

\begin{example}\label{ExCarlitz6A}
For the Carlitz motive from Example~\ref{ExCarlitz5A} and its $z$-isocrystal with Hodge-Pink structure $\ulD=(k\dpl z\dpr,z,(z-\zeta)^{-1}\Fp_D)$ one has $t_N(\ulD)=1=t_H(\ulD)$. It is (weakly) admissible.
\end{example}

To obtain a criterion for (weak) admissibility we need to recall a little bit of the theory of $\sigma$-bundles from \cite{HartlPink1}, see also \HJSectSigmaBd. Here we consider a variant which also works if $K$ is not algebraically closed. Namely, we do not consider the punctured open unit disc $\{0<|z|<1\}$ but instead the punctured closed disc $\{0<|z|\le|\zeta|\}$ of radius $|\zeta|$, and the ring $K\ancon$ of rigid analytic functions with coefficients in $K$ on it from \eqref{Kancon}. There are two morphisms from $K\ancon$ to $K\ancon[\hat q]$, namely
\begin{eqnarray*}
& \iota\colon K\ancon\longto K\ancon[\hat q], & \TS\sum_i b_i z^i \longmapsto \sum_i b_i z^i \qquad\text{and}\\[2mm]
& \hat\sigma\colon K\ancon\longto K\ancon[\hat q], & \TS\sum_i b_i z^i \longmapsto \sum_i b_i^{\hat q} z^i\,.
\end{eqnarray*}
For a $K\ancon$-module $\CF$ we define the two $K\ancon[\hat q]$-modules $\hat\sigma^*\CF:=\CF\otimes_{K\ancon,\,\hat\sigma}K\ancon[\hat q]$ and $\iota^*\CF:=\CF\otimes_{K\ancon,\,\iota}K\ancon[\hat q]$.

\begin{definition}\label{DefSigmaBd}
A \emph{$\hat\sigma$-bundle} (\emph{on $\{0<|z|\le|\zeta|\}$}) \emph{of rank $r$ over $K$} is a pair $\ulCF=(\CF,\tau_\CF)$ consisting of a free $K\ancon$-module $\CF$ of rank $r$ together with an isomorphism $\tau_\CF\colon\hat\sigma^*\CF\isoto\iota^*\CF$ of $K\ancon[\hat q]$-modules.

A \emph{homomorphism} $f\colon(\CF,\tau_\CF)\to(\CG,\tau_\CG)$ between $\hat\sigma$-bundles is a homomorphism $f\colon\CF\to\CG$ of $K\ancon$-modules which satisfies $\tau_\CF\circ\hat\sigma^*f=\iota^*f\circ\tau_\CG$.

The \emph{$\hat\tau$-invariants} of $(\CF,\tau_\CF)$ are defined as $\ulCF^{\hat\tau}:=\{\,f\in\CF:\tau_\CF(\hat\sigma^\ast f)=f\,\}$. It is a vector space over $\{f\in K\ancon:\hat\sigma(f)=f\,\}=\BF_\epsilon\dpl z\dpr=Q_\epsilon$.
\end{definition}

\begin{example}\label{ExSigmaBd}
1. The trivial $\hat\sigma$-bundle over $K$ is $\ulCF_{_{\SC 0,1}}:=\bigl(K\ancon,\id_{K\ancon[\hat q]}\bigr)$. 

\medskip\noindent
2. More generally, for relatively prime integers $d,r$ with $r>0$ we let $\ulCF_{d,r}$ be the $\hat\sigma$-bundle over $K$ consisting of $\CF_{d,r}=K\ancon^{\oplus r}$ with 
\[
\tau_{_{\CF_{d,r}}}:= \left( \raisebox{6.2ex}{$
\xymatrix @C=0.3pc @R=0.3pc {
0 \ar@{.}[ddd]\ar@{.}[drdrdrdr] & 1\ar@{.}[drdrdr] & 0\ar@{.}[rr]\ar@{.}[drdr] & & 0 \ar@{.}[dd]\\
& & & & \\
& & & & 0 \\
0 & & & & 1\\
z^{-d} & 0 \ar@{.}[rrr]  & & & 0\\
}$}
\right).
\]

\medskip\noindent
3. We already saw in equation~\eqref{EqT} that $\tplusminus\in(\ulCF_{_{\SC 1,1}})^{\hat\tau}$.
\end{example}

\begin{lemma}\label{LemmaF11Surj}
If $K$ is algebraically closed the evaluation at $z=\zeta$ induces an exact sequence of $\BF_\epsilon\dpl z\dpr$-vector spaces
\[
\xymatrix @R=0pc {
0 \ar[r] & \BF_\epsilon\dpl z\dpr\cdot\tplusminus \ar[r] & (\ulCF_{_{\SC 1,1}})^{\hat\tau} \ar[r] & K \ar[r] & 0\,, \\
& & f(z) \ar@{|->}[r] & f(\zeta)
}
\]
where $K$ is an $\BF_\epsilon\dpl z\dpr$-vector space via the inclusion $\BF_\epsilon\dpl z\dpr\into K$, $z\mapsto\zeta$.
\end{lemma}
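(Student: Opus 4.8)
The plan is to study the evaluation map $\mathrm{ev}_\zeta\colon\ulCF_{_{\SC 1,1}}{}^{\hat\tau}\to K$, $f\mapsto f(\zeta)$. Since the Frobenius structure of $\ulCF_{_{\SC 1,1}}$ is multiplication by $z^{-1}$, by definition $\ulCF_{_{\SC 1,1}}{}^{\hat\tau}=\{f\in K\ancon\colon\hat\sigma(f)=zf\}$, a vector space over $\{f\in K\ancon\colon\hat\sigma(f)=f\}=\BF_\epsilon\dpl z\dpr=Q_\epsilon$. First I would check that $\mathrm{ev}_\zeta$ is well defined and $Q_\epsilon$-linear: for $f=\sum_ib_iz^i\in K\ancon$ the defining condition of $K\ancon$ with $r=1$ yields $|b_i|\,|\zeta|^i\to0$ as $i\to\pm\infty$, so $f(\zeta):=\sum_ib_i\zeta^i$ converges in $K$; and for $a\in Q_\epsilon$ one has $\hat\sigma(a)=a$, hence $af\in\ulCF_{_{\SC 1,1}}{}^{\hat\tau}$ and $\mathrm{ev}_\zeta(af)=a(\zeta)f(\zeta)$, which is precisely the $Q_\epsilon$-module structure $z\mapsto\zeta$ on $K$.

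Surjectivity of $\mathrm{ev}_\zeta$ is exactly Corollary~\ref{CorF11Surj}: as $K$ is algebraically closed and complete we have $K=\olK$, so for $c\in K$ that corollary provides $f\in\CO_\olK\dbl z,z^{-1}\}$ with $f=z^{-1}\hat\sigma(f)$ and $f(\zeta)=c$; since $\CO_\olK\dbl z,z^{-1}\}\subset K\ancon$, this $f$ lies in $\ulCF_{_{\SC 1,1}}{}^{\hat\tau}$. For the inclusion $Q_\epsilon\cdot\tplusminus\subseteq\ker\mathrm{ev}_\zeta$ I would use $\hat\sigma(\tplusminus)=z\,\tplusminus$ from \eqref{EqT}, which shows $\tplusminus\in\ulCF_{_{\SC 1,1}}{}^{\hat\tau}$, together with $\tminus=(1-\tfrac{\zeta}{z})\hat\sigma(\tminus)$, which gives $\tminus(\zeta)=0$ and hence $\tplusminus(\zeta)=\tplus(\zeta)\tminus(\zeta)=0$; the map $a\mapsto a\tplusminus$ is injective because $K\ancon$ is an integral domain, so this part of the exact sequence is immediate.

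The substance is the reverse inclusion $\ker\mathrm{ev}_\zeta\subseteq Q_\epsilon\cdot\tplusminus$. Let $f\in K\ancon$ with $\hat\sigma(f)=zf$ and $f(\zeta)=0$. First I would show $f(\zeta^{\hat q^j})=0$ for all $j\ge0$: because $x\mapsto x^{\hat q}$ is an additive and continuous map of $\olK$, one has $\hat\sigma(f)(d^{\hat q})=f(d)^{\hat q}$ for every $d$ in the domain $\{0<|z|\le|\zeta|\}$ of $f$, so evaluating $\hat\sigma(f)=zf$ at $z=\zeta^{\hat q^{j+1}}=(\zeta^{\hat q^j})^{\hat q}$ gives $f(\zeta^{\hat q^j})^{\hat q}=\zeta^{\hat q^{j+1}}\,f(\zeta^{\hat q^{j+1}})$; since $\zeta\neq0$ this propagates $f(\zeta)=0$ inductively to all $f(\zeta^{\hat q^j})=0$ (note $|\zeta^{\hat q^j}|=|\zeta|^{\hat q^j}\le|\zeta|$). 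Now $\tplusminus=\tplus\tminus$ with $\tplus\in\olK\langle\tfrac{z}{\zeta}\rangle\mal$ a unit in $K\ancon$, so the zeros of $\tplusminus$ in $\{0<|z|\le|\zeta|\}$ are exactly the simple zeros $\zeta^{\hat q^j}$, $j\ge0$, of $\tminus=\prod_{j\ge0}(1-\tfrac{\zeta^{\hat q^j}}{z})$. Hence $g:=f\,\tplusminus^{-1}$, a priori merely meromorphic on $\{0<|z|\le|\zeta|\}$ with poles confined to these simple zeros, has empty polar divisor because $f$ vanishes at each $\zeta^{\hat q^j}$; restricting to any closed subannulus $\{\varrho\le|z|\le|\zeta|\}$ with $\varrho>0$ --- whose ring is a principal ideal domain by \cite[Proposition~4]{Lazard} and on which $\tplusminus$ has only finitely many zeros, all zeros of $f$ --- one sees that $\tplusminus$ divides $f$ there, and letting $\varrho\to0$ gives $g=f\,\tplusminus^{-1}\in K\ancon$. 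Finally $\hat\sigma(g)=\hat\sigma(f)/\hat\sigma(\tplusminus)=zf/(z\,\tplusminus)=g$, so writing $g=\sum_ig_iz^i$ we get $g_i=g_i^{\hat q}$, i.e.\ $g_i\in\BF_\epsilon$; the convergence condition $|g_i|\,|\zeta|^i\to0$ as $i\to-\infty$ then forces $g_i=0$ for $i\ll0$, so $g\in\BF_\epsilon\dpl z\dpr=Q_\epsilon$ and $f=g\,\tplusminus\in Q_\epsilon\cdot\tplusminus$, as wanted.

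The main obstacle is precisely this reverse inclusion --- converting the single condition $f(\zeta)=0$ into divisibility of $f$ by $\tplusminus$ inside $K\ancon$. The mechanism that makes it work is that the Frobenius functional equation $\hat\sigma(f)=zf$ automatically spreads a zero at $z=\zeta$ over the whole $\hat q$-power orbit $\{\zeta^{\hat q^j}:j\ge0\}$, which is exactly the zero locus of $\tminus$; once this is observed, only the analytic divisibility results of Lazard already invoked in the proof of Lemma~\ref{LemmaFundSequence} are needed. This argument is the variant, for the single equation $\hat\sigma(f)=zf$ over $K\ancon$, of the reasoning used there for exactness in the middle.
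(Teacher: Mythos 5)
Your proof is correct and follows exactly the paper's argument: surjectivity via Corollary~\ref{CorF11Surj}, $\tplusminus(\zeta)=0$ for one inclusion, and for the other the functional equation $\hat\sigma(f)=zf$ propagating the zero at $z=\zeta$ to all $\zeta^{\hat q^i}$ so that $g=f/\tplusminus$ lies in $K\ancon$ and is $\hat\sigma$-invariant, hence in $\BF_\epsilon\dpl z\dpr$. The only difference is that you spell out the divisibility step via Lazard on closed subannuli, which the paper leaves implicit.
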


\begin{proof}
The sequence is obviously exact on the left. Furthermore, $\tplusminus$ maps to zero under the right morphism. To prove exactness in the middle let $f\in(\ulCF_{_{\SC 1,1}})^{\hat\tau}$ vanish at $z=\zeta$. Then it vanishes at $z=\zeta^{\hat q^i}$ for all $i\in\BN_0$ and $g:=f/\tplusminus\in K\ancon$ satisfies $g=\hat\sigma^*(g)\in\BF_\epsilon\dpl z\dpr$ as desired. The exactness on the right was established in Corollary~\ref{CorF11Surj}.
\end{proof}

The structure theory of $\hat\sigma$-bundles over an algebraically closed complete field (for example over the completion $\olK$ of an algebraic closure of $K$) was developed in \cite{HartlPink1} and \cite[\S\,1.4]{HartlPSp}.

\begin{theorem}\label{ThmHartlPink1} We assume that $K$ is algebraically closed (or we work over $\olK$).
\begin{enumerate}
\item \label{ThmHartlPink1a}
Any $\hat\sigma$-bundle $\ulCF$ over $K$ is isomorphic to $\bigoplus_i\ulCF_{d_i,r_i}$ for pairs of relatively prime integers $d_i,r_i$ with $r_i>0$, which are uniquely determined by $\ulCF$ up to permutation. They satisfy $\rk\ulCF=\sum_i r_i$ and we define the \emph{degree} of $\ulCF$ as $\deg\ulCF:=\sum_i d_i$.
\item  \label{ThmHartlPink1b}
There is a non-zero morphism $\ulCF_{d',r'}\to\ulCF_{d,r}$ if and only if $\frac{d'}{r'}\le\frac{d}{r}$.
\item  \label{ThmHartlPink1c}
Any $\hat\sigma$-sub-bundle $\ulCF'\subset(\ulCF_{d,r})^{\oplus n}$ satisfies $\deg\ulCF'\le\frac{d}{r}\cdot\rk\ulCF'$.
\end{enumerate}
\end{theorem}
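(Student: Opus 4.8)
The plan is to follow the Dieudonn\'e--Manin-style classification carried out in \cite{HartlPink1} (see also \cite[\S\,1.4]{HartlPSp}), with the Newton-polygon combinatorics of the classical theory replaced by rigid-analytic estimates on the punctured closed disc $\{0<|z|\le|\zeta|\}$. The first step is to establish \ref{ThmHartlPink1b}, together with the auxiliary facts it supplies for the rest. The category of $\hat\sigma$-bundles over $K$ is a $Q_\epsilon$-linear tensor category admitting duals, with $\ulCF_{d,r}\dual\cong\ulCF_{-d,r}$, and a direct computation with the explicit matrices defining the standard bundles identifies the internal Hom bundle $\CHom(\ulCF_{d',r'},\ulCF_{d,r})$ with a direct sum of copies of a single standard bundle of slope $d/r-d'/r'$. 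Since $\Hom(\ulCF_{d',r'},\ulCF_{d,r})=\CHom(\ulCF_{d',r'},\ulCF_{d,r})^{\hat\tau}$, part \ref{ThmHartlPink1b} reduces to the claim that a standard bundle $\ulCF_{e,s}$ has $\ulCF_{e,s}^{\hat\tau}\neq 0$ precisely when $e\ge 0$. The vanishing for $e<0$ is an elementary convergence argument: a $\hat\tau$-invariant section would be forced to vanish to arbitrarily high order at the points $\zeta^{\hat q^i}$, $i\to\infty$, which accumulate at $0$, and the decay condition built into $K\ancon$ as $i\to-\infty$ then forces it to be $0$. The non-vanishing for $e\ge 0$ is the substantive analytic point: for $(e,s)=(1,1)$ it is precisely Lemma~\ref{LemmaF11Surj} and Corollary~\ref{CorF11Surj}, and in general one runs the same successive-approximation and Newton-polygon estimates, using that $K$ is algebraically closed and complete in order to solve the Artin--Schreier-type and power-series equations that arise. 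The case $(e,s)=(0,1)$ also yields $\ulCF_{0,1}^{\hat\tau}=\{f\in K\ancon:\hat\sigma(f)=f\}=\BF_\epsilon\dpl z\dpr=Q_\epsilon$, and more generally that $\End(\ulCF_{d,r})$ is a division algebra over $Q_\epsilon$ all of whose nonzero elements are isomorphisms.

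The second step is the decomposition \ref{ThmHartlPink1a}, which one proves by induction on $\rk\ulCF$. Twisting $\ulCF$ by $\ulCF_{N,1}$ for $N\gg 0$ and applying the first step produces a nonzero $\hat\tau$-invariant section, hence a nonzero morphism from some standard bundle into $\ulCF$; among all $\hat\sigma$-sub-bundles of $\ulCF$ isomorphic to a standard bundle one then picks one, say $\ulCF'\cong\ulCF_{d,r}$, of maximal slope $d/r$, using the B\'ezout property of $K\ancon$ (Lazard~\cite{Lazard}) to pass to its saturation. By the maximality of the slope and the first step, the quotient $\ulCF''=\ulCF/\ulCF'$ has no sub-bundle of slope $>d/r$, so $\Ext^1(\ulCF'',\ulCF')=0$; this $\Ext^1$-vanishing again unwinds, as in the first step, to the surjectivity of an operator of the shape $\tau\circ\hat\sigma^{\ast}-\iota^{\ast}$ on $K\ancon$-modules over an algebraically closed complete field. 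Hence the extension $0\to\ulCF'\to\ulCF\to\ulCF''\to 0$ splits, and the induction hypothesis applied to $\ulCF''$ completes the decomposition; the relation $\rk\ulCF=\sum_i r_i$ and the definition $\deg\ulCF:=\sum_i d_i$ are then formal.

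Uniqueness of the multiset $\{(d_i,r_i)\}$ follows from the first step just as in the classical Dieudonn\'e--Manin theorem: for each rational $\lambda$ the $\hat\sigma$-sub-bundle of $\ulCF$ generated by the images of all morphisms from standard bundles of slope $\ge\lambda$ equals $\bigoplus_{d_i/r_i\ge\lambda}\ulCF_{d_i,r_i}$, so its rank and degree are intrinsic invariants of $\ulCF$, and letting $\lambda$ vary recovers all the pairs $(d_i,r_i)$. Part~\ref{ThmHartlPink1c} is then immediate: writing a sub-bundle $\ulCF'\subset\ulCF_{d,r}^{\oplus n}$ as $\bigoplus_j\ulCF_{d_j',r_j'}$ by \ref{ThmHartlPink1a}, each inclusion of a summand composes with one of the projections to a nonzero morphism $\ulCF_{d_j',r_j'}\to\ulCF_{d,r}$, so $d_j'/r_j'\le d/r$ by \ref{ThmHartlPink1b}, and summing gives $\deg\ulCF'=\sum_j d_j'=\sum_j (d_j'/r_j')\,r_j'\le\tfrac{d}{r}\sum_j r_j'=\tfrac{d}{r}\rk\ulCF'$.

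The main obstacle is the analytic core common to the first two steps: showing that over an algebraically closed complete field every $\hat\sigma$-bundle all of whose slopes are positive carries a nonzero $\hat\tau$-invariant section, and, in the same spirit, that the relevant $\Ext^1$-groups vanish. Both come down to solving, with prescribed convergence on the annulus $\{0<|z|\le|\zeta|\}$, systems of semilinear equations $\hat\sigma(x)=Ax+b$ with $A$ of a suitable slope; this is the matrix version of the Newton-polygon successive-approximation argument already carried out in the proof of Corollary~\ref{CorF11Surj}, and making those estimates uniform enough to handle matrices, and infinitely many coefficients at once, is where the real work lies.
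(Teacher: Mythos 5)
Your proposal is correct and follows essentially the same route as the paper: for parts \ref{ThmHartlPink1a} and \ref{ThmHartlPink1b} the paper simply cites \cite[Theorem~1.4.2 and Proposition~1.4.5]{HartlPSp} (i.e.\ the Dieudonn\'e--Manin-style classification of \cite{HartlPink1} whose skeleton you reconstruct -- reduction to $\hat\tau$-invariants of standard bundles via internal Hom, maximal-slope sub-bundles, $\Ext^1$-vanishing and induction), and it deduces \ref{ThmHartlPink1c} exactly as you do, by decomposing $\ulCF'$ via \ref{ThmHartlPink1a} and bounding each slope via \ref{ThmHartlPink1b}. The only point your sketch glosses over is why the slopes of standard sub-bundles are bounded above and the supremum is attained with bounded denominator, which is where the degree estimates of \cite{HartlPink1} do real work.
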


\begin{proof}
Statements (a) and (b) are \cite[Theorem~1.4.2 and Proposition~1.4.5]{HartlPSp}. Statement (c) follows easily from (a) and (b). Namely, $\CF'\cong\bigoplus_i\ulCF_{d_i,r_i}$ by (a) with $\frac{d_i}{r_i}\le\frac{d}{r}$ by (b) yields (c).
\end{proof}

\bigskip

Now let $(\ulHM,\delta_{\hat M})=(\hat M,\tau_{\hat M},\delta_{\hat M})$ be a rigidified local shtuka over $R$ as in Definition~\ref{DefRigLocSht}, and let $\ulD=(D,\tau_D,\Fq_D)$ be the associated $z$-isocrystal with Hodge-Pink structure over $R$. We define two $\hat\sigma$-bundles on $\{0<|z|\le|\zeta|\}$
\begin{eqnarray*}
\ulCE(\ulHM) & := & (\CE(\ulHM),\tau_\CE) \es := \es \hat\sigma^*\ulHM\otimes_{R\dbl z\dbr}k\dpl z\dpr\otimes_{k\dpl z\dpr}K\ancon \es = \es (D,\tau_D)\otimes_{k\dpl z\dpr}K\ancon \es\text{and}\\[2mm]
\ulCF(\ulHM) & := & (\CF(\ulHM),\tau_\CF) \es := \es \ulHM\otimes_{R\dbl z\dbr}K\ancon\,.
\end{eqnarray*}
There is a canonical isomorphism $\hat\sigma^*\delta_{\hat M}\circ\tau_{\hat M}^{-1}\colon\ulCF(\ulHM)[\tminus^{-1}]\isoto\ulCE(\ulHM)[\tminus^{-1}]$. We use this isomorphism to view $\CF(\ulHM)$ as a $K\ancon$-submodule of $\CE(\ulHM)[\tminus^{-1}]$. If $\ulHM$ is effective, that is if $\tau_{\hat M}(\hat\sigma^*\hat M)\subset\hat M$, we have $\CE(\ulHM)\subset\hat\sigma^*\delta_{\hat M}\circ\tau_{\hat M}^{-1}(\CF(\ulHM))$. Assume that $\tau_{\hat M}(\hat\sigma^*\hat M)\subsetneq\hat M$. Then we visualize these $\hat\sigma$-bundles by the following diagram, in which the thick lines represent $K\ancon$-modules:

\begin{center}
\newpsobject{showgrid}{psgrid}{subgriddiv=1,griddots=10,gridlabels=0pt}
 \psset{unit=3.1cm} 
 \begin{pspicture}(1.6,-0.4)(4.6,0.85) 
   \rput(4.3,0.79){$\CF(\ulHM)$}
   \psplot[plotstyle=curve,plotpoints=200,linewidth=1pt,linecolor=black]{1.6}%
                {4.6}{0.7}
   \rput(4.3,0.09){$\CE(\ulHM)$}
   \psplot[plotstyle=curve,plotpoints=200,linewidth=1pt,linecolor=black]{1.6}%
                {4.6}{6 7 div 0.7 mul 0.6 sub}
   \rput(1.42,0.5){$\ulHM\!\underset{R\dbl z\dbr}{\otimes}K\ancon$}
   \psline[linewidth=1pt](1.6,0.666667)(4.6,0.666667)
   \rput(3.52,0.466667){$\hat\sigma^*\ulHM\!\underset{R\dbl z\dbr}{\otimes}K\ancon$}
   \psplot[plotstyle=curve,plotpoints=200,linewidth=1pt,linecolor=black]{1.6}{4.6}%
                {x 1.3 sub 16 exp 6 mul x 1.3 sub 16 exp 7 mul 1 add div 0.7 mul 0.1 3 div add}
   \rput(5.05,-0.15){$\{|\zeta|\ge|z|>0\}$}
   \psline[linewidth=0.5pt](1.6,-0.15)(4.6,-0.15)
   \rput(1.95,-0.3){$z=\zeta$}
   \psline[linewidth=0.5pt](1.95,-0.25)(1.95,0.75)
   \rput(2.88,-0.3){$z=\zeta^{\hat q}$}  
   \psline[linewidth=0.5pt](2.85,-0.25)(2.85,0.75)
   \rput(3.8,-0.3){$\dots$}  
 \end{pspicture}
\end{center}

\noindent
Modules drawn higher contain the ones drawn below. All $K\ancon$-modules coincide outside the discrete set $\bigcup_{i\in\BN_0}\{z=\zeta^{\hat q^i}\}\subset\{|\zeta|\ge|z|>0\}$. At those points in $\bigcup_{i\in\BN_0}\{z=\zeta^{\hat q^i}\}$ where two modules are drawn at almost the same height, they also coincide. Indeed, $\CF(\ulHM)$ equals $\hat M$. Moreover, it contains $\tau_{\hat M}(\hat\sigma^\ast\hat M)$ and differs from it only at $z=\zeta$ by our assumption. Also $\CE(\ulHM)$ coincides with $(\hat\sigma^\ast\hat M)\otimes_{R\dbl z\dbr}K\ancon$ at $z=\zeta$ because $\hat\sigma^*\delta_{\hat M}$ is an isomorphism of $K\ancon{}[\hat\sigma^*\tminus^{-1}]$-modules and $\hat\sigma^*\tminus$ is a unit in $K\dbl z-\zeta\dbr$. Finally, the strict inclusion $\tau_{\hat M}(\hat\sigma^*\hat M)\subsetneq\hat M$ at $z=\zeta$ is transported by $\tau_\CF$ and $\tau_\CE$ to strict inclusions $\CE(\ulHM)\subset\CF(\ulHM)$ at $\bigcup_{i\in\BN_0}\{z=\zeta^{\hat q^i}\}$. 

The stalks at $z=\zeta$, or more precisely the tensor products with $K\dbl z-\zeta\dbr$ satisfy
\begin{eqnarray}\label{EqCF}
\CE(\ulHM)\otimes_{K\ancon} K\dbl z-\zeta\dbr & = & \Fp_D \\[2mm]
\hat\sigma^*\delta_{\hat M}\circ\tau_{\hat M}^{-1}(\CF(\ulHM))\otimes_{K\ancon} K\dbl z-\zeta\dbr & = & \Fq_D \es \subset \es \CE(\ulHM)\otimes_{K\ancon} K\dpl z-\zeta\dpr \es = \es \Fp_D[\tfrac{1}{z-\zeta}]\,. \nonumber
\end{eqnarray}
We thus can view $\CF(\ulHM)$ as the modification of $\CE(\ulHM)$ at $\bigcup_{i\in\BN_0}\{z=\zeta^{\hat q^i}\}$ defined by the inclusion $\Fq_D\subset\Fp_D[\tfrac{1}{z-\zeta}]$, that is
\begin{eqnarray*}
\hat\sigma^*\delta_{\hat M}\circ\tau_{\hat M}^{-1}(\CF(\ulHM)) & = & \ulCE(\ulHM)[\tminus^{-1}]\cap\bigcap_{j\in\BN_0}\tau_\CE^j\bigl(\Fq_D\otimes_{K\dbl z-\zeta\dbr,\,\hat\sigma^j}K\dbl z-\zeta^{\hat q^j}\dbr\bigr)\\[2mm]
& = & \bigl\{\,f\in\CE(\ulHM)[\tminus^{-1}]\colon \tau_\CE^i(\hat\sigma^i(f))\in\Fq_D\text{ for all }i\le0\,\bigr\}\,.
\end{eqnarray*}
From this description it becomes clear that $\ulCE(\ulHM)$ and $\ulCF(\ulHM)$ only depend on the $z$-isocrystal with Hodge-Pink structure $\ulD=(D,\tau_D,\Fq_D)=\BH(\ulHM)$ associated with $\ulHM$. Namely, we can define
\begin{eqnarray}\label{EqCEFromD}
\ulCE(\ulD) & := & (\CE(\ulD),\tau_\CE) \es := \es (D,\tau_D)\otimes_{k\dpl z\dpr}K\ancon \qquad\text{and}\\[2mm]
\CF(\ulD) & = & (\CF(\ulD),\tau_\CF) \es := \es \ulCE(\ulD)[\tminus^{-1}]\cap\bigcap_{j\in\BN_0}\tau_\CE^j\bigl(\Fq_D\otimes_{K\dbl z-\zeta\dbr,\,\hat\sigma^j}K\dbl z-\zeta^{\hat q^j}\dbr\bigr)\,.\label{EqCFFromD}
\end{eqnarray}

\begin{definition}\label{DefCFFromD}
We call $\bigl(\ulCE(\ulD),\ulCF(\ulD)\bigr)$ from \eqref{EqCEFromD} and \eqref{EqCFFromD} the \emph{pair of $\hat\sigma$-bundles associated with $\ulD$}.
\end{definition}

The following proposition is analogous to \HJPropDeg{} and was proved in \cite[Lemma~2.4.5]{HartlPSp}, where we used the notation $\CP=\ulCE(\ulD)$ and $\CQ=\ulCF(\ulD)$.

\begin{proposition}\label{PropDeg}
The degree (defined in Theorem~\ref{ThmHartlPink1}\ref{ThmHartlPink1a}) satisfies
\[
t_N(\ulD)\;=\;-\deg\ulCE(\ulD) \qquad\text{and} \qquad t_H(\ulD)\;=\;\deg\ulCF(\ulD)-\deg\ulCE(\ulD)\,.
\]
\end{proposition}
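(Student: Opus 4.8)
The plan is to compute each degree separately by exploiting the structure theorem for $\hat\sigma$-bundles (Theorem~\ref{ThmHartlPink1}) together with the fact that $\deg$ is additive in short exact sequences and invariant under isomorphism, so in particular it can be read off from the top exterior power. First I would reduce to the rank-one case: since $\det\colon\ulCE(\ulD)\mapsto\wedge^r\ulCE(\ulD)$ and $\det\colon\ulCF(\ulD)\mapsto\wedge^r\ulCF(\ulD)$ are $\hat\sigma$-bundles of rank one with $\deg\wedge^r\ulCE=\deg\ulCE$ and $\deg\wedge^r\ulCF=\deg\ulCF$ (this is immediate from the decomposition in Theorem~\ref{ThmHartlPink1}\ref{ThmHartlPink1a}, as $\deg$ of a direct sum is the sum of the $d_i$), and since both $t_N$ and $t_H$ are additive under $\wedge^r$ by their definitions via $\det\tau_D$ and $\wedge^r\Fq_D$ in Definition~\ref{DefWA}, it suffices to treat $\det\ulD=(\wedge^r D,\wedge^r\tau_D,\wedge^r\Fq_D)$, a $z$-isocrystal with Hodge-Pink structure of rank one.

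For the first identity, $t_N(\ulD)=-\deg\ulCE(\ulD)$, I would argue directly in the rank-one case. Here $D=k\dpl z\dpr$, $\tau_D$ is multiplication by some element whose $z$-order is $t_N(\ulD)=:d$, so up to isomorphism $(D,\tau_D)\cong(k\dpl z\dpr,z^d)$; extending scalars to $K\ancon$ gives $\ulCE(\ulD)\cong\ulCF_{d,1}$ in the notation of Example~\ref{ExSigmaBd}(2) — wait, one must be careful about the sign convention: $\tau_{\CF_{d,r}}$ involves $z^{-d}$, so $(k\dpl z\dpr,z^d)\otimes K\ancon\cong\ulCF_{-d,1}$, which has degree $-d=-t_N(\ulD)$ by Theorem~\ref{ThmHartlPink1}\ref{ThmHartlPink1a}. (In general rank, the Newton decomposition of $(D,\tau_D)$ over $k\dpl z\dpr$ matches the $\hat\sigma$-bundle decomposition after base change, and the slopes negate; summing over the pieces and using additivity of $\deg$ and $t_N$ gives the formula, but the rank-one reduction already suffices.) Note that degree is insensitive to whether we work over $K$ or over $\olK$, since base change $\olK/K$ does not change the $d_i$; this lets us invoke Theorem~\ref{ThmHartlPink1} freely.

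For the second identity, $t_H(\ulD)=\deg\ulCF(\ulD)-\deg\ulCE(\ulD)$, the point is that $\ulCF(\ulD)$ is obtained from $\ulCE(\ulD)$ by the $\hat\sigma$-equivariant modification at the orbit $\bigcup_{i\in\BN_0}\{z=\zeta^{\hat q^i}\}$ prescribed by the inclusion $\Fq_D\subset\Fp_D[\tfrac{1}{z-\zeta}]$, as recorded in \eqref{EqCFFromD}. The length of this modification \emph{at the single point} $z=\zeta$ equals $\sum_i(-\mu_i)=t_H(\ulD)$ by the elementary divisor description of the Hodge-Pink weights in Definition~\ref{DefHPWts} together with part (b) of Definition~\ref{DefWA} (the wedge identity $\wedge^r\Fq_D=(z-\zeta)^{-t_H(\ulD)}\wedge^r\Fp_D$). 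Because the modification is $\hat\sigma$-equivariant, its contributions at $z=\zeta^{\hat q^i}$ for $i\ge1$ are Frobenius-translates of the contribution at $z=\zeta$ and do not add to the \emph{degree} — this is exactly the phenomenon that the degree of a $\hat\sigma$-bundle counts modifications ``once per $\hat\sigma$-orbit'', and it is the reason the naive length-at-all-points count collapses to $t_H(\ulD)$. Concretely, passing to $\det$ reduces to rank one where $\Fq_D=(z-\zeta)^{-t_H}\Fp_D$, so $\ulCF(\ulD)=\ulCE(\ulD)\otimes\ul\BOne(-t_H)^{\hat\sigma\text{-bundle}}$, and one reads off $\deg\ulCF(\ulD)=\deg\ulCE(\ulD)+t_H(\ulD)$ from the explicit shape of $\tau_{\CF_{d,1}}$.

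\textbf{Main obstacle.} The delicate point is the bookkeeping in the second identity: justifying rigorously that the $\hat\sigma$-equivariant modification at the full orbit $\bigcup_i\{z=\zeta^{\hat q^i}\}$ contributes only $t_H(\ulD)$ — and not something like an infinite sum — to the degree, i.e. that the notion of $\deg$ from Theorem~\ref{ThmHartlPink1}\ref{ThmHartlPink1a} is compatible with the ``length at $z=\zeta$'' invariant of the modification. After the reduction to rank one via $\det$ this becomes the transparent statement that $\ulCF_{d,1}\otimes\ulCF_{e,1}\cong\ulCF_{d+e,1}$, so in practice the rank-one reduction is what makes the argument clean; the real work is setting up that reduction and citing \cite[Lemma~2.4.5]{HartlPSp} or reproving the compatibility of $\deg$ with modifications. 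Since the excerpt already grants us \cite[Lemma~2.4.5]{HartlPSp}, I would present the reduction to rank one, do the explicit rank-one computation for both identities, and invoke the cited lemma (equivalently, the analog \HJPropDeg) to close the degree-bookkeeping gap.
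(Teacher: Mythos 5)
Your proposal is correct in substance, but it takes a genuinely different route from the paper: the paper offers no argument at all for Proposition~\ref{PropDeg} and simply refers to \cite[Lemma~2.4.5]{HartlPSp} (see the sentence immediately preceding the statement), whereas you sketch a self-contained derivation. Your reduction to rank one is sound, since $t_N$, $t_H$ and $\deg$ are all determinantal invariants ($t_N(\ulD)=\ord_z\det\tau_D$, $\wedge^r\Fq_D=(z-\zeta)^{-t_H(\ulD)}\wedge^r\Fp_D$, and $\deg\wedge^{r_i}\ulCF_{d_i,r_i}=d_i$), and the modification \eqref{EqCFFromD} commutes with $\wedge^r$ because it is prescribed pointwise by the lattices. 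In rank one the computation does close the bookkeeping gap you flag, without recourse to the citation: writing $\tau_D=c\,z^{t_N}u$ with $c\in k^{^{\SSC\times}}$ and $u\in k\dbl z\dbr\mal$, $u\equiv1$, one trivializes $c$ and $u$ over $\olK$ by Lang and Artin--Schreier, so $\ulCE(\ulD)\cong\ulCF_{-t_N,1}$, which is the first identity; and with $\Fq_D=(z-\zeta)^{-t_H}\Fp_D$ the module $\CF(\ulD)$ is free on $\tminus^{-t_H}$, in which basis $\tau_\CF$ becomes $z^{t_N-t_H}(z-\zeta)^{t_H}$, and multiplication by the appropriate power of $\tplus\in\olK\ancon\mal$ identifies this with $\ulCF_{t_H-t_N,1}$, giving $\deg\ulCF(\ulD)=t_H-t_N=\deg\ulCE(\ulD)+t_H(\ulD)$. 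This is precisely your ``once per $\hat\sigma$-orbit'' phenomenon made rigorous. Two small corrections: the rank-one twist is by $\ulCF_{t_H,1}=\ulCE(\ul\BOne(t_H))$, not by anything attached to $\ul\BOne(-t_H)$ (the sign in that parenthetical is off, though your final degree count is right); and since Theorem~\ref{ThmHartlPink1} is only available over an algebraically closed complete field, all isomorphisms and the degree itself must be read after base change to $\olK$, as you do note. Your route buys an explicit proof inside this paper's toolkit at the cost of the $\wedge^r$-compatibility checks; the paper's citation buys brevity.
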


The criterion for (weak) admissibility is now the following

\begin{theorem}\label{ThmCritWA}
Let $\ulD$ be a $z$-isocrystal with Hodge-Pink structure over $R$ and let $\bigl(\ulCE(\ulD),\ulCF(\ulD)\bigr)$ be the associated pair of $\hat\sigma$-bundles. Then
\begin{enumerate}
\item \label{ThmCritWA_A}
$\ulD$ is admissible if and only if over $\olK$ there is an isomorphism $\ulCF(\ulD)\cong(\ulCF_{_{\SC 0,1}})^{\oplus\rk\ulD}$.
\item \label{ThmCritWA_B}
$\ulD$ is weakly admissible if and only if $\deg\ulCF(\ulD)=0\ge\deg\ulCF(\ulD')$ for every strict subobject $\ulD'\subset\ulD$.
\end{enumerate}
\end{theorem}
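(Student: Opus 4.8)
\emph{Part~\ref{ThmCritWA_B}} is going to be a direct translation of Proposition~\ref{PropDeg}. Any strict subobject $\ulD'\subset\ulD$ is again a $z$-isocrystal with Hodge-Pink structure over $R$, and the formulas \eqref{EqCEFromD} and \eqref{EqCFFromD} applied to $\ulD'$ produce $\hat\sigma$-sub-bundles $\ulCE(\ulD')\subset\ulCE(\ulD)$ and $\ulCF(\ulD')\subset\ulCF(\ulD)$. Proposition~\ref{PropDeg} applied to $\ulD'$ gives $t_N(\ulD')=-\deg\ulCE(\ulD')$ and $t_H(\ulD')=\deg\ulCF(\ulD')-\deg\ulCE(\ulD')$, hence $t_H(\ulD')\le t_N(\ulD')$ if and only if $\deg\ulCF(\ulD')\le 0$, and likewise (applying the proposition to $\ulD$ itself) $t_H(\ulD)=t_N(\ulD)$ if and only if $\deg\ulCF(\ulD)=0$. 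Substituting these two equivalences into Definition~\ref{DefWA}\,(c) turns the definition of weak admissibility verbatim into the condition stated in~\ref{ThmCritWA_B}, so only a few lines are needed here.

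For the \emph{``only if'' part of~\ref{ThmCritWA_A}}, suppose $\ulD=\BH(\ulHM)$; then $\ulCF(\ulD)=\ulCF(\ulHM)=\ulHM\otimes_{R\dbl z\dbr}K\ancon$ by the discussion preceding Definition~\ref{DefCFFromD}. I would invoke the functorial isomorphism $h$ of Remark~\ref{RemTateMod}: for $1/\hat q<s\le 1$ it is a $\hat\tau$-equivariant isomorphism $\check T_\epsilon\ulHM\otimes_{A_\epsilon}K^\sep\langle\tfrac{z}{\zeta^s}\rangle\isoto\hat M\otimes_{R\dbl z\dbr}K^\sep\langle\tfrac{z}{\zeta^s}\rangle$, and for such $s$ one has $K^\sep\langle\tfrac{z}{\zeta^s}\rangle\subset\olK\ancon$. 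Base change along this inclusion yields a $\hat\tau$-equivariant isomorphism $\check T_\epsilon\ulHM\otimes_{A_\epsilon}\olK\ancon\isoto\ulCF(\ulD)\otimes_K\olK$, and picking an $A_\epsilon$-basis of $\check T_\epsilon\ulHM$ identifies the source, with $\hat\tau=\id\otimes\hat\sigma$, with $\ulCF_{_{\SC 0,1}}^{\oplus\rk\ulD}$. This proves the claim.

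The substantive direction is the \emph{``if'' part of~\ref{ThmCritWA_A}}, and the plan is to reconstruct a rigidified local shtuka from $\ulD$. Assume $\ulCF(\ulD)\otimes_K\olK\cong\ulCF_{_{\SC 0,1}}^{\oplus r}$ with $r=\rk\ulD$. First, triviality means $\check V:=\bigl(\ulCF(\ulD)\otimes_K\olK\bigr)^{\hat\tau}$ is $r$-dimensional over $Q_\epsilon=\BF_\epsilon\dpl z\dpr$ and satisfies $\check V\otimes_{Q_\epsilon}\olK\ancon\isoto\ulCF(\ulD)\otimes_K\olK\ancon$; since $\Gal(K^\sep/K)$ acts on the factor $\olK$, commutes with $\hat\tau$ and fixes $Q_\epsilon$, this makes $\check V$ a continuous $Q_\epsilon$-linear representation of $\Gal(K^\sep/K)$, so we may choose a $\Gal(K^\sep/K)$-stable $A_\epsilon$-lattice $\check T\subset\check V$ (its image in $\GL_r(Q_\epsilon)$ is compact). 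Then $\hat M$ is constructed inside $D\otimes_{k\dpl z\dpr}\olK\ancon[\tminus^{-1}]$: via the fixed section $k\into R$ one has $k\dpl z\dpr\into R\dbl z,z^{-1}\}$ and the lift $(D,\tau_D)\otimes_{k\dpl z\dpr}R\dbl z,z^{-1}\}$ of $\ulCE(\ulD)$; one intersects the $R\dbl z,z^{-1}\}$-span of $\check T$ (embedded via $\check V\subset\ulCF(\ulD)\otimes_K\olK\subset\ulCE(\ulD)\otimes_K\olK[\tminus^{-1}]$) with this lift and with the Hodge-Pink lattice $\Fq_D$, and descends along the Galois action to a free $R\dbl z\dbr$-module $\hat M$ of rank $r$. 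One then endows $\hat M$ with a Frobenius $\tau_{\hat M}$ and a rigidification $\delta_{\hat M}$ read off from $\tau_D$ and from the inclusion $\ulCF(\ulD)\subset\ulCE(\ulD)[\tminus^{-1}]$, with $\tau_{\hat M}$ an isomorphism after inverting $z-\zeta$ because $\Fq_D$ is a full $K\dbl z-\zeta\dbr$-lattice and with the normalization $\delta_{\hat M}\equiv\id\mod\Fm_R$ forced as in Lemma~\ref{LemmaGL} (using $\tminus\equiv 1\mod\Fm_R$). Finally $\BH(\hat M,\tau_{\hat M},\delta_{\hat M})=(D,\tau_D,\Fq_D)$ holds by construction: $\hat\sigma^*\hat M\otimes_{R\dbl z\dbr}k\dpl z\dpr$ recovers $(D,\tau_D)$ and \eqref{EqHPLattice} recovers $\Fq_D$. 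The technical details of this reconstruction are in \cite{HartlPSp}, and I would refer there while highlighting the steps above.

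I expect the main obstacle to be exactly this ``if'' direction, and within it the descent from the analytic $\hat\sigma$-bundle picture to a genuine \emph{free} $R\dbl z\dbr$-module. The classification of $\hat\sigma$-bundles over $\olK$ (Theorem~\ref{ThmHartlPink1}) together with the triviality hypothesis produces the Galois representation $\check V$ without difficulty, but pinning down a free $R\dbl z\dbr$-module — rather than merely a $K^\sep\dbl z\dbr$- or $K\ancon$-module — forces one to combine the Galois-stable lattice $\check T$ with the reductions $R\dbl z,z^{-1}\}\otimes_R k=k\dpl z\dpr$ and $R\dbl z\dbr\otimes_R k=k\dbl z\dbr$ carefully, and the freeness itself needs extra care when $R$ is not discretely valued, since then $R\dbl z\dbr$ is no longer a noetherian regular $2$-dimensional local ring and the reflexivity argument of Lemma~\ref{LemmaSaturation} (via \cite[\S\,6, Lemme~6]{Serre58}) does not apply directly.
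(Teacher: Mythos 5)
Your proposal follows essentially the same route as the paper: part (b) is read off directly from Proposition~\ref{PropDeg}, the ``only if'' half of part (a) uses the $\hat\tau$-equivariant isomorphism $h$ of Remark~\ref{RemTateMod} together with $K^\sep\langle\tfrac{z}{\zeta}\rangle\subset\olK\ancon$ to trivialize $\ulCF(\ulD)$, and the ``if'' half is deferred to \cite[Theorem~2.4.7 and Proposition~2.4.9]{HartlPSp}, exactly as the paper does. Your sketch of the reconstruction in the ``if'' direction is a reasonable outline of what that reference actually carries out, and your closing remarks correctly identify where the genuine difficulty lies.
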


\begin{proof}
\ref{ThmCritWA_B} directly follows from Proposition~\ref{PropDeg}.

\smallskip\noindent
\ref{ThmCritWA_A} was proved in \cite[Theorem~2.4.7]{HartlPSp}. We explain some parts of the proof.

The implication ``$\Longrightarrow$'' is easy to see. Choose a rigidified local shtuka $\ulHM:=(\hat M,\tau_{\hat M},\delta_{\hat M})$ over $R$ with $\ulD=\BH(\ulHM)$. We saw in Remark~\ref{RemTateMod} that there is an isomorphism $h\colon\check T_\epsilon\ulHM\otimes_{A_\epsilon}K^\sep\langle\tfrac{z}{\zeta}\rangle\isoto\ulHM\otimes_{R\dbl z\dbr}K^\sep\langle\tfrac{z}{\zeta}\rangle$, which is $\Gal(K^\sep/K)$- and $\hat\tau$-equivariant. Since $K^\sep\langle\tfrac{z}{\zeta}\rangle\subset\olK\ancon$ we obtain an isomorphism of $\hat\sigma$-bundles $(\ulCF_{_{\SC 0,1}})^{\oplus\rk\ulM}\isoto\check T_\epsilon\ulHM\otimes_{A_\epsilon}\ulCF_{_{\SC 0,1}}\isoto\ulCF(\ulHM)=\ulCF(\ulD)$ over $\olK$.

The converse implication ``$\Longleftarrow$'' is more complicated, as one has to construct the local shtuka out of $\ulCF(\ulD)\cong(\ulCF_{_{\SC 0,1}})^{\oplus\rk\ulD}$; see \cite[Theorem~2.4.7 and Proposition~2.4.9]{HartlPSp}.
\end{proof}

\begin{corollary}\label{CorA=>WA}
A $z$-isocrystal with Hodge-Pink structure over $R$ which is admissible is also weakly admissible.
\end{corollary}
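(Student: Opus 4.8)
The plan is to deduce the statement directly from the two criteria in Theorem~\ref{ThmCritWA} together with the structure theory of $\hat\sigma$-bundles in Theorem~\ref{ThmHartlPink1}.

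First I would use Theorem~\ref{ThmCritWA}\ref{ThmCritWA_A}: since $\ulD$ is admissible there is, over $\olK$, an isomorphism of $\hat\sigma$-bundles $\ulCF(\ulD)\cong\ulCF_{_{\SC 0,1}}^{\oplus\rk\ulD}$. As $\deg\ulCF_{_{\SC 0,1}}=0$ and the degree is additive under direct sums by Theorem~\ref{ThmHartlPink1}\ref{ThmHartlPink1a}, this forces $\deg\ulCF(\ulD)=0$. In view of Theorem~\ref{ThmCritWA}\ref{ThmCritWA_B} it therefore remains to show that $\deg\ulCF(\ulD')\le0$ for every strict subobject $\ulD'\subset\ulD$.

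So let $\ulD'=\bigl(D',\,\tau_{D'}:=\tau_D|_{\hat\sigma^*\!D'},\,\Fq_{D'}:=\Fq_D\cap D'\otimes_{k\dpl z\dpr}K\dpl z-\zeta\dpr\bigr)$ be a strict subobject. I would argue that the inclusion $D'\subset D$ induces an inclusion of $\hat\sigma$-bundles $\ulCF(\ulD')\hookrightarrow\ulCF(\ulD)$ exhibiting $\ulCF(\ulD')$ as a $\hat\sigma$-sub-bundle. This is read off from the explicit description \eqref{EqCFFromD}: the $\hat\sigma$-bundle $\CE(\ulD')=(D',\tau_{D'})\otimes_{k\dpl z\dpr}K\ancon$ is a $\hat\sigma$-stable $K\ancon$-direct summand of $\CE(\ulD)$ since $D'$ is a $k\dpl z\dpr$-subspace with $\tau_D(\hat\sigma^*D')=D'$, and $\Fq_{D'}\subset\Fq_D$; hence
\[
\CF(\ulD')\;=\;\CE(\ulD')[\tminus^{-1}]\,\cap\,\bigcap_{j\in\BN_0}\tau_\CE^j\bigl(\Fq_{D'}\otimes_{K\dbl z-\zeta\dbr,\,\hat\sigma^j}K\dbl z-\zeta^{\hat q^j}\dbr\bigr)
\]
is contained term by term in the analogous intersection defining $\CF(\ulD)$, and the inclusion is $\hat\tau$-equivariant by construction. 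Passing to $\olK$ and combining with the isomorphism $\ulCF(\ulD)\cong\ulCF_{_{\SC 0,1}}^{\oplus\rk\ulD}$ from the previous step, I may apply Theorem~\ref{ThmHartlPink1}\ref{ThmHartlPink1c} with $(d,r)=(0,1)$ and $n=\rk\ulD$ to the $\hat\sigma$-sub-bundle $\ulCF(\ulD')$. This yields $\deg\ulCF(\ulD')\le\tfrac{0}{1}\cdot\rk\ulCF(\ulD')=0$. Together with $\deg\ulCF(\ulD)=0$ this is exactly the condition in Theorem~\ref{ThmCritWA}\ref{ThmCritWA_B}, so $\ulD$ is weakly admissible.

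The step needing the most care is the claim in the previous paragraph that $\ulCF(\ulD')$ is genuinely a $\hat\sigma$-sub-bundle of $\ulCF(\ulD)$ over $\olK$: one must know that $\CF(\ulD')$ is a free $\olK\ancon$-module of the expected rank $\rk\ulD'$ --- this is the same fact, applied to $\ulD'$ instead of $\ulD$, that underlies the very definition of $\ulCF(\ulD)$ and Proposition~\ref{PropDeg} --- and that the formation of the Hodge-Pink lattice $\Fq_{D'}$ of the strict subobject commutes with the base changes $K\dpl z-\zeta\dpr\to\olK\dpl z-\zeta\dpr$ and $K\ancon\to\olK\ancon$, which holds by flatness. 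Everything else is bookkeeping with the definitions and the quoted results.
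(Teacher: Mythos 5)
Your proof is correct and is essentially the paper's own argument, which likewise deduces the corollary from the two criteria of Theorem~\ref{ThmCritWA} by applying Theorem~\ref{ThmHartlPink1}\ref{ThmHartlPink1c} to the $\hat\sigma$-sub-bundle $\ulCF(\ulD')\subset\ulCF(\ulD)\cong\ulCF_{_{\SC 0,1}}^{\oplus\rk\ulD}$. You have merely spelled out the details (additivity of the degree, the inclusion coming from \eqref{EqCFFromD}) that the paper leaves implicit.
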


\begin{proof}
This follows from the characterization of (weak) admissibility in Theorem~\ref{ThmCritWA} together with Theorem~\ref{ThmHartlPink1}\ref{ThmHartlPink1c} applied to the $\hat\sigma$-sub-bundle $\ulCF(\ulD')\subset\ulCF(\ulD)$.
\end{proof}

Whether the converse of Corollary~\ref{CorA=>WA} holds, depends on the field $K$. Before we state cases in which it holds, we discuss the following

\begin{example}\label{ExCarlitz6B}
For the Carlitz motive and its local shtuka $\ulHM=(R\dbl z\dbr,z-\zeta)$ from Example~\ref{ExCarlitz6A} we have $\ulCE(\ulHM)=\ulCF_{_{\SC -1,1}}$ and $\ulCF(\ulHM)=\bigl(K\ancon,z-\zeta\bigr)$ which is isomorphic over $\olK\ancon$ to $\ulCF_{_{\SC 0,1}}$ via an isomorphism given by multiplication with $\tplus$.
\end{example}

\begin{example}\label{ExDrinfeld}
If the $z$-isocrystal $(D,\tau_D)$ satisfies $\ulCE(D,\tau_D)=\ulCF_{_{\SC -1,r}}$, for example if $(D,\tau_D)$ is the crystalline realization of a Drinfeld module over $k$, then every weakly admissible Hodge-Pink lattice $\Fq_D$ on $(D,\tau_D)$ is already admissible. This can be shown in the same way as \cite[Theorem~9.3(a)]{HartlRZ}.
\end{example}

\begin{example}\label{ExWANotA}
We show that ``weakly admissible does not imply admissible'' if the field $K$ is algebraically closed (actually perfect suffices) and complete. Let $D=k\dpl z\dpr^{\oplus2},\,\tau_D= \left(\begin{smallmatrix} 0 & 1 \\ z^{-3} & 0 \end{smallmatrix}\right)$. We search a Hodge-Pink lattice $\Fq_D\subset\Fp_D$ with $\dim_K\Fp_D/\Fq_D=3$ for which $\ulD=(D,\tau_D,\Fq_D)$ is not admissible. This means the Hodge-Pink weights are $(2,1)$ or $(3,0)$. Note that any such $\ulD$ is weakly admissible, because $t_N(\ulD)=-3=t_H(\ulD)$ and there are no proper subobjects $\ulD'\subset\ulD$, as the $z$-isocrystal $(D,\tau_D)$ is simple.

We find $\Fq_D$ by imposing the condition that there is a $\hat\sigma$-sub-bundle $\ulCF_{_{\SC 1,1}}\subset\ulCF(\ulD)\subset\ulCE(\ulD)=\ulCF_{_{\SC 3,2}}$. Then $\ulCF(\ulD)\not\cong(\ulCF_{_{\SC 0,1}})^{\oplus2}$ because there is no non-zero homomorphism $\ulCF_{_{\SC 1,1}}\to\ulCF_{_{\SC 0,1}}$ by Theorem~\ref{ThmHartlPink1}\ref{ThmHartlPink1b}. Therefore Theorem~\ref{ThmCritWA} will imply that $\ulD$ is not admissible. One easily computes
\begin{equation}\label{EqHomCF}
\Hom\bigl(\ulCF_{_{\SC 1,1}},\ulCE(\ulD)\bigr) \es = \es \Bigl\{\,f=\sum_{i\in\BZ}z^i\cdot\left(\begin{smallmatrix}\TS u^{\hat q^{-2i}} \\[1mm] \TS u^{\hat q^{-2i-3}} \end{smallmatrix}\right)\colon u\in K,\,|u|<1\,\Bigr\}\,,
\end{equation}
because $f=\left(\begin{smallmatrix} f_1 \\ f_2 \end{smallmatrix}\right)$ must satisfy the equation $\tau_D\cdot\hat\sigma(f)=f\cdot z^{-1}$. This implies $\hat\sigma(f_2)=z^{-1}f_1$ and $z^{-3}\hat\sigma(f_1)=z^{-1}f_2$, whence $f_1=z^{-1}\hat\sigma^2(f_1)$. Writing $f_1=\sum_{i\in\BZ}b_iz^i$, we must have $b_i=(b_{i+1})^{\hat q^2}$. So for $b_0=u\in K$ we obtain $f_1=\sum_{i\in\BZ}u^{\hat q^{-2i}}z^i$. Now the convergence condition for $f\in K\ancon$ implies $|u|<1$.

Thus, if we take any $u\in K$ with $0<|u|<1$ and the corresponding $f\ne0$ from \eqref{EqHomCF} and $\Fq_D:=(z-\zeta)^3\Fp_D+K\dbl z-\zeta\dbr\cdot f$ the homomorphism $f\colon\ulCF_{_{\SC 1,1}}\to\ulCE(\ulD)$ factors through $\ulCF(\ulD)\subset\ulCE(\ulD)$, as can be seen from \eqref{EqCFFromD}. This implies that $\ulD$ is not admissible.
\end{example}

On the positive side there is the following

\begin{theorem}\label{ThmWA=>A}
If the field $K$ satisfies the following condition
\begin{enumerate}
\item \label{ThmWA=>A_a}
\cite[Th\'eor\`eme~7.3]{GL11}: $K$ is discretely valued, or
\item \label{ThmWA=>A_b}
\cite[Theorem~2.5.3]{HartlPSp}: the value group of $K$ is finitely generated, or
\item \label{ThmWA=>A_c}
\cite[Theorem~2.5.3]{HartlPSp}: the value group of $K$ does not contain an element $x\ne1$ such that $x^{\hat q^{-n}}\in|K^{^{\SSC\times}}|$ for all $n\in\BN_0$, or
\item \label{ThmWA=>A_d}
\cite[Theorem~2.5.3]{HartlPSp}: the completion $\wh{K\!\cdot\!k^\alg}$ of the compositum $K\!\cdot\!k^\alg$ inside $K^\alg$ does not contain an element $a$ with $0<|a|<1$ such that $a^{\hat q^{-n}}\in\wh{K\!\cdot\!k^\alg}$ for all $n\in\BN_0$,
\end{enumerate}
then every weakly admissible $z$-isocrystal with Hodge-Pink structure over $R$ is admissible.
\end{theorem}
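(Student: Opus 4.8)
The plan is to deduce Theorem~\ref{ThmWA=>A} from the criterion in Theorem~\ref{ThmCritWA}\ref{ThmCritWA_A} together with the structure theory of $\hat\sigma$-bundles in Theorem~\ref{ThmHartlPink1}. So suppose $\ulD$ is a weakly admissible $z$-isocrystal with Hodge-Pink structure over $R$ and let $\bigl(\ulCE(\ulD),\ulCF(\ulD)\bigr)$ be the associated pair of $\hat\sigma$-bundles. By Theorem~\ref{ThmCritWA_B} we have $\deg\ulCF(\ulD)=0$ and $\deg\ulCF(\ulD')\le0$ for every strict subobject. By Theorem~\ref{ThmHartlPink1}\ref{ThmHartlPink1a}, over $\olK$ we can write $\ulCF(\ulD)\cong\bigoplus_i\ulCF_{d_i,r_i}$; we must show that all the slopes $d_i/r_i$ are $0$, equivalently (since the degrees sum to $0$) that there is no summand of positive slope. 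If there were one, say of slope $d_i/r_i>0$, then by Theorem~\ref{ThmHartlPink1}\ref{ThmHartlPink1b} the subbundle generated by that summand would be a $\hat\sigma$-sub-bundle of $\ulCF(\ulD)$ of positive degree; the point is to show that such a sub-bundle descends to (or is forced by) a \emph{strict subobject} $\ulD'\subset\ulD$, contradicting weak admissibility.

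The heart of the matter is therefore a descent/Harder--Narasimhan argument: one considers the maximal destabilizing sub-$\hat\sigma$-bundle $\ulCF'\subset\ulCF(\ulD)$ (the first step of the HN filtration over $\olK$), which has slope $>0$ if $\ulCF(\ulD)$ is not semistable of slope $0$. The subtle part is that $\ulCF'$ a priori lives over $\olK$, and one needs a $k\dpl z\dpr$-rational subspace $D'\subset D$ with $\tau_D(\hat\sigma^*D')=D'$ producing it via \eqref{EqCEFromD}--\eqref{EqCFFromD}. This is exactly where the hypotheses \ref{ThmWA=>A_a}--\ref{ThmWA=>A_d} on $K$ enter: they guarantee a suitable rationality/finiteness so that the destabilizing object is defined over $K$ and descends to a strict subobject of $\ulD$. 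Once $\ulD'$ is produced with $\deg\ulCF(\ulD')=\deg\ulCF'>0$, Proposition~\ref{PropDeg} gives $t_H(\ulD')-t_N(\ulD')=\deg\ulCF(\ulD')>0$ (after also controlling $\deg\ulCE(\ulD')$ via $\deg\ulCE(\ulD')=\deg\ulCE(\ulD')$, using that $\ulCE$ is functorial in $\ulD$), contradicting weak admissibility. Hence $\ulCF(\ulD)$ is semistable of slope $0$, so $\ulCF(\ulD)\cong\ulCF_{_{\SC 0,1}}^{\oplus\rk\ulD}$ over $\olK$, and Theorem~\ref{ThmCritWA}\ref{ThmCritWA_A} shows $\ulD$ is admissible.

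Concretely, for case \ref{ThmWA=>A_a} (respectively \ref{ThmWA=>A_b}, \ref{ThmWA=>A_c}, \ref{ThmWA=>A_d}) I would invoke the already-established results \cite[Th\'eor\`eme~7.3]{GL11} and \cite[Theorem~2.5.3]{HartlPSp}, whose proofs supply precisely the needed descent of the weak-admissibility datum. The unified statement here is just the assembly of those four cases under the common criterion of Theorem~\ref{ThmCritWA}: in each case the cited theorem asserts that weak admissibility of $\ulD$ forces $\ulCF(\ulD)$ to become trivial over $\olK$, which by Theorem~\ref{ThmCritWA}\ref{ThmCritWA_A} is equivalent to admissibility.

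I expect the main obstacle to be the descent step --- showing that a $\hat\sigma$-sub-bundle of $\ulCF(\ulD)$ over $\olK$ of positive degree forces an honest strict subobject $\ulD'\subset\ulD$ over $k\dpl z\dpr$. Over a general (non-discretely-valued, non-algebraically-closed) $K$ this can genuinely fail, which is why Example~\ref{ExWANotA} exhibits a weakly admissible but non-admissible $\ulD$; the conditions \ref{ThmWA=>A_a}--\ref{ThmWA=>A_d} are exactly the hypotheses under which the destabilizing sub-$\hat\sigma$-bundle can be shown to be rational over $K$ and hence descend. Since the excerpt permits me to cite \cite[Th\'eor\`eme~7.3]{GL11} and \cite[Theorem~2.5.3]{HartlPSp}, I would present the proof as the reduction to Theorem~\ref{ThmCritWA} plus a pointer to these references for the delicate descent, rather than reproving it; a fully self-contained argument would require redoing the $\sigma$-bundle slope estimates of \cite{HartlPink1} in the relative setting, which is beyond what a survey proof should contain.
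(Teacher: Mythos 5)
Your overall route coincides with the paper's: the theorem is in the end a proof by citation, after the reduction of admissibility to the triviality of $\ulCF(\ulD)$ over $\olK$ via Theorem~\ref{ThmCritWA}\ref{ThmCritWA_A}. The one piece of actual content in the paper's proof that you omit is the chain of implications \ref{ThmWA=>A_a}$\Rightarrow$\ref{ThmWA=>A_b}$\Rightarrow$\ref{ThmWA=>A_c}$\Rightarrow$\ref{ThmWA=>A_d} (using that the value groups of $K$ and of $\wh{K\!\cdot\!k^\alg}$ coincide), which reduces all four cases to the single reference \cite[Theorem~2.5.3]{HartlPSp} under hypothesis \ref{ThmWA=>A_d}; you instead cite the references case by case, which is also acceptable.

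However, your heuristic for what the cited theorems actually prove is not right, and the paper's own Example~\ref{ExWANotA} contradicts it. You assert that a positive-slope sub-$\hat\sigma$-bundle of $\ulCF(\ulD)$ over $\olK$ must ``descend to (or be forced by) a strict subobject $\ulD'\subset\ulD$, contradicting weak admissibility,'' and that the hypotheses \ref{ThmWA=>A_a}--\ref{ThmWA=>A_d} guarantee the rationality of the destabilizing sub-bundle over $K$. In Example~\ref{ExWANotA} the $z$-isocrystal $(D,\tau_D)$ is simple, so there are \emph{no} proper strict subobjects and weak admissibility is automatic; nevertheless a nonzero map $\ulCF_{1,1}\to\ulCF(\ulD)$ exists --- already rational over $K$, given by an element $u\in K$ with $0<|u|<1$ as in \eqref{EqHomCF} --- and admissibility fails. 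So the destabilizing sub-bundle neither descends to a strict subobject of $\ulD$ nor yields a contradiction with weak admissibility, and rationality over $K$ is not the issue. The mechanism in \cite[Theorem~2.5.3]{HartlPSp} is rather that hypothesis \ref{ThmWA=>A_d} forbids the existence of elements all of whose $\hat q^{-n}$-th roots lie in $\wh{K\!\cdot\!k^\alg}$, and the explicit description of $\hat\tau$-equivariant maps as in \eqref{EqHomCF} shows that any positive-slope summand would require such an element; in the paper's words, ``one cannot have $f\in\Fq_D$.'' Since the theorem is established by citation in any case, this does not invalidate your proof, but the narrative surrounding the citation should be corrected. (Minor point: the parenthetical ``$\deg\ulCE(\ulD')=\deg\ulCE(\ulD')$'' is a tautology; all you need from Proposition~\ref{PropDeg} is $t_H(\ulD')-t_N(\ulD')=\deg\ulCF(\ulD')$.)
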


\begin{proof}
Clearly the conditions satisfy \ref{ThmWA=>A_a}$\Longrightarrow$\ref{ThmWA=>A_b}$\Longrightarrow$\ref{ThmWA=>A_c}$\Longrightarrow$\ref{ThmWA=>A_d}, because the value groups of $K$ and $\wh{K\!\cdot\!k^\alg}$ coincide. Under condition \ref{ThmWA=>A_d} the theorem was proved in \cite[Theorem~2.5.3]{HartlPSp}. In terms of Example~\ref{ExWANotA} the idea is that in this case one cannot have $f\in\Fq_D$.
\end{proof}

\begin{remark}\label{RemAnalog}
This theorem is the analog of the Theorem of Colmez and Fontaine~\cite[Th\'eor\`eme~A]{CF}, which states that in the theory of $p$-adic Galois representations every weakly admissible filtered isocrystal $\ulD=(D,\tau_D,F^\bullet D_L)$ over a discretely valued extension $L$ of $\BQ_p$ with perfect residue field, comes from a representation of $\Gal(L^\alg/L)$ on a finite dimensional $\BQ_p$-vector space; compare Remark~\ref{RemCompIsom}. In the special case where the Hodge filtration satisfies $F^0D_L=D_L\supset F^1D_L\supset F^2D_L=(0)$ and all Newton slopes of the isocrystal $(D,\tau_D)$ lie in the interval $[0,1]$ it was proved by Kisin~\cite[Theorem 0.3]{Kisin06} and Breuil~\cite[Theorem 1.4]{Breuil00} that $\ulD$ is (weakly) admissible if and only if $\ulD$ comes from a $p$-divisible group. Also in this situation there are weakly admissible filtered isocrystals $\ulD$ over $\BC_p$ which do not come from a $p$-divisible group; see \cite[Example~6.7]{HartlRZ}.
\end{remark}

\begin{remark}\label{RemEqCharCrystRep}
Note that the Theorem of Colmez and Fontaine~\cite[Th\'eor\`eme~A]{CF} from the previous remark actually says that a continuous representation of $\Gal(L^\sep/L)$ in a finite dimensional $\BQ_p$-vector space is crystalline if and only if it isomorphic to $F^0(\ulD\otimes_{L_0}\wt\bB_\rig)^{\Frob\,=\id}$ for a weakly admissible filtered isocrystal $\ulD=(D,\tau_D,F^\bullet D_L)$ over $L$. In the function field case, when $K$ is discretely valued, we could therefore define the \emph{category of equal characteristic crystalline representations of $\Gal(K^\sep/K)$} as the essential image of the functor 
\begin{equation}\label{EqVcris}
\ulD=(D,\tau_D,\Fq_D)\;\longmapsto\; \bigl(\ulD\otimes_{k\dpl z\dpr}\CO_\olK\dbl z,z^{-1}\}[\tplusminus^{-1}]\bigr)^{\hat\tau=\id}\,\cap\,\Fq_D\otimes_{K\dbl z-\zeta\dbr}\olK\dbl z-\zeta\dbr\
\end{equation}
from weakly admissible $z$-isocrystals with Hodge-Pink structure $\ulD$ to continuous representations of $\Gal(K^\sep/K)$ in finite dimensional $Q_\epsilon$-vector spaces.
By Theorems~\ref{ThmWA=>A}, \ref{ThmCompEpsCris} and \ref{ThmAndersonKim} and Proposition~\ref{PropBHfullyFaithful} this functor is fully faithful and this definition coincides with our Definition~\ref{DefCrystRep} above.
\end{remark}

\begin{remark}\label{RemFFCurve}
The functor \eqref{EqVcris} can also be described via the associated $\hat\sigma$-bundles. Namely if $\ulD$ is admissible, that is arises from a (rigidified) local shtuka $\ulHM$ over $R$ with $\ulD=\BH(\ulHM)$, then $\ulCF(\ulD)=\ulCF(\ulHM)=\ulHM\otimes_{R\dbl z\dbr}K\ancon$, and hence 
\[
\bigl(\ulCF(\ulD)\otimes_{K\ancon}\olK\ancon\bigr)^{\hat\tau} \; = \; \bigl(\ulHM\otimes_{R\dbl z\dbr}\olK\ancon\bigr)^{\hat\tau} \; = \; \bigl(\check T_\epsilon\ulHM\otimes_{A_\epsilon}\ulCF_{_{\SC 0,1}}\bigr)^{\hat\tau} \; = \; \check V_\epsilon\ulHM\,;
\]
see the proof of Theorem~\ref{ThmCritWA}\ref{ThmCritWA_A}. In the analogous situation of $p$-adic Galois representations, mentioned in the previous remarks, there is a similar description due to Fargues and Fontaine~\cite{FarguesFontaine}. Namely, consider the punctured open unit disc $\{0<|z|<1\}$ over $\olK$. Then every $\hat\sigma$-bundle on $\{0<|z|\le|\zeta|\}$ extends canonically to a $\hat\sigma$-bundle over $\{0<|z|<1\}$; see \cite[Proposition~1.4.1(b)]{HartlPSp}. The quotient $\{0<|z|<1\}/\hat\sigma^\BZ$ of $\{0<|z|<1\}$ modulo the action of $\hat\sigma$ exists in the category of Huber's~\cite{Huber} adic spaces and the category of vector bundles on $\{0<|z|<1\}/\hat\sigma^\BZ$ is equivalent to the category of $\hat\sigma$-bundles over $\{0<|z|<1\}$. Under this equivalence the $Q_\epsilon$-vector space of global sections of the vector bundle on $\{0<|z|<1\}/\hat\sigma^\BZ$ equals the $\hat\tau$-invariants of the associated $\hat\sigma$-bundle. 

In the theory of $p$-adic Galois representations the analog of the quotient $\{0<|z|<1\}/\hat\sigma^\BZ$ over $\BQ_p$ is the adic curve $X_{F,E}^\ad$ constructed by Fargues~\cite{Fargues14}. Every weakly admissible filtered isocrystal $\ulD=(D,\tau_D,F^\bullet D_L)$ over $L$ gives rise to two vector bundles $\CE^\infty$ and $\CE_\infty$ on $X_{F,E}^\ad$ which agree outside the point corresponding to $\bB_\dR^+$. These are analogous to the vector bundles on $\{0<|z|<1\}/\hat\sigma^\BZ$ corresponding to our $\hat\sigma$-bundles $\ulCE(\ulD)$ and $\ulCF(\ulD)$, which agree outside the image of the point $z=\zeta$ on $\{0<|z|<1\}/\hat\sigma^\BZ$ corresponding to our analog $\olK\dbl z-\zeta\dbr$ of $\bB_\dR^+$. By \cite[Th\'eor\`emes~4.43 and 3.5]{Fargues14} the $p$-adic Galois representation associated with $\ulD$ equals the global sections of $\CE_\infty$ over $X_{F,E}^\ad$.
\end{remark}

%
%

\section{Torsion local shtukas and torsion Galois representations}\label{SectTorLocSh}
\setcounter{equation}{0}

As a preparation for the equi-characteristic deformation theory, which will be discussed in Section~\ref{SectDefoTh}, we need a ``torsion version'' of equi-characteristic Fontaine's theory -- or rather, a suitable function field analog of finite flat group schemes of $p$-power order. Let $R$ be an arbitrary valuation ring as in Notation~\ref{Notation}. From Lemma~\ref{LemmaAuslanderBuchsbaum} onwards we will assume that $R$ is discretely valued. The analogy between $p$-divisible groups and local shtukas suggests the following

\begin{definition}\label{DefTorLocSht}
A  \emph{torsion local shtuka} (over $R$) is a pair $\ulHM=(\hat M,\tau_{\hat M})$ consisting of a finitely presented $R\dbl z\dbr$-module $\hat M$ which is $z$-power torsion and free (necessarily of finite rank) as an $R$-module, and an isomorphism $\tau_{\hat M}\colon\hat\sigma^\ast\hat M[\frac{1}{z-\zeta}] \isoto\hat M[\frac{1}{z-\zeta}]$. If $\tau_{\hat M}(\hat\sigma^*\hat M)\subset\hat M$ then $\ulHM$ is called \emph{effective}, and if $(z-\zeta)^d\hat M\subset\tau_{\hat M}(\hat\sigma^*\hat M)\subset\hat M$ then we say that $\ulHM$ is  \emph{of height $\leqslant d$}. If $\tau_{\hat M}(\hat\sigma^*\hat M)=\hat M$ then $\ulHM$ is called \emph{\'etale}.

A \emph{morphism} of torsion local shtukas $f\colon(\hat M,\tau_{\hat M})\to(\hat M',\tau_{\hat M'})$ over $R$ is a morphism of $R\dbl z\dbr$-modules $f\colon\hat M\to\hat M'$ which satisfies $\tau_{\hat M'}\circ \hat\sigma^\ast f = f\circ \tau_{\hat M}$.
\end{definition}

\begin{remark}\label{RemDVR}
(a) Since $\zeta$ is $\hat M$-regular and $\hat M$ is $z$-power torsion, $\hat M$ injects into $\hat M[\frac{1}{z-\zeta}]$. 

\smallskip\noindent
(b) Since $\hat M$ is finite free over $R$ the pair $(\hat M, \tau_{\hat M})$ with its $R\dbl z\dbr$-module structure can be defined in terms of finitely many parameters over $R$. Therefore it is defined over $R'\dbl z\dbr$ for some finitely generated and hence discretely valued $A_\epsilon$-subalgebra $R'\subset R$. So we may assume that $R$ is discretely valued. 

\smallskip\noindent
(c) The definition of torsion local shtukas over  $S\in\Nilp_{A_\epsilon}$ is not completely straightforward, because  we have $\hat M[\frac{1}{z-\zeta}] = 0$ for any finitely generated $\CO_S\dbl z \dbr$-module $\hat M$ killed by some power of $z$. Although it is possible to define torsion local shtukas over $S\in\Nilp_{A_\epsilon}$ (cf.~Remark~\ref{RemTorAndersonMod}), we will only consider torsion local shtukas over $R$. 
\end{remark}

The following example (together with Lemma~ \ref{LemmaRaynaudBBM}) explains why we can regard  torsion local shtukas over $R$ as the function field analog of finite flat group schemes of $p$-power order.
\begin{example}\label{ExCokerIsog}
Let $f\colon\ulHM_1 \rightarrow \ulHM_0$ be an isogeny of local shtukas over $R$; i.e., a morphism which is also a quasi-isogeny. Let $\hat M:=\coker f$. Since $f$ is a morphism of local shtukas, $\tau_{\hat M_0}$ induces $\tau_{\hat M}:\hat\sigma^*\hat M[\frac{1}{z-\zeta}]\isoto\hat M[\frac{1}{z-\zeta}]$. We claim that $\ulHM:=(\hat M,\tau_{\hat M})$ is a torsion local shtuka over $R$, which is effective (respectively, of height $\leqslant d$) if  $\ulHM_0$ is effective (respectively, if $(z-\zeta)^d\hat M_0 \subset \tau_{\hat M_0}(\hat\sigma^*\hat M_0)\subset\hat M_0$).

Since $R\dbl z\dbr\subset R\dbl z\dbr[\tfrac{1}{z}]$ the map $f$ is injective and there is an $n$ such that $z^n\hat M_0\subset f(\hat M_1)$. In particular $\hat M$ is killed by $z^n$. Tensoring with the residue field $k$ over $R$ yields an exact sequence
\[
0\;\longto\;\Tor_1^{R}(k,\hat M)\;\longto\;\hat M_1\otimes_{R\dbl z\dbr}k\dbl z\dbr\;\xrightarrow{\; f\otimes\id_k\;}\;\hat M_0\otimes_{R\dbl z\dbr}k\dbl z\dbr\;\longto\;\hat M\otimes_R k\;\longto\;0\,.
\]
As $\hat M_1$ and $\hat M_0$ are free we have $\hat M_i\otimes_{R\dbl z\dbr}k\dbl z\dbr\cong k\dbl z\dbr^{\oplus\rk\ulHM_i}$ for $i=0,1$ with $\rk\ulHM_0=\rk\ulHM_1$. Since $\hat M\otimes_R k$ is killed by $z^n$ the map $f\otimes\id_k$ is injective by the elementary divisor theorem. So $\Tor_1^{R}(k,\hat M)=(0)$ and since $\hat M\,=\,\coker\bigl(f\colon\hat M_1/z^n\hat M_1 \to \hat M_0/z^n\hat M_0\bigr)$ is finitely presented over $R$ it is free of finite rank by Nakayama's Lemma; e.g.\ \cite[Exercise~6.2]{Eisenbud}.
 \end{example}

From now on we assume that $R$ is discretely valued. We will need the following

\begin{lemma}\label{LemmaAuslanderBuchsbaum}
Assume that $R$ is discretely valued, and let $\hat M$ be a finitely generated module over $R\dbl z \dbr/(z^n)$ for some $n$. Then the following are equivalent
\begin{enumerate}
\item
$\hat M$ is flat over $R$;
\item
The kernel of any surjective map $\hat M_0\onto \hat M$, where $\hat M_0$ is  a free $R\dbl z\dbr$-module of finite rank, is free over $R\dbl z \dbr$.
\end{enumerate}
\end{lemma}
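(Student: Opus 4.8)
The plan is to derive both implications from the Auslander--Buchsbaum formula over the two-dimensional regular local ring $R\dbl z\dbr$, after reformulating condition (i). Since $R$ is discretely valued it is a discrete valuation ring; fix a uniformizer $\pi$. Then $R\dbl z\dbr$ is a noetherian regular local ring of dimension $2$ with maximal ideal $\Fm=(\pi,z)$, hence Cohen--Macaulay of depth $2$, and every finitely generated $R\dbl z\dbr$-module has finite projective dimension. Moreover $R\dbl z\dbr/(z^n)\cong R[z]/(z^n)$ is finite free over $R$, so any $\hat M$ as in the lemma is finitely generated over $R$; being finitely generated over the valuation ring $R$, it is flat over $R$ if and only if it is torsion free over $R$, i.e.\ if and only if $\pi$ is a nonzerodivisor on $\hat M$. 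This reformulation of (i) is what I will use.

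For (ii)$\Rightarrow$(i): picking one surjection $\hat M_0\onto\hat M$ with $\hat M_0$ free of finite rank and with free kernel $N$ exhibits a resolution $0\to N\to\hat M_0\to\hat M\to 0$ by free modules, so $\mathrm{pd}_{R\dbl z\dbr}(\hat M)\le1$; I may assume $\hat M\ne 0$. Auslander--Buchsbaum then gives $\mathrm{depth}_\Fm(\hat M)=2-\mathrm{pd}_{R\dbl z\dbr}(\hat M)\ge1$, so $\Fm\notin\mathrm{Ass}(\hat M)$. Now every associated prime of $\hat M$ contains $\mathrm{Ann}(\hat M)\supseteq(z^n)$, hence contains $z$, and the only primes of $R\dbl z\dbr$ over $z$ are $(z)$ and $\Fm$ (since $R\dbl z\dbr/(z)=R$ is a discrete valuation ring); therefore $\mathrm{Ass}(\hat M)=\{(z)\}$, the zerodivisors on $\hat M$ lie in $(z)$, and in particular $\pi\notin(z)$ is a nonzerodivisor on $\hat M$. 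By the reformulation above, $\hat M$ is flat over $R$.

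For (i)$\Rightarrow$(ii): assume $\hat M$ flat over $R$ and $\hat M\ne 0$; then $\pi$ is a nonzerodivisor on $\hat M$, so $\mathrm{depth}_\Fm(\hat M)\ge1$, while $\mathrm{depth}_\Fm(\hat M)\le\dim\hat M\le\dim\bigl(R\dbl z\dbr/(z^n)\bigr)=\dim R=1$. Hence $\mathrm{depth}_\Fm(\hat M)=1$ and Auslander--Buchsbaum yields $\mathrm{pd}_{R\dbl z\dbr}(\hat M)=1$. Consequently, for any surjection $\hat M_0\onto\hat M$ with $\hat M_0$ free of finite rank, the kernel $N$ --- finitely generated because $R\dbl z\dbr$ is noetherian --- is a first syzygy of a module of projective dimension $\le1$, hence projective, hence free since $R\dbl z\dbr$ is local. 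The case $\hat M=0$ is trivial in both directions.

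The routine inputs (the Auslander--Buchsbaum formula, the inequality $\mathrm{depth}\le\dim$, the description of the primes over $z$, and ``finitely generated projective over a local ring is free'') I would simply cite, e.g.\ from \cite{Eisenbud}. The only slightly delicate point is the step in (ii)$\Rightarrow$(i) deducing from $\mathrm{depth}_\Fm(\hat M)\ge1$ that $\pi$ is a nonzerodivisor on $\hat M$: this is exactly where the $z$-power-torsion hypothesis on $\hat M$ enters, and I expect it to be the heart of the argument, the remainder being formal.
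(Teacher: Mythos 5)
Your proof is correct and rests on the same key idea as the paper's: the Auslander--Buchsbaum formula over the two-dimensional regular local ring $R\dbl z\dbr$, combined with the observation that the $z$-power-torsion hypothesis ties flatness over $R$ to having depth exactly $1$. The only differences are cosmetic --- for (ii)$\Rightarrow$(i) you argue directly via associated primes ($\Fm\notin\Ass(\hat M)$ forces $\Ass(\hat M)=\{(z)\}$, so $\pi$ is a nonzerodivisor), whereas the paper argues the contrapositive by exhibiting the nonzero finite-length $\Fm_R$-torsion submodule forcing depth $0$; and for (i)$\Rightarrow$(ii) you invoke the syzygy/projective-dimension shift where the paper decomposes an arbitrary surjection against the minimal free resolution.
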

\begin{proof}
Since $R\dbl z\dbr$ is regular, whence Cohen-Macaulay, the theorem of Auslander and Buchsbaum \cite[Theorem~19.9]{Eisenbud} tells us that
\[
\mathrm{proj.dim}_{R\dbl z\dbr}\hat M + \mathrm{depth}_{R\dbl z\dbr}\hat M \;=\; \mathrm{depth}(R\dbl z\dbr) \;=\; \mathrm{dim}(R\dbl z\dbr) \;=\; 2
\]
for any finitely generated $R\dbl z\dbr$-module $\hat M$. If $\hat M$ is flat over $R$, then $\zeta$ is $\hat M$-regular so the depth of $\hat M$ is at least $1$. Therefore, the projective dimension of $\hat M$ is at most $1$; in other words, $\hat M$ admits a minimal projective resolution consisting of two terms, say $\hat F_1 \to \hat F_0$. If $\hat M_0\onto \hat M$ is an epimorphism for a free $R\dbl z\dbr$-module $\hat M_0$ then $\hat M_0\cong\hat F_0\oplus\hat N_0$ for a free $R\dbl z\dbr$-module $\hat N_0$ and $\ker(\hat M_0\onto \hat M)\cong\hat F_1\oplus\hat N_0$ by \cite[Theorem~20.2 and Lemma~20.1]{Eisenbud}.

Conversely, assume that $\hat M$ is not flat over $R$ and killed by $z^n$. Then the $\Fm_R$-torsion of $\hat M$ is of finite positive length since it is non-zero and killed by some powers of $z$ and $\zeta$, so the depth of $\hat M$ as a $R\dbl z \dbr$-module is zero. So the projective dimension of $\hat M$ is $2$; in other words, any projective resolution of $\hat M$ consists of at least three terms. 
\end{proof}

The following lemma states that any torsion local shtuka over a discretely valued $R$ arises from the construction in Example~\ref{ExCokerIsog}. This is analogous to  Raynaud's theorem \cite[3.1.1]{BBM82}, which states that any finite flat group scheme can be embedded, Zariski-locally on the base, in some abelian scheme.
\begin{lemma}\label{LemmaRaynaudBBM}
 Let $\ulHM$ be a torsion local shtuka over $R$. Then there exist local shtukas $\ulHM_0$ and $\ulHM_1$, and an isogeny $f\colon\ulHM_1\rightarrow\ulHM_0$ such that we have a short exact sequence
 \[
 0 \;\longto\; \ulHM_1\xrightarrow{\es f\;}\ulHM_0 \;\longto\; \ulHM \;\longto\; 0\,,
 \]
where each arrow is $\hat\tau$-equivariant. If $\ulHM$ is effective of height $\leqslant d$ (i.e., it satisfies  $(z-\zeta)^d\hat M \subset \tau_{\hat M}(\hat\sigma^*\hat M) \subset \hat M$), then we may take $\ulHM_0$ and $\ulHM_1$ to satisfy $(z-\zeta)^d\hat M_i \subset \tau_{\hat M_i}(\hat\sigma^*\hat M_i) \subset \hat M_i$.
\end{lemma}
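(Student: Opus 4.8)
The plan is the following; throughout I use that $R$ is discretely valued, which is harmless by Remark~\ref{RemDVR}(b).

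\textbf{Step 1 (reduction to the effective case of bounded height).} Tensoring $\ulHM$ with the twist $\ul\BOne(N):=(R\dbl z\dbr,(z-\zeta)^N)$ leaves the underlying $R\dbl z\dbr$-module $\hat M$ unchanged and replaces $\tau_{\hat M}$ by $(z-\zeta)^N\tau_{\hat M}$. Since $\hat M$ is killed by a power of $z$ and $\tau_{\hat M}$ is an isomorphism after inverting $z-\zeta$, the twist $\ulHM\otimes\ul\BOne(N)$ is effective of some height $\leqslant d'$ once $N\gg0$, and a short exact sequence of the desired type for $\ulHM\otimes\ul\BOne(N)$ yields one for $\ulHM$ after tensoring with $\ul\BOne(-N)$ (this operation preserves local shtukas and isogenies). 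Hence for the first assertion I may assume $\ulHM$ is effective of some height $\leqslant d$; for the second assertion this is given.

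\textbf{Step 2 (the key construction, and the main obstacle).} The heart of the matter is to produce a local shtuka $\ulHM_0$ over $R$ with $(z-\zeta)^d\hat M_0\subseteq\tau_{\hat M_0}(\hat\sigma^*\hat M_0)\subseteq\hat M_0$ together with a surjective morphism of local shtukas $q\colon\ulHM_0\onto\ulHM$. One starts from an arbitrary $R\dbl z\dbr$-linear surjection $q\colon\hat M_0\onto\hat M$ from a finite free module. The requirements that $\tau_{\hat M_0}$ be effective and that $q$ be a morphism ($q\circ\tau_{\hat M_0}=\tau_{\hat M}\circ\hat\sigma^*q$ after inverting $z-\zeta$) pin down $\tau_{\hat M_0}$ only up to an additive $\hat\sigma$-semilinear correction with values in $\ker q$; the point is to exploit this freedom --- enlarging $\hat M_0$ as needed --- so that in addition $\tau_{\hat M_0}$ becomes an isomorphism after inverting $z-\zeta$ with $\ord_{z-\zeta}(\det\tau_{\hat M_0})\leqslant d$. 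I expect obtaining this determinant control to be the hard part. It is the function field analogue of Raynaud's embedding of a finite locally free group scheme into an abelian scheme \cite[3.1.1]{BBM82}; in practice one would first treat the case $\hat M$ killed by $z$ (where $\hat M[\tfrac1{z-\zeta}]$ already sees all of $\tau_{\hat M}$) and induct on the exponent, using the regularity of $R\dbl z\dbr$ and discreteness of the valuation to solve the resulting matrix lifting problem.

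\textbf{Step 3 (conclusion).} Put $\ulHM_1:=\ker q$ with $\tau_{\hat M_1}:=\tau_{\hat M_0}|_{\hat M_1}$. As $\hat M$ is finite free, hence $R$-flat, Lemma~\ref{LemmaAuslanderBuchsbaum} shows $\hat M_1$ is free over $R\dbl z\dbr$, and it is finitely generated because $\hat M$ is finitely presented; since $\hat M$ is $z$-power torsion, $\hat M_1[\tfrac1z]=\hat M_0[\tfrac1z]$, so $\hat M_1$ and $\hat M_0$ have the same rank. If $(z-\zeta)m=0$ in $\hat M$ then $z^km=\zeta^km$ for all $k$, hence $\zeta^nm=0$ for $n$ with $z^n\hat M=0$, hence $m=0$ because $\hat M$ is $R$-torsion free; thus $z-\zeta$ is a non-zerodivisor on $\hat M$ and $\hat M_1=\hat M_0\cap\hat M_1[\tfrac1{z-\zeta}]$ inside $\hat M_0[\tfrac1{z-\zeta}]$. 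Combining this with effectivity of $\tau_{\hat M_0}$ and with $q$ being a morphism gives $\tau_{\hat M_0}(\hat\sigma^*\hat M_1)\subseteq\hat M_1$, so $\ulHM_1$ is an effective local shtuka and $f\colon\ulHM_1\into\ulHM_0$ a morphism. Writing the inclusion $\hat M_1\into\hat M_0$ as a square matrix $P$, the $z$-power torsion of $\coker P=\hat M$ forces $\det P\in z^{\BN_0}\!\cdot R\dbl z\dbr\mal$ ($R\dbl z\dbr$ being a unique factorization domain), whence $\hat\sigma(\det P)/\det P\in R\dbl z\dbr\mal$ and, since $\tau_{\hat M_1}=P^{-1}\tau_{\hat M_0}\,\hat\sigma(P)$, also $\ord_{z-\zeta}(\det\tau_{\hat M_1})=\ord_{z-\zeta}(\det\tau_{\hat M_0})\leqslant d$. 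For an effective local shtuka this inequality already implies height $\leqslant d$, because by Cramer's rule $(z-\zeta)^d\tau^{-1}$ has entries in $R\dbl z\dbr$ once $\ord_{z-\zeta}(\det\tau)\leqslant d$; hence $\ulHM_0$ and $\ulHM_1$ are of height $\leqslant d$. Finally $f$ is injective with $z$-power torsion cokernel, hence an isogeny, and $\coker f=\ulHM$ by construction, with all maps $\hat\tau$-equivariant. The only non-formal ingredient is Step~2.
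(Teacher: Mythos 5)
Your Steps~1 and~3 are sound and essentially match the paper: the paper also begins by replacing $\tau_{\hat M}$ with $(z-\zeta)^n\tau_{\hat M}$ to reduce to the effective bounded case, and also obtains $\ulHM_1$ as $\ker(\hat M_0\onto\hat M)$, free by Lemma~\ref{LemmaAuslanderBuchsbaum}. (Your determinant computation via $\det P\in z^{\BN_0}\!\cdot R\dbl z\dbr\mal$ is a legitimate, slightly more explicit way to get the height bound on $\ulHM_1$ than the paper's terse assertion.) The problem is Step~2: you correctly identify it as ``the only non-formal ingredient'' and then do not prove it. Producing an effective local shtuka $\ulHM_0$ of height $\leqslant d$ surjecting onto $\ulHM$ is the entire content of the lemma, and the sketch you offer (``treat the case $\hat M$ killed by $z$ and induct on the exponent, \dots solve the resulting matrix lifting problem'') is not an argument; as written the proposal has a genuine gap at its center.

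For comparison, the paper's construction avoids any determinant control or induction on the exponent of $z$. One chooses a finite free $R\dbl z\dbr/(z-\zeta)^d$-module $L$ surjecting onto $\hat M/\tau_{\hat M}(\hat\sigma^*\hat M)$ (possible since the latter is killed by $(z-\zeta)^d$), and a finite free $R\dbl z\dbr$-module $\hat M_0$ with compatible surjections $\hat M_0\onto L$ and $\hat M_0\onto\hat M$ such that $\ker(\hat M_0\onto L)$ also surjects onto $\hat\sigma^*\hat M$ (arranged by enlarging $\hat M_0$). By Lemma~\ref{LemmaAuslanderBuchsbaum}, $\ker(\hat M_0\onto L)$ is free, and one lifts the surjection $\hat\sigma^*\hat M_0\onto\hat\sigma^*\hat M$ to an isomorphism $\hat\sigma^*\hat M_0\isoto\ker(\hat M_0\onto L)$ by splitting off a direct summand $\hat N$ with $\hat N\otimes k=(\hat\sigma^*\hat M)\otimes k$ and using Nakayama plus the fact that both sides are free of the same rank. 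Setting $\tau_{\hat M_0}$ equal to this isomorphism followed by the inclusion $\ker(\hat M_0\onto L)\into\hat M_0$, the condition $(z-\zeta)^d\hat M_0\subset\tau_{\hat M_0}(\hat\sigma^*\hat M_0)\subset\hat M_0$ is automatic because $L$ is killed by $(z-\zeta)^d$ --- precisely the ``determinant control'' you flag as the hard part comes for free from the choice of $L$. You would need to supply an argument of this kind (or complete your proposed induction in detail) for the proof to stand.
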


\begin{proof}
The proof is analogous to the proof of \cite[2.3.4]{Kisin06}. By replacing $\tau_{\hat M}$ with $(z-\zeta)^n\tau_{\hat M}$ for some $n$, we may assume that $\ulHM$ satisfies $(z-\zeta)^d\hat M \subset \tau_{\hat M}(\hat\sigma^*\hat M) \subset \hat M$. 
We now choose a finite free $R\dbl z\dbr/(z-\zeta)^d$-module $L$ which surjects onto $\hat M/\tau_{\hat M}(\hat\sigma^*\hat M)$, and a finite free $R\dbl z\dbr$-module $\hat M_0$ which fits into the following diagram
\[
\xymatrix{
0\ar[r]& \ker(\hat M_0\onto L)\ar[d]\ar[r] &\hat M_0 \ar@{->>}[d] \ar[r] & L\ar@{->>}[d] \ar[r] & 0\\
0\ar[r]&\hat\sigma^*\hat M \ar[r]^-{\tau_{\hat M}}& \hat M  \ar[r] & \hat M/\tau_{\hat M}(\hat\sigma^*\hat M) \ar[r]&0
}.
\]
Furthermore, by enlarging $\hat M_0$ we may arrange so that the left vertical map is also surjective; this can be done by replacing $\hat M_0$ with $\hat M_0\oplus\hat M'$, where $\hat M'$ is a finite free $R\dbl z\dbr$-module which surjects onto the kernel of $L\onto \hat M/\tau_{\hat M}(\hat\sigma^*\hat M)$ and maps to zero in $\hat M$.

Applying Lemma~\ref{LemmaAuslanderBuchsbaum} to $\hat M_0\onto L$, $\ker(\hat M_0\onto L)$ is free over $R\dbl z\dbr$. We decompose $\ker(\hat M_0\onto L) = \hat N\oplus \hat N'$ into two free modules so that $\hat N\otimes_{R\dbl z\dbr} k = (\sigma^*\hat M)\otimes_{R\dbl z\dbr} k$, and hence $\hat N\onto\sigma^*\hat M$ is surjective by Nakayama. Then we may lift  $\sigma^*\hat M_0 \onto \sigma^* \hat M$ to a map $\sigma^*\hat M_0 \to \hat N$ which is also surjective by Nakayama and therefore a direct summand. We can now lift the latter map to an isomorphism  $\sigma^*\hat M_0 \xrightarrow\sim \hat N \oplus \hat N' = \ker(\hat M_0\onto L)$, as both modules are free of same finite rank over $R\dbl z\dbr$.
Therefore, we obtain a map
\[\tau_{\hat M_0}\colon \hat\sigma^*\hat M_0 \isoto \ker(\hat M_0\onto L)\hookrightarrow \hat M_0\]
lifting $\tau_{\hat M}\colon\hat\sigma^*\hat M \isoto \tau_{\hat M}(\hat\sigma^*\hat M)$. Clearly, $\ulHM_0:=(\hat M_0,\tau_{\hat M_0})$ is an effective local shtuka over $R$ with $(z-\zeta)^d\hat M_0 \subset \tau_{\hat M_0}(\hat\sigma^*\hat M_0) \subset \hat M_0$. Applying Lemma~\ref{LemmaAuslanderBuchsbaum} to $\hat M_0\onto \hat M$, $\hat M_1:=\ker (\hat M_0\onto \hat M)$ is free over $R\dbl z \dbr$ so $\ulHM_1:=(\hat M_1,\tau_{\hat M_1}:= \tau_{\hat M_0}|_{\hat M_1[\frac{1}{z-\zeta}]})$ is an effective local shtuka with $(z-\zeta)^d\hat M_1 \subset \tau_{\hat M_1}(\hat\sigma^*\hat M_1) \subset \hat M_1$. Let $f\colon \ulHM_1\rightarrow\ulHM_0$ be the morphism induced by the natural inclusion, then $\ulHM$ is  the cokernel of $f$.
\end{proof}

\begin{remark}\label{RemTorAndersonMod}
We have seen that the definition of torsion local shtukas over $R$ does not straightforwardly apply to base schemes $S\in\Nilp_{A_\epsilon}$. On the other hand, Lemma~\ref{LemmaRaynaudBBM} and the anti-equivalence between effective local shtukas and $z$-divisible local Anderson modules (Remark~\ref{RemEquiv}) suggest the following notion as the function filed analog of finite flat group schemes of $p$-power order.

We define a \emph{torsion local Anderson module} over $S\in\Nilp_{A_\epsilon}$ to be a finite locally free $A_{\epsilon}$-module scheme $H$ over $S$ such that for some Zariski covering $\{S_\alpha\}$ of $S$ there exist a $z$-divisible local Anderson module $G_\alpha$ over $S_\alpha$ and a monomorphism $H_{S_\alpha}\hookrightarrow G_\alpha$ of fppf sheaves of $A_\epsilon$-modules for each $\alpha$. 

Note that the anti-equivalence of categories $\Dr_{\hat q}$ in Remark~\ref{RemEquiv} can be extended to bounded effective local shtukas over $R$ by limit as in Remark~\ref{RemRelationBetweenTheTwoS}. If $\hat M$ is a torsion local shtuka over $R$ obtained as the cokernel of an isogeny $f\colon\ulHM_1\rightarrow\ulHM_0$ of effective local shtukas, then we set 
\[\Dr_{\hat q}(\ulHM)\;:=\;\ker(\Dr_{\hat q}f\colon \Dr_{\hat q}(\ulHM_0) \rightarrow \Dr_{\hat q}(\ulHM_1)).\]
Then $\Dr_{\hat q}(\ulHM)$ is independent of the choice of $f$ up to isomorphism, and this induces an anti-equivalence of categories between effective torsion local shtukas and torsion local Anderson modules over $R$. Indeed, it is possible to define $\Dr_{\hat q}$ directly without choosing $f\colon\ulHM_1\rightarrow\ulHM_0$ as
\[
\Dr_{\hat q}(\ulHM)\;:=\;\Spec\;(\Sym_R\hat M)\big/\bigl(m^{\otimes\hat q}-\tau_{\hat M}(\hat\sigma_{\hat M}^*m)\colon m\in \hat M\bigr)\,;
\]
like in \eqref{EqDrinfeldGr} in Section~\ref{SectDivLocAM}. And if $H = \Dr_{\hat q}(\ulHM)$ for some effective torsion local shtuka $\ulHM$ over $R$, then we can recover $\ulHM$ as $\Hom_{R\text{\rm-groups},\BF_\epsilon\text{\rm-lin}}(H\,,\,\BG_{a,R})$ by \cite[Theorem~2]{Abrashkin} or \cite[Theorem~5.2]{HartlSingh}. 
\end{remark}

To a torsion local shtuka over $R$, we associate a torsion Galois representation as follows.

\begin{definition}\label{DefTorTateMod}
Let $\ulHM$ be a torsion local shtuka over $R$. We define the (\emph{dual}) \emph{Tate module} of $\ulHM$ to be
\[
\check T_\epsilon\ulHM\;:=\;(\hat M\otimes_{R\dbl z\dbr}K^\sep\dbl z\dbr)^{\hat\tau}\;:=\;\bigl\{m\in\hat M\otimes_{R\dbl z\dbr}K^\sep\dbl z\dbr\colon \tau_{\hat M}(\hat\sigma_{\!\hat M}^*m)=m\bigr\}.
\]
\end{definition}

\begin{proposition}\label{PropTorTateMod}
For a torsion local shtuka $\ulHM$ over $R$,
$\check T_\epsilon\ulHM$ is a torsion $A_\epsilon$-module of length equal to $\rk_R (\hat M)$, which carries a discrete action of $\Gal(K^\sep/K)$. Moreover the inclusion $\check T_\epsilon\ulHM\subset\hat M\otimes_{R\dbl z\dbr}K^\sep\dbl z\dbr$ defines a canonical isomorphism of $K^\sep\dbl z\dbr$-modules 
\begin{equation}\label{EqTorTateModIsom}
\check T_\epsilon\ulHM\otimes_{A_\epsilon}K^\sep\dbl z\dbr\isoto\ulHM\otimes_{R\dbl z\dbr}K^\sep\dbl z\dbr
\end{equation}
which is $\Gal(K^\sep/K)$- and $\hat\tau$-equivariant, where on the left module $\Gal(K^\sep/K)$ acts on both factors and $\hat\tau$ is $\id\otimes\hat\sigma$, and on the right module $\Gal(K^\sep/K)$ acts only on $K^\sep\dbl z\dbr$ and $\hat\tau$ is $(\tau_{\hat M}\circ\hat\sigma_{\!\hat M}^*)\otimes \hat\sigma$. In particular one can recover $\ulHM\otimes_{R\dbl z\dbr}K\dbl z\dbr=\bigl(\check T_\epsilon\ulHM\otimes_{A_\epsilon}K^\sep\dbl z\dbr\bigr)^{\Gal(K^\sep/K)}$ as the Galois invariants.

If we have a $\hat\tau$-equivariant short exact sequence $0\rightarrow \ulHM'\rightarrow \ulHM\rightarrow \ulHM'' \rightarrow 0$ where each term is either a local shtuka or a torsion local shtuka over $R$, then we have a $\Gal(K^\sep/K)$-equivariant short exact sequence
\[0\,\longto\, \check T_\epsilon\ulHM' \,\longto\,\check T_\epsilon\ulHM \,\longto\, \check T_\epsilon\ulHM''\,\longto\, 0.
\]
\end{proposition}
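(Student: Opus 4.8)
The plan is to prove the statements of Proposition~\ref{PropTorTateMod} in the order: (i) the structural facts about $\check T_\epsilon\ulHM$ for a single torsion local shtuka, including the comparison isomorphism \eqref{EqTorTateModIsom}, and then (ii) the exactness of $\check T_\epsilon$ on short exact sequences. For (i) the main tool is Lemma~\ref{LemmaRaynaudBBM}: choose a presentation $0\to\ulHM_1\xrightarrow{f}\ulHM_0\to\ulHM\to0$ with $\ulHM_0,\ulHM_1$ local shtukas (not torsion) and $f$ an isogeny. Tensoring with $K^\sep\dbl z\dbr$ over $R\dbl z\dbr$ is exact since $K^\sep\dbl z\dbr$ is flat over $R\dbl z\dbr$ (it is even the $z$-adic completion of a localization, or one argues directly with $R\dbl z\dbr\to K\dbl z\dbr\to K^\sep\dbl z\dbr$), so we get a short exact sequence of $K^\sep\dbl z\dbr$-modules with semilinear $\hat\tau$. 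Now $f\otimes1$ becomes an isomorphism after inverting $z-\zeta$, but actually by Proposition~\ref{PropTateMod} (applied to $\ulHM_0$ and $\ulHM_1$) the map $f\otimes1\colon\hat M_1\otimes K^\sep\dbl z\dbr\to\hat M_0\otimes K^\sep\dbl z\dbr$ corresponds, under the trivializations $\check T_\epsilon\ulHM_i\otimes_{A_\epsilon}K^\sep\dbl z\dbr\isoto\hat M_i\otimes K^\sep\dbl z\dbr$, to $\check T_\epsilon f\otimes\id_{K^\sep\dbl z\dbr}$, which is injective with cokernel $\coker(\check T_\epsilon f)\otimes_{A_\epsilon}K^\sep\dbl z\dbr$. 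Taking $\hat\tau$-invariants (equivalently, tensoring the identification $\hat M\otimes K^\sep\dbl z\dbr\cong(\check T_\epsilon\ulHM_0/\check T_\epsilon\ulHM_1)\otimes_{A_\epsilon}K^\sep\dbl z\dbr$ back down) one reads off that $\check T_\epsilon\ulHM$ is a finitely generated torsion $A_\epsilon$-module, canonically $\cong\coker(\check T_\epsilon f\colon\check T_\epsilon\ulHM_1\to\check T_\epsilon\ulHM_0)$, and that $\check T_\epsilon\ulHM\otimes_{A_\epsilon}K^\sep\dbl z\dbr\isoto\hat M\otimes_{R\dbl z\dbr}K^\sep\dbl z\dbr$. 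The length count $\length_{A_\epsilon}\check T_\epsilon\ulHM=\rk_{A_\epsilon}\check T_\epsilon\ulHM_0-\rk_{A_\epsilon}\check T_\epsilon\ulHM_1=\rk\ulHM_0-\rk\ulHM_1$; one then checks this equals $\rk_R\hat M$ by comparing with the cokernel computation over $R\dbl z\dbr$ (tensor $0\to\hat M_1\to\hat M_0\to\hat M\to0$ with $k\dbl z\dbr$ and use the elementary-divisor/Tor$_1=0$ argument of Example~\ref{ExCokerIsog}, giving $\rk\ulHM_0-\rk\ulHM_1=\dim_{k\dpl z\dpr}(\hat M\otimes k\dpl z\dpr)$; but $\hat M\otimes_R k$ is free over $k$ of rank $\rk_R\hat M$ and $z$-power torsion, so its base change to $k\dbl z\dbr$ has $k\dpl z\dpr$-dimension zero — instead one should compare $\length_{A_\epsilon}\check T_\epsilon\ulHM$ directly with $\length_{R\dbl z\dbr}\hat M$ divided by $\length$ of the residue ring, or more simply: $\check T_\epsilon\ulHM\cong(\hat M\otimes_{R\dbl z\dbr}K^\sep\dbl z\dbr)^{\hat\tau}$ and by \'etale descent over $K\dbl z\dbr$ — using Remark~\ref{RemTateModGeneral} or Lang's theorem directly — this has the same length as $\hat M\otimes_{R\dbl z\dbr}k\dbl z\dbr$ does not work either; the cleanest route is that $\hat M$ is $R$-free of rank $n:=\rk_R\hat M$ and $z$-power torsion, hence $z^N\hat M=0$ for some $N$, so $\hat M$ is a module over $R\dbl z\dbr/z^N$, and $\hat M\otimes_{R\dbl z\dbr}K^\sep\dbl z\dbr=\hat M\otimes_{R}K^\sep$ as an abelian group with a twisted $z,\hat\tau$-action; taking $\hat\tau$-invariants of this finite free $K^\sep$-module of rank $n$ gives, by Lang/Artin–Schreier descent, an $A_\epsilon$-module of length exactly $n$). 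I will present this length computation via the presentation by local shtukas together with Lemma~\ref{LemmaLocSht}\ref{LemmaLocSht_B}-type bookkeeping, which is the step I expect to require the most care.

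Having established (i) for a single $\ulHM$, the comparison isomorphism \eqref{EqTorTateModIsom}, its $\Gal(K^\sep/K)$- and $\hat\tau$-equivariance, and the recovery of $\ulHM\otimes_{R\dbl z\dbr}K\dbl z\dbr$ as Galois invariants all follow formally: equivariance is inherited from the corresponding statements for $\ulHM_0$ in Proposition~\ref{PropTateMod} together with functoriality of $h$ in $\ulHM$ (Remark~\ref{RemTateMod}), passed to the cokernel; and the Galois-invariants statement follows because $\bigl((\check T_\epsilon\ulHM_0/\check T_\epsilon\ulHM_1)\otimes_{A_\epsilon}K^\sep\dbl z\dbr\bigr)^{\Gal(K^\sep/K)}=\bigl(\check T_\epsilon\ulHM_0\otimes K^\sep\dbl z\dbr\bigr)^{\Gal}/\bigl(\check T_\epsilon\ulHM_1\otimes K^\sep\dbl z\dbr\bigr)^{\Gal}$ since the relevant $H^1$ of $\Gal(K^\sep/K)$ with coefficients in $\check T_\epsilon\ulHM_1\otimes K^\sep\dbl z\dbr$ vanishes (additive Hilbert 90 / the fact that $\check T_\epsilon\ulHM_1\otimes_{A_\epsilon}K^\sep\dbl z\dbr$ is an induced-type module; concretely $K^\sep\dbl z\dbr^{\Gal}=K\dbl z\dbr$ with no higher cohomology by the normal basis theorem applied coefficient-wise). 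Discreteness of the Galois action is immediate since $\check T_\epsilon\ulHM$ is finite and the action factors through $\Gal(L/K)$ for $L$ the finite extension generated by a lift of a generating set (this is already visible in the presentation: the action factors through the one on $\check T_\epsilon\ulHM_0/z^N$ which is discrete by Proposition~\ref{PropTateMod}).

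For (ii), the exactness statement, I would argue as follows. Given a $\hat\tau$-equivariant short exact sequence $0\to\ulHM'\to\ulHM\to\ulHM''\to0$ in which each term is a local or torsion local shtuka, tensor over $R\dbl z\dbr$ with the flat ring $K^\sep\dbl z\dbr$ to obtain a short exact sequence of $K^\sep\dbl z\dbr$-modules equipped with compatible semilinear $\hat\tau$. The functor $(-)^{\hat\tau}=\Hom$-into-invariants is left exact in general, so $0\to\check T_\epsilon\ulHM'\to\check T_\epsilon\ulHM\to\check T_\epsilon\ulHM''$ is exact; the only issue is right-exactness, i.e.\ surjectivity of $\check T_\epsilon\ulHM\to\check T_\epsilon\ulHM''$. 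Equivalently one must show the connecting map $H^0_{\hat\tau}(\ulHM''\otimes K^\sep\dbl z\dbr)\to H^1_{\hat\tau}(\ulHM'\otimes K^\sep\dbl z\dbr)$ is zero, for which it suffices that $H^1_{\hat\tau}$ of any (torsion or free) local shtuka base-changed to $K^\sep\dbl z\dbr$ vanishes — here $H^1_{\hat\tau}(\hat N)$ means $\coker\bigl(\hat\tau-\id\colon\hat N\otimes K^\sep\dbl z\dbr\to\hat N\otimes K^\sep\dbl z\dbr\bigr)$ where $\hat\tau=\tau\circ\hat\sigma$. This vanishing is the key input: for a local shtuka it follows from the trivialization \eqref{EqTateModIsomConv} of Proposition~\ref{PropTateMod}, which turns $\hat\tau-\id$ into $\id\otimes(\hat\sigma-\id)$ on $\bigl(K^\sep\langle\tfrac{z}{\zeta^s}\rangle\bigr)^{\oplus r}$, and $\hat\sigma-\id$ is surjective on $K^\sep\langle\tfrac{z}{\zeta^s}\rangle$ (solve the Artin–Schreier equations coefficient-wise in $K^\sep$, as in the proof of Proposition~\ref{PropTateMod}); for a torsion local shtuka use the presentation from Lemma~\ref{LemmaRaynaudBBM} and the snake lemma to deduce surjectivity of $\hat\tau-\id$ from the free case. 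I expect the main obstacle to be this surjectivity of $\hat\tau-\id$ in the torsion setting with the correct convergence bookkeeping — but it reduces cleanly to the free case already handled in the proof of Proposition~\ref{PropTateMod}, so the essential content is already available. Once surjectivity of $\check T_\epsilon\ulHM\to\check T_\epsilon\ulHM''$ is in hand, the displayed short exact sequence of Galois modules follows, completing the proof.
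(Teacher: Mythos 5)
Your proposal is correct, and its core — resolving $\ulHM$ by an isogeny of local shtukas via Lemma~\ref{LemmaRaynaudBBM}, transporting the trivializations of Proposition~\ref{PropTateMod} along the resolution, and reading off \eqref{EqTorTateModIsom} together with its equivariance, the identification $\check T_\epsilon\ulHM\cong\coker(\check T_\epsilon f)$, and the Galois-invariants statement — is exactly the paper's argument. You diverge in two auxiliary steps. For the length, the paper simply reads it off from \eqref{EqTorTateModIsom}: the left side has $K^\sep$-dimension $\length_{A_\epsilon}\check T_\epsilon\ulHM$ (structure theorem over the discrete valuation ring $A_\epsilon$), while $\hat M\otimes_{R\dbl z\dbr}K^\sep\dbl z\dbr\cong\hat M\otimes_RK^\sep$ has dimension $\rk_R\hat M$. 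Your final Lang-descent count is equivalent and valid, but the paragraph leading up to it contains a genuinely false intermediate claim — $\length_{A_\epsilon}\check T_\epsilon\ulHM=\rk\ulHM_0-\rk\ulHM_1$ vanishes because $f$ is an isogeny and the two ranks agree; the correct quantity would be $\ord_z\det(\check T_\epsilon f)$ — plus two further discarded attempts, all of which should be excised in favour of the one-line dimension count. For right-exactness of $\check T_\epsilon$ on a general short exact sequence the paper combines left-exactness (immediate from the definition) with additivity of $\rk_R$, hence of $A_\epsilon$-lengths, which forces surjectivity; you instead prove surjectivity of $\hat\tau-\id$ on $\hat M'\otimes_{R\dbl z\dbr}K^\sep\dbl z\dbr$ (coefficientwise Artin--Schreier after trivializing via Proposition~\ref{PropTateMod} in the free case, then the snake lemma and Lemma~\ref{LemmaRaynaudBBM} in the torsion case) and conclude by the long exact sequence. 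Your route is sound and slightly more robust, since it does not use the length formula as an input; the paper's is shorter.
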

\begin{proof}
Let $\ulHM$ be a torsion local shtuka over $R$.
To see that $\check T_\epsilon \ulHM$ carries a natural discrete action of $\Gal(K^\sep/K)$, note that $\hat M$ is killed by some power of $z$, say by $z^n$, so the natural $\Gal(K^\sep/K)$-action on 
\[
\hat M \otimes_{R\dbl z\dbr} K^\sep\dbl z \dbr =\hat M \otimes_{R\dbl z\dbr} K^\sep\dbl z \dbr/(z^n) 
\]
is discrete and commutes with $\tau_{\hat M}$.

Let us consider a $\hat\tau$-equivariant short exact sequence $0\rightarrow \ulHM_1\xrightarrow f\ulHM_0\rightarrow \ulHM \rightarrow0$, where $\ulHM$ is a torsion local shtuka over $R$ and $\ulHM_0$ and $\ulHM_1$ are local shtukas over $R$. We obtain a $\Gal(K^\sep/K)$-equivariant sequence
\begin{equation}\label{EqPropTorTateMod}
0 \,\longto\, \check T_\epsilon\ulHM_1 \xrightarrow{\,\check T_\epsilon f} \check T_\epsilon\ulHM_0 \,\longto\, \check T_\epsilon\ulHM\,\longto\,0\,,
\end{equation}
which by definition is exact on the left and in the middle. To see that it is also exact on the right, we obtain from Proposition~\ref{PropTateMod}
\[
\coker(\check T_\epsilon f)\otimes_{A_\epsilon}K^\sep\dbl z\dbr \isoto \hat M\otimes_{R\dbl z\dbr}K^\sep\dbl z\dbr \hookleftarrow \check T_\epsilon\ulHM\otimes_{A_\epsilon}K^\sep\dbl z\dbr.
\]
Combining these, it follows that the natural inclusion induces a $\Gal(K^\sep/K)$- and $\hat\tau$-equivariant isomorphism
\[
\check T_\epsilon\ulHM \otimes_{A_\epsilon}K^\sep\dbl z\dbr \isoto \hat M\otimes_{R\dbl z\dbr}K^\sep\dbl z\dbr.      
\]
Taking $\hat\tau$-invariants yields $\check T_\epsilon\ulHM=\coker(\check T_\epsilon f)$ and so the sequence \eqref{EqPropTorTateMod} is also exact on the right. The claim on the $A_\epsilon$-length of $\check T_\epsilon\ulHM$ follows from this isomorphism.

It now remains to show that $\check T_\epsilon$ on the category of torsion local shtukas over $R$ is exact. Indeed, it is clearly left exact by definition, and the right exactness follows from the length consideration.
\end{proof}

Recall from Remark~\ref{RemTorAndersonMod} that to an effective torsion local shtuka $\ulHM$ over $R$ we associated a torsion local Anderson module $H:=\Dr_{\hat q}(\ulHM)$ over $R$. Hence we have a finite-length $A_\epsilon$-module $H(K^\sep)$ equipped with a discrete $\Gal(K^\sep/K)$-action. We will now discuss the comparison between $H(K^\sep)$ and $\check T_\epsilon\ulHM$ in a way that is analogous to and compatible with Proposition~\ref{PropCompTateMod}. For this, we need some preparations.

\begin{lemma}\label{LemmaTorTateModES}
Let $H$ be the kernel of an isogeny $G_0\rightarrow G_1$ of $z$-divisible local Anderson modules over $R$. Then there exists a natural Galois-equivariant short exact sequence
 \[
0\,\longto\, T_\epsilon G_0\,\longto\, T_\epsilon G_1 \,\longto\, H(K^\sep) \,\longto\,0\,.
 \]
 The surjective map on the right can be defined as follows. Given $\xi\colon Q_\epsilon/A_\epsilon\rightarrow G_1(K^\sep)$ (which is an element of $T_\epsilon G_1$), choose a lift $\tilde\xi:Q_\epsilon\rightarrow G_0(K^\sep)$. Then we have $\tilde\xi(1)\in H(K^\sep)$, and it only depends on $\xi$ (not on the choice of $\tilde\xi$). The surjective map on the right is given by $\xi\mapsto\tilde\xi(1)$.
\end{lemma}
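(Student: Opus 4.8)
The plan is to reduce everything to the exact sequence of $K^\sep$-points. By the anti-equivalence between effective torsion local shtukas and torsion local Anderson modules (Remark~\ref{RemTorAndersonMod}), an isogeny $\alpha\colon G_0\to G_1$ of $z$-divisible local Anderson modules is an epimorphism of \fppf-sheaves of $\BF_\epsilon\dbl z\dbr$-modules over $\Spec R$ with kernel the torsion local Anderson module $H$, so we get a short exact sequence $0\to H\to G_0\xrightarrow{\alpha}G_1\to0$ of \fppf-sheaves. The first step -- and the one I expect to be the main obstacle -- is to show that taking $K^\sep$-points preserves exactness, i.e.\ that
\[
0\;\longto\;H(K^\sep)\;\longto\;G_0(K^\sep)\;\xrightarrow{\alpha}\;G_1(K^\sep)\;\longto\;0
\]
is short exact as $A_\epsilon$-modules with $\Gal(K^\sep/K)$-action, and likewise that $z\colon G_0(K^\sep)\to G_0(K^\sep)$ is surjective; left exactness being automatic, the content is surjectivity on the right in both cases.

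Here the key point is that over $K$ the finite group schemes involved become \emph{étale}. Indeed $z-\zeta$ is a unit in $K\dbl z\dbr$, so $\tau$ becomes an isomorphism after tensoring with $K\dbl z\dbr$ over $R\dbl z\dbr$; by the identification of the co-Lie module with $\coker\tau$ recalled in Section~\ref{SectDivLocAM} (cf.\ \cite[Theorem~5.2]{HartlSingh}) the co-Lie modules of $H\times_RK$ and $G_0[z]\times_RK$ vanish, so these group schemes are finite étale, in particular smooth, over $K$. Pulling the two sheaf sequences back to the separably closed field $K^\sep$ and taking global sections, the obstructions $H^1_\fppf(\Spec K^\sep,H)=H^1_\et(\Spec K^\sep,H)$ and $H^1_\fppf(\Spec K^\sep,G_0[z])=H^1_\et(\Spec K^\sep,G_0[z])$ vanish, which gives the two surjectivities; moreover $G_0(K^\sep)$ and $G_1(K^\sep)$ are $z$-power torsion by \cite[Lemma~5.4]{HV1} and, by what we just showed, $G_0(K^\sep)$ is $z$-divisible.

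Next I would apply the left exact functor $\Hom_{A_\epsilon}(Q_\epsilon/A_\epsilon,-)$, which by Definition~\ref{DefTateModAModule} computes $T_\epsilon$ on $G_0(K^\sep)$ and $G_1(K^\sep)$, to the short exact sequence of $K^\sep$-points. Its long exact $\Ext_{A_\epsilon}$-sequence collapses to the desired
\[
0\;\longto\;T_\epsilon G_0\;\longto\;T_\epsilon G_1\;\longto\;H(K^\sep)\;\longto\;0
\]
once one records the following facts about modules over the complete discrete valuation ring $A_\epsilon=\BF_\epsilon\dbl z\dbr$ with uniformizer $z$: for $M$ killed by a power of $z$ one has $\Hom_{A_\epsilon}(Q_\epsilon,M)=\Ext^1_{A_\epsilon}(Q_\epsilon,M)=0$ (using $Q_\epsilon=z^NQ_\epsilon$ and a Mittag--Leffler argument for the derived inverse limit), whence $\Hom_{A_\epsilon}(Q_\epsilon/A_\epsilon,M)=0$ and $\Ext^1_{A_\epsilon}(Q_\epsilon/A_\epsilon,M)\cong M$ from the sequence $0\to A_\epsilon\to Q_\epsilon\to Q_\epsilon/A_\epsilon\to0$; and $\Ext^1_{A_\epsilon}(Q_\epsilon/A_\epsilon,D)=0$ for any $z$-divisible $A_\epsilon$-module $D$. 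Applying the first with $M=H(K^\sep)$ (killed by a power of $z$ since $H$ is finite) and the second with $D=G_0(K^\sep)$ produces the collapse.

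Finally I would identify the boundary map $T_\epsilon G_1\to\Ext^1_{A_\epsilon}(Q_\epsilon/A_\epsilon,H(K^\sep))\cong H(K^\sep)$ with the explicit map in the statement. Given $\xi\in T_\epsilon G_1=\Hom_{A_\epsilon}(Q_\epsilon/A_\epsilon,G_1(K^\sep))$, put $\xi':=\xi\circ\mathrm{pr}\colon Q_\epsilon\twoheadrightarrow Q_\epsilon/A_\epsilon\to G_1(K^\sep)$; it lifts to some $\tilde\xi\colon Q_\epsilon\to G_0(K^\sep)$ because $\Ext^1_{A_\epsilon}(Q_\epsilon,H(K^\sep))=0$; as $\xi'$ kills $A_\epsilon$ we get $\tilde\xi(A_\epsilon)\subseteq\ker\alpha=H(K^\sep)$, so $\tilde\xi(1)\in H(K^\sep)$, and it is independent of the lift because $\Hom_{A_\epsilon}(Q_\epsilon,H(K^\sep))=0$; unwinding the connecting homomorphism shows it is exactly $\xi\mapsto\tilde\xi(1)$. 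Galois equivariance of $T_\epsilon\alpha$ and of $\xi\mapsto\tilde\xi(1)$ is immediate, since lifts transform by postcomposition with the $\Gal(K^\sep/K)$-action on $K^\sep$-points, and functoriality of the whole sequence in $\alpha$ is clear. Alternatively one can avoid the homological algebra and check directly that $\xi\mapsto\tilde\xi(1)$ is well defined, Galois-equivariant, has kernel $\im(T_\epsilon\alpha)$, and is surjective, using only the $z$-divisibility of $G_0(K^\sep)$ and that $H(K^\sep)$ is killed by a power of $z$.
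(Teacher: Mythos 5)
Your proposal is correct and follows essentially the same route as the paper: both apply the $\Hom$--$\Ext$ long exact sequence over $A_\epsilon$ to the sequence of $K^\sep$-points with $Q_\epsilon/A_\epsilon$ in the first slot, use the identification $\Ext^1_{A_\epsilon}(Q_\epsilon/A_\epsilon,H(K^\sep))\cong H(K^\sep)$ together with the vanishing of $\Ext^1$ against the $z$-divisible modules $G_i(K^\sep)$, and then unwind the connecting homomorphism to get the explicit description $\xi\mapsto\tilde\xi(1)$. The only packaging difference is that the paper works at each finite level $\epsilon^{-n}A_\epsilon/A_\epsilon$ and passes to the limit via Mittag--Leffler, whereas you compute $\Hom$ and $\Ext^1$ against $Q_\epsilon/A_\epsilon$ directly; these are equivalent. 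One genuine addition on your side: the paper simply ``recalls'' the short exactness of $0\to H(K^\sep)\to G_0(K^\sep)\to G_1(K^\sep)\to 0$ and the surjectivity of $z$ on $G_i(K^\sep)$, while you justify both by observing that the relevant finite group schemes become \'etale over $K$ (since $z-\zeta$ is a unit in $K\dbl z\dbr$) so that the obstruction groups $\Koh^1(\Spec K^\sep,\,\cdot\,)$ vanish -- this is exactly the argument the paper leaves implicit, and it is worth making explicit.
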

\begin{proof}
For any $n$, consider the exact sequence $0\rightarrow A_\epsilon \rightarrow \epsilon^{-n}A_\epsilon \rightarrow \epsilon^{-n}A_\epsilon/A_\epsilon \rightarrow 0$, where the first two terms are projective over $A_\epsilon$. By applying to it the long exact sequence for $\Hom_{A_\epsilon}\bigl(\fdot,H(K^\sep)\bigr)$, we obtain the following natural Galois-equivariant isomorphism 
\begin{equation}\label{EqExtH}
\xymatrix @C=-6pc {
*!R(0.6)
\objectbox{\Ext^1_{A_\epsilon}\bigl(\epsilon^{-n}A_\epsilon/A_\epsilon,\,H(K^\sep)\bigr)\cong 
\coker\bigl[
\Hom_{A_\epsilon}\bigl(\epsilon^{-n}A_\epsilon,H(K^\sep)\bigr)} \ar[r] \ar[d]_{\xi\mapsto\xi(z^{-n})}^\cong &
\Hom_{A_\epsilon}\bigl(A_\epsilon,H(K^\sep)\bigr) \ar[d]^{\xi\mapsto\xi(1)}_\cong
\bigr]\\
**{!R(0.6) =<12pc,2pc>}
\objectbox{\cong\coker\bigl[\qquad\qquad H(K^\sep)} \ar[r]^{[z^n]} & **{!L(0.6)
 =<8pc,2pc>} \objectbox{H(K^\sep)\qquad\bigr]},
}
\end{equation}
where we let $\Gal(K^\sep/K)$ act trivially on $\epsilon^{-n}A_\epsilon/A_\epsilon$. (Indeed, this isomorphism is independent of the choice of $z$.) In particular, if we choose $n$ so that $z^n$ kills $H$, then we have a natural isomorphism $\Ext^1_{A_\epsilon}\bigl(\epsilon^{-n}A_\epsilon/A_\epsilon,\,H(K^\sep)\bigr) \xrightarrow\sim H(K^\sep)$, and the natural inclusion $\epsilon^{-n}A_\epsilon/A_\epsilon \hookrightarrow \epsilon^{-n-1}A_\epsilon/A_\epsilon$ induces an isomorphism $\Ext^1_{A_\epsilon}\bigl(\epsilon^{-n-1}A_\epsilon/A_\epsilon,\,H(K^\sep)\bigr)\xrightarrow\sim \Ext^1_{A_\epsilon}\bigl(\epsilon^{-n}A_\epsilon/A_\epsilon,\,H(K^\sep)\bigr)$, which induces the identity map on $H(K^\sep)$.

Recall that we have the following Galois-equivariant short exact sequence of $A_\epsilon$-modules 
\[
0\,\longto\, H(K^\sep)\,\longto\, G_0(K^\sep)\,\longto\, G_1(K^\sep)\,\longto\,0\,.
\]
If $z^n$ kills $H$, then we have the following natural Galois-equivariant exact sequence via the $\Hom$-$\Ext$ long exact sequence (independent of the choice of $z$):
 \begin{multline}\label{EqTorTateModES}
0\,\rightarrow\, \Hom_{A_\epsilon}\bigl(\epsilon^{-n}A_\epsilon/A_\epsilon, H(K^\sep)\bigr)\,\rightarrow\,  \Hom_{A_\epsilon}\bigl(\epsilon^{-n}A_\epsilon/A_\epsilon, G_0(K^\sep)\bigr)\\
\xrightarrow{\es g\;}\,  \Hom_{A_\epsilon}\bigl(\epsilon^{-n}A_\epsilon/A_\epsilon,G_1(K^\sep)\bigr)\,\rightarrow\, \Ext^1_{A_\epsilon}\bigl(\epsilon^{-n}A_\epsilon/A_\epsilon,\,H(K^\sep)\bigr)\,\rightarrow\,0\, . 
 \end{multline}
The right exactness follows from the fact that $\Ext^1_{A_\epsilon}\bigl(\epsilon^{-n}A_\epsilon/A_\epsilon,\,G_i(K^\sep)\bigr)=0$ by the analog of \eqref{EqExtH} for $G_i$ using that $[z^n]\colon G_i(K^\sep)\to G_i(K^\sep)$ is surjective as $G_i$ is $z$-divisible. Now by taking the projective limit of the sequence \eqref{EqTorTateModES} as we increase $n$ and observing that both the kernel and the cokernel of $g$ satisfy the Mittag-Leffler condition, we obtain
\begin{multline*}
0\,\rightarrow\, \Hom_{A_\epsilon}\bigl(Q_\epsilon/A_\epsilon, H(K^\sep)\bigr)\,\rightarrow\,  \Hom_{A_\epsilon}\bigl(Q_\epsilon/A_\epsilon, G_0(K^\sep)\bigr)\\
\xrightarrow{\es g\;}\,  \Hom_{A_\epsilon}\bigl(Q_\epsilon/A_\epsilon,G_1(K^\sep)\bigr)\,\rightarrow\, H(K^\sep)\,\rightarrow\,0\,.
\end{multline*}
Since $\Hom_{A_\epsilon}\bigl(Q_\epsilon/A_\epsilon, H(K^\sep)\bigr) = 0$, we obtain the short exact sequence as in the statement.

It remains to explicitly describe the surjective map $T_\epsilon G_1 \rightarrow H(K^\sep)$. Choosing $n$ so that $z^n$ kills $H$, the map, by construction, factors as follows:
\begin{multline*}
 T_\epsilon G_1 =  \Hom_{A_\epsilon}\bigl(Q_\epsilon/A_\epsilon,G_1(K^\sep)\bigr) \rightarrow \Hom_{A_\epsilon}\bigl(\epsilon^{-n}A_\epsilon/A_\epsilon,G_1(K^\sep)\bigr) \\ 
 \rightarrow  \Ext^1_{A_\epsilon}\bigl(\epsilon^{-n}A_\epsilon/A_\epsilon,\,H(K^\sep)\bigr)\xrightarrow\sim H(K^\sep),
\end{multline*}
where the first arrow is the restriction map and the second arrow is the connecting homomorphism in \eqref{EqTorTateModES}. Keeping in mind the description of $\Ext^1_{A_\epsilon}\bigl(\epsilon^{-n}A_\epsilon/A_\epsilon,H(K^\sep)\bigr)$ in \eqref{EqExtH}, it is straightforward that the description given in the statement matches with the connecting homomorphism of \eqref{EqTorTateModES} given by the snake's lemma.
\end{proof}
\begin{remark}
One can prove Lemma~\ref{LemmaTorTateModES} by directly checking that the map $T_\epsilon G_1 \rightarrow H(K^\sep)$ is well defined and gives the desired short exact sequence. We instead appealed to the isomorphism $\Ext^1_{A_\epsilon}\bigl(\epsilon^{-n}A_\epsilon/A_\epsilon,\,H(K^\sep)\bigr) \xrightarrow\sim H(K^\sep)$ for the sake of conceptual clarity.
\end{remark}

In order to relate \eqref{EqPropTorTateMod} with the  exact sequence in Lemma~\ref{LemmaTorTateModES}, we need a little digression on (some variant of) Pontryagin duality. Consider a short exact sequence of $A_\epsilon$-modules 
\[
0\,\longto\, T_0\,\longto\, T_1 \,\longto\, T\,\longto\,0\,,
\]
where $T_0$ and $T_1$ are finitely generated free $A_\epsilon$-modules, and $T$ is of finite length. Then by applying to it the long exact sequence for $\Hom_{A_\epsilon}\bigl(\fdot, \widehat\Omega^1_{A_\epsilon/\BF_\epsilon}\bigr)$, we get
\begin{equation}\label{EqHomExt}
0\,\longto\, \Hom_{A_\epsilon}\bigl(T_1, \widehat\Omega^1_{A_\epsilon/\BF_\epsilon}\bigr)\,\longto\, \Hom_{A_\epsilon}\bigl(T_0, \widehat\Omega^1_{A_\epsilon/\BF_\epsilon}\bigr) \,\longto\, \Ext^1_{A_\epsilon}\bigl(T, \widehat\Omega^1_{A_\epsilon/\BF_\epsilon}\bigr)\,\longto\,0\,,
\end{equation}
since $\Hom_{A_\epsilon}\bigl(T, \widehat\Omega^1_{A_\epsilon/\BF_\epsilon}\bigr) = 0$ and $\Ext^1_{A_\epsilon}\bigl(T_i, \widehat\Omega^1_{A_\epsilon/\BF_\epsilon}\bigr) = 0$ for $i=0,1$ (as $T_i$ are projective). 

Now, consider the following exact sequence
\[
0\,\longto\,  \widehat\Omega^1_{A_\epsilon/\BF_\epsilon} \,\longto\,  \widehat\Omega^1_{A_\epsilon/\BF_\epsilon} \otimes_{A_\epsilon}Q_\epsilon  \,\longto\, \widehat\Omega^1_{A_\epsilon/\BF_\epsilon} \otimes_{A_\epsilon}Q_\epsilon/A_\epsilon\,\longto\,0\,.
\]
Since $\widehat\Omega^1_{A_\epsilon/\BF_\epsilon}$ is a rank-$1$ free module over $A_\epsilon$, the last two terms are injective  over $A_\epsilon$. Using that $T$ is torsion and $\widehat\Omega^1_{A_\epsilon/\BF_\epsilon} \otimes_{A_\epsilon}Q_\epsilon$ is torsion free, we get the following natural isomorphism
\[
\Hom_{A_\epsilon}\bigl(T,\,\widehat\Omega^1_{A_\epsilon/\BF_\epsilon}\otimes_{A_\epsilon}Q_\epsilon/A_\epsilon\bigr) \isoto \Ext^1_{A_\epsilon}\bigl(T, \widehat\Omega^1_{A_\epsilon/\BF_\epsilon}\bigr).
\]
Combining this isomorphism with the exact sequence \eqref{EqHomExt}, we obtain the following exact sequence:
\begin{equation}\label{EqDualityES}
0\,\longto\, \Hom_{A_\epsilon}\bigl(T_1, \widehat\Omega^1_{A_\epsilon/\BF_\epsilon}\bigr)\,\longto\, \Hom_{A_\epsilon}\bigl(T_0, \widehat\Omega^1_{A_\epsilon/\BF_\epsilon}\bigr) \,\longto\, \Hom_{A_\epsilon}\bigl(T,\,\widehat\Omega^1_{A_\epsilon/\BF_\epsilon}\otimes_{A_\epsilon}Q_\epsilon/A_\epsilon\bigr)\,\longto\,0\,.
\end{equation}
We can make explicit the surjective map in \eqref{EqDualityES} in a similar way as Lemma~\ref{LemmaTorTateModES}. Namely, we view $\xi\colon T_0 \rightarrow \widehat\Omega^1_{A_\epsilon/\BF_\epsilon}$ as a map $T_1 \rightarrow \widehat\Omega^1_{A_\epsilon/\BF_\epsilon}\otimes_{A_\epsilon}Q_\epsilon$ (using the isomorphism $T_0\otimes_{A_\epsilon}Q_\epsilon\xrightarrow\sim T_1\otimes_{A_\epsilon}Q_\epsilon$), and take $\bar\xi\colon T=T_1/T_0 \rightarrow \widehat\Omega^1_{A_\epsilon/\BF_\epsilon}\otimes_{A_\epsilon}Q_\epsilon/A_\epsilon$ to be its reduction. Then the surjective map in \eqref{EqDualityES} is defined by $\xi\mapsto\bar\xi$.

Applying \eqref{EqDualityES} to \eqref{EqPropTorTateMod} and the exact sequence in Lemma~\ref{LemmaTorTateModES}, we obtain the following commutative diagrams with exact columns:
\begin{equation}\label{EqPropCompTorTateMod}
\xymatrix@C=3.5pt{
0 \ar[d]&& 0 \ar[d]& 
0 \ar[d]&& 0 \ar[d]\\
T_\epsilon G_0  \ar[rr]^-{\cong} \ar[d]&& \Hom_{A_\epsilon}\bigl(\check T_\epsilon\ulHM_0 ,\widehat\Omega^1_{A_\epsilon/\BF_\epsilon}\bigr) \ar[d]&
\check T_\epsilon\ulHM_1  \ar[rr]^-{\cong} \ar[d]&&  \Hom_{A_\epsilon}\bigl(T_\epsilon G_1 ,\widehat\Omega^1_{A_\epsilon/\BF_\epsilon}\bigr) \ar[d]\\
T_\epsilon G_1\ar[rr]^-{\cong} \ar[d] && \Hom_{A_\epsilon}\bigl(\check T_\epsilon\ulHM_1,\widehat\Omega^1_{A_\epsilon/\BF_\epsilon}\bigr) \ar[d]& 
\check T_\epsilon\ulHM_0\ar[rr]^-{\cong} \ar[d] && \Hom_{A_\epsilon}\bigl(T_\epsilon G_0,\widehat\Omega^1_{A_\epsilon/\BF_\epsilon}\bigr) \ar[d] \\
H(K^\sep)  \ar[rr]^-{\cong} _-{(*')}\ar[d] &&\Hom_{A_\epsilon}\bigl(\check T_\epsilon\ulHM ,\widehat\Omega^1_{A_\epsilon/\BF_\epsilon}\otimes_{A_\epsilon}Q_\epsilon/A_\epsilon\bigr) \,; \ar[d] &
\check T_\epsilon\ulHM  \ar[rr]^-{\cong}_-{(*)} \ar[d] &&\Hom_{A_\epsilon}\bigl(H(K^\sep) ,\widehat\Omega^1_{A_\epsilon/\BF_\epsilon}\otimes_{A_\epsilon}Q_\epsilon/A_\epsilon\bigr) \,; \ar[d] \\
0&&0&0&&0}
\end{equation}
where the horizontal isomorphisms on the first two rows are defined in Proposition~\ref{PropCompTateMod}.

We now give an intrinsic description of the isomorphism $(*)$ and $(*')$ in the above diagram, not depending on the choice of the isogeny $\ulHM_1\rightarrow\ulHM_0$.
Assume that $H$  is killed by $z^n$. Then we can identify the Galois-equivariant $A/\epsilon^n$-isomorphism
\[
\Hom_{\BF_\epsilon}(A/\epsilon^n, H(K^\sep)) \isoto H(K^\sep)),
\]
defined by $\bar\xi\mapsto \bar\xi(1)$. This isomorphism is compatible with increasing $n$ with respect to the natural projections $A/\epsilon^{n+n'} \onto A/\epsilon^n$. 

Let us consider the following $A/\epsilon^n$-linear isomorphism
\[
 \widehat\Omega^1_{A_\epsilon/\BF_\epsilon}\otimes_{A_\epsilon}\epsilon^{-n}A_\epsilon/A_\epsilon \isoto \Hom_{\BF_\epsilon}(A/\epsilon^n,\BF_\epsilon),\quad 
 \omega \mapsto (a\mapsto \Res_\epsilon(\tilde a\tilde\omega)),
\]
where $\tilde\omega\in\epsilon^{-n}\widehat\Omega^1_{A_\epsilon/\BF_\epsilon}$ and $\tilde a\in A_\epsilon$ are some lifts of $\omega$ and $a$, respectively. Note that the residue $\Res_\epsilon (\tilde a\tilde\omega)$ modulo $\epsilon^n$ only depends on $a$ and $\omega$. If we choose a uniformizing parameter $z\in A_\epsilon$, then this map can be identified with the reduction of the isomorphism $\widehat\Omega^1_{A_\epsilon/\BF_\epsilon} \isoto \Hom_{\BF_\epsilon}(Q_\epsilon/A_\epsilon,\BF_\epsilon)$ (introduced above Proposition~\ref{PropCompTateMod}) via the isomorphism $A/\epsilon^n\isoto \epsilon^{-n}A_\epsilon/A_\epsilon$ sending $1$ to $\frac{1}{z^n}$, and we can write down the inverse map as $\lambda \mapsto dz\otimes(\sum_{i=1}^{n}\lambda(z^{i-1})z^{-i})$.

Let us now state the following comparison result analogous to Proposition~\ref{PropCompTateMod}:

\begin{proposition}\label{PropCompTorTateMod}
We use the notation as above, and view $\ulHM$ as $\Hom_{R\text{\rm-groups},\BF_\epsilon\text{\rm-lin}}(H\,,\,\BG_{a,R})$; \emph{cf.} Remark~\ref{RemTorAndersonMod}.
Then the following Galois-equivariant pairing of $A_\epsilon$-modules
\begin{align*}
H(K^\sep)\times \check T_\epsilon\ulHM&\longto \widehat\Omega^1_{A_\epsilon/\BF_\epsilon}\otimes_{A_\epsilon}Q_\epsilon/A_\epsilon\,,\\
(f,m)&\mapsto m\circ f \in \Hom_{\BF_\epsilon}(A/\epsilon^n,\BF_\epsilon) \cong \widehat\Omega^1_{A_\epsilon/\BF_\epsilon}\otimes_{A_\epsilon}\epsilon^{-n}A_\epsilon/A_\epsilon \subset \widehat\Omega^1_{A_\epsilon/\BF_\epsilon}\otimes_{A_\epsilon}Q_\epsilon/A_\epsilon\,;
\end{align*}
is perfect; in other words, it induces the following Galois-equivariant $A_\epsilon$-linear isomorphisms:
\[
 H(K^\sep)\isoto \Hom_{A_\epsilon}\bigl(\check T_\epsilon\ulHM,\widehat\Omega^1_{A_\epsilon/\BF_\epsilon}\otimes_{A_\epsilon}Q_\epsilon/A_\epsilon\bigr) \quad\text{and}\quad \check T_\epsilon\ulHM \isoto \Hom_{A_\epsilon}\bigl(H(K^\sep),\widehat\Omega^1_{A_\epsilon/\BF_\epsilon}\otimes_{A_\epsilon}Q_\epsilon/A_\epsilon\bigr).
\] 
Furthermore, if we choose an isogeny $\ulHM_1\rightarrow\ulHM_0$ whose cokernel is $\ulHM$, then the above isomorphism coincides with the isomorphisms $(*)$ and $(*')$ in the diagram \eqref{EqPropCompTorTateMod}.

If we choose a uniformizing parameter $z\in A_\epsilon$ and identify $\widehat\Omega^1_{A_\epsilon/\BF_\epsilon}\cong A_\epsilon dz$, then the above pairing non-canonically identifies $H(K^\sep)$ with the Pontryagin dual of $\check T_\epsilon\ulHM$ as a torsion $\Gal(K^\sep/K)$-representation.
\end{proposition}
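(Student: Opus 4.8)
The strategy is to reduce the perfectness of the stated pairing to the already-established Proposition~\ref{PropCompTateMod}, via the isogeny presentation of $\ulHM$ and the two auxiliary exact sequences assembled in the diagram \eqref{EqPropCompTorTateMod}. First I would fix an isogeny $f\colon\ulHM_1\to\ulHM_0$ of (honest) local shtukas over $R$ with cokernel $\ulHM$, which exists by Lemma~\ref{LemmaRaynaudBBM}; applying $\Dr_{\hat q}$ and Remark~\ref{RemTorAndersonMod} this gives an isogeny $G_0\to G_1$ of $z$-divisible local Anderson modules with kernel $H=\Dr_{\hat q}(\ulHM)$. Proposition~\ref{PropTorTateMod} provides the short exact sequence $0\to\check T_\epsilon\ulHM_1\to\check T_\epsilon\ulHM_0\to\check T_\epsilon\ulHM\to 0$, and Lemma~\ref{LemmaTorTateModES} provides $0\to T_\epsilon G_0\to T_\epsilon G_1\to H(K^\sep)\to 0$. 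Feeding each of these into the Pontryagin-type exact sequence \eqref{EqDualityES} (with coefficients in $\wh\Omega^1_{A_\epsilon/\BF_\epsilon}$) and using the isomorphisms of Proposition~\ref{PropCompTateMod} on the first two rows, one gets the two columns of \eqref{EqPropCompTorTateMod}, whose bottom rows are the desired isomorphisms $(*)$ and $(*')$ by the five lemma (or just by the snake lemma applied to the map of the two three-term exact sequences).

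The second, and main, task is to check that the isomorphisms $(*)$ and $(*')$ obtained this way agree with the intrinsic pairing $(f,m)\mapsto m\circ f$ described in the statement, and in particular that this intrinsic pairing is independent of the chosen isogeny. Here I would argue at finite level: fix $n$ with $z^n\hat M=0$ (equivalently $z^n$ kills $H$), and unwind the two ``connecting homomorphism'' descriptions. On the Galois side, an element $\xi\in T_\epsilon G_1=\Hom_{A_\epsilon}(Q_\epsilon/A_\epsilon,G_1(K^\sep))$ maps to $\tilde\xi(1)\in H(K^\sep)$ for any lift $\tilde\xi\colon Q_\epsilon\to G_0(K^\sep)$, by the explicit formula in Lemma~\ref{LemmaTorTateModES}. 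On the shtuka side, the description above Proposition~\ref{PropCompTorTateMod} identifies $\check T_\epsilon\ulHM=\coker(\check T_\epsilon f)$ with $\Hom_{\BF_\epsilon}(A/\epsilon^n,H(K^\sep))^\vee$-type data via $\Res_\epsilon$. The key point is that the pairing $\langle\,\cdot\,,\,\cdot\,\rangle$ on $\check T_\epsilon\ulHM_i\times T_\epsilon G_i$ from Proposition~\ref{PropCompTateMod} is $\langle g,m\rangle = m\circ g$, so that under the vertical maps of \eqref{EqPropCompTorTateMod}, a pair $(m,f)$ with $f\in H(K^\sep)$ a lift of some $\xi\in T_\epsilon G_1$ and $m\in\check T_\epsilon\ulHM$ a lift of some $m_0\in\check T_\epsilon\ulHM_0$ pairs to $m_0\circ\xi$ at level $G_1$, which by the compatibility of residues and the commutativity of the square reduces to the class of $m\circ f$ in $\wh\Omega^1_{A_\epsilon/\BF_\epsilon}\otimes_{A_\epsilon}Q_\epsilon/A_\epsilon$. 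So the diagram chase shows $(*)$ and $(*')$ are given by $(f,m)\mapsto m\circ f$; the functoriality statement (b) of the remark after Proposition~\ref{PropCompTateMod}, applied to $f\colon\ulHM_1\to\ulHM_0$, is what makes the squares commute. Finally, that $m\circ f$ really lands in $\Hom_{\BF_\epsilon}(A/\epsilon^n,\BF_\epsilon)\cong\wh\Omega^1_{A_\epsilon/\BF_\epsilon}\otimes_{A_\epsilon}\epsilon^{-n}A_\epsilon/A_\epsilon$ follows exactly as in part (a) of the remark after Proposition~\ref{PropCompTateMod}: $m=\tau_{\hat M}(\hat\sigma_{\!\hat M}^*m)=F_{\hat q,\BG_{a}/R}\circ m$ forces every value $m\circ f(a)$ to be $\hat q$-Frobenius-fixed, hence in $\BF_\epsilon$.

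Once $(*)$ and $(*')$ are identified with the intrinsic pairing, independence of the isogeny is automatic (the intrinsic description makes no reference to $f$), and perfectness of the pairing is exactly the bijectivity of $(*)$ and $(*')$, already proven. The Galois-equivariance follows from the computation $\langle g(f),g(m)\rangle = g(m)\circ g(f) = g(m\circ f) = m\circ f$ since $\Gal(K^\sep/K)$ acts trivially on $m\circ f\in\BF_\epsilon$, just as in the proof of Proposition~\ref{PropCompTateMod}. For the last sentence, choosing a uniformizer $z$ and the identification $\wh\Omega^1_{A_\epsilon/\BF_\epsilon}\cong A_\epsilon\,dz$ turns $\wh\Omega^1_{A_\epsilon/\BF_\epsilon}\otimes_{A_\epsilon}Q_\epsilon/A_\epsilon$ into $Q_\epsilon/A_\epsilon$, which is (non-canonically, after the same choice) the injective hull / Pontryagin dualizing module for finite-length $A_\epsilon$-modules; so $\check T_\epsilon\ulHM\cong\Hom_{A_\epsilon}(H(K^\sep),Q_\epsilon/A_\epsilon)$ exhibits $H(K^\sep)$ as the Pontryagin dual of $\check T_\epsilon\ulHM$ as a $\Gal(K^\sep/K)$-module. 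I expect the main obstacle to be purely bookkeeping: matching the two explicitly-described connecting maps (snake lemma for Lemma~\ref{LemmaTorTateModES} versus snake lemma for \eqref{EqDualityES}) on the nose, keeping track of the various identifications $\epsilon^{-n}A_\epsilon/A_\epsilon\cong A/\epsilon^n$ and $\Res_\epsilon$-duality, and verifying all squares in \eqref{EqPropCompTorTateMod} commute — there is no conceptual difficulty beyond Proposition~\ref{PropCompTateMod} and its functoriality.
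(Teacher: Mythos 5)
Your proposal is correct and follows essentially the same route as the paper: reduce to an isogeny presentation via Lemma~\ref{LemmaRaynaudBBM}, obtain $(*)$ and $(*')$ from the diagram \eqref{EqPropCompTorTateMod}, and then verify by a diagram chase — using the explicit descriptions of the connecting maps in Lemma~\ref{LemmaTorTateModES} and below \eqref{EqDualityES} together with the functoriality of the pairing of Proposition~\ref{PropCompTateMod} — that the bottom isomorphisms are given by $(f,m)\mapsto m\circ f$. Your write-up is in fact more detailed than the paper's own terse proof, and the supplementary points (Galois-equivariance, values landing in $\BF_\epsilon$, independence of the isogeny, and the Pontryagin-duality remark) are all handled correctly.
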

\begin{proof}
By Lemma~\ref{LemmaRaynaudBBM} we can find an isogeny of effective local shtukas $\ulHM_1\rightarrow\ulHM_0$ whose cokernel is the given effective torsion local shtuka $\ulHM$. Therefore, it suffices to show that the pairing in the statement induces the isomorphisms $(*)$ and $(*')$ in the diagram \eqref{EqPropCompTorTateMod}. Now, since  the surjective vertical arrows in the diagram \eqref{EqPropCompTorTateMod} are  made completely explicit (\emph{cf.} Lemma~\ref{LemmaTorTateModES} and the discussion below \eqref{EqDualityES}), we can directly check the commutativity of \eqref{EqPropCompTorTateMod} with horizontal maps given by the pairings in Propositions~\ref{PropCompTateMod} and \ref{PropCompTorTateMod}.
\end{proof}
\begin{remark}
As suggested in the proof of Proposition~\ref{PropCompTorTateMod}, one can easily show that Propositions~\ref{PropCompTateMod} and \ref{PropCompTorTateMod} are equivalent. Indeed, it is possible to deduce Proposition~\ref{PropCompTateMod} by directly proving Proposition~\ref{PropCompTorTateMod} and apply it to $H=G[z^n]$ for a $z$-divisible local Anderson module $G$ over $R$.
\end{remark}

\begin{definition}\label{DefTorShModel}
Let $\Rep_{A_\epsilon}^\tors\Gal(K^\sep/K)$ denote the category of finite-length torsion $A_\epsilon$-modules equipped with a discrete action of $\Gal(K^\sep/K)$.
Then for $T\in \Rep_{A_\epsilon}^\tors\Gal(K^\sep/K)$, a \emph{torsion local shtuka model} of $T$ is a torsion local shtuka $\ulHM$ over $R$ equipped with a $\Gal(K^\sep/K)$-equivariant isomorphism $T\cong \check T_\epsilon\ulHM$. 
\end{definition}

Contrary to the case of local shtukas over $R$, the functor $\check T_\epsilon$ from the category of torsion local shtukas over $R$ to $\Rep_{A_\epsilon}^\tors\Gal(K^\sep/K)$ is not fully faithful. In particular, it is possible to have more than one torsion local shtuka model for $ T\in\Rep_{A_\epsilon}^\tors\Gal(K^\sep/K)$. For example, for any torsion local shtuka $\ulHM$ over $R$, the Galois action on $\check T_\epsilon\ulHM $ trivializes after replacing $R$ with some finite extension, so as $R$ gets more ramified we obtain more torsion local shtuka models of the trivial torsion Galois module.

\begin{example}\label{ExTorLSModel}
In this example, we find  more torsion local shtuka models of $A_\epsilon/(z^n)$ with trivial $\Gal(K^\sep/K)$-action. We consider the following torsion local shtuka for any $d\in\BZ$:
\[
\ulHM^d_n:=(R\dbl z \dbr/(z^n), (z-\zeta)^d).
\]
Using the notation from Example~\ref{ExCarlitz4A}, it follows that $\check T_\epsilon\ulHM_n^d\subset K^\sep\dbl z\dbr/(z^n)$ is an $A_\epsilon/(z^n)$-lattice generated by $(\tplus)^{-d}\bmod{z^n}$  with the Galois action given by the character $(\chi_\epsilon)^{-d}\bmod{z^n}$. Moreover, if $e\in\BN$ is such that $\hat q^e\ge n$ and $d=\hat q^e(\hat q-1)d'$ then we have $(\tplus)^{-d}\bmod{z^n}=\ell_0^{-d}=(-\zeta)^{-\hat q^ed'}\in K\dbl z\dbr/(z^n)$; in other words, $(\chi_\epsilon)^d\equiv 1\bmod{z^n}$. Therefore, in this case $\ulHM^d_n$ defines a torsion local shtuka model of the trivial Galois module $A_\epsilon/(z^n)$ for any $d\in\BZ$. Note that all these are pairwise non-isomorphic because $(\tplus)^{-d}\bmod{z^n}$ is not a unit in $R\dbl z\dbr/(z^n)$.
\end{example}

\begin{proposition}\label{PropRamakrishnaCrit}
The full subcategory of $\Rep_{A_\epsilon}^\tors\Gal(K^\sep/K)$, consisting of torsion Galois representations which admit an effective torsion local shtuka model with height $\leqslant d$, is stable under subquotients, direct products, and twisted duality $T\mapsto T^\vee(d):=\Hom_{A_\epsilon}(T,Q_\epsilon/A_\epsilon)\otimes(\chi_\epsilon)^d$, where $\chi_\epsilon$ is defined in Example~\ref{ExCarlitz4A}.
\end{proposition}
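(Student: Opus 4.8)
The plan is to establish stability under each of the three operations—subquotients, direct products, and twisted duality—by exhibiting explicit torsion local shtuka models, using Lemma~\ref{LemmaRaynaudBBM} and Example~\ref{ExCokerIsog} together with the exactness of $\check T_\epsilon$ from Proposition~\ref{PropTorTateMod}. First I would treat direct products, which is the easiest case: if $T_1\cong\check T_\epsilon\ulHM^{(1)}$ and $T_2\cong\check T_\epsilon\ulHM^{(2)}$ with each $\ulHM^{(i)}$ effective of height $\leqslant d$, then $\ulHM^{(1)}\oplus\ulHM^{(2)}$ is effective of height $\leqslant d$ (the containments $(z-\zeta)^d\hat M^{(i)}\subset\tau(\hat\sigma^*\hat M^{(i)})\subset\hat M^{(i)}$ pass to direct sums termwise), and $\check T_\epsilon$ commutes with finite direct sums by Proposition~\ref{PropTorTateMod}, giving $\check T_\epsilon(\ulHM^{(1)}\oplus\ulHM^{(2)})\cong T_1\oplus T_2$.

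Next I would handle subquotients. Suppose $0\to T'\to T\to T''\to0$ is exact in $\Rep_{A_\epsilon}^\tors\Gal(K^\sep/K)$ with $T\cong\check T_\epsilon\ulHM$ for an effective torsion local shtuka $\ulHM$ of height $\leqslant d$. By the equivalence of categories between effective torsion local shtukas over $R$ and torsion local Anderson modules (Remark~\ref{RemTorAndersonMod}), together with Proposition~\ref{PropCompTorTateMod}, the Galois-subrepresentations of $\check T_\epsilon\ulHM$ correspond to quotient Anderson modules, hence to sub-torsion-local-shtukas, but it is cleaner to argue directly on the shtuka side. The key point is that $\check T_\epsilon$ is an exact functor on torsion local shtukas (Proposition~\ref{PropTorTateMod}), and the isomorphism \eqref{EqTorTateModIsom} identifies the lattice of $\Gal(K^\sep/K)$-stable $A_\epsilon$-submodules of $\check T_\epsilon\ulHM$ with the lattice of $R\dbl z\dbr$-submodules $\hat M'\subset\hat M$ that are $\tau$-stable (i.e. $\tau_{\hat M}(\hat\sigma^*\hat M')=\hat M'[\tfrac{1}{z-\zeta}]$) and for which $\hat M'$ and $\hat M/\hat M'$ are free over $R$; this is the torsion analog of Proposition~\ref{PropLattices} and follows by taking $\hat\sigma$-invariants in $\check T'\otimes_{A_\epsilon}K^\sep\dbl z\dbr\subset\hat M\otimes_{R\dbl z\dbr}K^\sep\dbl z\dbr$, setting $\hat M':=\hat M\cap(\check T'\otimes_{A_\epsilon}K^\sep\dbl z\dbr)^{\Gal}$, and using Lemma~\ref{LemmaAuslanderBuchsbaum} for freeness of $\hat M/\hat M'$ over $R$. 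Given such $\hat M'$, both $\ulHM'=(\hat M',\tau_{\hat M}|)$ and $\ulHM''=(\hat M/\hat M',\tau\bmod\hat M')$ inherit the height bound: from $(z-\zeta)^d\hat M\subset\tau_{\hat M}(\hat\sigma^*\hat M)\subset\hat M$ and $\tau$-stability one gets $(z-\zeta)^d\hat M'\subset\tau_{\hat M'}(\hat\sigma^*\hat M')\subset\hat M'$ and likewise for the quotient. Then $\check T_\epsilon\ulHM'\cong T'$ and $\check T_\epsilon\ulHM''\cong T''$ by the exactness in Proposition~\ref{PropTorTateMod}.

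For twisted duality I would use the model $\ulHM^d_n=(R\dbl z\dbr/(z^n),(z-\zeta)^d)$ from Example~\ref{ExTorLSModel}, which is effective of height $\leqslant d$ and has $\check T_\epsilon\ulHM^d_n\cong(A_\epsilon/(z^n))(\chi_\epsilon^{-d})$, so its dual realizes the twisting character. The construction is: given $T\cong\check T_\epsilon\ulHM$ with $\ulHM$ effective of height $\leqslant d$ and $z^n$ killing $\hat M$, form the internal-Hom-type shtuka $\ulHM^\vee(d):=\CHom(\ulHM,\ulHM^d_n):=(\Hom_{R\dbl z\dbr}(\hat M,R\dbl z\dbr/(z^n)),\ \tau)$ with $\tau$ induced by $(z-\zeta)^d\cdot(\tau_{\hat M}^{-1})^\vee$; equivalently, realize $T^\vee(d)$ via Lemma~\ref{LemmaRaynaudBBM}: write $\ulHM$ as the cokernel of an isogeny $f\colon\ulHM_1\to\ulHM_0$ of effective local shtukas with $(z-\zeta)^d\hat M_i\subset\tau_{\hat M_i}(\hat\sigma^*\hat M_i)\subset\hat M_i$, dualize to get an isogeny $f^\vee\colon\ulHM_0^\vee(d)\to\ulHM_1^\vee(d)$ of local shtukas whose cokernel $\ulHM^\vee(d)$ is, by Example~\ref{ExCokerIsog}, an effective torsion local shtuka of height $\leqslant d$, and check via Proposition~\ref{PropCompTorTateMod} (the perfect pairing $H(K^\sep)\times\check T_\epsilon\ulHM\to\wh\Omega^1_{A_\epsilon/\BF_\epsilon}\otimes_{A_\epsilon}Q_\epsilon/A_\epsilon$ and the compatibility with the $z$-divisible module duality) that $\check T_\epsilon\ulHM^\vee(d)\cong\Hom_{A_\epsilon}(T,Q_\epsilon/A_\epsilon)\otimes\chi_\epsilon^d$. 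The main obstacle I expect is the bookkeeping in the duality case—verifying that the height bound is preserved under the dualization of $\tau$ (the factor $(z-\zeta)^d$ is exactly what turns $\tau^\vee$, which a priori lands in $(z-\zeta)^{-d}$-scaled modules, back into an effective map of height $\leqslant d$) and that the Tate-twist character matches up correctly with the cyclotomic character $\chi_\epsilon$ and the differential-form normalization in Proposition~\ref{PropCompTorTateMod}. The subquotient case's only subtlety is the freeness of $\hat M/\hat M'$ over $R$, which is handled by Lemma~\ref{LemmaAuslanderBuchsbaum}; everything else is formal from exactness of $\check T_\epsilon$.
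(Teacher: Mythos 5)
Your proposal follows essentially the same route as the paper: direct products via direct sums of models, subquotients via the saturated sub-/quotient-lattice construction inside $T\otimes_{A_\epsilon}K^\sep\dbl z\dbr$ (which is exactly the paper's Lemma~\ref{LemmaSchCl}, the analog of scheme-theoretic closure), and twisted duality via $\hat M^\vee$ equipped with the Frobenius $(z-\zeta)^d(\tau_{\hat M}^{-1})^\vee$. The only cosmetic differences are that you phrase the subobject step as a full lattice bijection (more than is needed, since one only has to produce \emph{some} model and $\check T_\epsilon$ is not fully faithful on torsion objects) and that you also offer the dualization of an isogeny from Lemma~\ref{LemmaRaynaudBBM} as an alternative packaging of the dual, which the paper does not use here.
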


The proposition, especially the assertion about subquotients, is not obvious since $\check T_\epsilon$ may not be fully faithful. Recall that the analogous assertion for torsion representations of the Galois group of a $p$-adic field which admit finite flat group scheme models over the valuation ring can be obtained by working with the scheme-theoretic closure of the generic fiber. The following lemma provides an analog of the scheme-theoretic closure, which is needed for the proof of Proposition~\ref{PropRamakrishnaCrit}.

\begin{lemma}\label{LemmaSchCl}
Let $\ulHM$ be a torsion local shtuka over $R$, and consider a $\Gal(K^\sep/K)$-stable quotient $\check T_\epsilon\ulHM\onto T'$. We define $\hat M'$ to be the image of the following map:
\[
\hat M \hookrightarrow \hat M\otimes_{R\dbl z\dbr}K^\sep\dbl z\dbr \cong \check T_\epsilon\ulHM\otimes_{A_\epsilon}K^\sep\dbl z\dbr \onto T'\otimes_{A_\epsilon}K^\sep\dbl z\dbr.
\]
Then, $\tau_{\hat M}$ induces an isomorphism $\tau_{\hat M'}:\hat\sigma^*\hat M'[\frac{1}{z-\zeta}]\isoto\hat M'[\frac{1}{z-\zeta}]$, and $\ulHM':=(\hat M',\tau_{\hat M'})$ is a torsion local shtuka over $R$. If $\ulHM$ is effective (respectively, effective with height $\leqslant d$), then so is $\ulHM'$. Furthermore, $\ulHM'$ is the unique quotient of $\ulHM$ such that $ T' = \check T_\epsilon\ulHM'$ as a quotient of $\check T_\epsilon\ulHM$.
\end{lemma}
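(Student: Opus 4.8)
The module $\hat M'$ is to be regarded as the function-field counterpart of the scheme-theoretic closure of a finite flat subgroup of the generic fibre, and the plan is to verify the assertions in the order stated, using Proposition~\ref{PropTorTateMod} throughout. Write $\psi\colon\hat M\to N:=T'\otimes_{A_\epsilon}K^\sep\dbl z\dbr$ for the composite in the statement, so $\hat M'=\psi(\hat M)$. By construction $\psi$ is the $K^\sep\dbl z\dbr$-linearization of the quotient map $\check T_\epsilon\ulHM\onto T'$ transported along the isomorphism $\hat M\otimes_{R\dbl z\dbr}K^\sep\dbl z\dbr\isoto\check T_\epsilon\ulHM\otimes_{A_\epsilon}K^\sep\dbl z\dbr$ of Proposition~\ref{PropTorTateMod}; hence $\psi$ is $R\dbl z\dbr$-linear and $\hat\tau$-equivariant, where on $N$ the operator $\hat\tau$ is $\id_{T'}\otimes\hat\sigma$ (because $T'$ is a quotient of the $\hat\tau$-invariant module $\check T_\epsilon\ulHM$). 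Since $z-\zeta$ has the nonzero constant coefficient $-\zeta\in K^\sep$, it is a unit in $K^\sep\dbl z\dbr$, so this $\hat\tau$-structure linearizes to an isomorphism $\tau_N\colon\hat\sigma^*N\isoto N$ of $K^\sep\dbl z\dbr$-modules, and $\psi$ intertwines $\tau_{\hat M}$ with $\tau_N$.

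Next I would check that $\hat M'$ is free of finite rank over $R$. As a quotient of the finitely presented, $z$-power torsion $R\dbl z\dbr$-module $\hat M$ it is itself finitely presented over $R\dbl z\dbr$ and killed by a power of $z$, hence finitely generated over $R$; and as an $R$-submodule of the finite-dimensional $K^\sep$-vector space $N$ it is $R$-torsion free, so it is $R$-free since $R$ is discretely valued. A scaling argument then shows that an $R$-basis of $\hat M'$ remains $K^\sep$-linearly independent in $N$, so $\hat M'\otimes_R K^\sep\into N$ is injective; since it is also surjective ($N$ is spanned over $K^\sep$ by $\psi(\hat M)$), one obtains $\hat M'\otimes_{R\dbl z\dbr}K^\sep\dbl z\dbr=N$ and $\rk_R\hat M'=\dim_{K^\sep}N=\length_{A_\epsilon}T'$. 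The same reasoning, together with the facts that $\hat\sigma$ fixes $z$ and is injective on $R$, gives that $\hat\sigma^*\hat M'$ is $R$-free and that $\hat\sigma^*\hat M'\into\hat\sigma^*N$ is injective with image equal to the image of $\hat\sigma^*\psi$ (here one uses that $\hat\sigma^*$ commutes with the base change $R\dbl z\dbr\to K^\sep\dbl z\dbr$).

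Now I would produce $\tau_{\hat M'}$ and deduce the effectivity statements. On a $z$-power torsion $R\dbl z\dbr$-module, inverting $z-\zeta$ amounts to inverting $\zeta$ (since $(z-\zeta)$ divides $\zeta^n$ modulo $z^n$), so $\hat M'[\tfrac1{z-\zeta}]=\hat M'\otimes_R K$ and likewise for $\hat\sigma^*\hat M'$, and both embed into $N$, resp.\ $\hat\sigma^*N$. Restricting $\tau_N$ to $\hat\sigma^*\hat M'[\tfrac1{z-\zeta}]$ and using the commuting square expressing $\hat\tau$-equivariance of $\psi$, its image is
\[
\tau_N\bigl(\hat\sigma^*\hat M'[\tfrac1{z-\zeta}]\bigr)\;=\;\psi\bigl(\tau_{\hat M}(\hat\sigma^*\hat M[\tfrac1{z-\zeta}])\bigr)\;=\;\psi\bigl(\hat M[\tfrac1{z-\zeta}]\bigr)\;=\;\hat M'[\tfrac1{z-\zeta}]\,,
\]
so $\tau_{\hat M'}:=\tau_N|$ is an isomorphism $\hat\sigma^*\hat M'[\tfrac1{z-\zeta}]\isoto\hat M'[\tfrac1{z-\zeta}]$, and $\ulHM'=(\hat M',\tau_{\hat M'})$ is a torsion local shtuka over $R$. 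If $\tau_{\hat M}(\hat\sigma^*\hat M)\subset\hat M$ then $\tau_{\hat M'}(\hat\sigma^*\hat M')=\psi\bigl(\tau_{\hat M}(\hat\sigma^*\hat M)\bigr)\subset\psi(\hat M)=\hat M'$, and if in addition $(z-\zeta)^d\hat M\subset\tau_{\hat M}(\hat\sigma^*\hat M)$ then applying $\psi$ gives $(z-\zeta)^d\hat M'\subset\tau_{\hat M'}(\hat\sigma^*\hat M')$. Finally, taking $\hat\tau$-invariants in $\hat M'\otimes_{R\dbl z\dbr}K^\sep\dbl z\dbr=N=T'\otimes_{A_\epsilon}K^\sep\dbl z\dbr$ and using $(K^\sep\dbl z\dbr)^{\hat\sigma}=A_\epsilon$ yields $\check T_\epsilon\ulHM'=T'$, and naturality of the isomorphism of Proposition~\ref{PropTorTateMod} identifies the map $\check T_\epsilon\ulHM\to\check T_\epsilon\ulHM'$ induced by $\psi$ with the given quotient $\check T_\epsilon\ulHM\onto T'$. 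For uniqueness, any torsion local shtuka quotient $\ulHM\onto\ulHM''$ with $\check T_\epsilon\ulHM''\cong T'$ compatibly has $\hat M''$ free over $R$, hence $\hat M''\into\hat M''\otimes_{R\dbl z\dbr}K^\sep\dbl z\dbr\cong\check T_\epsilon\ulHM''\otimes_{A_\epsilon}K^\sep\dbl z\dbr\cong N$; by compatibility and naturality this embedding identifies $\hat M''$ with $\psi(\hat M)=\hat M'$ inside $N$, and then $\tau_{\hat M''}$ is forced to equal $\tau_N|=\tau_{\hat M'}$.

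The main obstacle I expect is not a single deep point but the bookkeeping that makes ``$\tau_N$ restricts to $\hat M'$'' literally correct: one must know that $\hat\sigma^*\hat M'[\tfrac1{z-\zeta}]$ genuinely sits inside $\hat\sigma^*N$ as the image of $\hat\sigma^*\psi$, which hinges on $\hat\sigma^*\hat M'$ being $R$-free (using $\hat\sigma(z)=z$ and that $R$ is a DVR) and on the compatibility of $\hat\sigma^*$ with the base change $R\dbl z\dbr\to K^\sep\dbl z\dbr$. This is routine but delicate, because $\hat\sigma^*$ is not exact in general and $\hat M'$ need not be free over $R\dbl z\dbr$, so one cannot simply intersect inside a flat module as in the proof of Lemma~\ref{LemmaSaturation}.
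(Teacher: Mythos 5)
Your proposal follows the paper's own route: take $\hat M'$ to be the image of $\hat M$, observe that it is finitely presented over $R\dbl z\dbr$ and torsion-free (hence free) over $R$, descend $\tau_{\hat M}$ through the commutative square using that $\hat\sigma^*$ preserves the injection $\hat M'\into N$ (the paper phrases this as flatness of $\hat\sigma\colon R\to R$), read off the effectivity statements, and deduce uniqueness from Proposition~\ref{PropTorTateMod}. All of this is sound, including the chain $\tau_N\bigl(\hat\sigma^*\hat M'[\tfrac1{z-\zeta}]\bigr)=\psi\bigl(\hat M[\tfrac1{z-\zeta}]\bigr)=\hat M'[\tfrac1{z-\zeta}]$; note that the last equality only needs injectivity of $\hat M'\otimes_RK\to N$, and over $K=\Quot(R)$ clearing denominators of a relation into $R$ is legitimate because $\hat M'$ is $R$-free.

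The genuine gap is the claim that ``a scaling argument shows that an $R$-basis of $\hat M'$ remains \emph{$K^\sep$}-linearly independent in $N$.'' For a general finite free $R$-submodule of a $K^\sep$-vector space this is simply false: $R+R\,a\subset K^\sep$ with $a\in\CO_{K^\sep}\setminus K$ is free of rank $2$ over $R$ inside a one-dimensional $K^\sep$-space, and no rescaling of the relation $a\cdot 1+(-1)\cdot a=0$ brings both coefficients into $R$, since that would force them into $K$. Because you use the resulting identity $\hat M'\otimes_{R\dbl z\dbr}K^\sep\dbl z\dbr=N$ to conclude $\check T_\epsilon\ulHM'=T'$, this step must be repaired. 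The repair is to use that $\hat M\otimes 1$ is pointwise fixed by the Galois action of Proposition~\ref{PropTorTateMod} (the one acting only on $K^\sep\dbl z\dbr$), so that $\hat M'=\psi(\hat M)$ lies in the invariants $N^{\Gal(K^\sep/K)}$ for the corresponding (diagonal, semilinear, discrete) action on $N=T'\otimes_{A_\epsilon}K^\sep\dbl z\dbr$; by Galois descent this is a $K$-vector space of dimension $\dim_{K^\sep}N=\length_{A_\epsilon}T'$, and \emph{there} the scaling argument into $R\subset K$ works. Combined with the surjectivity of $\hat M'\otimes_RK^\sep\to N$ this yields $\rk_R\hat M'=\length_{A_\epsilon}T'$ and the desired equality $\hat M'\otimes_{R\dbl z\dbr}K^\sep\dbl z\dbr=N$. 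Alternatively, one can bypass the rank count entirely by checking that $\ker(\hat M\onto\hat M')$ with the restricted $\tau$ is again a torsion local shtuka and invoking the exactness of $\check T_\epsilon$ from Proposition~\ref{PropTorTateMod}, which is essentially what the paper does.
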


\begin{proof}
Note that $\hat M'$ is flat over the discrete valuation ring $R$ and finitely presented over the noetherian ring $R\dbl z\dbr$ because it is a submodule of $T'\otimes_{A_\epsilon}K^\sep\dbl z\dbr$ and a quotient of $\hat M$. The isomorphism $\tau_{\hat M'}$ is obtained from the diagram
\[
\xymatrix {
\hat\sigma^*\hat M[\tfrac{1}{z-\zeta}] \ar@{->>}[r] \ar[d]_{\TS\tau_{\hat M}}^{\TS\cong} & \hat\sigma^*\hat M'[\tfrac{1}{z-\zeta}] \ar@{^{ (}->}[r] \ar@{-->}[d]_{\TS\tau_{\hat M'}}^{\TS\cong} & T'\otimes_{A_\epsilon}K^\sep\dbl z\dbr \ar@{=}[d]\\
\hat M[\tfrac{1}{z-\zeta}] \ar@{->>}[r] & \hat M'[\tfrac{1}{z-\zeta}] \ar@{^{ (}->}[r] & T'\otimes_{A_\epsilon}K^\sep\dbl z\dbr 
}
\]
using that $\hat\sigma\colon R\to R$ is flat. The diagram also shows that $\ulHM'$ is effective (respectively, effective with height $\leqslant d$) if $\ulHM$ is. Finally, the uniqueness of $\ulHM'$ follows from Proposition~\ref{PropTorTateMod}.
\end{proof}

\begin{remark}
In the setting of Lemma~\ref{LemmaSchCl}, assume furthermore that $\ulHM$ is effective. Then for a Galois-stable quotient $T'$ of $\check T_\epsilon\ulHM$, the associated $\ulHM'$ is effective. So there is a unique $A_\epsilon$-submodule scheme $H'_K$ of the generic fiber of $H:=\Dr_{\hat q}\ulHM'$ such that $H'_K(K^\sep)$ is the Pontryagin dual of $T'$. Then $\Dr_{\hat q}\ulHM'$ is the scheme-theoretic closure of $H'_K$ in $H$, which can be seen from the uniqueness of the scheme-theoretic closure.
\end{remark}

\begin{proof}[Proof of Proposition~\ref{PropRamakrishnaCrit}]
The claim on direct products is clear as $\check T_\epsilon$ commutes with direct products.

Let $\ulHM$ be an effective torsion local shtuka over $R$ with height $\leqslant d$, and set $T:=\check T_\epsilon \ulHM$.
Lemma~\ref{LemmaSchCl} shows that any Galois-stable quotient of $T$ admits an effective torsion local shtuka model with height $\leqslant d$. Let $T'\subset T$ be a Galois-stable $A_\epsilon$-submodule. Then by Lemma~\ref{LemmaSchCl}, there exists a quotient $\ulHM''$ of $\ulHM$ corresponding to $T/T'$. Then Proposition~\ref{PropTorTateMod} implies that $\hat M':=\ker(\hat M \onto\hat M'')$ can naturally be viewed as a torsion local shtuka model of $T'$, which is effective and of height $\leqslant d$.

We construct a torsion local shtuka model of $T^\vee(d)$ which is effective and of height $\leqslant d$. We first define $\ulHM^\vee:=(\hat M^\vee, \tau_{\hat M^\vee})$ as follows:
\begin{eqnarray}\label{EqPonDual}
\hat M^\vee & := & \Hom_{R\dbl z\dbr}(\hat M,R\dbl z \dbr[\tfrac{1}{z}]/R\dbl z\dbr)\,,\\[2mm]
\tau_{\hat M^\vee} & := & (\tau_{\hat M}^{-1})^\vee\colon \;\hat\sigma^*\hat M^\vee[\tfrac{1}{z-\zeta}] \;\cong\; (\hat\sigma^*\hat M)[\tfrac{1}{z-\zeta}]^\vee \;\isoto\; \hat M^\vee[\tfrac{1}{z-\zeta}]\,.\nonumber
\end{eqnarray}
Note that if $z^n\hat M=(0)$ then 
\[
\xymatrix @R=0pc {
\Hom_R(M,R)\,\cong\,\Hom_R(M,R)\otimes_{R\dbl z\dbr/(z^n)}\,\tfrac{1}{z^n}R\dbl z\dbr/R\dbl z\dbr \ar[r]^{\qquad\qquad\qquad\qquad\sim} & \hat M^\vee\qquad\qquad\quad\,\;\\
\es\qquad\qquad\qquad\qquad\qquad\qquad h\otimes a \;\es\qquad\qquad\qquad\qquad\ar@{|->}[r] & \bigl(m\mapsto a\cdot h(m)\bigr) \,.
}
\]
So as an $R$-module it is flat and finitely generated over $R$. By \cite[Proposition~2.10]{Eisenbud} we have
\begin{eqnarray*}
\hat M^\vee\otimes_{R\dbl z\dbr}K^\sep\dbl z\dbr & \cong & \Hom_{K^\sep\dbl z\dbr}\bigl(\hat M\otimes_{R\dbl z\dbr}K^\sep\,,\,K^\sep\dbl z \dbr[\tfrac{1}{z}]/K^\sep\dbl z\dbr\bigr) \\[2mm]
& \cong & \Hom_{A_\epsilon}(\check T_\epsilon\hat M,Q_\epsilon/A_\epsilon)\otimes_{A_\epsilon}K^\sep\dbl z\dbr\,,
\end{eqnarray*}
and hence $\check T_\epsilon\hat M^\vee\cong\Hom_{A_\epsilon}(\check T_\epsilon\hat M,Q_\epsilon/A_\epsilon)$. Note that $\ulHM^\vee$ is not necessarily an effective torsion local shtuka, but we can slightly modify to get
\begin{equation}\label{EqCartierDual}
\ulHM^{\vee,d}:=(\hat M^\vee, (z-\zeta)^d\tau_{\hat M^\vee}),
\end{equation} 
which is effective and of height $\leqslant d$ if the same holds for $\ulHM$. To see $T^\vee(d)\cong\check T_\epsilon\ulHM^{\vee,d}$, note that $(\tplus)^d\check T_\epsilon\ulHM^\vee = \check T_\epsilon\ulHM^{\vee,d}$ inside $\hat M^\vee\otimes_{R\dbl z\dbr}K^\sep\dbl z\dbr$, using the notation from Example~\ref{ExCarlitz4A}.
\end{proof}

Just as for finite flat group scheme models of torsion Galois representations of a $p$-adic field, there exists a natural notion of partial ordering on (equivalence classes of) torsion local shtuka models.
\begin{definition}\label{DefPO}
We fix $T\in \Rep^\tors_{A_\epsilon}\Gal(K^\sep/K)$, and consider torsion local shtuka models $\ulHM$ and $\ulHM'$ of $T$. We write $\ulHM\precsim\ulHM'$ if there exists a (necessarily unique) map $\ulHM\rightarrow\ulHM'$ which respects the identification $\check T_\epsilon \ulHM \cong T\cong\check T_\epsilon\ulHM'$. We say that $\ulHM$ and $\ulHM'$ are \emph{equivalent} if $\ulHM\precsim\ulHM'$ and $\ulHM\succsim\ulHM'$; or equivalently, if there exists a necessarily unique isomorphism $\ulHM\cong\ulHM'$ which respects the identification $\check T_\epsilon \ulHM \cong T\cong\check T_\epsilon\ulHM'$. 
\end{definition}

\begin{lemma}\label{LemmaMaxMin}
For $T\in \Rep^\tors_{A_\epsilon}\Gal(K^\sep/K)$, we have the following:
\begin{enumerate}
\item\label{LemmaMaxMin:PO}
$\precsim$ defines a partial ordering on the set of equivalence classes of torsion local shtuka models of $T$.
\item\label{LemmaMaxMin:Exist}
In the set of equivalence classes of effective torsion local shtuka models of $T$ with height $\leqslant d$, there exist unique maximal and minimal elements with respect to the partial ordering $\precsim$. Furthermore, the formation of maximal and minimal effective torsion local shtuka models with height $\leqslant d$ is functorial in the sense that any Galois-equivariant morphism of torsion Galois representations comes from a unique map of their maximal (respectively, minimal) effective torsion local shtuka models with height $\leqslant d$. 
\item\label{LemmaMaxMin:Finite}
The set of equivalence classes of effective torsion local shtuka models of $T$ with height $\leqslant d$ is finite.
\end{enumerate}
\end{lemma}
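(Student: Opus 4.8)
\textbf{Proof plan for Lemma~\ref{LemmaMaxMin}.}

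\emph{Part \ref{LemmaMaxMin:PO}.} The only non-trivial point is antisymmetry: if $\ulHM\precsim\ulHM'$ and $\ulHM'\precsim\ulHM$, then the composite $\ulHM\to\ulHM'\to\ulHM$ respects the identification with $T$, hence equals $\id_\ulHM$ by the uniqueness clause (which in turn follows from the faithfulness of $\check T_\epsilon$ on morphisms, i.e.\ from \eqref{EqTorTateModIsom} in Proposition~\ref{PropTorTateMod}, since $\hat M\hookrightarrow\hat M\otimes_{R\dbl z\dbr}K^\sep\dbl z\dbr$); similarly the other composite is the identity, so the two maps are mutually inverse isomorphisms. Reflexivity and transitivity are immediate.

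\emph{Part \ref{LemmaMaxMin:Exist}.} First I would show that the set of effective torsion local shtuka models of $T$ with height $\leqslant d$ is non-empty: choose \emph{any} effective torsion local shtuka model $\ulHM_0$ (e.g.\ coming from Lemma~\ref{LemmaSchCl} applied after realizing $T$ as a Galois stable quotient of the Tate module of a suitable local shtuka, using Proposition~\ref{PropLattices}), and note that $\tau_{\hat M_0}$ may be multiplied by $(z-\zeta)^{d'}$ for $d'\gg0$ to force height $\leqslant d$ — however one must be careful, multiplying $\tau$ by $(z-\zeta)$ changes $\check T_\epsilon$ by a Tate twist, so instead I would invoke that $T$ is assumed to lie in the subcategory of Proposition~\ref{PropRamakrishnaCrit}, i.e.\ non-emptiness is part of the hypothesis. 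Now for the maximal and minimal elements: given two models $\ulHM_1,\ulHM_2$, both $\hat M_1$ and $\hat M_2$ sit inside $T\otimes_{A_\epsilon}K^\sep\dbl z\dbr$ via the canonical isomorphisms \eqref{EqTorTateModIsom}, and both are $\tau$-stable $R\dbl z\dbr$-lattices there in the appropriate sense. I would define $\hat M_1+\hat M_2$ and $\hat M_1\cap\hat M_2$ inside $T\otimes_{A_\epsilon}K^\sep\dbl z\dbr$; one checks these are finitely presented over $R\dbl z\dbr$, $R$-flat (a submodule of $T\otimes_{A_\epsilon}K^\sep\dbl z\dbr$ that is also finitely generated is automatically $R$-torsion-free, hence $R$-flat since $R$ is a discrete valuation ring — here is where discreteness of $R$ enters), $\hat\sigma$- and $\tau$-stable with the height-$\leqslant d$ condition inherited (the intersection/sum of two submodules each satisfying $(z-\zeta)^d\hat M_i\subset\tau(\hat\sigma^*\hat M_i)\subset\hat M_i$ again satisfies this, using that $\hat\sigma^*$ commutes with finite sums and intersections by flatness of $\hat\sigma$), and that $\check T_\epsilon$ of each is again $T$. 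Thus the set of models is a lattice under $\precsim$. The maximal element is then $\sum_i\hat M_i$ over all models and the minimal is $\bigcap_i\hat M_i$ — but for these to make sense I need part \ref{LemmaMaxMin:Finite} (finiteness of the poset), so I would prove \ref{LemmaMaxMin:Finite} first and then deduce existence of maximal/minimal elements from the lattice structure plus finiteness. Functoriality: given $g\colon T\to T'$ Galois-equivariant and $\ulHM^{\max},\ulHM'^{\max}$ the maximal models, the induced map $f_K$ on $\hat M\otimes K\dbl z\dbr$ (as in the proof of Theorem~\ref{ThmAndersonKim}) sends $\hat M^{\max}$ into some model of $T'$, hence into $\hat M'^{\max}$ by maximality; uniqueness is as in \ref{LemmaMaxMin:PO}. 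The minimal case is dual, using that the image of $\hat M^{\min}$ is a model of a Galois-stable submodule and applying Lemma~\ref{LemmaSchCl} together with minimality.

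\emph{Part \ref{LemmaMaxMin:Finite}.} This is the crux. Every model $\ulHM$ of $T$ with height $\leqslant d$ embeds, via \eqref{EqTorTateModIsom}, as an $R\dbl z\dbr$-submodule $\hat M\subset T\otimes_{A_\epsilon}K^\sep\dbl z\dbr$ with $\hat M\otimes_{R\dbl z\dbr}K\dbl z\dbr$ equal to the fixed submodule $\ulHM\otimes K\dbl z\dbr=(T\otimes K^\sep\dbl z\dbr)^{\Gal}$, and satisfying $(z-\zeta)^d\hat M\subset\tau(\hat\sigma^*\hat M)\subset\hat M$. Fix the maximal model $\ulHM^{\max}$ (whose existence I would establish in this part by Zorn plus a bound, or circularly-free by first bounding); then for any model $\ulHM$ one has $\hat M\subset\hat M^{\max}$, and the height-$\leqslant d$ condition applied to both $\hat M$ and $\hat M^{\max}$ forces $(z-\zeta)^{2d}\hat M^{\max}\subset\tau(\hat\sigma^*(z-\zeta)^d\hat M^{\max})\subset\ldots$; more directly, I would argue that $\hat M$ is squeezed between $\hat M^{\max}$ and $(z-\zeta)^N\hat M^{\max}$ (or $z^N\hat M^{\max}$) for an $N$ depending only on $d$ and the length of $T$ — indeed $\hat M\supset\tau(\hat\sigma^*\hat M)\supset(z-\zeta)^d\hat M$ iterated gives $\tau^n(\hat\sigma^{n*}\hat M)\supset(z-\zeta)^{d(1+\hat q+\cdots)}\hat M$, and comparing $\tau$-structures of $\hat M$ and $\hat M^{\max}$ shows $z^{N}\hat M^{\max}\subset\hat M\subset\hat M^{\max}$ for suitable $N$. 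Since $T$ has finite $A_\epsilon$-length, $\hat M^{\max}/z^N\hat M^{\max}$ is a finite-length $R\dbl z\dbr$-module — wait, it is finite free over $R$ of finite rank but $R$ need not be finite; however, the relevant point is that a $\tau$-stable submodule $\hat M$ with $\check T_\epsilon\hat M$ of the \emph{same} length as $\check T_\epsilon\hat M^{\max}$ and with $\hat M\otimes K\dbl z\dbr=\hat M^{\max}\otimes K\dbl z\dbr$ is determined by a lattice datum over the special fibre. I expect the clean argument is: the models of $T$ correspond bijectively (by $\ulHM\mapsto\ulHM\otimes_R k$, using the faithfulness of base change to $k$ from Lemma~\ref{LemmaFaithfulBM}, together with an unramifiedness/flatness argument) to certain $\tau$-stable $k\dbl z\dbr$-lattices in a fixed finite-dimensional $k\dpl z\dpr$-object, all trapped between two fixed lattices $L_0\subset L_1$ with $L_1/L_0$ of finite length over $k\dbl z\dbr$; and there are only finitely many $k\dbl z\dbr$-submodules between $L_0$ and $L_1$ since $L_1/L_0$ is a finite-length module over the (not-necessarily-finite, but irrelevant) principal ideal domain — no: finite length over $k\dbl z\dbr$ does give finitely many submodules only if $k$ is finite. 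So the genuinely correct route, and the main obstacle, is to show the number of \emph{$\tau$-stable} lattices is finite even when $k$ is infinite: here one uses that $\tau$-stability is a strong rigidity constraint (à la Raynaud / Kisin theory of finite flat models), pinning each model down to finitely many possibilities regardless of $|k|$. I would model this step on Kisin's \cite{Kisin06} or Raynaud's finiteness for finite flat group scheme models, transported through the anti-equivalence of Remark~\ref{RemTorAndersonMod}, reducing to the classical finiteness of finite flat models of a fixed torsion Galois representation of a discretely valued field — which is where discreteness of $R$ is essential.
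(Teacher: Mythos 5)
Your treatment of part~\ref{LemmaMaxMin:PO} and of the lattice structure (sum and intersection of models inside $T\otimes_{A_\epsilon}K^\sep\dbl z\dbr$) matches the paper, as does the squeeze argument you sketch for part~\ref{LemmaMaxMin:Finite} once maximal and minimal models are available. The genuine gap is the circularity you yourself flag and never resolve: your part~\ref{LemmaMaxMin:Exist} is deduced from part~\ref{LemmaMaxMin:Finite}, while your part~\ref{LemmaMaxMin:Finite} presupposes the maximal model $\ulHM^{\max}$ (``whose existence I would establish \dots by Zorn plus a bound''), and the bound is exactly what is missing. The paper breaks the circle with a concrete upper bound that does not use finiteness: for an effective model $\ulHM$ one passes to $\Dr_{\hat q}(\ulHM)$; its generic fiber is \'etale over $K$, so the trace pairing on $\CO_{\Dr_{\hat q}(\ulHM)}\otimes_RK$ is perfect and the normalization of $\Dr_{\hat q}(\ulHM)$ in its generic fiber is \emph{finite} over it. Every effective model dominating $\ulHM$ corresponds to a subring of this normalization, so the set of effective models is bounded above; closure under sums then yields the unique maximal element, the minimal one is obtained by applying the twisted duality $\ulHM\mapsto\ulHM^{\vee,d}$ (which is why minimality is only asserted among height-$\leqslant d$ models), and only \emph{then} does the finiteness of part~\ref{LemmaMaxMin:Finite} follow from $\hat M^-\subset\hat M\subset\hat M^+$ with $\hat M^+/\hat M^-$ of finite length over $R\dbl z\dbr$. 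You gesture at ``Raynaud's finiteness for finite flat models'' as a black box, but that classical finiteness rests on precisely this normalization step, so invoking it does not discharge the obligation — you need to exhibit the bound, and the \'etale-generic-fiber/trace-pairing argument is the missing idea.

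Two smaller points. First, your functoriality argument for the maximal models is incomplete when $g\colon T\to T'$ is not surjective: the image of $\hat M^{\max}$ under $f_K$ is a model of $\im(g)$, not of $T'$, so maximality of $\hat M'^{\max}$ does not directly apply; the paper fixes this by replacing $g$ with the surjection $T\oplus T'\xrightarrow{g+\id}T'$ and using the maximal model of $T\oplus T'$. Second, your worry about a finite-length $R\dbl z\dbr$-module having infinitely many submodules when $k$ is infinite is a fair reading of the paper's one-line conclusion of part~\ref{LemmaMaxMin:Finite}, but you should not let it derail the proof into a second appeal to Raynaud: in the intended setting the residue field is finite, and in any case the submodules to be counted are the $\hat\tau$-stable ones trapped between $\hat M^-$ and $\hat M^+$, so the squeeze is the argument — your detour through ``finiteness of finite flat models of a torsion representation'' duplicates the very statement being proved.
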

\begin{proof}
For any torsion local shtuka models $\ulHM$ and $\ulHM'$ of $T$, the isomorphism $\check T_\epsilon\ulHM\cong T\cong \check T_\epsilon\ulHM'$ enables us to identify the equivalence classes of $\ulHM$ and $\ulHM'$ as $R\dbl z\dbr$-submodules of $T\otimes_{R\dbl z\dbr}K^\sep\dbl z\dbr$ which are stable under $1\otimes\hat\sigma$. 
One can easily check that $\ulHM+\ulHM'$ and $\ulHM\cap\ulHM'$ can naturally be viewed as torsion local shtuka models of $T$, where $\hat\tau$ is obtained by the restriction of $1\otimes\hat\sigma$. Namely, $\ulHM+\ulHM'$ is the torsion local shtuka from Lemma~\ref{LemmaSchCl} associated with the quotient $T_\epsilon\ulHM=T_\epsilon\ulHM'$ of $T_\epsilon(\ulHM\oplus \ulHM')$, and $\ulHM\cap\ulHM'$ is the kernel of $\ulHM\oplus \ulHM'\onto\ulHM+ \ulHM'$. This shows that $\precsim$ is a partial ordering, i.e., part~\ref{LemmaMaxMin:PO}.

We next show that the set of equivalence classes of \emph{effective} torsion local shtuka models of $T$ admits a unique maximal element (if the set is non-empty).  For an effective torsion local shtuka model $\ulHM$ of $T$ we consider
\[
\Dr_{\hat q}(\ulHM)\;:=\;\Spec\;(\Sym_R\hat M)\big/\bigl(m^{\otimes\hat q}-\tau_{\hat M}(\hat\sigma_{\hat M}^*m)\colon m\in \hat M\bigr)\,;
\]
as in Remark~\ref{RemTorAndersonMod}. If $\ulHM'$ is another effective torsion local  shtuka model of $T$ with $\ulHM\precsim\ulHM'$, then there exists a finite morphism $\Dr_{\hat q}(\ulHM')\rightarrow\Dr_{\hat q}(\ulHM)$ which is an isomorphism over $\Spec K$. On the other hand, the normalization $\wt X$ of $\Dr_{\hat q}(\ulHM)$ in its generic fiber is \emph{finite} over $\Dr_{\hat q}(\ulHM)$ as the generic fiber is \'etale over $K$ by the Jacobi criterion or because $\ulHM$ is \'etale over $K$. Indeed, the trace pairing
\[
(\CO_{\Dr_{\hat q}(\ulHM)}\otimes_RK )\times (\CO_{\Dr_{\hat q}(\ulHM)}\otimes_RK) \rightarrow K
\]
is perfect by \'etaleness \cite[IV$_4$, Proposition~18.2.3(c)]{EGA}, so the dual of $\CO_{\Dr_{\hat q}(\ulHM)}$ is finite over $\CO_{\Dr_{\hat q}(\ulHM)}$ and contains the normalisation of $\CO_{\Dr_{\hat q}(\ulHM)}$. Thus the normalization is finite. This shows that the set of equivalence class of effective torsion local shtuka models of $T$ is bounded above with respect to $\precsim$, because every such is contained in the effective torsion local shtuka corresponding to $\wt X$. On the other hand, if $\ulHM$ and $\ulHM'$ are effective torsion local shtuka models of $T$, then so is the torsion local shtuka model $\ulHM+\ulHM'$. This shows the uniqueness of the maximal element.

Let us show there exists a unique minimal effective torsion local shtuka model with height $\leqslant d$ up to equivalence. Let $\ulHM'$ be a torsion local shtuka model of $T^\vee(d)$ maximal among those effective and of height $\leqslant d$. Since $\ulHM'\mapsto(\ulHM')^{\vee,d}$ reverses the partial ordering by \eqref{EqPonDual}, it follows that $(\ulHM')^{\vee,d}$ is minimal among effective torsion local shtuka models of $T$ with height $\leqslant d$.

For the functoriality claim on maximal and minimal objects, we  may assume that $\ulHM$ and $\ulHM'$ are maximal effective torsion local shtuka models of $T=\check T_\epsilon\ulHM$ and $T'=\check T_\epsilon\ulHM'$, respectively. (The case of minimal objects is reduced to this since the functor \eqref{EqCartierDual} switches the maximal and the minimal objects.) Let $f\colon T\rightarrow T'$ be a $\Gal(K^\sep/K)$-equivariant map. If $f$ is surjective, then the image of $\hat M$ under $\hat M  \hookrightarrow T\otimes_{A_\epsilon}K^\sep\dbl z\dbr \xrightarrow{f\otimes1}T'\otimes_{A_\epsilon}K^\sep\dbl z\dbr$ is an effective local torsion shtuka model of $T'$ with height $\leqslant d$, so  it is contained in $\hat M'$ by the maximality of $\ulHM'$. If $f$ is not surjective, then consider a surjective map $T\oplus T' \xrightarrow{f+\id}T'$. Let $\ulHM''$ denote a  torsion local shtuka model of $T\oplus T'$ maximal among those effective and of height $\leqslant d$, then $f+\id$ induces a map $\ulHM''\rightarrow \ulHM'$. By 
maximality of $\ulHM''$, there is a map $\ulHM\oplus\ulHM'\rightarrow\ulHM''$, so we obtain
\[
\ulHM \xrightarrow{(\id,0)}\ulHM\oplus\ulHM'\,\longto\,\ulHM'' \,\longto\, \ulHM',
\] 
which induces $f\colon T\rightarrow T'$. This proves part~\ref{LemmaMaxMin:Exist}.

Let $\ulHM^+$ and $\ulHM^-$ respectively denote the maximal and minimal effective torsion local shtuka models of $T$ with height $\leqslant d$. Since $R$ is discretely valued, the quotient $\hat M^+/\hat M^-$ is of finite length over $R\dbl z\dbr$. Since any equivalence class of effective torsion local shtuka models of $T$ with height $\leqslant d$ gives rise to a unique $R\dbl z\dbr$-submodule of $\hat M^+/\hat M^-$, we obtain the desired finiteness claim (i.e., part~\ref{LemmaMaxMin:Finite}).
\end{proof}

\begin{lemma}\label{LemmaLimit}
Let $T$ be a finitely generated free $A_\epsilon$-module equipped with a continuous action of $\Gal(K^\sep/K)$. Then the following are equivalent:
\begin{enumerate}
 \item\label{LemmaLimitLatt} There exists an effective local shtuka $\ulHM$ with $(z-\zeta)^d\hat M\subset\tau_{\hat M}(\hat\sigma^*\hat M)\subset\hat M$ such that $T\cong\check T_\epsilon\ulHM$.
 \item\label{LemmaLimitTor} For each positive integer $n$, there exists an effective torsion local shtuka $\ulHM_n$ with height $\leqslant d$ (i.e., satisfying $(z-\zeta)^d\hat M_n\subset\tau_{\hat M_n}(\hat\sigma^*\hat M_n)\subset\hat M_n$) such that $T/(z^n)\cong \check T(\ulHM_n)$.
\end{enumerate}
\end{lemma}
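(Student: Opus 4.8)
The plan is to treat the two implications separately. The direction \ref{LemmaLimitLatt}$\Rightarrow$\ref{LemmaLimitTor} is formal: given an effective local shtuka $\ulHM$ with $(z-\zeta)^d\hat M\subset\tau_{\hat M}(\hat\sigma^*\hat M)\subset\hat M$ and a $\Gal(K^\sep/K)$-equivariant isomorphism $T\cong\check T_\epsilon\ulHM$, one sets $\ulHM_n:=\ulHM\otimes_{R\dbl z\dbr}R\dbl z\dbr/(z^n)$. Because $\hat M$ is finite free over $R\dbl z\dbr$, the module $\hat M/z^n\hat M$ is finitely presented over $R\dbl z\dbr$, $z$-power torsion and finite free over $R$; the two inclusions descend to the quotient, so $\ulHM_n$ is an effective torsion local shtuka of height $\leqslant d$; and $\check T_\epsilon\ulHM_n=\check T_\epsilon\ulHM/z^n\check T_\epsilon\ulHM\cong T/z^nT$ by the displayed identity in Definition~\ref{DefTateMod}.

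For \ref{LemmaLimitTor}$\Rightarrow$\ref{LemmaLimitLatt} the idea is to produce a compatible family of torsion local shtuka models and pass to the limit, in the spirit of the relation between torsion local shtukas and finite flat group schemes (Example~\ref{ExCokerIsog}, Lemma~\ref{LemmaRaynaudBBM}). First I would fix for each $n$ a $\Gal(K^\sep/K)$-equivariant isomorphism $T/z^nT\cong\check T_\epsilon\ulHM_n$ and, using the canonical isomorphism of Proposition~\ref{PropTorTateMod}, view the effective torsion local shtuka models of $T/z^nT$ of height $\leqslant d$ as certain $\hat\tau$-stable $R\dbl z\dbr$-submodules of $T\otimes_{A_\epsilon}K^\sep\dbl z\dbr/(z^n)$; by Lemma~\ref{LemmaMaxMin}\ref{LemmaMaxMin:Finite} the set $\mathcal{S}_n$ of these (up to equivalence) is finite, and it is non-empty by \ref{LemmaLimitTor}. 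The key point will be that reduction modulo $z^n$ gives a map $\mathcal{S}_{n+1}\to\mathcal{S}_n$: for $\ulHM'\in\mathcal{S}_{n+1}$ the submodule $z^n\hat M'$ is finite free over the discretely valued ring $R$ and stable under $\tau_{\hat M'}$ away from $z=\zeta$, so $z^n\ulHM'$ and $\ulHM'/z^n\ulHM'$ are torsion local shtukas, the bound descends, and feeding $0\to\hat M'[z^n]\to\hat M'\xrightarrow{z^n}z^n\hat M'\to0$ and $0\to z^n\hat M'\to\hat M'\to\hat M'/z^n\hat M'\to0$ into the exactness of $\check T_\epsilon$ (Proposition~\ref{PropTorTateMod}) identifies $\check T_\epsilon(z^n\ulHM')$ with $z^n(T/z^{n+1}T)$ and hence $\check T_\epsilon(\ulHM'/z^n\ulHM')$ with $T/z^nT$, compatibly with the chosen identifications. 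Then $\invlim[n]\mathcal{S}_n$ is a non-empty inverse limit of finite sets; picking an element gives torsion local shtukas $\ulHM_n$ with surjective transition maps of kernel $z^n\hat M_{n+1}$, and I would set $\ulHM:=(\hat M,\tau_{\hat M})$ with $\hat M:=\invlim[n]\hat M_n\subset T\otimes_{A_\epsilon}K^\sep\dbl z\dbr$ and $\tau_{\hat M}$ induced (it is an isomorphism after inverting $z-\zeta$ since each $\tau_{\hat M_n}$ is).

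It then remains to check that $\ulHM$ is effective with $(z-\zeta)^d\hat M\subset\tau_{\hat M}(\hat\sigma^*\hat M)\subset\hat M$ — immediate from the bounds on the $\ulHM_n$ — and that $\check T_\epsilon\ulHM\cong T$. Everything rests on the identification $\hat M/z^n\hat M\isoto\hat M_n$: once this is known, $\hat M$ is $z$-adically separated and complete with $\hat M/z\hat M$ finite over $R$, hence finitely generated over $R\dbl z\dbr$ by topological Nakayama; being a submodule of the torsion-free module $T\otimes_{A_\epsilon}K^\sep\dbl z\dbr$ it is torsion-free over $R\dbl z\dbr$, and one verifies $\hat M=\hat M[\tfrac{1}{z}]\cap(\hat M\otimes_{R\dbl z\dbr}K\dbl z\dbr)$, so Lemma~\ref{LemmaSaturation} makes $\hat M$ free over $R\dbl z\dbr$; finally $\hat M/z^n\hat M\cong\hat M_n$ forces $\rk\hat M=\rk\ulHM_1=\rk_{A_\epsilon}T$ and gives $\check T_\epsilon\ulHM=\invlim[n]\check T_\epsilon(\ulHM/z^n\ulHM)=\invlim[n]T/z^nT=T$. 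The hard part will therefore be exactly this identification $\hat M/z^n\hat M\cong\hat M_n$, i.e. the exactness of reduction modulo $z^n$ on the inverse limit, equivalently the vanishing of the relevant $\invlim[n]^1$; this must be extracted from the fact that every $\hat M_n$ is a \emph{full} $R\dbl z\dbr$-lattice in $T\otimes_{A_\epsilon}K^\sep\dbl z\dbr/(z^n)$, and it is here that the discreteness of $R$ and the finiteness of the $\mathcal{S}_n$ do the real work. A variant would replace the compactness argument by the maximal models $\ulHM_n^+$ of height $\leqslant d$ together with their functoriality (Lemma~\ref{LemmaMaxMin}\ref{LemmaMaxMin:Exist}): the projections $T/z^{n+1}T\onto T/z^nT$ then induce maps $\ulHM_{n+1}^+\to\ulHM_n^+$, which one shows factor through an injection $\ulHM_{n+1}^+/z^n\ulHM_{n+1}^+\hookrightarrow\ulHM_n^+$ (again by exactness of $\check T_\epsilon$), after which the eventual images in $\ulHM_n^+$ stabilize by finiteness and one is back in the previous situation.
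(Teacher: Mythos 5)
Your direction \ref{LemmaLimitLatt}$\Rightarrow$\ref{LemmaLimitTor} is fine and agrees with the paper (which dismisses it as clear). For the converse your plan has a genuine hole, and you have put your finger on it yourself: everything in your construction rests on the identification $\hat M/z^n\hat M\isoto\hat M_n$, which you defer and never prove. This is not routine. Even after arranging surjective transition maps with kernel $z^n\hat M_{n+1}$, the kernel of $\invlim[m]\hat M_m\to\hat M_n$ is $\invlim[m]z^n\hat M_m$, and identifying this with $z^n\cdot\invlim[m]\hat M_m$ amounts to the vanishing of the first derived functor of $\invlim$ applied to the system of torsion submodules $(\hat M_m[z^n])_m$ --- exactly the exactness statement you postpone. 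A second, smaller issue is the well-definedness of your map $\mathcal{S}_{n+1}\to\mathcal{S}_n$: before you may feed your two short exact sequences into the exactness of $\check T_\epsilon$ (Proposition~\ref{PropTorTateMod}), each term must already be known to be a torsion local shtuka, and in particular you must show that $\hat M'/z^n\hat M'$ is $R$-free, i.e.\ that $z^n\hat M'$ is saturated in $\hat M'$; this needs an argument you do not supply.

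The paper's proof avoids the hard identification altogether. It takes each $\ulHM_n$ to be the \emph{maximal} effective torsion local shtuka model of height $\leqslant d$, so that Lemma~\ref{LemmaMaxMin}\ref{LemmaMaxMin:Exist} provides (not necessarily surjective) transition maps $\ulHM_{n+1}\to\ulHM_n$ lifting $T/z^{n+1}T\onto T/z^nT$. By Lemma~\ref{LemmaMaxMin}\ref{LemmaMaxMin:Finite} the images stabilize (Mittag--Leffler), which suffices to show that $\hat M':=\invlim[n]\hat M_n$ is finitely generated over $R\dbl z\dbr$, torsion-free, and that the natural map $\hat M'\otimes_{R\dbl z\dbr}K^\sep\dbl z\dbr\to T\otimes_{A_\epsilon}K^\sep\dbl z\dbr$ is an isomorphism --- a purely ``rational'' statement that never requires computing $\hat M'/z^n\hat M'$. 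Lemma~\ref{LemmaSaturation} then upgrades $(\hat M',\tau_{\hat M'})$ to a genuine local shtuka $\ulHM''$ with the height bound and with $\check V_\epsilon\ulHM''\cong T[\tfrac{1}{z}]$, and Proposition~\ref{PropLattices} together with Lemma~\ref{LemmaIsogBounded} selects, within the isogeny class of $\ulHM''$, the local shtuka whose Tate module is the given lattice $T$. Your closing ``variant'' with maximal models is headed in this direction, but you should finish it as above --- via the rational Tate module, saturation, and the lattice correspondence --- rather than steering back to the surjective-system picture and the unproved exactness of the inverse limit.
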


\begin{proof}
It is clear that \ref{LemmaLimitLatt} implies \ref{LemmaLimitTor}, so let us assume \ref{LemmaLimitTor}.
By choosing each $\ulHM_n$ to be maximal among effective torsion local shtuka models with height $\leqslant d$, we may assume that for each $n$ there exists a (not necessarily surjective) morphism $\ulHM_{n+1} \rightarrow \ulHM_n$ that induces the natural projection $T/(z^{n+1}) \onto T/(z^n)$ by Lemma~\ref{LemmaMaxMin}\ref{LemmaMaxMin:Exist}. Set $\hat M':=\varprojlim_n\hat M_n$, equipped with $\tau_{\hat M'}:\sigma^*\hat M'[\frac{1}{z-\zeta}] \isoto \hat M'[\frac{1}{z-\zeta}]$ obtained as the limit of $\{\tau_{\hat M_n}\}$. Note that the projective system $(\hat M_n)$ satisfies the Mittag-Leffler condition, because by Lemma~\ref{LemmaMaxMin}\ref{LemmaMaxMin:Finite} for each $n$ the set of images $\hat M_{n'}\to\hat M_n$ is finite. Therefore $\hat M'/z\hat M'$ is a quotient of $\hat M_n$ for $n\gg1$, and since $\hat M'$ is $z$-adically separated and complete, it is finitely generated over $R\dbl z\dbr$.

We next consider the following injective map
\[
\hat M' = \varprojlim_n\hat M_n \rightarrow  \varprojlim_n (\hat M_n\otimes_{R\dbl z\dbr} K^\sep\dbl z\dbr) \cong \varprojlim_n (T/z^nT\otimes_{R\dbl z\dbr} K^\sep\dbl z\dbr)\cong T\otimes_{A_\epsilon}K^\sep\dbl z\dbr,
\]
where the non-trivial isomorphism is from Proposition~\ref{PropTorTateMod}. (The injectivity follows from the left exactness of the projective limit.) In particular, $\hat M'$ is torsion-free. By $K^\sep\dbl z\dbr$-linearly extending it, we obtain a map
\begin{equation}\label{EqLimit}
\hat M'\otimes_{R\dbl z\dbr}K^\sep\dbl z\dbr \rightarrow T\otimes_{A_\epsilon}K^\sep\dbl z\dbr,
\end{equation}
which is  injective by torsion-freeness of $\hat M'$. We claim that this map is an isomorphism. Indeed, the surjectivity follows since the image of $\hat M'$ in $\hat M_n\otimes_{R\dbl z\dbr}K^\sep\dbl z\dbr \cong T/z^nT\otimes_{A_\epsilon}K^\sep\dbl z\dbr$ coincides with the image of $\hat M_{n'}$ for $n'\gg n$ by Lemma~\ref{LemmaMaxMin}\ref{LemmaMaxMin:Finite}.

One can now apply Lemma~\ref{LemmaSaturation} to $(\hat M', \tau_{\hat M'})$ to obtain a local shtuka  $\ulHM'':=(\hat M'',\tau_{\hat M''})$ with  $\hat M'[\frac{1}{z-\zeta}] = \hat M''[\frac{1}{z-\zeta}]$. So we have $(z-\zeta)^d\hat M''\subset\tau_{\hat M''}(\hat\sigma^*\hat M'')\subset\hat M''$. Furthermore, the isomorphism \eqref{EqLimit} implies that  $\check V_\epsilon\ulHM'' \cong T[\frac{1}{z}]$. Then by Proposition~\ref{PropLattices}, there exists a local shtuka $\ulHM$ isogenous to $\ulHM''$ with $T = \check T_\epsilon\ulHM$. Finally, the property $(z-\zeta)^d\hat M\subset\tau_{\hat M}(\hat\sigma^*\hat M)\subset\hat M$ could be checked up to isogeny, which concludes the proof.
\end{proof}
\begin{remark}
Using the same notation in the proof of Lemma~\ref{LemmaLimit}, one can also show that $\ulHM = \ulHM''$ by using the isomorphism \eqref{EqLimit} and suitably adapting the exact sequence \eqref{EqLatticesProof}.
\end{remark}

We finish the section by discussing torsion local shtukas with coefficients. Let $B$ be an $A_\epsilon$-algebra with $\#B<\infty$, and set $R\dbl z\dbr_B:=R\dbl z\dbr\otimes_{A_\epsilon}B$. We define $\hat\sigma:R\dbl z\dbr_B\rightarrow R\dbl z\dbr_B$ by $B$-linearly extending $\hat\sigma:R\dbl z\dbr \rightarrow R\dbl z\dbr$.
\begin{definition}
A \emph{torsion local $B$-shtuka}  over $R$ is a pair $\ulHM_B=(\hat M_B,\tau_{\hat M_B})$ consisting of a finitely generated free $R\dbl z\dbr_B$-module $\hat M_B$, and an $R\dbl z\dbr_B$-linear isomorphism $\tau_{\hat M_B}\colon\hat\sigma^\ast\hat M_B[\frac{1}{z-\zeta}] \isoto\hat M_B[\frac{1}{z-\zeta}]$. We take the obvious notion of morphisms.

If $B'$ is a $B$-algebra with $\#B'<\infty$ and $\ulHM_B$ is a torsion local $B$-shtuka, then we define a torsion local $B'$-shtuka $\ulHM_B\otimes_B B':=(\hat M_B\otimes_B B', \tau_{\hat M_B}\otimes \id_{B'})$. We call $\ulHM_B\otimes_B B'$ the \emph{scalar extension} of $\ulHM_B$

We may view a torsion local $B$-shtuka $\ulHM_B$ as a torsion local shtuka by forgetting the $B$-action. We say that $\ulHM_B$ is \emph{effective} (respectively, \emph{effective with height $\leqslant d$}; respectively, \emph{\'etale}) if it is so as a torsion local shtuka. These properties are stable under scalar extensions.
\end{definition}

\begin{lemma}
\begin{enumerate}
\item
For any torsion local $B$-shtuka $\ulHM_B$, $\check T_\epsilon\ulHM_B$ is a finitely generated \emph{free} $B$-module of rank equal to $\rk_{R\dbl z\dbr_B}\hat M_B$, where the $B$-action on $\check T_\epsilon\ulHM_B$ is induced from the $B$-action on $\ulHM_B$.
\item
For any $B$-algebra $B'$ with $\#B'<\infty$, we have a natural $\Gal(K^\sep/K)$-equivariant isomorphism $\check T_\epsilon(\ulHM_B\otimes_B B') \cong (\check T_\epsilon\ulHM_B)\otimes_B B'$.
\end{enumerate}
\end{lemma}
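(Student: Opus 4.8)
The plan is to deduce both parts from the canonical $\hat\tau$- and $\Gal(K^\sep/K)$-equivariant trivialization \eqref{EqTorTateModIsom} of Proposition~\ref{PropTorTateMod} together with faithfully flat descent, so that no new analytic input is required. First I would record the structure of $B$: being finite over $A_\epsilon=\BF_\epsilon\dbl z\dbr$ it is Artinian, killed by a power $z^n$, and a finite product of local Artinian rings; in particular $R\dbl z\dbr_B=R\dbl z\dbr\otimes_{A_\epsilon}B$ is $z$-power torsion and finite free over $R$ (writing $B$ as a finite direct sum of modules $A_\epsilon/(z^{m_i})$), so a torsion local $B$-shtuka, after forgetting the $B$-action, is genuinely a torsion local shtuka over $R$ in the sense of Definition~\ref{DefTorLocSht}. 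Since each $b\in B$ acts on $\ulHM_B$ by an endomorphism of torsion local shtukas, functoriality of $\check T_\epsilon$ (Proposition~\ref{PropTorTateMod}) equips $\check T_\epsilon\ulHM_B$ with a $B$-module structure, and the isomorphism \eqref{EqTorTateModIsom} is then automatically $B$-linear.

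For part~(1), set $r:=\rk_{R\dbl z\dbr_B}\hat M_B$ and $C:=B\otimes_{A_\epsilon}K^\sep\dbl z\dbr$. I would note that $A_\epsilon\to K^\sep\dbl z\dbr$ is faithfully flat ($K^\sep\dbl z\dbr$ is torsion-free over the discrete valuation ring $A_\epsilon$, and $K^\sep\dbl z\dbr/(z)=K^\sep\neq 0$), hence $B\to C$ is faithfully flat. Because $\hat M_B$ is free of rank $r$ over $R\dbl z\dbr_B$, the right-hand side of \eqref{EqTorTateModIsom} equals $\hat M_B\otimes_{R\dbl z\dbr_B}C$, which is $C$-free of rank $r$, so \eqref{EqTorTateModIsom} reads $\check T_\epsilon\ulHM_B\otimes_B C\cong C^{\oplus r}$. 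Faithfully flat descent of finite presentation and of finite projectivity along $B\to C$ then shows $\check T_\epsilon\ulHM_B$ is finite projective over $B$ of constant rank $r$, hence free of rank $r$ since $B$ is a finite product of local rings. As a consistency check, $\rk_R\hat M_B=r\cdot\length_{A_\epsilon}B=\length_{A_\epsilon}(\check T_\epsilon\ulHM_B)$, recovering the length statement of Proposition~\ref{PropTorTateMod}.

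For part~(2), I would build the comparison map by tensoring the inclusion $\check T_\epsilon\ulHM_B\subset\hat M_B\otimes_{R\dbl z\dbr}K^\sep\dbl z\dbr$ with $B'$ over $B$ and observing that the image lies in the $\hat\tau$-invariants of $\hat M_{B'}\otimes_{R\dbl z\dbr}K^\sep\dbl z\dbr$, where $\hat M_{B'}:=\hat M_B\otimes_B B'$; this yields a $B'$-linear, $\Gal(K^\sep/K)$-equivariant map $\alpha\colon\check T_\epsilon\ulHM_B\otimes_B B'\to\check T_\epsilon(\ulHM_B\otimes_B B')$. By part~(1) applied to $B$ and to $B'$, both sides are free $B'$-modules of rank $r$. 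To see $\alpha$ is an isomorphism I would apply $-\otimes_{A_\epsilon}K^\sep\dbl z\dbr$: using \eqref{EqTorTateModIsom} for $\ulHM_B$ and for $\ulHM_B\otimes_B B'$, together with the identification $\hat M_{B'}\otimes_{R\dbl z\dbr}K^\sep\dbl z\dbr=(\hat M_B\otimes_{R\dbl z\dbr}K^\sep\dbl z\dbr)\otimes_B B'$, the map $\alpha\otimes_{A_\epsilon}K^\sep\dbl z\dbr$ is identified with an identity map; since $A_\epsilon\to K^\sep\dbl z\dbr$ is faithfully flat and $\alpha$ is $A_\epsilon$-linear, $\alpha$ itself is an isomorphism.

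The whole argument is bookkeeping once Proposition~\ref{PropTorTateMod} is available: the only places where I would be careful are the $B$-linearity of \eqref{EqTorTateModIsom}, which is immediate from its functoriality, and the verification that $\alpha$, after the faithfully flat extension $A_\epsilon\to K^\sep\dbl z\dbr$, genuinely corresponds to the identity under \eqref{EqTorTateModIsom} — a routine diagram chase. I do not expect a genuinely hard step; all the analytic content (existence of a $\hat\tau$-equivariant trivialization over $K^\sep\dbl z\dbr$, where Lang's theorem and Artin–Schreier solving enter) has already been carried out in Proposition~\ref{PropTorTateMod}, and what remains is faithfully flat descent along $B\to B\otimes_{A_\epsilon}K^\sep\dbl z\dbr$ and $A_\epsilon\to K^\sep\dbl z\dbr$.
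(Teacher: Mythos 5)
Your argument is correct and follows exactly the route the paper intends: the paper's own proof is the single sentence that both claims are straightforward from the isomorphism \eqref{EqTorTateModIsom}, and your write-up simply supplies the faithfully flat descent along $A_\epsilon\to K^\sep\dbl z\dbr$ (resp.\ $B\to B\otimes_{A_\epsilon}K^\sep\dbl z\dbr$) that makes "straightforward" precise. No gaps; the preliminary observations (that $B$ is $z$-power torsion so $\ulHM_B$ really is a torsion local shtuka over $R$, and that \eqref{EqTorTateModIsom} is $B$-linear) are exactly the points worth checking.
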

\begin{proof}
Both claims are straightforward from the isomorphism (\ref{EqTorTateModIsom}).
\end{proof}

Let $\ulHM=(\hat M,\tau_{\hat M})$ be a torsion local shtuka equipped with a $B$-action that commutes with $\tau_{\hat M}$, and assume that $\check T_\epsilon\ulHM$ is \emph{free} over $B$. 
Note that it does not necessarily follow that $\ulHM$ is a torsion local $B$-shtuka; indeed, if $B$ is non-reduced then one can create an example where $\hat M$ is \emph{not} free over $R\dbl z\dbr\otimes_{A_\epsilon}B$ by the same approach as in \cite[Remark~1.6.3]{KimNormField}.  On the other hand, we have the following lemma:

\begin{lemma}\label{LemmaBFSht}
Let $\BF$ be a finite extension of $\BF_\epsilon$. Then any local shtuka $\ulHM$ equipped with an $\BF$-action is a torsion local $\BF$-shtuka; i.e., the underlying module $\hat M$ is free over $R\dbl z\dbr_\BF$.
\end{lemma}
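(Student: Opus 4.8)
The plan is to realize $\hat M$ as a finite projective module over $R\dbl z\dbr_\BF$ and then invoke the Tate module to see that its rank is constant, which forces freeness.

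Since the $\BF$-action commutes with $\tau_{\hat M}$ and with scalar multiplication by $R\dbl z\dbr$, and is $\BF_\epsilon$-linear, it endows $\hat M$ with the structure of a module over $R\dbl z\dbr_\BF=R\dbl z\dbr\otimes_{A_\epsilon}\BF\dbl z\dbr=R\dbl z\dbr\otimes_{\BF_\epsilon}\BF$. As $\BF/\BF_\epsilon$ is a separable extension of finite fields, $R\dbl z\dbr_\BF$ is finite étale over $R\dbl z\dbr$. Because $R$ is a complete discrete valuation ring, $R\dbl z\dbr$ is a complete --- hence Henselian --- regular local ring of dimension $2$; therefore $R\dbl z\dbr_\BF$ splits as a finite product $\prod_j R_j$ of regular local rings $R_j$, each finite étale of rank $d_j$ over $R\dbl z\dbr$ (so $\sum_j d_j=[\BF:\BF_\epsilon]$), and correspondingly $\hat M=\prod_j\hat M_j$ with $\hat M_j$ an $R_j$-module. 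Since $\hat M$ is free over $R\dbl z\dbr$ (Definition~\ref{DefLocSht}), each $\hat M_j$, being a direct summand of $\hat M$, is again free over $R\dbl z\dbr$; as depth is preserved under the finite extension $R\dbl z\dbr\hookrightarrow R_j$, the module $\hat M_j$ is maximal Cohen--Macaulay over the regular $2$-dimensional local ring $R_j$, hence free over $R_j$ by Auslander--Buchsbaum (cf.\ the proof of Lemma~\ref{LemmaAuslanderBuchsbaum}, resp.\ Lemma~\ref{LemmaSaturation} and \cite[\S6, Lemme~6]{Serre58}); write $a_j$ for its rank. Then $\hat M$ is free over $R\dbl z\dbr_\BF=\prod_j R_j$ if and only if all the $a_j$ coincide.

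To show $a_j=a_{j'}$ for all $j,j'$, I pass to the Tate module. By Proposition~\ref{PropTateMod}, $\check T_\epsilon\ulHM$ is a finite free $A_\epsilon$-module carrying a continuous $\Gal(K^\sep/K)$-action, and the $\BF$-action on $\ulHM$ makes it a finitely generated, torsion-free --- hence free --- module over the discrete valuation ring $A_\epsilon\otimes_{\BF_\epsilon}\BF=\BF\dbl z\dbr$, say of rank $N$. The canonical comparison isomorphism $\check T_\epsilon\ulHM\otimes_{A_\epsilon}K^\sep\dbl z\dbr\isoto\hat M\otimes_{R\dbl z\dbr}K^\sep\dbl z\dbr$ of Proposition~\ref{PropTateMod} is functorial in $\ulHM$, hence $\BF$-linear, so it is an isomorphism of modules over $R\dbl z\dbr_\BF\otimes_{R\dbl z\dbr}K^\sep\dbl z\dbr=K^\sep\dbl z\dbr\otimes_{\BF_\epsilon}\BF$, and its source is free of rank $N$ over this ring. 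In the target, $\hat M_j\otimes_{R\dbl z\dbr}K^\sep\dbl z\dbr$ is free of rank $a_j$ over $R_j\otimes_{R\dbl z\dbr}K^\sep\dbl z\dbr$, and the latter is a finite étale $K^\sep\dbl z\dbr$-algebra which splits completely --- because $K^\sep\dbl z\dbr$ is Henselian local with separably closed residue field $K^\sep$ --- into $d_j$ copies of $K^\sep\dbl z\dbr$. Comparing ranks at each of the $\sum_j d_j$ resulting idempotents of $K^\sep\dbl z\dbr\otimes_{\BF_\epsilon}\BF$ forces $a_j=N$ for every $j$, so $\hat M$ is free of rank $N$ over $R\dbl z\dbr_\BF$, as claimed. (When $\ulHM$ is a torsion local shtuka the argument is the same, with Lemma~\ref{LemmaAuslanderBuchsbaum} replacing the Cohen--Macaulay step to get freeness of the $\hat M_j$ over the relevant Artinian quotients of the $R_j$.)

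The step I expect to be the crux is this last one: reducing freeness over the possibly non-local ring $R\dbl z\dbr_\BF$ to constancy of the local ranks $a_j$, and then extracting that constancy from $\check T_\epsilon\ulHM$. This requires checking carefully that the comparison isomorphism of Proposition~\ref{PropTateMod} is genuinely compatible with the $\BF$-structures and that the idempotent decompositions survive the base change to $K^\sep\dbl z\dbr$. The preliminary commutative algebra --- that $R\dbl z\dbr_\BF$ is a product of regular local rings and that the $\hat M_j$ are free over the factors --- is routine.
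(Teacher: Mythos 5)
Your identification of the coefficient ring is wrong, and this derails most of the written argument. Since $\BF$ is a \emph{field} finite over $\BF_\epsilon$, the structure map $A_\epsilon\to\BF$ kills $z$, so $R\dbl z\dbr_\BF=R\dbl z\dbr\otimes_{A_\epsilon}\BF=R\otimes_{\BF_\epsilon}\BF$ --- a finite \'etale extension of the discrete valuation ring $R$, hence a product of discrete valuation rings --- and \emph{not} $R\dbl z\dbr\otimes_{\BF_\epsilon}\BF$ as you write. Consequently the only local shtukas admitting an $\BF$-action are torsion ones killed by $z$; the case your main argument treats (with $\hat M$ free over $R\dbl z\dbr$) is vacuous, and all of the two-dimensional commutative algebra --- Henselianity of $R\dbl z\dbr$, maximal Cohen--Macaulayness, Auslander--Buchsbaum --- is beside the point. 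In the case that actually occurs, freeness of each isotypic factor $\hat M_j$ over the local factor $R_j$ of $R\otimes_{\BF_\epsilon}\BF$ is immediate: $\hat M_j$ is an $R$-direct summand of the finite free $R$-module $\hat M$, hence finite and torsion-free over the discrete valuation ring $R_j$. Your parenthetical for the torsion case does not repair the gap: Lemma~\ref{LemmaAuslanderBuchsbaum} is a criterion for $R$-flatness and does not give freeness over ``Artinian quotients of the $R_j$''; a torsion local shtuka killed by $z^n$ with $n\ge2$ is in general \emph{not} free over $R\dbl z\dbr/(z^n)$, and the lemma holds only because $\BF$ is a field, which forces $n=1$. (For the same reason, $\check T_\epsilon\ulHM$ is not a free $\BF\dbl z\dbr$-module but an $\BF$-vector space, and the relevant comparison isomorphism is the one of Proposition~\ref{PropTorTateMod}, not Proposition~\ref{PropTateMod}.)

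The step you correctly single out as the crux --- constancy of the local ranks $a_j$ --- is, however, handled by a valid mechanism that differs from the paper's: you exploit that $\check T_\epsilon\ulHM$ is automatically free over the field $\BF$ and transport this through the ($\BF$-linear, by functoriality) comparison isomorphism to the idempotent decomposition of $\hat M\otimes_RK^\sep$ over $\BF\otimes_{\BF_\epsilon}K^\sep\cong(K^\sep)^{[\BF:\BF_\epsilon]}$, forcing every $a_j$ to equal $\dim_\BF\check T_\epsilon\ulHM$. The paper avoids the Tate module entirely: after enlarging $R$ so that $\BF\subset R$ (legitimate by faithfully flat descent of freeness), it uses that $\hat\sigma$ restricts on $\BF$ to the $\hat q$-power Frobenius, so $\hat\sigma^*$ shifts the isotypic components by one and $\tau_{\hat M}$ induces isomorphisms $\hat\sigma^*\hat M^{(i-1)}[\tfrac{1}{z-\zeta}]\isoto\hat M^{(i)}[\tfrac{1}{z-\zeta}]$, whence equal ranks. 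Either mechanism works; but as written your proof does not establish the statement until the coefficient ring and the torsion nature of $\hat M$ are set up correctly.
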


\begin{proof} 
Since the assertion can be checked after increasing the residue field of $R$, we may assume that $R$ contains $\BF$. Since $z=0$ in $\BF$ we have $R\dbl z\dbr_\BF \cong R\otimes_{\BF_\epsilon}\BF \cong \prod_{i=0}^{[\BF:\BF_\epsilon]-1} R^{(i)}$ where $R^{(i)}$ is $R$ viewed as an $\BF$-algebra via $\hat\sigma^i:\BF\rightarrow R$. Likewise, we obtain the isotypic decomposition $\hat M =  \prod_i \hat M^{(i)}$ for any local shtuka $\ulHM=(\hat M, \tau_{\hat M})$ with $\BF$-action. Since $\hat M$ is $\zeta$-torsion free, each factor $\hat M^{(i)}$ is free over $R^{(i)}$. It remains to show that the $R^{(i)}$-rank of $\hat M^{(i)}$ is constant, which follows since  $\tau_{\hat M}$ restricts to an isomorphism $\hat\sigma^*\hat M^{(i-1)}[\frac{1}{z-\zeta}]\xrightarrow\sim \hat M^{(i)}[\frac{1}{z-\zeta}]$ for any $1\leqslant i \leqslant [\BF:\BF_\epsilon]-1$.
\end{proof}

%
%

\section{Deformation theory of Galois representations} \label{SectDefoTh}
\setcounter{equation}{0}

Mazur \cite{MazurMSRI} has introduced the notion of Galois deformation rings (both for number fields and $p$-adic local fields), as an attempt to see the counterpart on the Galois representation side of $p$-adic deformations of modular forms; cf.~Hida theory. Since Wiles's proof of Fermat's last theorem, which opened up its application to modularity of Galois representations, Galois deformation theory has become an indispensable technical tool in number theory. 

Among the important actors in modularity lifting theorems are $p$-adic local deformation rings with a certain condition in terms of Fontaine's theory, such as flat deformation rings, crystalline deformation rings and potentially semi-stable deformation rings. In the second author's thesis (cf.~\cite[\S4]{KimNormField}), it was shown that there exists an equi-characteristic analog of flat deformation rings (or crystalline deformation rings) by working with torsion local shtukas and Hodge-Pink theory instead of finite flat group schemes and Fontaine's theory. The existence of such equi-characteristic deformation rings is quite surprising because without imposing any deformation condition, the deformation functor would have infinite-dimensional tangent space; cf.~Remark~\ref{RemTangentSp}. Indeed, the main result we obtain is the finiteness of the tangent space when we impose a suitable deformation condition in terms of torsion local shtuka models. 

In this section, we indicate the main idea for the existence of the equi-characteristic deformation rings, and list some of their properties, such as smoothness and dimension, which are analogous to Kisin's study of flat deformation rings; cf.~\cite{KisinModuliFF}.

Throughout this section, we assume that $K$ is a \emph{finite} extension of $Q_\epsilon$.
We fix a finite extension $\BF$ of $\BF_\epsilon$ and  a continuous homomorphism 
\[\bar\rho\colon\Gal(K^\sep/K)\rightarrow\GL_r(\BF).\]
Let $\Art_{\BF\dbl z\dbr}$ denote the category of Artinian local $\BF\dbl z\dbr$-algebras with residue field $\BF$.
\begin{definition}\label{DefDeformation}
For $B\in\Art_{\BF\dbl z\dbr}$, a \emph{framed deformation} of $\bar\rho$ over $B$ is a continuous homomorphism $\rho_B\colon\Gal(K^\sep/K)\rightarrow\GL_r(B)$ which reduces to $\bar\rho$ modulo the maximal ideal $\Fm_B$ of $B$. For any morphism $B\rightarrow B'$ in $\Art_{\BF\dbl z\dbr}$, we define the \emph{scalar extension} $\rho_{B'}$ of a framed deformation $\rho_B$ to be the composite
\[
\rho_{B'}\colon \Gal(K^\sep/K)\xrightarrow{\rho_B}\GL_r(B) \rightarrow\GL_r(B').
\]
Let $\Def^\Box_{\bar\rho}$ denote the set-valued functor on $\Art_{\BF\dbl z\dbr}$, where $\Def^\Box_{\bar\rho}(B)$ is the set of framed deformations of $\bar\rho$ over $B$, and for any morphism $B\rightarrow B'$ in $\Art_{\BF\dbl z\dbr}$ the map $\Def_{\bar\rho}^\Box(B)\rightarrow\Def_{\bar\rho}^\Box(B')$ is defined by the scalar extension $\rho_B\mapsto\rho_{B'}$. 

A \emph{deformation} of $\bar\rho$ over $B$ is an equivalence class of framed deformations of $\bar\rho$ over $B$, where two framed deformations $\rho_B$ and $\rho'_B$ are equivalent if and only if there exists $g\in\ker(\GL_r(B)\onto\GL_r(\BF))$ which conjugates $\rho_B$ to $\rho_B'$. Let $\Def_{\bar\rho}$ denote a set-valued functor on $\Art_{\BF\dbl z\dbr}$, where $\Def_{\bar\rho}(B)$ is the set of  deformations of $\bar\rho$ over $B$.
\end{definition}

\begin{remark}\label{RemTangentSp}
If $K$ is replaced by a finite extension of $\mathbb{Q}_p$, then it is a well-known result of Mazur that $\Def^\Box_{\bar\rho}$ is pro-representable by a complete local noetherian ring (and $\Def_{\bar\rho}$ is pro-representable under some restriction on $\bar\rho$). For equi-characteristic $K$, on the other hand,  these functors cannot be pro-represented by a complete local noetherian ring because the reduced tangent spaces  $\Def_{\bar\rho}(\BF[u]/(u^2))$ and $\Def^\Box_{\bar\rho}(\BF[u]/(u^2))$ are infinite-dimensional. Indeed, we have $\Def_{\bar\rho}(\BF[u]/(u^2)) \cong \Koh^1(K, \bar\rho\otimes\bar\rho^\vee)$ by \cite[p.~391]{MazurMSRI}, and the Galois cohomology group is infinite-dimensional over $\BF$, because if $\ol 1\colon\Gal(K^\sep/K)\to\GL_1(\BF)$ is the trivial representation $\Koh^1(K,\ol 1)\cong \Hom(\Gal(K^\sep/K),\BF)$ is infinite-dimensional (by Artin-Schreier theory; see \cite[\S\,VI.1]{NSW08}) and it maps to $\Koh^1(K, \bar\rho\otimes\bar\rho^\vee)$ with finite dimensional kernel, by the following long exact sequence
\[
\Koh^0(K, \bar\rho\otimes\bar\rho^\vee/\ol1) \,\longto\, \Koh^1(K,\ol 1) \,\longto\, \Koh^1(K, \bar\rho\otimes\bar\rho^\vee)\,.
\]
\end{remark}

\begin{definition}
Let $d$ be a non-negative integer. We define a subfunctor $\Def^{\Box,\leqslant d}_{\bar\rho}$ of  $\Def^\Box_{\bar\rho}$ so that for  $\rho_B\in\Def^\Box_{\bar\rho}(B)$, we have $\rho_B\in\Def^{\Box,\leqslant d}_{\bar\rho}(B)$ if and only if $\rho_B$ admits an effective torsion local shtuka model with height  $\leqslant d$ as a torsion $A_\epsilon[\Gal(K^\sep/K)]$-module; in other words, there exists a torsion local shtuka $\ulHM$ satisfying $(z-\zeta)^d\hat M \subset \tau_{\hat M}(\hat\sigma^*\hat M) \subset \hat M$, equipped with an $A_\epsilon$-linear Galois-equivariant isomorphism $\rho_B\cong\check T_\epsilon\ulHM$. Since the condition defining the subfunctor $\Def^{\Box,\leqslant d}_{\bar\rho}\subset\Def^\Box_{\bar\rho}$ only depends on the equivalence classes of framed deformations (i.e., deformations), we may similarly define a subfunctor $\Def^{\leqslant d}_{\bar\rho}$ of  $\Def_{\bar\rho}$.
\end{definition}

\begin{theorem}[{cf.~\cite[Theorem~4.2.1]{KimNormField}}]\label{ThmDeformation}
Let $K$ be a finite extension of $Q_\epsilon$.
 Then the framed deformation functor $\Def^{\Box,\leqslant d}_{\bar\rho}$ is pro-representable by a complete local noetherian $\BF\dbl z \dbr$-algebra $R^{\Box,\leqslant d}_{\bar\rho}$, and there exists a complete local noetherian $\BF\dbl z \dbr$-algebra  $R^{\leqslant d}_{\bar\rho}$ which is a pro-representable hull of the deformation functor $\Def^{\leqslant d}_{\bar\rho}$  in the sense of Schlessinger \cite[Definition~2.7]{Schlessinger}. If we have $\End_{\Gal(K^\sep/K)}(\bar\rho) = \BF$, then $\Def^{\leqslant d}_{\bar\rho}$ is pro-representable by $R^{\leqslant d}_{\bar\rho}$.
\end{theorem}

\begin{proof}
 The statement is an equi-characteristic analog of \cite[Theorem~1.3]{KimNormField}, and the proof can be easily adapted by working with torsion local shtukas instead of torsion $\varphi$-modules over $\FS$. Let us sketch the main ideas.
 
 Let $F$ be a set-valued functor on  $\Art_{\BF\dbl z\dbr}$ such that $F(\BF)$ is a singleton. 
 By Schlessinger's theorem \cite[Theorem~2.11]{Schlessinger}, the following conditions are equivalent to pro-representability of $F$ by a complete local noetherian $\BF\dbl z\dbr$-algebra:
 \begin{description}
  \item[(H1)] For any surjective map $B'\onto B$ with length-$1$ kernel and a map $B''\rightarrow B$ in $\Art_{\BF\dbl z\dbr}$, the natural map $h:F(B'\times_BB'')\rightarrow F(B')\times_{F(B)}F(B'')$ is surjective.
  \item[(H2)] In the setting of \textbf{(H1)}, $h$ is bijective if $B=\BF$ and $B' = \BF[u]/(u^2)$. (This implies that $F(\BF[u]/(u^2))$ is an $\BF$-vector space; cf.~\cite[Lemma~2.10]{Schlessinger}.)
  \item[(H3)] Under \textbf{(H2)}, the $\BF$-vector space  $F(\BF[u]/(u^2))$ is finite-dimensional.
  \item[(H4)] For any surjective map $B'\onto B$ with length-$1$ kernel, the natural map $h:F(B'\times_BB')\rightarrow F(B')\times_{F(B)}F(B')$ is bijective.
 \end{description}
Furthermore, the existence of a pro-representable hull of $F$ is equivalent to the conditions  \textbf{(H1)}--\textbf{(H3)}.

The proof of the pro-representability result in \cite[\S1.2]{MazurMSRI} actually shows that $\Def^\Box_{\bar\rho}$ and $\Def_{\bar\rho}$ satisfy \textbf{(H1)} and \textbf{(H2)}, and \textbf{(H4)} is satisfied by $\Def^\Box_{\bar\rho}$ for any $\bar\rho$ and by $\Def_{\bar\rho}$ if $\End_{\Gal(K^\sep/K)}(\bar\rho)=\BF$. Since the condition defining the sub-functors $\Def^{\Box,\leqslant d}_{\bar\rho}$ and $\Def^{\leqslant d}_{\bar\rho}$ is stable under finite direct products and subquotients (by Proposition~\ref{PropRamakrishnaCrit}), one can check without difficulty that  $\Def^{\Box,\leqslant d}_{\bar\rho}$ (respectively,  $\Def^{\leqslant d}_{\bar\rho}$) satisfies \textbf{(H1)}, \textbf{(H2)}, or \textbf{(H4)} if and only if the same holds for $\Def^\Box_{\bar\rho}$ (respectively, for $\Def_{\bar\rho}$).

It remains to verify \textbf{(H3)} for $\Def^{\Box,\leqslant d}_{\bar\rho}$ and $\Def^{\leqslant d}_{\bar\rho}$, which is the key step. Indeed, it suffices to show that $\Def^{\leqslant d}_{\bar\rho}(\BF[u]/(u^2))$ is a finite set, since $\Def^{\leqslant d}_{\bar\rho}(\BF[u]/(u^2))$ is the quotient of $\Def^{\Box,\leqslant d}_{\bar\rho}(\BF[u]/(u^2))$ by the natural action of $\widehat\GL_r(\BF[u]/(u^2)):=\ker\bigl[\GL_r(\BF[u]/(u^2))\rightarrow\GL_r(\BF)\bigr]$; \emph{cf.} the definition of deformations as equivalence classes of framed deformations in Definition~\ref{DefDeformation}. The finiteness of $\Def^{\leqslant d}_{\bar\rho}(\BF[u]/(u^2))$ can be obtained by repeating the argument in \cite[\S1.6]{KimNormField}. The main ideas behind the argument can be described roughly as follows:
\begin{enumerate}
 \item 
 Any $\rho\in\Def^{\leqslant d}_{\bar\rho}(\BF[u]/(u^2))$ admits an effective torsion local shtuka model $\ulHM$ with height $\leqslant d$ with an action of  $\BF[u]/(u^2)$, which induces the correct scalar action on $\rho$; indeed, viewing $\rho$ as a torsion $A_\epsilon[\Gal(K^\sep/K)]$-module, the \emph{maximal} torsion local shtuka model $\ulHM^+$ of $\rho$ with height $\leqslant d$ (cf.~Lemma~\ref{LemmaMaxMin}) inherits the action of $\BF[u]/(u^2)$ by Lemma~\ref{LemmaMaxMin}\ref{LemmaMaxMin:Exist}.
 \item 
 Considering only the $\BF$-action on $\ulHM$, it follows that $\ulHM$ is a torsion local $\BF$-shtuka by Lemma~\ref{LemmaBFSht}. Then by the $\BF[u]/(u^2)$ action, $\ulHM$ fits in the following short exact sequence of torsion local $\BF$-shtukas:
 \[
  0\rightarrow \ulHM_1 \rightarrow \ulHM \rightarrow \ulHM_0 \rightarrow 0,
 \]
where $\ulHM_0$ and $\ulHM_1$ are effective torsion local $\BF$-shtuka models of $\bar\rho$ with height $\leqslant d$, and there exists an $\BF$-linear map $u:\ulHM_0\rightarrow \ulHM_1$ of torsion local shtuka models of $\bar\rho$ (induced by the action of $u$).
 \item 
 By Lemma~\ref{LemmaMaxMin}, there are only finitely many choices of $\ulHM_0$ and $\ulHM_1$. (Recall that $K$ is discretely valued.) Therefore, it suffices to show that for any fixed $\ulHM_0$ and $\ulHM_1$ as above there exist only finitely many extensions of $\ulHM_0$ by $\ulHM_1$ as effective torsion local $\BF$-shtukas with height $\leqslant d$. This finiteness assertion can be read off from \cite[\S1.6.9\emph{ff}]{KimNormField}. Note that the argument uses the finiteness of the residue field $k$ of $K$ as the extensions naturally form a $k$-vector space.
\end{enumerate}
This concludes the proof.
\end{proof}

\begin{remark}
One can define a notion of ``$\epsilon$-torsion $A$-motives'' with good reduction at $\epsilon$ (over certain open subschemes $U$ of a finite cover of the curve $C$ from \ref{Notation}) in an analogous way to torsion local shtukas over $R$. It is possible to formulate a global deformation problem (with the height condition at $\epsilon$ analogous to $\Def^{\leqslant d}_{\bar\rho}$) and prove a suitable global analog of Theorem~\ref{ThmDeformation}.
\end{remark}

For what follows, let us assume that $\End_{\Gal(K^\sep/K)}(\bar\rho) = \BF$ so that $R^{\leqslant d}_{\bar\rho}$ pro-represents $D^{\leqslant d}_{\bar\rho}$. 
Since $R^{\leqslant d}_{\bar\rho}$ is a complete local noetherian $\BF\dbl z\dbr$-algebra, it is possible to associate to it a rigid analytic variety $(\Spf R^{\leqslant d}_{\bar\rho})^\rig$ over $\BF\dpl z\dpr$, where  $(\,.\,)^\rig$ denotes the rigid analytic generic fiber. Keeping the analogy between $p$-adic crystalline representations and equi-characteristic crystalline representations, $(\Spf R^{\leqslant d}_{\bar\rho})^\rig$ can be viewed as an equi-characteristic analog of a $p$-adic crystalline deformation space. Now we are going to prove some properties of equi-characteristic crystalline deformation spaces (such as smoothness and the dimension formula), which will strengthen the analogy with $p$-adic crystalline deformation spaces. If $D^{\leqslant d}_{\bar\rho}$ is not pro-representable, then we can work with the framed deformation ring $R^{\Box,\leqslant d}_{\bar\rho}$ instead of $R^{\leqslant d}_{\bar\rho}$.

Let $\rho^{\leqslant d}:\Gal(K^\sep/K)\rightarrow \GL_r(R^{\leqslant d}_{\bar\rho})$ denote (a representative of) the universal deformation (obtained by taking the limit of the  deformations over Artinian quotients of $R^{\leqslant d}_{\bar\rho}$). Then given a finite-dimensional $\BF\dpl z\dpr$-algebra $B$ and a continuous $\BF\dbl z\dbr$-homomorphism $f_B:R^{\leqslant d}_{\bar\rho}\rightarrow B$, we obtain
\[f_B^*(\rho^{\leqslant d}):\Gal(K^\sep/K)\xrightarrow{\rho^{\leqslant d}} \GL_r(R^{\leqslant d}_{\bar\rho})\xrightarrow{f_B}\GL_r(B).\]

Let $E$ be a finite extension of $\BF\dpl z\dpr$ with valuation ring $\CO_E$, and fix a continuous $\BF\dbl z\dbr$-morphism $f_E:R^{\leqslant d}_{\bar\rho}\rightarrow E$. We set $\rho_E:=f_E^*(\rho^{\leqslant d})$. Since $f_E$ factors through $\CO_E$ by compactness, $\rho_E$ comes equipped with a distinguished Galois-stable $\CO_E$-lattice.

\begin{lemma}\label{LemmaGenFibDefor}
 Let $B$ be an Artinian local $\BF\dpl z\dpr$-algebra with residue field $E$, and let $f_B:R^{\leqslant d}_{\bar\rho}\rightarrow B$ be a continuous $\BF\dbl z\dbr$-homomorphism. Then there exists an effective local shtuka $\ulHM$ with $(z-\zeta)^d\hat M\subset\tau_{\hat M}(\hat\sigma^*\hat M)\subset\hat M$, with a $Q_\epsilon[\Gal(K^\sep/K)]$-isomorphism $f_B^*(\rho^{\leqslant d})\cong \check V_\epsilon\ulHM$.
 
 Conversely, let $\rho_B:\Gal(K^\sep/K)\rightarrow \GL_r(B)$ be a lift of $\rho_E$ such that for some effective local shtuka $\ulHM$ with $(z-\zeta)^d\hat M\subset\tau_{\hat M}(\hat\sigma^*\hat M)\subset\hat M$, there exists a $Q_\epsilon[\Gal(K^\sep/K)]$-isomorphism $\rho_B\cong \check V_\epsilon\ulHM$. Then there exists a unique continuous $\BF\dbl z\dbr$-homomorphism $f_B:R^{\leqslant d}_{\bar\rho}\rightarrow B$  lifting $f_E$, such that there exists an isomorphism $\rho_B\cong f_B^*(\rho^{\leqslant d})$ which reduces to the identity map on $\rho_E$.
\end{lemma}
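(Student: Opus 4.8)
The plan is to deduce this from the torsion theory of Section~\ref{SectTorLocSh} together with Lemma~\ref{LemmaLimit} and Proposition~\ref{PropLattices}. For the first (direct) implication I would argue as follows. Since $B$ is Artinian local with residue field $E$ and $K$ is a finite extension of $Q_\epsilon$, the homomorphism $f_B$ factors through some $A_\epsilon$-order in $B$, i.e.\ we may choose a $\Gal(K^\sep/K)$-stable $A_\epsilon$-lattice $T\subset f_B^*(\rho^{\leqslant d})$ which is finitely generated free over an Artinian local $\CO_E$-algebra $B_0$ with $B_0[\tfrac 1z]=B$. Reducing modulo $z^n$, each $T/z^nT$ lies in $\Def^{\leqslant d}_{\bar\rho}(B_0/z^n)$ by construction of $R^{\leqslant d}_{\bar\rho}$ as the pro-representing object, hence admits an effective torsion local shtuka model with height $\leqslant d$; here one uses that the map $R^{\leqslant d}_{\bar\rho}\to B_0/z^n$ is an Artinian $\BF\dbl z\dbr$-algebra quotient so that the height-$\leqslant d$ condition applies. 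Now Lemma~\ref{LemmaLimit}, applied to the $A_\epsilon$-module $T$ (viewed just as a $\Gal(K^\sep/K)$-representation, forgetting the $B_0$-structure), produces an effective local shtuka $\ulHM$ over $R$ with $(z-\zeta)^d\hat M\subset\tau_{\hat M}(\hat\sigma^*\hat M)\subset\hat M$ and $T\cong\check T_\epsilon\ulHM$, whence $f_B^*(\rho^{\leqslant d})=T\otimes_{A_\epsilon}Q_\epsilon\cong\check V_\epsilon\ulHM$ as $Q_\epsilon[\Gal(K^\sep/K)]$-modules.

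For the converse implication I would argue via descent along the filtration of $B$ by powers of its maximal ideal. Given $\rho_B$ with $\rho_B\cong\check V_\epsilon\ulHM$ for an effective $\ulHM$ of height $\leqslant d$, first choose inside $\rho_B$ a $\Gal(K^\sep/K)$-stable lattice $T$ over an order $B_0$ as above; by Proposition~\ref{PropLattices} (bijectivity of $\check T_\epsilon$ on full-rank sublocal-shtukas versus Galois-stable lattices, over the discretely valued $R$) we may arrange a local shtuka model with $T\cong\check T_\epsilon\ulHM$, still satisfying the height bound by Lemma~\ref{LemmaIsogBounded}. Then the reductions $T/z^nT$ lie in $\Def^{\leqslant d}_{\bar\rho}(B_0/z^n)$, so by pro-representability of $\Def^{\leqslant d}_{\bar\rho}$ by $R^{\leqslant d}_{\bar\rho}$ they are classified by compatible $\BF\dbl z\dbr$-homomorphisms $R^{\leqslant d}_{\bar\rho}\to B_0/z^n$; passing to the limit and inverting $z$ gives the required $f_B\colon R^{\leqslant d}_{\bar\rho}\to B$. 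Uniqueness of $f_B$ (lifting $f_E$, with $\rho_B\cong f_B^*(\rho^{\leqslant d})$ reducing to the identity on $\rho_E$) follows from the uniqueness clause in the universal property together with the fact that such an $f_B$ is determined by its reductions modulo powers of the maximal ideal of $B$.

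The main obstacle, and the step that needs the most care, is the passage from the finite-level data $\{T/z^nT\}$ (living over the Artinian rings $B_0/z^n$, where the height-$\leqslant d$ torsion local shtuka models exist and are classified by $\Def^{\leqslant d}_{\bar\rho}$) back to a genuine local shtuka over $R$ and to a map out of $R^{\leqslant d}_{\bar\rho}$ into the $z$-adically complete but not Artinian ring $B_0$. For the geometric side this is exactly what Lemma~\ref{LemmaLimit} is designed to handle --- one must invoke the maximal torsion local shtuka models of Lemma~\ref{LemmaMaxMin} to get the Mittag--Leffler property making the projective limit well-behaved --- but one must also check that the $B$-action is carried along, i.e.\ that the limiting local shtuka can be taken compatibly with the $B_0$-structure; here Lemma~\ref{LemmaBFSht}, the functoriality of maximal models in Lemma~\ref{LemmaMaxMin}\ref{LemmaMaxMin:Exist}, and the fact that $R^{\leqslant d}_{\bar\rho}$ pro-represents $\Def^{\leqslant d}_{\bar\rho}$ (so that all the finite-level maps to $B_0/z^n$ automatically glue) do the work. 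The remaining verifications --- that $\check V_\epsilon$ is compatible with the identifications modulo $z^n$, and that inverting $z$ loses no information because $B$ is finite over $\BF\dpl z\dpr$ --- are routine.
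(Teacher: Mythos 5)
Your proof follows essentially the same route as the paper's: spreading out to an $\BF\dbl z\dbr$-finite order $B_0\subset B$ with $B_0[\tfrac{1}{z}]=B$, applying Lemma~\ref{LemmaLimit} to the resulting Galois-stable lattice for the direct implication, and using Proposition~\ref{PropLattices} together with Lemma~\ref{LemmaIsogBounded} plus the universal property of $R^{\leqslant d}_{\bar\rho}$ applied to the mod~$z^n$ reductions for the converse. The only divergence is your closing concern about carrying the $B_0$-module structure through the limit in Lemma~\ref{LemmaLimit}, which is unnecessary because the statement only asks for a $Q_\epsilon[\Gal(K^\sep/K)]$-isomorphism, exactly as you yourself note when you forget the $B_0$-structure before invoking that lemma.
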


\begin{proof}
 For the first claim, observe that by the usual compactness argument one can find an $\BF\dbl z\dbr$-subalgebra $B_0\subset B$, finitely generated as an $\BF\dbl z\dbr$-module, such that $B_0[\frac{1}{z}] = B$ and $f_B$ factors through $B_0$. By construction, $f_B^*(\rho^{\leqslant d})$ has a distinguished Galois-stable $B_0$-lattice, whose Artinian quotients admit effective torsion local shtuka models with height $\leqslant d$. We now obtain the first claim by applying Lemma~\ref{LemmaLimit}.
 
 For the converse, let $B^\circ\subset B$ denote the preimage of $\CO_E$. Since $\rho_E$ factors through $\GL_r(\CO_E)$, it follows that $\rho_B$ factors through $\GL_r(B^\circ)$. Since $B^\circ$ is a directed union of $\BF\dbl z\dbr$-sub-algebras of $B$ which are finitely generated as $\BF\dbl z\dbr$-modules, the usual compactness argument shows that there exists an $\BF\dbl z\dbr$-subalgebra $B_0\subset B$, finitely generated as a $\BF\dbl z\dbr$-module, such that $B_0[\frac{1}{z}] = B$ and $\rho_B$ is factored by $\rho_{B_0}:\Gal(K^\sep/K)\rightarrow \GL_r(B_0)$.
 
 Now viewing $\rho_B$ as a Galois representation over $Q_\epsilon$ isomorphic to $\check V_\epsilon\ulHM$, its $B_0$-lattice $\rho_{B_0}$ yields an $A_\epsilon$-lattice $T'\subset\check V_\epsilon\ulHM$. By Proposition~\ref{PropLattices} and Lemma~\ref{LemmaIsogBounded} there exists an effective local shtuka $\ulHM'$ isogenous to $\ulHM$ with $(z-\zeta)^d\hat M'\subset\tau_{\hat M'}(\hat\sigma^*\hat M')\subset\hat M'$ with an isomorphism $\rho_{B_0}\cong\check T_\epsilon\ulHM'$. By applying the universal property of $R^{\leqslant d}_{\bar\rho}$ to the mod~$z^n$ reduction of $\rho_{B_0}$ for each $n$, we obtain a continuous $\BF\dbl z\dbr$-morphism $f_{B_0}:R^{\leqslant d}_{\bar\rho}\rightarrow B_0$ giving rise to $\rho_{B_0}$. By composing $f_{B_0}$ with the natural inclusion $B_0\hookrightarrow B$, we obtain the desired $f_B$.
\end{proof}

Let us make a digression to the $z$-isocrystal with Hodge-Pink structure associated to an equi-characteristic crystalline representation ``with coefficients in $B$''. Let $B$ be a finite-dimensional $Q_\epsilon$-algebra, and let  $\rho_B\in\Rep_B\Gal(K^\sep/K)$ be a representation which is equi-characteristic crystalline if viewed as a Galois representation over $Q_\epsilon$, that is $\rho_B\cong\check V_\epsilon\ulHM$ as $Q_\epsilon[\Gal(K^\sep/K)]$-modules for a local shtuka $\ulHM$ over $R$. By full faithfulness of $\check V_\epsilon$ (Theorem~\ref{ThmAndersonKim}), there exists a local shtuka $\ulHM:=(\hat M,\tau_{\hat M})$, unique up to quasi-isogeny, such that  $B$ acts on $\ulHM$ by quasi-morphism and we have a $B[\Gal(K^\sep/K)]$-isomorphism $\rho_B\cong \check V_\epsilon\ulHM$.

\begin{lemma}\label{LemmaLSCoeff}
In the above setting, $\hat M[\frac{1}{z}]$ is free over $R\dbl z\dbr_B:=R\dbl z\dbr\otimes_{A_\epsilon}B$ with rank equal to $\rk_B\rho_B$.
\end{lemma}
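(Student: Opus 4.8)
The assertion is that the local shtuka $\ulHM$ with $B$-action by quasi-morphism (whose existence and uniqueness up to quasi-isogeny we already have from the full faithfulness of $\check V_\epsilon$, Theorem~\ref{ThmAndersonKim}) has the property that $\hat M[\tfrac{1}{z}]$ is free over $R\dbl z\dbr_B$ of the expected rank. The first reduction is to observe that $\hat M[\tfrac{1}{z}]$ is a finitely generated $R\dbl z\dbr[\tfrac{1}{z}]_B$-module that is moreover a projective $R\dbl z\dbr[\tfrac{1}{z}]$-module (since $R\dbl z\dbr[\tfrac{1}{z}]$ is a principal ideal domain and $\hat M[\tfrac{1}{z}]$ is torsion-free) on which $B$ acts, compatibly with $\tau_{\hat M}$. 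Since $R\dbl z\dbr[\tfrac{1}{z}]$ is a PID and $B$ is a finite-dimensional $Q_\epsilon$-algebra, $R\dbl z\dbr_B[\tfrac{1}{z}]=R\dbl z\dbr[\tfrac{1}{z}]\otimes_{A_\epsilon}B$ is a finite product of local rings only in special cases, so instead I would work with the Galois-descent description: by Proposition~\ref{PropTateMod} we have $\hat M\otimes_{R\dbl z\dbr}K^\sep\dbl z\dbr\cong\check T_\epsilon\ulHM\otimes_{A_\epsilon}K^\sep\dbl z\dbr$, hence inverting $z$, $\hat M[\tfrac{1}{z}]\otimes_{R\dbl z\dbr}K^\sep\dbl z\dbr\cong\check V_\epsilon\ulHM\otimes_{Q_\epsilon}K^\sep\dbl z\dbr$, and this isomorphism is $B$-linear and $\Gal(K^\sep/K)$- and $\hat\tau$-equivariant. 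Taking the $\hat\tau$- and Galois-invariants on both sides recovers $\hat M[\tfrac{1}{z}]\otimes_{R\dbl z\dbr}K\dbl z\dbr$ on the left and $\rho_B\otimes_{Q_\epsilon}K\dbl z\dbr$ on the right; the latter is visibly free over $K\dbl z\dbr_B:=K\dbl z\dbr\otimes_{A_\epsilon}B$ of rank $\rk_B\rho_B$.

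The second step is a descent argument from $K\dbl z\dbr$ down to $R\dbl z\dbr[\tfrac{1}{z}]$. We have established that $\hat M[\tfrac{1}{z}]\otimes_{R\dbl z\dbr[\tfrac{1}{z}]}K\dbl z\dbr[\tfrac{1}{z}]$ (equivalently $\otimes_{R\dbl z\dbr}K\dbl z\dbr$, which already has $z$ inverted after tensoring, since $z-\zeta$ becomes a unit but $z$ itself may not—so more carefully I would first tensor with $K\dbl z\dbr$ and then invert $z$, or just note $K\dbl z\dbr[\tfrac{1}{z}]=K\dpl z\dpr$) is free over $K\dbl z\dbr_B$ or $K\dpl z\dpr_B$ of rank $n:=\rk_B\rho_B$. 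Since $R\dbl z\dbr[\tfrac{1}{z}]$ is a PID, $\hat M[\tfrac{1}{z}]$ is free over $R\dbl z\dbr[\tfrac{1}{z}]$; let $m$ be its rank over $R\dbl z\dbr[\tfrac{1}{z}]$. Comparing ranks over $K\dpl z\dpr$ gives $m=n\cdot\dim_{Q_\epsilon}B$. Now I want to upgrade "free over $R\dbl z\dbr[\tfrac{1}{z}]$ with a $B$-action" to "free over $R\dbl z\dbr_B[\tfrac{1}{z}]$". The cleanest route is: $\hat M[\tfrac{1}{z}]$ is a finitely generated $R\dbl z\dbr_B[\tfrac{1}{z}]$-module; it is projective over $R\dbl z\dbr_B[\tfrac{1}{z}]$ because it is projective (indeed free) over $R\dbl z\dbr[\tfrac{1}{z}]$ and $R\dbl z\dbr_B[\tfrac{1}{z}]$ is finite free over $R\dbl z\dbr[\tfrac{1}{z}]$, so $\hat M[\tfrac{1}{z}]$ is a direct summand of a free $R\dbl z\dbr_B[\tfrac{1}{z}]$-module (apply $\Hom_{R\dbl z\dbr[\frac{1}{z}]}(R\dbl z\dbr_B[\tfrac{1}{z}],-)$ and adjunction, or just note projectivity is inherited along a finite flat extension when the module is already projective over the base). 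Being finitely generated projective over $R\dbl z\dbr_B[\tfrac{1}{z}]$ with locally constant rank, it remains to check the rank is constant equal to $n$; this follows by base-changing to each residue field of $R\dbl z\dbr_B[\tfrac{1}{z}]$ and comparing with the $K\dpl z\dpr_B$-computation, using that $R\dbl z\dbr_B[\tfrac{1}{z}]\to K\dpl z\dpr_B$ hits every connected component. Finally, a finitely generated projective module of constant rank over $R\dbl z\dbr_B[\tfrac{1}{z}]$ need not be free in general, but here one can appeal to the structure $R\dbl z\dbr_B[\tfrac{1}{z}]=R\dbl z\dbr[\tfrac{1}{z}]\otimes_{A_\epsilon}B$ together with the fact that $R\dbl z\dbr[\tfrac{1}{z}]$ is a PID: projective modules over $B\otimes_{A_\epsilon}(\text{PID})$ that are free over the PID factor are free, by a standard patching argument (lift a $B$-basis of $\hat M[\tfrac{1}{z}]\otimes_{R\dbl z\dbr[\frac{1}{z}]}\operatorname{Frac}$ and clear denominators, then use that the obstruction to freeness lives in $\mathrm{Pic}$ of the semilocal-over-PID ring, which vanishes).

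I expect the main obstacle to be precisely this last point—getting from "finitely generated projective of constant rank over $R\dbl z\dbr_B[\tfrac{1}{z}]$" to "free". The honest way around it, and probably the way the paper intends, is to avoid the abstract projectivity argument and instead mimic the proof of Lemma~\ref{LemmaBFSht}: decompose via idempotents. The ring $B$ is Artinian (finite-dimensional over $Q_\epsilon$), hence a finite product of local Artinian $Q_\epsilon$-algebras, and correspondingly $\hat M[\tfrac{1}{z}]$ and $\rho_B$ decompose; so one reduces to $B$ local Artinian with residue field a finite extension $\BF'$ of $Q_\epsilon$ (here $\BF'=\BF\dpl z\dpr$-type). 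Then one argues that $\hat M[\tfrac{1}{z}]$ is free over $R\dbl z\dbr_B[\tfrac{1}{z}]$ by lifting a basis of $(\hat M[\tfrac{1}{z}]/\Fm_B\hat M[\tfrac{1}{z}])$ over $R\dbl z\dbr_{\BF'}[\tfrac{1}{z}]$—which is free by the coefficient-field case, itself reducing to freeness over the PID $R\dbl z\dbr[\tfrac{1}{z}]$ since $\BF'$ is a field and $R\dbl z\dbr_{\BF'}[\tfrac{1}{z}]$ is again a PID (a finite separable extension of the fraction-field-like ring $R\dbl z\dbr[\tfrac{1}{z}]$, or a localization of the Dedekind ring $R\dbl z\dbr\otimes_{A_\epsilon}\mathcal{O}_{\BF'}$; at any rate one checks it is a PID directly since its Picard group is trivial)—and then applying the topological Nakayama lemma over the $\Fm_B$-adically complete (in fact $\Fm_B$-nilpotent, as $B$ is Artinian) ring $R\dbl z\dbr_B[\tfrac{1}{z}]$, using that $\hat M[\tfrac{1}{z}]$ is finitely generated and $\Fm_B$-adically separated because $\Fm_B$ is nilpotent. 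The rank count is then forced by comparing with $\rk_B\rho_B$ via the $\hat\tau$-invariants identification from the first paragraph. I would structure the written proof as: (1) reduce to $B$ local Artinian over $Q_\epsilon$ by idempotents; (2) prove freeness over $R\dbl z\dbr_{\BF'}[\tfrac{1}{z}]$ in the residue-field case by reduction to the PID $R\dbl z\dbr[\tfrac{1}{z}]$; (3) lift by Nakayama; (4) compute the rank.
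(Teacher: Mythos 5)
The step you yourself flag as ``the main obstacle'' is exactly where the proof breaks, and neither of your two attempts to cross it works. First, the assertion that $\hat M[\tfrac{1}{z}]$ is projective over $R\dbl z\dbr_B[\tfrac{1}{z}]$ \emph{because} it is projective over $R\dbl z\dbr[\tfrac{1}{z}]$ and $R\dbl z\dbr_B[\tfrac{1}{z}]$ is finite free over $R\dbl z\dbr[\tfrac{1}{z}]$ is false: projectivity does not ascend along a finite flat ring extension (take $S=Q_\epsilon$, $B=Q_\epsilon[u]/(u^2)$ and $M=Q_\epsilon$ with $u$ acting as $0$). Second, the Nakayama argument in your last paragraph silently assumes that $\hat M[\tfrac{1}{z}]/\Fm_B\hat M[\tfrac{1}{z}]$ is free (or at least generated by $\rk_B\rho_B$ elements) over $R\dbl z\dbr_{E}[\tfrac{1}{z}]$, which you justify ``by the coefficient-field case''. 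But the reduction of $\ulHM$ modulo $\Fm_B$ is \emph{not} the local shtuka attached to $\rho_E$: it is merely a quotient of a free $R\dbl z\dbr$-module and can acquire torsion supported on $V(\pi)\subset\Spec R\dbl z\dbr[\tfrac{1}{z}]$, where $\pi$ is a uniformizer of $R$ --- and it does so precisely when $\hat M[\tfrac{1}{z}]$ fails to be $B$-flat, which is the statement to be proved. Concretely, $N=R\dbl z\dbr[\tfrac{1}{z}]^{\oplus 2}$ with $u\in B=Q_\epsilon[u]/(u^2)$ acting by $\left(\begin{smallmatrix}0&\pi\\0&0\end{smallmatrix}\right)$ satisfies every hypothesis your argument actually uses (free over the PID $R\dbl z\dbr[\tfrac{1}{z}]$, carries a $B$-action, becomes free of rank $1$ over $K\dpl z\dpr_B$), yet $N/uN\cong R\dbl z\dbr[\tfrac{1}{z}]/(\pi)\oplus R\dbl z\dbr[\tfrac{1}{z}]$ has torsion and $N$ is not free over $R\dbl z\dbr_B[\tfrac{1}{z}]$. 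The paper issues the same warning in the torsion setting just before Lemma~\ref{LemmaBFSht}. Your generic-fibre computation via Proposition~\ref{PropTateMod} only controls the module after $\pi$ is inverted and cannot rule this out.

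What is missing is the one place where the Frobenius structure must be used integrally, and it is exactly what the paper's one-line proof points to: the Fitting-ideal argument of \cite[Proposition~1.6.1]{KisinPST}, with $\FS$ and $E(u)$ replaced by $R\dbl z\dbr$ and $z-\zeta$. Since $\tau_{\hat M}$ is an isomorphism after inverting $z-\zeta$, each Fitting ideal $\mathrm{Fit}_i\bigl(\hat M[\tfrac{1}{z}]\bigr)\subset R\dbl z\dbr_B[\tfrac{1}{z}]$ generates the same ideal as $\hat\sigma(\mathrm{Fit}_i)$ after inverting $z-\zeta$, and Kisin's argument shows that a finitely generated ideal with this invariance is either $0$ or the unit ideal; combined with your rank computation over $K\dpl z\dpr_B$ this yields that $\hat M[\tfrac{1}{z}]$ is projective over $R\dbl z\dbr_B[\tfrac{1}{z}]$ of constant rank $\rk_B\rho_B$. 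Only at that point do your (correct and useful) remaining ingredients --- the idempotent decomposition of $B$, the identification of the factors of $R\dbl z\dbr_B[\tfrac{1}{z}]$ as power series rings over complete discrete valuation rings with $z$ inverted, hence PIDs, and the rank count --- upgrade projectivity to freeness.
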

\begin{proof}
One can repeat the proof of \cite[Proposition~1.6.1]{KisinPST} via Fitting ideals, with $\FS$ and $E(u)$ replaced with $R\dbl z\dbr$ and $z-\zeta$.
\end{proof}

\begin{corollary}\label{CorLSCoeff}
Let $\rho_B$ and $\ulHM$ be as above. We define $\BH(\rho_B):=\BH(\ulHM):=(D,\tau_D,\Fq_D)$, where we give the unique rigidification to $\ulHM$; cf.~Lemma~\ref{LemmaGL}. Then the admissible $z$-isocrystal with Hodge-Pink structure $\BH(\rho_B)$ enjoys the following additional properties:
\begin{enumerate}
\item
$D$ is free over $k\dpl z\dpr_B:=k\dpl z\dpr\otimes_{Q_\epsilon}B$ and $\tau_D$ is $B$-linear.
\item
$\Fq_D\subset D\otimes_{k\dpl z\dpr}K\dpl z-\zeta\dpr$ is a finitely generated free submodule over $K\dbl z-\zeta\dbr_B:=K\dbl z-\zeta\dbr\otimes_{Q_\epsilon}B$. 
\end{enumerate}
\end{corollary}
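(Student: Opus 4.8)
\textbf{Plan of proof for Corollary~\ref{CorLSCoeff}.}
The two assertions are essentially reformulations of Lemma~\ref{LemmaLSCoeff} after the scalar extensions that define $D$ and $\Fq_D$, so the strategy is to track the $B$-action through the construction of $\BH(\ulHM)$ in Definition~\ref{DefMystFunct}. First I would recall that $B$ acts on $\ulHM$ by quasi-morphisms, so by Lemma~\ref{LemmaLSCoeff} the $R\dbl z\dbr[\tfrac1z]$-module $\hat M[\tfrac1z]$ is free over $R\dbl z\dbr_B = R\dbl z\dbr\otimes_{A_\epsilon}B$ of rank $\rk_B\rho_B$, and the $B$-action commutes with $\tau_{\hat M}$ (it is a quasi-morphism of local shtukas).

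For part~(a): the underlying isocrystal is $(D,\tau_D)=\hat\sigma^*(\hat M,\tau_{\hat M})\otimes_{R\dbl z\dbr}k\dpl z\dpr$. Since $\hat M$ and $\hat M[\tfrac1z]$ have the same reduction modulo $\Fm_R$ (as $\hat M/\hat M[\tfrac1z]$ is $z$-power torsion only — more precisely $\hat M\subset\hat M[\tfrac1z]$ with quotient killed by a power of $z$, and tensoring $\hat M[\tfrac1z]$ down to $k\dpl z\dpr$ inverts $z$), one gets $D = \hat M[\tfrac1z]\otimes_{R\dbl z\dbr[\frac1z]}k\dpl z\dpr$, hence $D$ is free over $k\dpl z\dpr\otimes_{A_\epsilon}B = k\dpl z\dpr\otimes_{Q_\epsilon}B = k\dpl z\dpr_B$ of the stated rank. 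The base change $\hat\sigma^*$ preserves freeness and the rank since $\hat\sigma$ is $B$-linear on $R\dbl z\dbr_B$ (here one uses that $\hat\sigma$ acts as the identity on $z$ and on $B$, the latter because $B$ is a $Q_\epsilon=\BF_\epsilon\dpl z\dpr$-algebra and $\hat\sigma$ fixes $\BF_\epsilon$). The $B$-linearity of $\tau_D$ is inherited from the $B$-linearity (up to the $\hat\sigma$-twist on the source, which is $B$-linear) of $\tau_{\hat M}$ together with the fact that the $B$-action commutes with it.

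For part~(b): by \eqref{EqHPLattice} the Hodge-Pink lattice is $\Fq_D = \hat\sigma^*\delta_{\hat M}\circ\tau_{\hat M}^{-1}\bigl(\hat M\otimes_{R\dbl z\dbr}K\dbl z-\zeta\dbr\bigr)$, and $\hat\sigma^*\delta_{\hat M}\circ\tau_{\hat M}^{-1} = \tau_{\wt D}^{-1}\circ\delta_{\hat M}$ is an isomorphism onto $D\otimes_{k\dpl z\dpr}K\dpl z-\zeta\dpr$. Since $\delta_{\hat M}$ is the canonical (unique) rigidification, it is compatible with the $B$-action by functoriality in Lemma~\ref{LemmaGL} (apply the lemma's uniqueness to each $b\in B$ acting as a quasi-morphism); likewise $\tau_{\hat M}^{-1}$ is compatible up to the $\hat\sigma$-twist. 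Therefore $\Fq_D$ is the image under a $B$-linear isomorphism of $\hat M\otimes_{R\dbl z\dbr}K\dbl z-\zeta\dbr$, which — since $\hat M[\tfrac1z]$ is free over $R\dbl z\dbr_B$ and $z$ is a unit in $K\dbl z-\zeta\dbr$, so $\hat M\otimes_{R\dbl z\dbr}K\dbl z-\zeta\dbr = \hat M[\tfrac1z]\otimes_{R\dbl z\dbr[\frac1z]}K\dbl z-\zeta\dbr$ — is free over $K\dbl z-\zeta\dbr\otimes_{A_\epsilon}B = K\dbl z-\zeta\dbr\otimes_{Q_\epsilon}B = K\dbl z-\zeta\dbr_B$ of rank $\rk_B\rho_B$.

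\textbf{Main obstacle.} The only genuinely non-formal input is Lemma~\ref{LemmaLSCoeff} itself (freeness of $\hat M[\tfrac1z]$ over $R\dbl z\dbr_B$, proved by the Fitting-ideal argument of \cite[Proposition~1.6.1]{KisinPST}); granting that, the remaining work is the bookkeeping that the rigidification $\delta_{\hat M}$ and the formation of $(D,\tau_D)$ are $B$-equivariant. The subtle point to get right is that all the ring identifications $R\dbl z\dbr\otimes_{A_\epsilon}B \cong R\dbl z\dbr\otimes_{Q_\epsilon}B$ etc.\ are legitimate because $B$ is a $Q_\epsilon$-algebra and $z$ becomes invertible after inverting $z$, and that $\hat\sigma$ is $B$-linear on these rings — which is what makes $\hat\sigma^*$ preserve $R\dbl z\dbr_B$-freeness and lets $\tau_D$ be $B$-linear rather than merely $B$-semilinear.
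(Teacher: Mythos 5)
Your proposal is correct and takes exactly the route the paper does: the paper's own proof is a one-line reduction to Lemma~\ref{LemmaLSCoeff} and the construction of $D$ and $\Fq_D$ in \eqref{EqMystFunct}, and your write-up simply fills in the base-change and $B$-equivariance bookkeeping (correctly, including the key observation that $z$ is invertible in $k\dpl z\dpr$ and in $K\dbl z-\zeta\dbr$, so both $D$ and $\hat M\otimes_{R\dbl z\dbr}K\dbl z-\zeta\dbr$ are scalar extensions of the free $R\dbl z\dbr_B$-module $\hat M[\tfrac1z]$).
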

\begin{proof}
This corollary follows directly from Lemma~\ref{LemmaLSCoeff} and the construction of $D$ and $\Fq_D$; cf.~\eqref{EqMystFunct}.
\end{proof}

 Assume that $\End_{\Gal(K^\sep/K)}(\bar\rho) = \BF$, and fix $f_E:R^{\leqslant d}_{\bar\rho}\rightarrow E$  for some finite extension $E$ of $\BF\dpl z \dpr$. Set  $\rho_E:=f_E^*(\rho^{\leqslant d})$ and $\BH(\rho_E):=(D_E,\tau_{D_E},\Fq_{D_E})$, and write 
 \[
 \BH(\rho_E)\otimes\BH(\rho_E)^\vee= \left(\Ad(D_E),\tau_{\Ad(D_E)},\Fq_{\Ad(D_E)}\right),
 \]
 with $\Fp_{\Ad(D_E)}$ denoting the tautological lattice; cf.~Definition~\ref{DefHPStruct}. Identifying $\Ad(D_E) \otimes_{K\dbl z\dbr_E}K\dpl z-\zeta\dpr_E$ with $\End_{K\dpl z-\zeta\dpr_E}\bigl(D_E\otimes_{K\dpl z\dpr_E}K\dpl z-\zeta\dpr_E\bigr)$ we can describe  $\Fp_{\Ad(D_E)}$ and $\Fq_{\Ad(D_E)}$ as follows:
 \[
  \Fp_{\Ad(D_E)} = \End_{K\dbl z-\zeta\dbr_E}\bigl(\Fp_{D_E})\quad\text{and}\quad
  \Fq_{\Ad(D_E)} = \End_{K\dbl z-\zeta\dbr_E}\bigl(\Fq_{D_E}\bigr);  
 \]
 in other words, as a $K\dbl z-\zeta\dbr_E$-lattice of $\End_{K\dpl z-\zeta\dpr_E}\bigl(D_E\otimes_{K\dpl z\dpr_E}K\dpl z-\zeta\dpr_E\bigr)$, $\Fp_{\Ad(D_E)}$ (respectively, $\Fq_{\Ad(D_E)}$) consists of endomorphisms preserving $\Fp_{D_E}$ (respectively, $\Fq_{D_E}$).
 
\begin{theorem}\label{ThmGenSm}
In the above setting, the completion of $R^{\leqslant d}_{\bar\rho}\otimes_{\BF\dbl z\dbr}E$ with respect to the kernel of $f_E\otimes 1$ is isomorphic to a formal power series ring over $E$ of dimension 
\[
1+\dim_E \frac{\Fp_{\Ad(D_E)}}{\Fp_{\Ad(D_E)}\cap\Fq_{\Ad(D_E)}}
\]
(This is a finite number since $K$ is a finite extension of $Q_\epsilon$.)
In particular, $(\Spf R^{\leqslant d}_{\bar\rho})^\rig$ is a smooth rigid analytic variety over $\BF\dpl z \dpr$.
 \end{theorem}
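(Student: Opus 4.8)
The plan is to follow Kisin's strategy for flat deformation rings \cite{KisinModuliFF} and deduce the smoothness and dimension formula of $(\Spf R^{\leqslant d}_{\bar\rho})^\rig$ from a deformation-theoretic computation of the tangent space of the completed local ring at the point $f_E$. First I would fix a finite extension $E$ of $\BF\dpl z\dpr$, a morphism $f_E\colon R^{\leqslant d}_{\bar\rho}\to E$, and set $\rho_E:=f_E^*(\rho^{\leqslant d})$ with its distinguished Galois-stable $\CO_E$-lattice. Let $\wh{R}$ denote the completion of $R^{\leqslant d}_{\bar\rho}\otimes_{\BF\dpl z\dpr}E$ at the kernel of $f_E\otimes1$. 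By the infinitesimal criterion for smoothness of a complete local noetherian $E$-algebra, it suffices to show that $\wh{R}$ is formally smooth over $E$ and to compute $\dim_E\Fm_{\wh R}/\Fm_{\wh R}^2$. By Lemma~\ref{LemmaGenFibDefor}, for any Artinian local $\BF\dpl z\dpr$-algebra $B$ with residue field $E$, the set $\Hom_{\BF\dbl z\dbr}(R^{\leqslant d}_{\bar\rho},B)$ lifting $f_E$ is identified with the set of lifts $\rho_B$ of $\rho_E$ to $\GL_r(B)$ that arise as $\check V_\epsilon\ulHM$ for an effective local shtuka $\ulHM$ with $(z-\zeta)^d\hat M\subset\tau_{\hat M}(\hat\sigma^*\hat M)\subset\hat M$. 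Thus I would work with the groupoid of such ``height-$\leqslant d$ crystalline lifts'' of $\rho_E$.

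The key step is to translate this deformation problem, via the mysterious functor $\BH$ and Corollary~\ref{CorLSCoeff}, into a deformation problem for $z$-isocrystals with Hodge-Pink structure ``with coefficients in $B$''. Since $\BH$ is fully faithful (Proposition~\ref{PropBHfullyFaithful}) and, by Theorems~\ref{ThmAndersonKim} and \ref{ThmCompEpsCris}, the functor $\check V_\epsilon$ is an equivalence onto equal characteristic crystalline representations, deforming $\rho_E$ as a height-$\leqslant d$ crystalline representation is equivalent to deforming the pair $\BH(\rho_E)=(D_E,\tau_{D_E},\Fq_{D_E})$. The crucial observation — exactly as in Kisin's treatment of the analogous $p$-adic situation — is that the isocrystal part $(D_E,\tau_{D_E})$ is rigid: any deformation of a $z$-isocrystal over $k\dpl z\dpr_E$ to $k\dpl z\dpr_B$ is trivial because $1-\hat\sigma$-type operators on $\End$ are surjective over the Artinian coefficient ring (a formal consequence of the fact that $\Hom$ between local shtukas is finite free over $A_\epsilon$, Corollary~\ref{CorHomFinGen}, together with a dévissage up the maximal ideal of $B$). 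Hence all the deformation moduli is concentrated in the Hodge-Pink lattice $\Fq_{D_E}$: deformations correspond to $K\dbl z-\zeta\dbr_B$-lattices in $D_E\otimes K\dpl z-\zeta\dpr_B$ reducing to $\Fq_{D_E}$ and having the prescribed elementary-divisor type bounded by $d$. The space of such lattice deformations over $E[u]/(u^2)$ is, by a standard computation with the stabilizer of a lattice inside $\GL$ of the ambient $K\dpl z-\zeta\dpr_E$-vector space, naturally isomorphic to $\Fp_{\Ad(D_E)}/(\Fp_{\Ad(D_E)}\cap\Fq_{\Ad(D_E)})$, using the description of $\Fp_{\Ad(D_E)}$ and $\Fq_{\Ad(D_E)}$ as endomorphisms preserving $\Fp_{D_E}$, resp.\ $\Fq_{D_E}$. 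The extra ``$1+$'' in the dimension formula comes from the unrestricted deformation of the $z$-adic ``twist'' — i.e.\ the rank-one contribution corresponding to deforming along $z\mapsto z$ inside $\BF\dbl z\dbr$, equivalently the relative dimension of $\Spf\CO_E\dbl z\dbr$-type deformations; concretely, one keeps track of the formal parameter $z$ of the base $\BF\dbl z\dbr$ which contributes one additional dimension to the generic fiber.

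To get formal smoothness, I would check that the deformation functor of the Hodge-Pink lattice (with the height bound) is formally smooth: given a small surjection $B'\onto B$ in the relevant category and a lattice deformation over $B$, one lifts it to $B'$ by lifting a basis adapted to the elementary-divisor decomposition — since $K\dbl z-\zeta\dbr_{B'}\onto K\dbl z-\zeta\dbr_B$ is surjective with nilpotent kernel and the bound ``$(z-\zeta)^d$'' is a closed condition that is automatically preserved, no obstruction arises. Combined with the rigidity of the isocrystal part and the equivalence $\check V_\epsilon\circ\BH^{-1}$, this shows $\wh R$ is formally smooth over $E$, hence isomorphic to a formal power series ring over $E$ in $\dim_E\Fm_{\wh R}/\Fm_{\wh R}^2 = 1+\dim_E\Fp_{\Ad(D_E)}/(\Fp_{\Ad(D_E)}\cap\Fq_{\Ad(D_E)})$ variables; smoothness of the rigid generic fiber $(\Spf R^{\leqslant d}_{\bar\rho})^\rig$ at the point corresponding to $f_E$ follows since this holds at every such closed point, and these are Zariski dense. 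I expect the main obstacle to be the precise bookkeeping in the passage from the groupoid of crystalline lifts of $\rho_E$ to the deformation groupoid of $\BH(\rho_E)$ with $B$-coefficients — in particular verifying that ``effective, height $\leqslant d$'' on the shtuka side corresponds exactly to the lattice-type condition on $\Fq_D$ and that the coefficient ring $B$ acts freely enough (Lemma~\ref{LemmaLSCoeff} and Corollary~\ref{CorLSCoeff} handle freeness, but one must check compatibility of this with the small extensions used in the smoothness argument), and in isolating cleanly the ``$+1$'' coming from the $\BF\dbl z\dbr$-base direction as opposed to the Galois-theoretic moduli.
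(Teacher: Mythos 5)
Your overall strategy (reduce to a deformation computation for $\BH(\rho_E)$ over Artinian local $E$-algebras via Lemma~\ref{LemmaGenFibDefor} and Corollary~\ref{CorLSCoeff}) is the one the paper follows, but two of your key assertions are wrong and one step is missing. First, the isocrystal part is \emph{not} rigid: the relevant operator $\tau_{\Ad(D_E)}-1$ on $\Ad(D_E)$ is surjective only over a separably closed base (Lang's theorem), whereas here the residue field $k$ of $R$ is \emph{finite}, so its cokernel has the same $E$-dimension as its kernel, which is the (generally nonzero) endomorphism algebra of the $z$-isocrystal. Deformations of $(D_E,\tau_{D_E})$ therefore contribute nontrivially. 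Second, and relatedly, the ``$+1$'' has nothing to do with the $z$-direction of the base $\BF\dbl z\dbr$ — the deformation problem lives over Artinian local $E$-algebras, with $z$ already specialized into $E$. The correct bookkeeping is via the two-term complex
\[
\CC^\bullet(\rho_E)=\Bigl[\Ad(D_E)\longto\Ad(D_E)\oplus\tfrac{\Fp_{\Ad(D_E)}}{\Fp_{\Ad(D_E)}\cap\Fq_{\Ad(D_E)}}\Bigr],
\]
whose $\CH^1$ acts simply transitively on the set of lifts: both occurrences of $\Ad(D_E)$ cancel in the Euler characteristic, leaving $\dim_E\CH^1=\dim_E\CH^0+\dim_E\Fp_{\Ad(D_E)}/(\Fp_{\Ad(D_E)}\cap\Fq_{\Ad(D_E)})$, and the ``$+1$'' is exactly $\dim_E\CH^0=\dim_E\End_{\Gal(K^\sep/K)}(\rho_E)=1$, forced by the hypothesis $\End_{\Gal(K^\sep/K)}(\bar\rho)=\BF$. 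Your version, in which the isocrystal is rigid and only the lattice moves, would give $\dim_E\Fp_{\Ad(D_E)}/(\Fp_{\Ad(D_E)}\cap\Fq_{\Ad(D_E)})$ with no honest source for the extra $1$.

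The missing step is admissibility. Having lifted $(D_E,\tau_{D_E})$ and $\Fq_{D_E}$ freely over $B$, you must show the resulting triple $\ulD$ is \emph{admissible} before you can invoke $\BH^{-1}$ and $\check V_\epsilon$ to produce a $B$-point of $R^{\leqslant d}_{\bar\rho}$; unobstructedness of the lattice lift alone does not give this. The paper handles it by observing that $\ulD$ is an extension of $\BH(\rho_{B/I})$ by $\BH(\rho_E)\otimes_E I$ and that extensions of admissible objects are admissible, via the $\hat\sigma$-bundle criterion of Theorem~\ref{ThmCritWA}\ref{ThmCritWA_A} and the classification in Theorem~\ref{ThmHartlPink1}\ref{ThmHartlPink1a}. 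You should also replace the appeal to ``Zariski density of closed points'' at the end by the actual argument: write $(\Spf R^{\leqslant d}_{\bar\rho})^\rig$ as an admissible union of affinoids $\Spm B_n$, use that affinoid algebras are Jacobson and that the completion of $B_n$ at a maximal ideal agrees with that of $R^{\leqslant d}_{\bar\rho}\otimes_{\BF\dbl z\dbr}\BF\dpl z\dpr$ at the corresponding point.
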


 \begin{remark}
If we do not assume that $\End_{\Gal(K^\sep/K)}(\bar\rho) = \BF$, one can still show that $(\Spf R^{\Box,\leqslant d}_{\bar\rho})^\rig$ is a smooth rigid analytic variety over $\BF\dpl z \dpr$, and its dimension at an $E$-point $f_E:R^{\Box,\leqslant d}_{\bar\rho}\rightarrow E$ is 
\[r^2 +\dim_E \frac{\Fp_{\Ad(D_E)}}{\Fp_{\Ad(D_E)}\cap\Fq_{\Ad(D_E)}},\] 
with the obvious notation, where $r=\dim_\BF\bar\rho = \dim_E\rho_E$. This formula is compatible with Theorem~\ref{ThmGenSm} if $\End_{\Gal(K^\sep/K)}(\bar\rho) = \BF$, since the natural morphism $\Spf R^{\Box,\leqslant d}_{\bar\rho} \rightarrow \Spf R^{\leqslant d}_{\bar\rho}$ is formally smooth of relative dimension $r^2-1$. (In fact, formal smoothness is clear, and to obtain the relative dimension it suffices to look at the tangent spaces. Since we have $\End_{\Gal(K^\sep/K)}(\bar\rho) = \BF$, the stabilizer of the natural $\wh\GL_r(\BF[u]/(u^2))$-action on $\Def^{\Box,\leqslant d}_{\bar\rho}(\BF[u]/(u^2))$ is the subgroup of scalar matrices $\wh{\mathbb G}_m(\BF[u]/(u^2))$, which shows that $\Def^{\Box,\leqslant d}_{\bar\rho}(\BF[u]/(u^2))$ is a $\wh\PGL_r(\BF[u]/(u^2))$-torsor over $\Def^{\leqslant d}_{\bar\rho}(\BF[u]/(u^2))$. Since  $\wh\PGL_r(\BF[u]/(u^2))\cong\mathfrak{pgl}_r(\BF)$, where the latter is an $(r^2-1)$-dimensional $\BF$-vector space, we obtain the desired numerology.)

This result should be thought of as an equi-characteristic analog of the formal smoothness and the dimension formula for the (rigid analytic) generic fibers of crystalline deformation rings; cf.~the case with $N=0$ of \cite[\S3]{KisinPST}. Note also that in the dimension formula, we have terms involving the Hodge-Pink structure associated to $\rho_E$ instead of the Hodge-Pink filtration, which supports that the Hodge-Pink structure is the right equi-characteristic analog of the Hodge filtration associated to a $p$-adic crystalline representation. (Compare with \cite[Theorems~3.3.4, 3.3.8]{KisinPST}.)
 \end{remark}

\begin{example}\label{ExDimGemFib}
 Assume that $\BF=\BF_\epsilon$ so that $R\dbl z\dbr_\BF = R$. Consider a local shtuka $\ulHM$ where $\hat M = R\dbl z\dbr^2$ and $\tau_{\hat M}$ is given by the matrix $\left(\begin{smallmatrix}1&1\\0&(z-\zeta)^d\end{smallmatrix}\right)$. Then the isomorphism $\delta_{\hat M}$ from Lemma~\ref{LemmaGL} equals $\left(\begin{smallmatrix}1\es&g\\0\es&\tminus^{-d}\end{smallmatrix}\right)$ for some $g\in R\dbl z,z^{-1}\}[\tminus^{-1}]$ and so $\Fq_D=\Fp_D+(z-\zeta)^{-d}\left(\begin{smallmatrix} \hat\sigma(g)-1 \\ \hat\sigma(\tminus)^{-d} \end{smallmatrix}\right)K\dbl z-\zeta\dbr$. One can check that $\bar\rho:=\check T_\epsilon\ulHM\otimes_{A_\epsilon}\BF_\epsilon$ is a non-split extension of  $(\chi_\epsilon)^{-d}\bmod z$ by the trivial character. Therefore, we have  $\End_{\Gal(K^\sep/K)}(\bar\rho) = \BF$ if and only if $\hat q-1$ does not divide $d$, which we assume; cf.~Example~\ref{ExTorLSModel}.
 
Clearly, $\check T_\epsilon\ulHM$ defines an $A_\epsilon$-point  $f:R^{\leqslant d}_{\bar\rho}\rightarrow A_\epsilon$. By Theorem~\ref{ThmGenSm}, this implies that the dimension of $(\Spf R^{\leqslant d}_{\bar\rho})^\rig$ at $f$ is given by
 \[
  1 + \dim_{Q_\epsilon} \left(K\dbl z-\zeta\dbr/(z-\zeta)^d\right) = 1 + d\cdot [K:Q_\epsilon],
 \]
because $\End_{K\dbl z-\zeta\dbr}(\Fp_{D})=K\dbl z-\zeta\dbr^{2\times2}$ and $\End_{K\dbl z-\zeta\dbr}(\Fp_{D})\cap\End_{K\dbl z-\zeta\dbr}(\Fq_{D})=$
\[
\es\bigl\{\,\left(\begin{smallmatrix} a & b \\ c & d \end{smallmatrix}\right)\in K\dbl z-\zeta\dbr^{2\times2}\colon b\equiv c(\hat\sigma(g)-1)^2\hat\sigma(\tminus)^{2d}+(d-a)(\hat\sigma(g)-1)\hat\sigma(\tminus)^d\mod(z-\zeta)^d\,\bigr\}.
\]
\end{example}

\bigskip

Let us begin the proof of Theorem~\ref{ThmGenSm}.
Let $B$ be an Artinian local $E$-algebra with residue field $E$, and let $I\subset B$ be an ideal annihilated by the maximal ideal of $B$. We fix a lift $f_{B/I}:R^{\leqslant d}_{\bar\rho}\rightarrow  B/I$ of $f_E$, and set $\rho_{B/I}:=f_{B/I}^*(\rho^{\leqslant d})$.

In order to study a lift of $f_{B/I}$ to a $B$-point, we introduce the following complex concentrated in degrees $0$ and $1$:
\begin{equation}
 \CC^\bullet(\rho_E):= \left[\Ad(D_E) \xrightarrow{(\tau_{D_E}-1,incl)} \Ad(D_E) \oplus \frac{\Fp_{\Ad(D_E)}}{\Fp_{\Ad(D_E)}\cap\Fq_{\Ad(D_E)}}\right].
\end{equation}
Let $\CH^0(\rho_E)$ and $\CH^1(\rho_E)$ respectively denote the $0$th and the $1$st cohomology of $\CC^\bullet(\rho_E)$. Clearly, $\CH^0(\rho_E)$ is naturally isomorphic to the $E$-vector space of $E$-linear endomorphisms of $\BH(\rho_E)$, which is isomorphic to $\End_{\Gal(K^\sep/K)}(\rho_E)$ by full faithfulness results (Theorem~\ref{ThmAndersonKim}, Proposition~\ref{PropBHfullyFaithful}). 
\begin{lemma}\label{LemmaGenForSm}
The set of maps $f_B:R^{\leqslant d}_{\bar\rho}\rightarrow B$ lifting $f_{B/I}$ is a principal homogeneous space under $\CH^1(\rho_E)\otimes_E I$; in particular, there exists a lift $f_B$ of $f_{B/I}$.
\end{lemma}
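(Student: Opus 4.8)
The statement is the standard ``deformation theory via a two-term complex'' computation, transported to the setting of torsion local shtukas. The plan is to produce an obstruction in $\CH^1(\rho_E)\otimes_E I$ (really just to show the set of lifts is a torsor under $\CH^1(\rho_E)\otimes_E I$, the obstruction itself living in a higher group which vanishes here) by translating the lifting problem for $f_{B/I}$ into a lifting problem for the associated $z$-isocrystals with Hodge-Pink structure and then computing with the explicit complex $\CC^\bullet(\rho_E)$.

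First I would use Lemma~\ref{LemmaGenFibDefor} to replace $f_{B/I}$ by the local shtuka data: $\rho_{B/I}\cong\check V_\epsilon\ulHM_{B/I}$ for an effective local shtuka $\ulHM_{B/I}$ over $R$ with $(z-\zeta)^d\hat M_{B/I}\subset\tau(\hat\sigma^*\hat M_{B/I})\subset\hat M_{B/I}$ carrying a $B/I$-action by quasi-morphism, and then by Corollary~\ref{CorLSCoeff} and the full faithfulness results (Theorem~\ref{ThmAndersonKim}, Proposition~\ref{PropBHfullyFaithful}) work instead with $\ulD_{B/I}:=\BH(\rho_{B/I})=(D_{B/I},\tau_{D_{B/I}},\Fq_{D_{B/I}})$, which is free over $k\dpl z\dpr_{B/I}$ with $B/I$-linear $\tau_D$ and a free $K\dbl z-\zeta\dbr_{B/I}$-lattice $\Fq_D$. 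By Lemma~\ref{LemmaGenFibDefor} again (its converse half), lifting $f_{B/I}$ to a $B$-point $f_B$ is the same as lifting $\rho_{B/I}$ to a $\rho_B\cong\check V_\epsilon\ulHM_B$ with $B$-action and the height-$\leqslant d$ bound, which — by full faithfulness — is the same as lifting $\ulD_{B/I}$ to a $\ulD_B=(D_B,\tau_{D_B},\Fq_{D_B})$ over $B$ with the analogous freeness and boundedness properties, reducing to $\ulD_{B/I}$ modulo $I$. The key point is that here ``lifting with coefficients in $B$'' over the square-zero extension $B\onto B/I$ is unobstructed and its solutions form a torsor.

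The core computation is then: (i) since $D_{B/I}$ is finite free over $k\dpl z\dpr_{B/I}$ and $I\Fm_B=0$, the module $D_B$ underlying a lift is forced to be $D_E\otimes_E B$ — a free lift always exists and any two are conjugate, contributing nothing new — so the only data to lift are $\tau_D$ and $\Fq_D$; (ii) lifts of $\tau_{D_{B/I}}$ to a $B$-linear isomorphism $\tau_{D_B}$ form a torsor under $\Hom_{k\dpl z\dpr}(\hat\sigma^*D_E, D_E)\otimes_E I=\Ad(D_E)\otimes_E I$, via $\tau_{D_B}\mapsto \tau_{D_B}(1+\varepsilon)$ with $\varepsilon\in\Ad(D_E)\otimes_E I$ using the identification $\hat\sigma^*D_E\cong D_E$ by $\tau_{D_E}$; (iii) lifts of the lattice $\Fq_{D_{B/I}}$ to a free $K\dbl z-\zeta\dbr_B$-lattice $\Fq_{D_B}$ with $(z-\zeta)^d\Fp_{D_B}\subset\Fq_{D_B}\subset\Fp_{D_B}$ (the boundedness) form a torsor under $(\Fp_{\Ad(D_E)}/(\Fp_{\Ad(D_E)}\cap\Fq_{\Ad(D_E)}))\otimes_E I$; here one writes $\Fq_{D_B}$ as $\exp$ of an element of $\Fp_{\Ad(D_E)}$ applied to $\Fq_{D_E}\otimes B$ and notes that the ambiguity is exactly the endomorphisms already preserving $\Fq_{D_E}$, i.e.\ $\Fp_{\Ad(D_E)}\cap\Fq_{\Ad(D_E)}$. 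Thus the pair $(\tau_{D_B},\Fq_{D_B})$ lives, after fixing $D_B=D_E\otimes_E B$, in a torsor under $\bigl(\Ad(D_E)\oplus \Fp_{\Ad(D_E)}/(\Fp_{\Ad(D_E)}\cap\Fq_{\Ad(D_E)})\bigr)\otimes_E I$ — the degree-$1$ term of $\CC^\bullet(\rho_E)\otimes_E I$ — and two such data give isomorphic $z$-isocrystals with Hodge-Pink structure over $B$ (restricting to the identity on $\ulD_{B/I}$) precisely when they differ by the image of the coboundary $\Ad(D_E)\xrightarrow{(\tau_{D_E}-1,\,\mathrm{incl})}$, because an isomorphism lifting the identity is $1+\varepsilon$ with $\varepsilon\in\Ad(D_E)\otimes_E I$ and its effect on $(\tau_D,\Fq_D)$ is exactly $(\tau_{D_E}-1)\varepsilon$ on the Frobenius and the class of $\varepsilon$ in $\Fp_{\Ad(D_E)}/(\Fp_{\Ad(D_E)}\cap\Fq_{\Ad(D_E)})$ on the lattice. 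Taking the quotient yields that the set of lifts is a principal homogeneous space under $\CH^1(\rho_E)\otimes_E I$; nonemptiness follows because one can always exhibit \emph{a} lift (take the trivial deformation of $\tau_D$ and $\Fq_D$), so the torsor is nonempty, giving the existence of $f_B$.

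The main obstacle I anticipate is step (iii): making precise that lifts of the Hodge-Pink lattice, subject to the height bound $(z-\zeta)^d\Fp\subset\Fq\subset\Fp$, are parametrized by $(\Fp_{\Ad}/(\Fp_{\Ad}\cap\Fq_{\Ad}))\otimes I$ rather than by a larger space — one must check that the boundedness condition is automatically inherited (it is, since it holds modulo $I$ and $I$ is square-zero, so $(z-\zeta)^d$ still kills $\Fp/\Fq$ after lifting) and that the exponential/parametrization is well-defined over the Artinian base, and one must correctly match the coboundary map with the action of $1+\varepsilon$ on both coordinates simultaneously. A secondary point requiring care is the passage back and forth through the equivalences of Lemma~\ref{LemmaGenFibDefor}, Corollary~\ref{CorLSCoeff}, and full faithfulness, to be sure that ``lift of $f_{B/I}$'' really does correspond bijectively to ``lift of $\ulD_{B/I}$ over $B$ with the stated properties, modulo isomorphism restricting to the identity mod $I$''; this is where the hypothesis $\End_{\Gal(K^\sep/K)}(\bar\rho)=\BF$ (hence pro-representability, and $\CH^0(\rho_E)=\End_{\Gal}(\rho_E)$) is used to rigidify and conclude the torsor structure rather than merely a groupoid statement.
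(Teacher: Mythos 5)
Your overall route is the same as the paper's: reduce to a square-zero (in fact length-one) extension, transport the lifting problem through Lemma~\ref{LemmaGenFibDefor}, Corollary~\ref{CorLSCoeff} and the full faithfulness results to a lifting problem for the triple $(D,\tau_D,\Fq_D)$, and then identify the set of lifts with a torsor under the degree-one cohomology of the two-term complex $\CC^\bullet(\rho_E)$, with the coboundary accounting for isomorphisms $1+u\varepsilon$ lifting the identity. That part of your argument matches the paper's.

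However, there is a genuine gap in your existence step, and it is exactly the point the paper spends most of its effort on. Full faithfulness of $\BH$ (Proposition~\ref{PropBHfullyFaithful}) does \emph{not} tell you that an arbitrary lift $\ulD_B=(D_B,\tau_{D_B},\Fq_{D_B})$ of $\BH(\rho_{B/I})$ arises from a local shtuka: for that you need $\ulD_B$ to be \emph{admissible} in the sense of Definition~\ref{DefWA}, i.e.\ to lie in the essential image of $\BH$. Only then does Lemma~\ref{LemmaGenFibDefor} convert it into a $B$-point of $R^{\leqslant d}_{\bar\rho}$. Your phrase ``take the trivial deformation of $\tau_D$ and $\Fq_D$'' also glosses over the fact that $B\onto B/I$ has no canonical section, so one must genuinely \emph{choose} lifts of the free modules, of $\tau_{D_{B/I}}$ and of the lattice inclusion $\ol\Fq\into(z-\zeta)^{-d}\ol\Fp$; but more importantly, whatever lift $\ulD$ one constructs, one must then prove it is admissible. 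The paper does this by observing that $\ulD$ sits in a short exact sequence $0\to\BH(\rho_E)\to\ulD\to\BH(\rho_{B/I})\to0$ of $z$-isocrystals with Hodge-Pink structure and that an extension of admissible objects is admissible — which is itself not formal: it uses the characterization of admissibility by the triviality of the associated $\hat\sigma$-bundle $\ulCF(\ulD)$ over $\olK$ (Theorem~\ref{ThmCritWA}\ref{ThmCritWA_A}) together with the Hartl--Pink classification (Theorem~\ref{ThmHartlPink1}\ref{ThmHartlPink1a}), since an extension of $\ulCF_{_{\SC 0,1}}^{\oplus m}$ by $\ulCF_{_{\SC 0,1}}^{\oplus n}$ must again be a direct sum of $\ulCF_{_{\SC 0,1}}$'s. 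Without this admissibility verification your torsor is a torsor over the wrong set (deformations of the Hodge-Pink datum rather than $B$-points of the deformation ring), and in particular nonemptiness of the set of lifts $f_B$ does not follow. The same caveat applies to your transitivity claim: each twisted datum $(D,(1+u\gamma)\tau_D,(1+u\delta)\Fq)$ must again be checked to be admissible by the identical extension argument before it yields a $B$-lift of $f_{B/I}$.
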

\begin{proof}
By filtering $I$ if necessary, it suffices to show the lemma when $I$ is $1$-dimensional as an $E$-vector space, which we assume from now on. Let $u\in I$ denote a principal generator.

Let us first show the existence of a lift $f_B$.
 Write $\BH(\rho_{B/I})=(\olD,\tau_{\olD},\ol\Fq)$. Since $\olD$ is free over $k\dpl z\dpr_{B/I}$ by Corollary~\ref{CorLSCoeff}, we pick a free $k\dpl z\dpr_B$-module $D$ that lifts $\olD$ and lift $\tau_{\olD}$ arbitrarily to $\tau_D$. Note that $(D,\tau_D)$ is  a $z$-isocrystal. Since $\ol\Fq$ is free over $K\dbl z-\zeta\dbr_{B/I}$ by Corollary~\ref{CorLSCoeff}, we choose a free $K\dbl z-\zeta\dbr_B$-module $\Fq$ lifting $\ol\Fq$ and an injective map $\Fq\hookrightarrow (z-\zeta)^{-d}\Fp$ lifting the natural inclusion $\ol\Fq\hookrightarrow (z-\zeta)^{-d}\ol\Fp$. (Here, $\ol\Fp$ and $\Fp$ respectively denote the tautological lattices for $\olD$ and $D$.) We set $\ulD:=(D, \tau_D, \Fq)$.
 
 Let us first show that $\ulD$ is \emph{admissible}. Indeed, choosing an $E$-basis $u\in I$, we have an isomorphism $\BH(\rho_E)\isoto \ulD\otimes_B I$ (with the obvious notation), so we have the following short exact sequence of $z$-isocrystals with Hodge-Pink structure:
 \[
  0\rightarrow \BH(\rho_E) \rightarrow \ulD \rightarrow \BH(\rho_{B/I}) \rightarrow 0.
 \]
Now, one can show that any extension of admissible $z$-isocrystals with Hodge-Pink structure is again admissible, using the characterization of admissibility in terms of the associated $\hat\sigma$-bundle $\CF(\ulD)$ (Theorem~\ref{ThmCritWA}\ref{ThmCritWA_A}) and the classification of $\hat\sigma$-bundles (Theorem~\ref{ThmHartlPink1}\ref{ThmHartlPink1a}).

Let $\ulHM$ denote a local shtuka with $\BH(\ulHM)\cong\ulD$, which exists by admissibility. By construction, all the Hodge-Pink weights of $\ulHM$ are between $0$ and $d$, so $\ulHM$ is effective with $(z-\zeta)^d\hat M\subset\tau_{\hat M}(\hat\sigma^*\hat M)\subset\hat M$. By full faithfulness (cf.~Lemma~\ref{LemmaGL} and Proposition~\ref{PropBHfullyFaithful}), $B$ acts on $\ulHM$ by quasi-morphism, so $\check V_\epsilon(\ulHM)$ has a natural $B$-action commuting with the Galois action. On the other hand,  $\check V_\epsilon(\ulHM)$ is a free $B$-module lifting $\rho_{B/I}$ by comparing the $E$-dimension and the minimal number of generators obtained by the Nakayama lemma. By Lemma~\ref{LemmaGenFibDefor} it follows that $\check V_\epsilon(\ulHM)$ defines a $B$-point of $R^{\leqslant d}_{\bar\rho}$ lifting $f_{B/I}$.

Let us now show that the set of $f_B$ lifting $f_{B/I}$ is a principal homogeneous space under $\CH^1(\rho_E)\cong \CH^1(\rho_E)\otimes_E I$. Let $\ulD$ be as above. Given $\gamma\in\Ad(D_E)\cong\End_{k\dpl z\dpr_E}(D_E)$ and $\delta\in\Fp_{\Ad(D_E)}\cong \End_{K\dbl z-\zeta\dbr_E}(\Fp_{D_E})$, we obtain another $z$-isocrystal with Hodge-Pink structure as follows:
 \[
 \ulD_{(\gamma,\delta)}:=(D,(1+u\gamma)\tau_D, (1+u\delta)\Fq).
 \]
 By repeating the previous argument for $\ulD_{(\gamma,\delta)}$ instead of $\ulD$, it follows that $\ulD_{(\gamma,\delta)}$ defines a $B$-lift of $f_{B/I}$, thus we obtain an $E$-linear \emph{transitive} action of $\Ad(D_E)\oplus\Fp_{\Ad(D_E)}$ on the set of $B$-lifts of $f_{B/I}$. (Transitivity follows from Corollary~\ref{CorLSCoeff}.)
 One can easily check that $\ulD_{(\gamma,\delta)}$ and $\ulD_{(\gamma',\delta')}$ define the same $B$-lift of $f_{B/I}$ if and only if $(\gamma,\delta)$ and $(\gamma',\delta')$ define the same class in $\CH^1(\rho_E)$. This shows the desired claim.
\end{proof}

\begin{proof}[Proof of Theorem~\ref{ThmGenSm}] 
Lemma~\ref{LemmaGenForSm} shows that the completion of $R^{\leqslant d}_{\bar\rho}\otimes_{\BF\dbl z\dbr}E$ at the maximal ideal corresponding to $f_E$ is a formal power series ring of dimension equal to $\dim_E\CH^1(\rho_E)$. It remains to compute $\dim_E\CH^1(\rho_E)$. Note that
\[
 \dim_E\CH^0(\rho_E) - \dim_E\CH^1(\rho_E)= \dim_E\Ad(D_E) - \left(\dim_E\Ad(D_E) + \dim_E \frac{\Fp_{\Ad(D_E)}}{\Fp_{\Ad(D_E)}\cap\Fq_{\Ad(D_E)}}\right) ,
\]
and $\CH^0(\rho_E)\cong \End_{\Gal(K^\sep/K)}(\rho_E)$ is $1$-dimensional since $\End_{\Gal(K^\sep/K)}(\rho_\BF)\cong\BF$. 
This shows that $\dim_E\CH^1(\rho_E)$ has the expected dimension. 

By construction of rigid analytic generic fibers (\emph{cf.} \cite[\S7.1]{deJongDieudonne}), $(\Spf R^{\leqslant d}_{\bar\rho})^\rig$ is the direct union of affinoid rigid analytic spaces  $\Spm B_n$, where $B_n$ is the completion of $R^{\leqslant d}_{\bar\rho}\otimes_{\BF\dbl z\dbr}\BF\dpl z\dpr$ with respect to the norm such that $R^{\leqslant d}_{\bar\rho}[\frac{\Fm^n}{z}]$ is the set of norm $\leqslant 1$ elements in $R^{\leqslant d}_{\bar\rho}\otimes_{\BF\dbl z\dbr}\BF\dpl z\dpr$. Since $\{\Spm B_n\}$ is an admissible covering of   $(\Spf R^{\leqslant d}_{\bar\rho})^\rig$, it remains to show that each $B_n$ is a topologically smooth affinoid algebra over $\BF\dpl z\dpr$. Since $B_n$ is Jacobson \cite[\S5.2.6, Theorem~3]{BGR} and  all ideals of $B_n$ are closed \cite[\S5.2.7, Corollary~2]{BGR}, it suffices to show that the completion of $B_n$ at any closed maximal ideal (i.e., classical points of $\Spm B_n$) is geometrically regular. (Note that the invertibility of Jacobian can be captured by the completions at a dense set of prime ideals.) 

By \cite[Lemma~7.1.9]{deJongDieudonne}, any (closed) maximal ideal of $B_n$ restricts to a maximal ideal of  $R^{\leqslant d}_{\bar\rho}\otimes_{\BF\dbl z\dbr}\BF\dpl z\dpr$, and the completion of $B_n$ at a maximal ideal is isomorphic to the completion of $R^{\leqslant d}_{\bar\rho}\otimes_{\BF\dbl z\dbr}\BF\dpl z\dpr$ at the corresponding maximal ideal. Since Lemma~\ref{LemmaGenForSm} shows that the local ring of $R^{\leqslant d}_{\bar\rho}\otimes_{\BF\dbl z\dbr}\BF\dpl z\dpr$ at any maximal ideal is geometrically regular, we obtain the desired smoothness claim.
\end{proof}

We end the section with discussing the equi-characteristic analog of moduli of finite flat group schemes \cite{KisinModuliFF}. We first need the  following definition:
\begin{definition}
 Let $B$ be an $A_\epsilon$-algebra with $\#B<\infty$ (not a finite $Q_\epsilon$-algebra), and let $T_B$ be a finitely generated free $B$-module equipped with a discrete action of $\Gal(K^\sep/K)$. Then a \emph{torsion local $B$-shtuka model} of $T_B$ is a torsion local $B$-shtuka $\ulHM_B$ equipped with a $B[\Gal(K^\sep/K)]$-isomorphism $T_B\cong\check T_\epsilon \ulHM_B$.
 
 We take the obvious notion of equivalence for torsion local $B$-shtuka models of $T_B$. We can understand an equivalence class of torsion local $B$-shtuka models of $T_B$ as a certain $R\dbl z\dbr_B$-lattice of $T_B\otimes_{A_\epsilon}K^\sep\dbl z\dbr$ stable under $1\otimes\hat\sigma$ and invariant under the Galois action, using the isomorphism \eqref{EqTateModIsom}.
\end{definition}

\begin{theorem}[{\cite[Proposition~4.3.1]{KimNormField}}]
We fix $\bar\rho:\Gal(K^\sep/K)\rightarrow \GL_d(\BF)$, and for simplicity we assume that $\End_{\Gal(K^\sep/K)}(\bar\rho)\cong\BF$.
 Then there exists a projective scheme $\mathcal{GR}^{\leqslant d}_{\bar\rho}$ over $\Spec R^{\leqslant d}_{\bar\rho}$ with the following property. For any local $R^{\leqslant d}_{\bar\rho}$-algebra $B$ with $\#B<\infty$,  $\Hom_{R^{\leqslant d}_{\bar\rho}}(\Spec B,\mathcal{GR}^{\leqslant d}_{\bar\rho})$ is functorially bijective with the set of equivalence classes of torsion local $B$-shtuka models of $\rho^{\leqslant d}\otimes_{R^{\leqslant d}_{\bar\rho}}B$ which are effective and of height $\leqslant d$.
\end{theorem}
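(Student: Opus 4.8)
The construction is the equi-characteristic analogue of Kisin's construction of moduli of finite flat group schemes \cite{KisinModuliFF}, with torsion local shtukas over $R$ in place of finite flat group schemes (equivalently, Breuil--Kisin modules); \emph{cf.}~\cite[\S4.3]{KimNormField}. Write $\Lambda:=R^{\leqslant d}_{\bar\rho}$, which by Theorem~\ref{ThmDeformation} pro-represents $\Def^{\leqslant d}_{\bar\rho}$ since $\End_{\Gal(K^\sep/K)}(\bar\rho)=\BF$; let $\rho^{\leqslant d}\colon\Gal(K^\sep/K)\to\GL_d(\Lambda)$ be the universal deformation, let $\Fm\subset\Lambda$ be the maximal ideal, and set $\Lambda_n:=\Lambda/\Fm^n$, $\rho_n:=\rho^{\leqslant d}\otimes_\Lambda\Lambda_n$. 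As $\Lambda$ is $\Fm$-adically complete and $\bar\rho$ has finite image, every finite local $\Lambda$-algebra $B$ is a $\Lambda_n$-algebra for $n\gg0$. The plan is: (i) for each $n$, construct a projective $\Lambda_n$-scheme $\mathcal{GR}_n$ representing the functor sending a finite local $\Lambda_n$-algebra $B$ to the set of equivalence classes of effective torsion local $B$-shtuka models of height $\leqslant d$ (Definition~\ref{DefTorLocSht}) of $\rho^{\leqslant d}\otimes_\Lambda B$; (ii) observe that $\mathcal{GR}_{n+1}\times_{\Lambda_{n+1}}\Lambda_n$ and $\mathcal{GR}_n$ represent the same functor on finite local $\Lambda_n$-algebras --- because $\rho^{\leqslant d}\otimes_\Lambda B$ does not depend on whether $B$ is viewed via $\Lambda_{n+1}$ or $\Lambda_n$ --- hence are canonically isomorphic, and the very ample sheaf of $\mathcal{GR}_{n+1}$ restricts to one on $\mathcal{GR}_n$; (iii) algebraise the formal projective scheme $(\mathcal{GR}_n)_n$ over $\Spf\Lambda$ by Grothendieck's formal existence theorem \cite[III]{EGA} to a projective $\Lambda$-scheme $\mathcal{GR}^{\leqslant d}_{\bar\rho}$ with $\mathcal{GR}^{\leqslant d}_{\bar\rho}\times_\Lambda\Lambda_n\cong\mathcal{GR}_n$; then for a general finite local $\Lambda$-algebra $B$ with $\Fm^nB=0$ one gets $\mathcal{GR}^{\leqslant d}_{\bar\rho}(B)=\mathcal{GR}_n(B)$, which is the claimed bijection.

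For step (i), fix $n$ and a finite local $\Lambda_n$-algebra $B$, put $T_B:=\rho^{\leqslant d}\otimes_\Lambda B$ and $R\dbl z\dbr_B:=R\dbl z\dbr\otimes_{A_\epsilon}B$. By Proposition~\ref{PropTorTateMod} and the isomorphism \eqref{EqTorTateModIsom}, a torsion local $B$-shtuka model of $T_B$ is the same datum as an $R\dbl z\dbr_B$-lattice $\hat M_B\subset T_B\otimes_{A_\epsilon}K^\sep\dbl z\dbr$ which is free over $R\dbl z\dbr_B$, invariant under $\Gal(K^\sep/K)$, and compatible with $1\otimes\hat\sigma$ in the sense of Proposition~\ref{PropTorTateMod} (so that the induced $\tau_{\hat M_B}\colon\hat\sigma^\ast\hat M_B[\tfrac{1}{z-\zeta}]\isoto\hat M_B[\tfrac{1}{z-\zeta}]$ is defined); it is effective of height $\leqslant d$ precisely when $(z-\zeta)^d\hat M_B\subset\tau_{\hat M_B}(\hat\sigma^\ast\hat M_B)\subset\hat M_B$. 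Since $\rho_n\in\Def^{\leqslant d}_{\bar\rho}(\Lambda_n)$, Lemma~\ref{LemmaMaxMin} yields a maximal and a minimal effective height-$\leqslant d$ torsion local shtuka model $\ulHM^+_n,\ulHM^-_n$ over $R$ of $\rho_n$; by the functoriality in Lemma~\ref{LemmaMaxMin}\ref{LemmaMaxMin:Exist} the $\Lambda_n$-action on $\rho_n$ lifts uniquely to both, so $\hat M^\pm_n$ are finite $R\dbl z\dbr_{\Lambda_n}$-modules, and $\mathcal{N}_n:=\hat M^+_n/\hat M^-_n$ has finite length over $R\dbl z\dbr$ since $R$ is discretely valued (Lemma~\ref{LemmaMaxMin}\ref{LemmaMaxMin:Finite}). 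Invoking the functoriality of maximal and minimal models again --- for $B$ free over $\Lambda_n$ one identifies the maximal, resp.\ minimal, model of $T_B$ with $\ulHM^\pm_n\otimes_{\Lambda_n}B$, and a general $B$ is reduced to this by a free presentation together with Lemma~\ref{LemmaSchCl} --- one shows that, under \eqref{EqTorTateModIsom}, every effective height-$\leqslant d$ model $\hat M_B$ of $T_B$ is an $R\dbl z\dbr_B$-submodule of the image of $\hat M^+_n\otimes_{\Lambda_n}B$ in $T_B\otimes_{A_\epsilon}K^\sep\dbl z\dbr$ containing the image of $\hat M^-_n\otimes_{\Lambda_n}B$, with quotient annihilated by a power of $(z-\zeta)$ depending only on $n$.

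Hence the $\hat M_B$ are parametrised by the $B$-points of the relative Quot scheme of the coherent $\Lambda_n$-module $\mathcal{N}_n$, which is projective over $\Lambda_n$; inside it one imposes the conditions that the corresponding $\hat M_B$ be compatible with $1\otimes\hat\sigma$, that $(z-\zeta)^d\hat M_B\subset\tau_{\hat M_B}(\hat\sigma^\ast\hat M_B)$, and that $\hat M_B$ be locally free over $R\dbl z\dbr_B$. These define a closed subscheme $\mathcal{GR}_n$ of the Quot scheme, projective over $\Lambda_n$ --- the local freeness being automatic on the locus defined by the first two conditions, by a flatness argument of the type used in Example~\ref{ExCokerIsog} and Lemma~\ref{LemmaAuslanderBuchsbaum}. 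No condition is needed to enforce $\Gal(K^\sep/K)$-invariance: $\hat M^\pm_n$, and hence the entire Quot scheme, are built from $R\dbl z\dbr$-modules, so every $\hat M_B$ arising from the construction is automatically Galois-stable --- the analogue of working over $R\dbl z\dbr$ rather than over an unramified extension. The required functorial bijection for $\mathcal{GR}_n$ is then immediate from the way the conditions were chosen, so that $\mathcal{GR}_n(B)=\Hom_{\Lambda_n}(\Spec B,\mathcal{GR}_n)$ is the set of equivalence classes of effective torsion local $B$-shtuka models of height $\leqslant d$ of $\rho^{\leqslant d}\otimes_\Lambda B$.

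Steps (ii) and (iii) are then formal: the isomorphisms $\mathcal{GR}_n\cong\mathcal{GR}_{n+1}\times_{\Lambda_{n+1}}\Lambda_n$ follow from Yoneda, the very ample sheaves (pulled back from the Plücker embeddings of the ambient Quot schemes) restrict compatibly, so Grothendieck's formal existence theorem produces the projective $\Lambda$-scheme $\mathcal{GR}^{\leqslant d}_{\bar\rho}$. The main obstacle is the boundedness input hidden in step (i): one must know that \emph{all} effective height-$\leqslant d$ torsion local $B$-shtuka models of $T_B$ lie between two fixed reference lattices, \emph{uniformly} in $B$ (for $B$ killed by $\Fm^n$), so that the moduli problem is represented by a genuinely projective --- not merely ind-projective --- scheme over $\Lambda_n$. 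This rests on the discreteness of the valuation of $R$ (through the finiteness of the set of height-$\leqslant d$ models, Lemma~\ref{LemmaMaxMin}\ref{LemmaMaxMin:Finite}) and on propagating the sandwich $\hat M^-_n\subseteq\hat M_B\subseteq\hat M^+_n$ through the base change $\otimes_{\Lambda_n}B$; here care is needed because $\hat M^\pm_n$ need not be flat over $\Lambda_n$ (so one works with their images in $T_B\otimes_{A_\epsilon}K^\sep\dbl z\dbr$ and with Quot schemes of coherent sheaves rather than Grassmannians of vector bundles), and the identification of the maximal and minimal models of $T_B$ with $\ulHM^\pm_n\otimes_{\Lambda_n}B$ via the functoriality of Lemma~\ref{LemmaMaxMin} is the technical heart. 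The verification that $1\otimes\hat\sigma$-compatibility, the height bound and local freeness define (locally) closed conditions then proceeds exactly as in \cite{KisinModuliFF} and \cite[\S4.3]{KimNormField}.
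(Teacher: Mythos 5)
Your proposal follows the same route as the paper: the paper's proof is a one-sentence pointer to Kisin's construction of moduli of finite flat group schemes \cite[Proposition~2.1.10]{KisinModuliFF} and to \cite[Proposition~4.3.1]{KimNormField}, and what you write is a fleshed-out version of exactly that construction --- bound the moduli problem between the maximal and minimal models supplied by Lemma~\ref{LemmaMaxMin}, cut out the $\hat\sigma$-stability and height conditions inside a Quot-scheme (affine Grassmannian) over each Artinian quotient $\Lambda_n$, and algebraize over $\Spf R^{\leqslant d}_{\bar\rho}$ by the formal existence theorem. Your observation that no Galois-invariance condition need be imposed is also correct, since under the twisted identification of Proposition~\ref{PropTorTateMod} the ambient lattice is pointwise Galois-fixed.

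One claim should be retracted: local freeness of $\hat M_B$ over $R\dbl z\dbr_B$ is \emph{not} automatic on the locus cut out by $\hat\sigma$-stability and the height bound. The paper warns, in the paragraph preceding Lemma~\ref{LemmaBFSht}, that for non-reduced $B$ there exist torsion local shtukas with $B$-action whose Tate module is free over $B$ but whose underlying module is not free over $R\dbl z\dbr\otimes_{A_\epsilon}B$; such objects satisfy your first two conditions and sit between the reference lattices, so freeness over $R\dbl z\dbr_B$ is a genuine further restriction. Example~\ref{ExCokerIsog} and Lemma~\ref{LemmaAuslanderBuchsbaum} only control flatness over $R$, not over $R\dbl z\dbr_B$, so they do not supply the missing argument. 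This is precisely the technical content of \cite[\S\,2.1]{KisinModuliFF} (and of \cite[\S\,4.3]{KimNormField}, \cite{PRCoeffSp}): one must verify that the freeness condition, in combination with the others, defines a \emph{closed} subscheme of the Quot scheme, so that $\mathcal{GR}_n$ is projective and not merely quasi-projective over $\Lambda_n$. With that point repaired, your argument matches the intended proof.
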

\begin{proof}
 Indeed, $\mathcal{GR}^{\leqslant d}_{\bar\rho}$ can be represented by a closed subscheme of a certain affine Grassmannian for $\GL_r$, following the same idea as Kisin's construction of moduli of finite flat group schemes; cf.~\cite[Proposition~2.1.10]{KisinModuliFF}.
\end{proof}

\begin{remark}
 In the $p$-adic setting, Kisin's construction of moduli of finite flat group schemes over flat deformation rings  was ``globalized'' by Pappas and Rapoport; cf.~\cite{PRCoeffSp}. Namely, the mod $p$ local Galois representation $\bar\rho$ is allowed to vary in the moduli space constructed by Pappas and Rapoport, and we recover moduli of finite flat group schemes by ``fixing $\bar\rho$''.  Such ``coefficient spaces'' (which can be thought of as fixing a base of $p$-divisible groups and varying coefficients) bear somewhat striking similarities with moduli spaces of $p$-divisible groups (with fixed coefficients $\BZ_p$ and varying its base), commonly known as Rapoport-Zink spaces. For example, there exists a natural ``period-morphisms'' \cite[\S5]{PRCoeffSp} on the rigid analytic generic fiber of a Pappas-Rapoport coefficient space whose image was computed by Hellmann and the first author; see \cite[Theorem 7.8]{Hellmann13} and \cite[Corollary~6.12]{HartlHellmann}.
 
 By repeating the construction in \cite{PRCoeffSp} or \cite{HartlHellmann} by working with $A_\epsilon$ instead of $\BZ_p$, one can obtain an equi-characteristic analog of Pappas-Rapoport coefficient spaces, which can roughly be thought of as the moduli space of local shtukas with fixed base $R$ and varying coefficients. Such equi-characteristic coefficient spaces could be interesting objects to study; for example, one can ask about a description of the image of the rigid analytic period morphism, analogous to \cite{Hellmann13,HartlHellmann}.
\end{remark}

%
%

{\small

}

\vfill

\begin{minipage}[t]{0.5\linewidth}
\noindent
Urs Hartl\\
Universit\"at M\"unster\\
Mathematisches Institut \\
Einsteinstr.~62\\
D -- 48149 M\"unster
\\ Germany
\\[1mm]
\href{http://www.math.uni-muenster.de/u/urs.hartl/}{\small www.math.uni-muenster.de/u/urs.hartl/}
\end{minipage}
\begin{minipage}[t]{0.45\linewidth}
\noindent
Wansu Kim\\
KAIST \\
Department of Mathematical Sciences\\
291 Daehak-ro, Yuseong-gu\\
Daejeon, 34141\\
Republic of Korea
\\[1mm]
{\small \href{https://sites.google.com/site/wansukimmaths/}{sites.google.com/site/wansukimmaths/}}
\end{minipage}

\end{document}